\numberwithin{equation}{section}
\newtheorem{thm}{Theorem}[section]
\newtheorem{prop}[thm]{Proposition}
\newtheorem{lem}[thm]{Lemma}
\newtheorem{coro}[thm]{Corollary}
\theoremstyle{definition}
\newtheorem*{definition}{Definition}
\newtheorem*{example}{Example}
\newtheorem*{remark}{Remark}
\DeclareMathSymbol{\lsb@l}{\mathalpha}{letters}{`l}
\DeclareMathOperator{\GL}{GL}
\DeclareMathOperator{\SL}{SL}
\DeclareMathOperator{\SO}{SO}
\DeclareMathOperator{\End}{End}
\DeclareMathOperator{\Mat}{\mathcal{M}}
\DeclareMathOperator{\Supp}{supp}
\DeclareMathOperator{\Stab}{Stab}
\newcommand{\bracket}[1]{\langle#1\rangle}
\newcommand{\dd}{\,\mathrm{d}}
\newcommand{\diag}{\mathrm{diag}}
\newcommand{\op}{\mathrm{op}}
\newcommand{\pp}{\boxplus}
\newcommand{\ff}{*}
\newcommand{\mm}{\boxminus}
\newcommand{\R}{\mathbb{R}}
\newcommand{\C}{\mathbb{C}}
\newcommand{\Z}{\mathbb{Z}}
\newcommand{\N}{\mathbb{N}}
\newcommand{\Q}{\mathbb{Q}}
\newcommand{\Tbb}{\mathbb{T}}
\newcommand{\Ebb}{\mathbb{E}}
\newcommand{\Pbb}{\mathbb{P}}
\newcommand{\Ecal}{\mathcal{E}}
\newcommand{\Lcal}{\mathcal{L}}
\newcommand{\Ncal}{\mathcal{N}}
\newcommand{\Pcal}{\mathcal{P}}
\newcommand{\dtil}{\tilde{d}}
\newcommand{\tE}{\bar{E}}
\newcommand{\eps}{\varepsilon}
\newcommand{\hhat}{\widehat}
\newcommand{\tmu}{\bar{\mu}}
\DeclareMathOperator{\ess}{ess}
\newcommand{\abs}[1]{\lvert#1\rvert}    
\newcommand{\abse}[1]{\left\lvert#1\right\rvert} 
\newcommand{\absbig}[1]{\bigl\lvert#1\bigr\rvert} 
\newcommand{\absBig}[1]{\Bigl\lvert#1\Bigr\rvert} 
\newcommand{\norm}[1]{\lVert#1\rVert}   
\newcommand{\normbig}[1]{\bigl\lVert#1\bigr\rVert}
\newcommand{\qnorm}[1]{\lvert#1\rvert^{\scaleobj{0.7}{\sim}}}
\newcommand{\Probbig}[1]{\Pbb\bigl[\,#1\,\bigr]}
\newcommand{\ProbBig}[1]{\Pbb\Bigl[\,#1\,\Bigr]}
\newcommand{\ProbcondBig}[2]{\Pbb\Bigl[\, #1 \,\Big\vert\, #2\,\Bigr]}
\newcommand{\set}[2]{\{\, #1 \,\vert\, #2 \,\}}
\newcommand{\setbig}[2]{\bigl\{\, #1 \,\big\vert\, #2 \,\bigr\}}
\newcommand{\setBig}[2]{\Bigl\{\, #1 \,\Big\vert\, #2 \,\Bigr\}}
\DeclareMathOperator{\Nbdtil}{\tilde{N}bd}
\DeclareMathOperator{\Ball}{B}
\renewcommand{\phi}{\varphi}
\DeclareMathOperator{\Span}{Span}
\DeclareMathOperator{\1}{\mathbbm{1}}
\DeclareMathOperator{\mes}{m}
\newcommand{\mybullet}{\boldsymbol{\,\cdot\,}}
\newcommand{\mymod}{\;\mathrm{mod}\;}
\DeclareMathOperator{\NC}{NC} 
\DeclareMathOperator{\ANC}{ANC}
\begin{document}

\title{Semisimple random walks on the torus}

\author{Weikun He}
\thanks{}
\address{Institute of Mathematics, Academy of Mathematics and System Science, CAS, Zhongguancun East Road 55, Beijing 100190, P.R. China}
\email{heweikun@amss.ac.cn}

\author{Nicolas de Saxcé}
\thanks{}
\address{CNRS -- Université Sorbonne Paris Nord, LAGA, 93430 Villetaneuse, France.}
\email{desaxce@math.univ-paris13.fr}

\subjclass[2010]{Primary 37A17, 11B75; Secondary 37A45, 11L07, 20G30.}

\keywords{Equidistribution, sum-product, Lyapunov exponent, Fourier decay}

\date{}

\begin{abstract}
We study linear random walks on the torus and show a quantitative equidistribution statement, under the assumption that the Zariski closure of the acting group is semisimple.
\end{abstract}

\maketitle

\section{Introduction}

Let $d\geq 2$ be an integer and $\Tbb^d=\R^d/\Z^d$ the torus of dimension $d$.
We study a random walk $(x_n)_{n\geq 0}$ on $\Tbb^d$ given by
\[
\forall n\geq 0,\quad x_n = g_n\dots g_1x_0
\]
where $(g_n)_{n\geq 1}$ is a sequence of independent identically distributed random variables with law $\mu$ on $\GL_d(\Z)$.
Let $\Gamma$ denote the group generated by the support of $\mu$, and $G$ be the Zariski closure of $\Gamma$ in $\GL_d(\R)$.
In \cite{BFLM}, Bourgain, Furman, Lindenstrauss and Mozes showed that if $G$ acts strongly irreducibly and proximally on $\R^d$, then the random walk $(x_n)_{n\geq 0}$ equidistributes in law to the Haar probability measure $\mes_{\Tbb^d}$ as soon as $x_0$ is irrational, i.e.
\[
\forall x_0\not\in\Q^d/\Z^d,
\quad \mu^{*n}*\delta_{x_0} \rightharpoonup^*_{n\to+\infty} \mes_{\Tbb^d}.
\]
Moreover, this result is quantitative : an explicit rate of convergence is obtained in terms of the distance from $x_0$ to rational points of small denominator.
Following their strategy, we showed in \cite{HS2019} that their theorem is still valid without the proximality assumption, as long as the action of $G$ on $\R^d$ is strongly irreducible.
On the other hand, the theory developed by Benoist and Quint in their series of articles \cite{bq1,bq2,bq3,bqfv} made it clear that when studying random walks on homogeneous spaces, it is most natural to only assume that the acting algebraic group $G$ is semisimple.
Indeed, under this assumption \cite[Theorem~1.1]{bq2} gives a full classification of stationary measures, which in turn implies the very general equidistribution results of \cite{bq3}.
It is therefore desirable to obtain quantitative convergence results similar to those of \cite{BFLM} or \cite{HS2019} in this more general setting, and this is the goal of the present article.

\bigskip

Of course, without the irreducibility assumption, there can exist some proper closed $\Gamma$-invariant subsets of $\Tbb^d$, and the random walk may not equidistribute to the Haar measure, even if the starting point $x_0$ is irrational.
So in order to state our main result, we need to set up some notation.
Let $G^\circ$ denote the identity component of $G$ for the Zariski topology.
The subalgebra of $\Mat_d(\R)$ generated by $G^\circ$ is denoted by $E$.
Since $G$ is semisimple, we may write
\(
\R^d = V_0\oplus V_1 \oplus\dots\oplus V_r
\)
where for $i = 0,\dotsc,r$, $V_i$ is a maximal sum of simple $G$-modules having the same top Lyapunov exponent for the action of $\mu$.
Reordering the subspaces $V_i$, we may assume in addition that 
\[
\lambda_1(\mu,V_1)
>\dots > \lambda_1(\mu,V_r)>\lambda_1(\mu,V_0) = 0.
\]
The space $V_0$ will play a special role in our analysis of the random walk behavior.
By a result of Furstenberg, the image of $G$ in $\GL(V_0)$ is compact, and for that reason, we shall say that $V_0$ is the \emph{sum of all compact factors} of $G$ in $\R^d$.
We define a quasi-norm on $\R^d$ by
\[
\abs{v} = \max_{0\leq i\leq r} \norm{v_i}^{\frac{1}{\lambda_1(\mu,V_i)}}
\]
where $v = v_0+\dotsc+v_r$ is the decomposition of $v$ according to the direct sum $\R^d = \oplus_{i=1}^r V_i$.
By convention, we set $\frac{1}{0}=+\infty$ and
\begin{equation*}
\norm{v_0}^{+\infty} = \begin{cases} 0 & \text{if } \norm{v_0} \leq 1,\\ +\infty &\text{otherwise.}\end{cases}
\end{equation*}
This quasi-norm induces a quasi-distance on $\R^d$ given by $\dtil(x,y)=\abs{x-y}$, which projects to a quasi-distance on $\Tbb^d$, still denoted by $\dtil$.
A finite measure $\mu$ on $\GL_d(\Z)$ is said to have a \emph{finite exponential moment} if there exists $\tau>0$ such that
\[
\int\norm{g}^\tau\dd\mu(g) < +\infty,
\]
where $\norm{\cdot}$ denotes a norm on the space $M_d(\R)$ of $d\times d$ matrices; this definition does not depend on the choice of the norm.
Our goal is the following theorem.

\begin{thm}[Equidistribution of semisimple linear random walks on $\Tbb^d$]
\label{thm:finali}
Let $\mu$ be a probability measure on $\GL_d(\Z)$ having a finite exponential moment.
Denote by $G \subset \GL_d(\R)$ the algebraic group generated by $\mu$, by $G^\circ$ its identity component, and let $E$ be the subalgebra of $\Mat_d(\R)$ generated by $G^\circ$. 
As above, we write $V_0$ for the sum of all compact factors of $G$ in $\R^d$.
If the algebraic group $G$ is semisimple, then for every $\lambda\in(0,1)$, there exists $C=C(\mu,\lambda)\geq 0$ such that the following holds.

Given $x_0\in\Tbb^d$, 
assume that for some $t\in(0,\frac{1}{2})$, $a_0\in\Z^d \setminus\{0\}$, and $n \geq C\log\frac{\norm{a_0}}{t}$,
\[
\abs{\hhat{(\mu^{*n}*\delta_{x_0})}(a_0)} \geq t.
\]
Then, there exists $\gamma\in G/G^\circ$ such that, denoting
\(
W_0 = (a_0\gamma E)^\perp
\),
one has
\[
\dtil\bigl(x_0-\frac{p}{q}-v, W_0 \bigr) \leq e^{-n\lambda}
\]
for some $v\in V_0$, $p \in \Z^d$ and $q\in\Z \setminus\{0\}$ such that
\(
\max(\norm{v}, \abs{q}) \leq \left(\frac{\norm{a_0}}{t}\right)^C.
\)
\end{thm}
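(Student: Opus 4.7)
The plan is to follow the general strategy of Bourgain--Furman--Lindenstrauss--Mozes, as refined in our earlier paper \cite{HS2019}, and adapt it to the semisimple setting. We start from the identity
\[
\hhat{(\mu^{*n}*\delta_{x_0})}(a_0) = \int e^{2\pi i \bracket{a_0 g, x_0}} \dd \mu^{*n}(g),
\]
so that the assumption $\abs{\hhat{(\mu^{*n}*\delta_{x_0})}(a_0)} \geq t$ says that the random covector $a_0 g$, with $g$ distributed as $\mu^{*n}$, causes the linear form $\bracket{\,\cdot\,, x_0}$ to concentrate modulo $1$. The goal is to convert this soft analytic concentration into the rigid Diophantine structure on $x_0$ described in the statement.

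First, I would split the walk as $n = n_1 + n_2$ and apply a Cauchy--Schwarz (or $\ell^2$-expansion) step to reduce the problem to bounding the Fourier coefficient, at scale $n_1$, of a pushforward measure related to $a_0(g-g')$, where $g,g'$ are independent copies drawn from $\mu^{*n_2}$. The semisimplicity hypothesis enters critically at this stage: the support of the law of $a_0 g$ lies in the linear span $a_0 E$ of the $G^\circ$-orbit, and the isotypic decomposition $\R^d = V_0\oplus V_1\oplus \dotsb \oplus V_r$ lets us organize the analysis Lyapunov-component by Lyapunov-component. The compact factor $V_0$ contributes no expansion and is therefore invisible to the random walk; it will account for the free vector $v\in V_0$ in the conclusion.

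Second, on each non-compact isotypic factor, I would invoke the $\ell^2$-flattening / discretized sum--product machinery for random walks on semisimple groups available after Bourgain, Benoist--Quint and \cite{HS2019}. The resulting dichotomy is that either the pushforward of $\mu^{*n}$ under $g\mapsto a_0 g$ is flat at some exponentially small scale $e^{-cn}$ (in which case the Fourier coefficient is at most $e^{-cn}$, contradicting the hypothesis once $n \gg \log(\norm{a_0}/t)$), or else the measure concentrates near a proper $E$-invariant algebraic piece of the orbit $a_0 G$. Combined with large-deviation control on the Lyapunov spectrum (Le Page, Guivarc'h--Raugi), this lets one select a single matrix $\gamma\in G$ such that the concentration is essentially supported on $a_0\gamma E$ at scale $e^{-n\lambda}$. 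Feeding this back into the Fourier coefficient forces $\bracket{a_0\gamma\eta, x_0}$ to be close to $\Z$ for $\eta$ ranging over a dense set of $E$, i.e.\ forces $x_0$ to be close to the affine subspace $W_0 = (a_0\gamma E)^\perp$ after correction by a $V_0$-translate and a rational shift.

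The main obstacle, and the genuine novelty compared with \cite{HS2019}, is the classification of the ``bad'' concentration sets produced by the flattening dichotomy when the action of $G$ is only semisimple. In the strongly irreducible case one has $E = \Mat_d(\R)$ and a proper $E$-invariant subvariety reduces essentially to a rational point; here, by contrast, $E$ can be strictly smaller, and a proper invariant piece of $a_0G$ can be a genuinely positive-dimensional affine object. Translating ``$x_0$ is close to a bad piece'' into a Diophantine statement with controlled denominator $q$ and controlled $V_0$-translate $v$ requires a careful pigeonhole in the lattice $\Z^d$, exploiting that $\gamma$ acts through the algebraic group $G$ and that the $a_0\gamma E$-coefficients involved take values in a number field of bounded height. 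Quantifying each step in $n$, $\norm{a_0}$ and $t$, and choosing $n_1,n_2$ to be suitable linear functions of $n$, finally produces both the error $e^{-n\lambda}$ and the polynomial bound $\max(\norm{v},\abs{q})\leq (\norm{a_0}/t)^C$.
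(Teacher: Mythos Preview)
Your proposal captures the right ingredients on the \emph{input} side --- sum--product in the semisimple algebra $E$, $L^2$-flattening, Lyapunov/isotypic decomposition, the special role of the compact factor $V_0$ --- and this is indeed how the paper establishes Fourier decay of the rescaled law $\mu_n = (\pi'\circ\phi_n)_*\mu^{*n}$ on $E'$. But the second half of your outline, where you pass from this Fourier decay to the Diophantine conclusion on $x_0$, does not match the paper and, as written, has a genuine gap.

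The ``dichotomy'' you describe is not how the argument runs. The Fourier decay one proves is for $\hhat{\mu_n}(\xi)$ with $\xi\in (E')^*$ in a range; this does \emph{not} directly bound $\hhat{\mu^{*n}*\delta_{x_0}}(a_0) = \int e(\bracket{a_0 g, x_0})\,\dd\mu^{*n}(g)$, which is an oscillatory integral at one specific frequency determined by $x_0$. There is no contradiction between $\mu_n$ being perfectly ``flat'' on $E'$ and $\abs{\hhat{\mu^{*n}*\delta_{x_0}}(a_0)}$ being large. What the paper does instead is: from one large Fourier coefficient on $\Tbb^d$, use the additive--structure lemma and the Fourier decay of $\mu_n$ (via a regularity-from-decay lemma) to produce \emph{many} large Fourier coefficients of $\nu=\mu^{*(n-n_0)}*\delta_{x_0}$ in a convex body; then apply a quantitative Wiener lemma with polar convex bodies to conclude that $\nu$ is \emph{granulated}, i.e.\ concentrated on finitely many $\rho$-balls modulo the subtorus $W_0=(a_0\gamma E)^\perp$. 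Your outline has no analogue of this Wiener/granulation step, and your sentence ``forces $\bracket{a_0\gamma\eta,x_0}$ to be close to $\Z$ for $\eta$ ranging over a dense set of $E$'' is where the argument breaks: nothing so far constrains $x_0$, only the distribution $\mu^{*(n-n_0)}*\delta_{x_0}$.

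Getting from the granulation back to $x_0$ is a separate, substantial argument that your proposal omits entirely. The paper (i) \emph{bootstraps} the granulation by running the walk backwards a controlled number of steps, shrinking the grains until some $\mu^{*(n-n_1)}*\delta_{x_0}$ puts mass $\geq\rho^\eta$ on a single $\rho$-ball; (ii) uses a \emph{Diophantine property of the walk} (ultimately the spectral gap mod primes from Salehi Golsefidy--Varj\'u, not a ``pigeonhole in $\Z^d$'') to show that this ball must sit near the invariant set $Z_Q = \bigl(\Ball_{V_0}(0,Q)+\bigcup_{q\le Q}\frac{1}{q}\Z^d\bigr)\bmod W_0$ with $Q\le\rho^{-\beta}$; and (iii) applies a \emph{Margulis/Foster drift inequality} $\int\phi_Q(gy)\,\dd\mu^{*n}(g)\le e^{-\lambda\alpha n}\phi_Q(y)+Q^C$ to conclude that $x_0$ itself lies $e^{-\lambda n}$-close to $Z_Q$. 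This drift step is what produces the arbitrary $\lambda\in(0,1)$ in the exponent; without it you would at best get $e^{-cn}$ for some fixed small $c$. None of (i)--(iii) appears in your outline, and the rational $p/q$ arises precisely from step (ii), not from height considerations in a number field.
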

In the above, of course, $\frac{p}{q}$, $v$ and $W_0$ are identified with their projection to the torus $\Tbb^d$.
A slightly more precise version of Theorem~\ref{thm:finali} is stated below as Theorem~\ref{thm:final}.

\begin{remark}
If $G$ is connected, i.e. $G=G^\circ$, then $W_0=(a_0E)^\perp$ is entirely determined by $a_0$.
Existence of a large Fourier coefficient $\hhat{(\mu^{*n}*\delta_{x_0})}(a_0)$ implies that the starting point of the random walk is close to a rational translate of an invariant closed subset of the form $W_0+\Ball_{V_0}(0,R) \mod \Z^d$, where $\Ball_{V_0}(0,R)$ denotes the centered closed ball of radius $R$ in $V$ with respect to some $G$-invariant Euclidean norm on $V$ and $R$ is controlled in terms of $\norm{a_0}$ and $\abs{\hhat{(\mu^{*n}*\delta_{x_0})}(a_0)}$.
\end{remark}

\begin{example}[Reducible random walk]
Fix a probability measure $\mu_0$ on $\SL_2(\Z)$ such that $\Supp\mu_0$ generates $\SL_2(\Z)$, and let $\mu=\mu_0\otimes\mu_0$.
Using the block diagonal embedding $\SL_2\times\SL_2\hookrightarrow\SL_4$, we view $\mu$ as a probability measure on $\SL_4(\Z)$.
In that setting, $E=M_2(\R)\times M_2(\R)$ and $V_0=\{0\}$.\\
Assume as in the theorem that $\hhat{(\mu^{*n}*\delta_{x_0})}(a_0)$ is large.
Write $a_0=(a_1,a_2,a_3,a_4)$.
\begin{itemize}
\item If $(a_1,a_2)$ and $(a_3,a_4)$ are both nonzero,
then $a_0E=\R^4$ and therefore $W_0=\{0\}$.
Thus, the starting point $x_0$ must be close to a rational point with small denominator.
\item If $(a_1,a_2) = 0$ and $(a_3,a_4) \neq 0$, then $a_0 E = \{0\} \oplus \R^2$ so that $W_0 = \R^2 \oplus \{0\}$.
The theorem only allows us to conclude that $x_0$ is close to a rational translate of the invariant subtorus $\Tbb^2\times\{0\} = W_0 \mod \Z^4$.
In particular, we can conclude nothing about the first two coordinates of $x_0$.
And indeed, if one starts from a point $x_0$ whose third and fourth coordinates are zero, then the random walk is trapped in the proper invariant subset $\Tbb^2\times\{0\}$.
For every frequency of the form $a_0=(0,0,a_3,a_4)$ and for all $n$, one has $\hhat{(\mu^{*n}*\delta_{x_0})}(a_0)=1$.
\end{itemize}
\end{example}

\begin{example}[Compact factors and satellite measures]
Consider the quadratic form $Q(x,y,z)=x^2+y^2-\sqrt{2}z^2$ on $\R^3$, and $\SO_Q\subset\SL_3$ its special orthogonal group:
\[
\SO_Q = \{g\in\SL_3\ |\ ^t\!gJ_Qg = J_Q\},
\quad\mbox{where}\
J_Q = \diag(1,1,-\sqrt{2}).
\]
If $g$ is an element of the group $\Gamma=\SO_Q(\Z[\sqrt{2}])$ of elements of $\SO_Q$ with entries in the ring $\Z[\sqrt{2}]$, one can write $g=A+\sqrt{2}B$, with $A,B$ in $M_3(\Z)$.
The map
\[
g=A+\sqrt{2}B \mapsto \begin{pmatrix} A & 2B\\ B & A\end{pmatrix}
\]
embeds $\Gamma$ into $\SL_6(\Z)$.
Let $\mu$ be a probability measure on $\SL_6(\Z)$ whose support generates the group $\Gamma$.\\
\indent Since $\begin{pmatrix} A & 2B\\ B & A\end{pmatrix}$ is conjugate to $\diag(A+\sqrt{2}B,A-\sqrt{2}B)$, so that $\Gamma$ preserves a direct sum decomposition $\R^6=\R^3\oplus\R^3$.
The group $\Gamma$ acts on the second factor as a subgroup of $\SO_{\bar{Q}}$, where $\bar{Q}(x,y,z)=x^2+y^2+\sqrt{2}z^2$ is positive definite,
so there is a non-trivial compact factor $V_0\simeq\R^3$.
Note that $V_0$ embeds densely in $\Tbb^6$.\\
\indent On the other hand, one can check that in that setting $E$ is conjugate under $\begin{pmatrix}\sqrt{2}I_3 & -\sqrt{2}I_3\\ I_3 & I_3\end{pmatrix}$ to the block diagonal subalgebra $M_3(\R)\times M_3(\R)$.
If $a_0\in\Z^6\setminus\{0\}$, both projections of $a_0$ to the $\R^3$ factors are non-zero (note that the direct sum decomposition is not defined over $\Q$) and therefore one always has $a_0E=\R^6$, whence $W_0=\{0\}$.\\
\indent
Existence of a large Fourier coefficient $\hhat{(\mu^{*n}*\delta_{x_0})}(a_0)$ implies that up to a rational translation with small denominator, the starting point $x_0$ is close to the image in $\Tbb^6$ of a ball of controlled radius in $V_0$.
Note that if the starting point $x_0$ lies on the embedded leaf $V_0$, then the random walk equidistributes with respect to (the image in $\Tbb^6$ of) the uniform probability measure on the sphere containing $x_0$ for the quadratic form $x^2+y^2+\sqrt{2}z^2$ on $V_0$.
\end{example}

If the sequence $(\mu^{*n}*\delta_{x_0})$ does not converge to the Haar measure $\mes_{\Tbb^d}$ in the weak-$*$ topology, then, by Weyl's equidistribution criterion, there are $a_0\in \Z^d \setminus\{0\}$ and $t > 0$ such that $\abs{\hhat{(\mu^{*n}*\delta_{x_0})}(a_0)} \geq t$ for an unbounded sequence of $n \in \N$.
Letting $n$ go to infinity along this sequence, we deduce the following qualitative statement from the above theorem.
\begin{coro}[Qualitative statement]
\label{cr:qual}
Let $\mu$ be a probability measure on $\GL_d(\Z)$ having a finite exponential moment.
Denote by $G \subset \GL_d(\R)$ the algebraic group generated by $\mu$.
Assume that $G$ is semisimple. 
Then for any point $x_0\in\Tbb^d$, either 
\[
\mu^{*n}*\delta_{x_0} \rightharpoonup^* \mes_{\Tbb^d},
\]
or 
\[
x_0 \in \Q^d + V_0 + W_0 \mod \Z^d,
\]
where $V_0$ denotes the sum of all compact factors of $G$ in $\R^d$ and $W_0$ is a proper rational subspace of $\R^d$ invariant under the action of the identity component $G^\circ$ of $G$.
\end{coro}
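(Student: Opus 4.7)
The plan is to derive the corollary from Theorem~\ref{thm:finali} via Weyl's equidistribution criterion, a pigeonhole-plus-compactness extraction, and a few algebraic verifications. Assume $(\mu^{*n}*\delta_{x_0})$ does not converge weakly-$*$ to $\mes_{\Tbb^d}$; then Weyl's criterion produces $a_0\in\Z^d\setminus\{0\}$ and $t\in(0,\tfrac{1}{2})$ such that $\abs{\hhat{(\mu^{*n}*\delta_{x_0})}(a_0)}\geq t$ along an unbounded sequence $N\subset\N$. Fixing some $\lambda\in(0,1)$ and applying Theorem~\ref{thm:finali} to each sufficiently large $n\in N$, I obtain $\gamma_n\in G$, $v_n\in V_0$, $p_n\in\Z^d$, $q_n\in\Z\setminus\{0\}$, and $W_n:=(a_0\gamma_n E)^\perp$, with $\max(\norm{v_n},\abs{q_n})\leq K:=(\norm{a_0}/t)^C$ and
\begin{equation*}
\dtil\bigl(x_0 - p_n/q_n - v_n,\,W_n\bigr)\leq e^{-n\lambda}.
\end{equation*}

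Next I extract a subsequence on which all this data stabilizes and pass to the limit. The integers $q_n\in[1,K]$ take finitely many values; for each such $q$ the residue $p_n/q_n\bmod\Z^d$ has at most $q^d$ possibilities; the $v_n$ lie in a compact ball of $V_0$; and the $W_n$ themselves take only finitely many values, because $E$ is a unital subalgebra of $\Mat_d(\R)$ containing $G^\circ$, so $hE=E$ for every $h\in G^\circ$ and $a_0\gamma E$ depends only on the coset of $\gamma$ in the finite quotient $G/G^\circ$. Pigeonholing on these finite choices and passing to a convergent subsequence of $v_n$, I find $N'\subset N$ along which $q_n=q$, $p_n/q_n\equiv p/q\pmod{\Z^d}$, and $W_n=W_0$ are constant while $v_n\to v_\infty\in V_0$. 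A short compactness argument on the approximating points in $W_0+\Z^d$ (which live in a bounded region) then lets me send $n\to\infty$ in the displayed inequality, yielding $\dtil(x_0-p/q-v_\infty,W_0)=0$; unpacking the quasi-distance, this means $x_0\in p/q+v_\infty+V_0+W_0\subseteq\Q^d+V_0+W_0\pmod{\Z^d}$, which is the second alternative of the corollary.

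The last step is to verify that $W_0$ can be chosen rational and $G^\circ$-invariant. Since $\Gamma$ is Zariski-dense in $G$, it meets every coset of $G^\circ$, so each $\gamma_n$ may be replaced by an element of $\Gamma\subset\GL_d(\Z)$ without changing $W_n$. The intersection $\Gamma\cap G^\circ$ is of finite index in $\Gamma$, hence Zariski-dense in $G^\circ$, so its $\R$-linear span coincides with $E$; therefore $E$ is defined over $\Q$, and combined with $a_0\gamma\in\Z^d$ this shows $a_0\gamma E$, and with it $W_0$, is rational. For $G^\circ$-invariance, $G^\circ\subset E$ together with $E\cdot E\subset E$ gives $(a_0\gamma E)\cdot g\subset a_0\gamma E$ for every $g\in G^\circ$; passing to the orthogonal complement shows $W_0$ is $G^\circ$-invariant under the standard left action on $\R^d$. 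This algebraic verification is the only step requiring real care; the rest is a routine qualitative extraction from the quantitative statement.
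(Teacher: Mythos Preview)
Your proof is correct and follows exactly the approach the paper indicates: Weyl's criterion plus Theorem~\ref{thm:finali} along an unbounded sequence of $n$, then letting $n\to\infty$. The paper compresses this into a single sentence before the corollary, whereas you have carefully supplied the pigeonhole on $q_n$, $p_n/q_n$, and the finitely many possible $W_n$ (via $G/G^\circ$), the compactness extraction on $v_n$ and on the approximating points of $W_0+\Z^d$, and the verification that $W_0$ is rational and $G^\circ$-invariant; all of these details are left implicit in the paper but are exactly what is needed.
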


As a consequence, we recover the classification of orbit closures due to Guivarc'h and Starkov~\cite{GuivarchStarkov} and Muchnik~\cite{Muchnik}.
\begin{coro}[Classification of orbit closures]
Let $\Gamma \subset \GL_d(\Z)$ be a subgroup whose Zariski closure $G$ is semisimple.
Let $x_0 \in \Tbb^d$. Then the orbit closure $\overline{\Gamma x}$ is either the whole $\Tbb^d$ or contained in a $\Gamma$-invariant closed subset of the form
\[\frac{1}{q}\Z^d + \Ball_{V_0}(0,R) + \bigcup_{\gamma \in G/G^\circ} \gamma W_0 \mod \Z^d\]
where $q$ is a nonzero integer, $\Ball_{V_0}(0,R)$ is a ball in $V_0$, the sum of all compact factors of $G$ in $\R^d$ and $W_0$ is a proper rational subspace invariant under the action of the identity component $G^\circ$ of $G$.
\end{coro}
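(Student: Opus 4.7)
The plan is to deduce the classification of orbit closures from Corollary~\ref{cr:qual} by a routine argument. Since $\Gamma \subset \GL_d(\Z)$ is countable, enumerate it as $\Gamma = \{g_1, g_2, \ldots\}$ and choose weights $c_n > 0$ summing to $1$ and decreasing fast enough that $\sum_n c_n \norm{g_n}^{\eps} < +\infty$ for some $\eps > 0$. The probability measure $\mu = \sum_n c_n \delta_{g_n}$ then has finite exponential moment and its support is all of $\Gamma$, so the algebraic group it generates equals the Zariski closure of $\Gamma$, namely $G$. Corollary~\ref{cr:qual} therefore applies to $\mu$ and the given point $x_0$.

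In the first alternative, $\mu^{*n}*\delta_{x_0} \rightharpoonup^* \mes_{\Tbb^d}$; since every $\mu^{*n}*\delta_{x_0}$ is supported in $\Gamma x_0$, any nonempty open subset of $\Tbb^d \setminus \overline{\Gamma x_0}$ would carry positive Haar mass while receiving $(\mu^{*n}*\delta_{x_0})$-mass zero for every $n$, contradicting weak-$*$ convergence. Hence $\overline{\Gamma x_0} = \Tbb^d$. Otherwise there exist $p \in \Z^d$, $q \in \Z \setminus \{0\}$, $v_0 \in V_0$, a rational $G^\circ$-invariant subspace $W_0 \subset \R^d$, and $w_0 \in W_0$ such that $x_0 \equiv \frac{p}{q} + v_0 + w_0 \mod \Z^d$.

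In this second case, for every $\gamma \in \Gamma$ one checks that $\gamma\frac{p}{q}$ lies in $\frac{1}{q}\Z^d$ modulo $\Z^d$ since $\gamma \in \GL_d(\Z)$; that $\gamma v_0$ stays in the $G$-invariant subspace $V_0$ and satisfies $\norm{\gamma v_0} \leq R$ for some uniform $R$, because the image of $G$ in $\GL(V_0)$ is relatively compact by the definition of $V_0$ as the sum of the compact factors of $G$; and that $\gamma w_0 \in \gamma W_0$, where $\gamma W_0$ depends only on the coset $\gamma G^\circ$ because $W_0$ is $G^\circ$-invariant. Consequently
\[
\Gamma x_0 \;\subset\; \frac{1}{q}\Z^d + \Ball_{V_0}(0,R) + \bigcup_{\gamma \in G/G^\circ} \gamma W_0 \mod \Z^d.
\]
The right-hand side is closed in $\Tbb^d$: since $\Gamma$ is Zariski dense in $G$ and each coset of $G^\circ$ is Zariski clopen in $G$, every class in $G/G^\circ$ has a representative in $\Gamma \subset \GL_d(\Z)$; each $\gamma W_0$ is thus a rational subspace projecting to a closed subtorus, and the Minkowski sum of the finite set $\frac{1}{q}\Z^d \mod \Z^d$, the compact ball $\Ball_{V_0}(0,R) \mod \Z^d$, and finitely many subtori is compact, hence closed. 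Taking closures in the inclusion above produces the desired containment. There is no real obstacle in this deduction; all the analytic content has been packaged into Theorem~\ref{thm:finali} and Corollary~\ref{cr:qual}, and what remains is purely formal bookkeeping about how the ingredients $\frac{p}{q}$, $v_0$ and $w_0$ transform under $\Gamma$.
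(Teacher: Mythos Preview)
Your argument is correct and follows the route the paper intends: the corollary is stated in the paper as a direct consequence of Corollary~\ref{cr:qual}, without a written proof, and your deduction supplies exactly the missing details (construct $\mu$ with full support on $\Gamma$ and finite exponential moment, then read off the structure of the orbit from the second alternative of Corollary~\ref{cr:qual}).

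One small point to tighten: the corollary asserts that the displayed set is \emph{$\Gamma$-invariant}, not merely that it contains $\Gamma x_0$. Your choice of $R$ only guarantees that the $\Gamma$-orbit of the single vector $v_0$ stays in $\Ball_{V_0}(0,R)$, which is not the same as $\Gamma$-invariance of the ball. The clean fix is to use a $G$-invariant Euclidean norm on $V_0$ (which exists precisely because the image of $G$ in $\GL(V_0)$ is compact, as you already noted; the paper adopts this convention in Section~\ref{sec:granulation}). With that norm one may take $R = \norm{v_0}$, the ball is then genuinely $G$-invariant, and together with the obvious invariance of $\frac{1}{q}\Z^d$ and of $\bigcup_{\gamma \in G/G^\circ}\gamma W_0$ under $\Gamma$, the whole set is $\Gamma$-invariant as required.
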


The qualitative statement could also be reformulated more simply as follows.
\begin{coro}[Equidistribution]
Let $\mu$ be a probability measure on $\GL_d(\Z)$ having a finite exponential moment.
Denote by $\Gamma \subset \GL_d(\Z)$ the subgroup generated by $\mu$.
Assume that the Zariski closure of $\Gamma$ is semisimple. 
Then for any $x_0\in\Tbb^d$, either $\mu^{*n}*\delta_{x_0} \rightharpoonup^* \mes_{\Tbb^d}$ 
or $x_0$ is contained in a proper $\Gamma$-invariant closed subset.
\end{coro}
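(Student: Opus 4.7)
The statement is a geometric rephrasing of Corollary~\ref{cr:qual}. One direction is trivial: if $x_0\in F$ for some proper closed $\Gamma$-invariant $F\subset\Tbb^d$, then every $\mu^{*n}*\delta_{x_0}$ is supported on $F$, and so is any weak-$*$ limit of the sequence, which therefore cannot coincide with the fully-supported Haar measure $\mes_{\Tbb^d}$.

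For the converse, I would take $F:=\overline{\Gamma x_0}$, which is closed and $\Gamma$-invariant by construction, and use Corollary~\ref{cr:qual} to show it is proper. The qualitative statement writes $x_0\equiv p/q+v+w\pmod{\Z^d}$ with $p/q\in\Q^d$, $v\in V_0$, $w\in W_0$, and $W_0$ a proper rational $G^\circ$-invariant subspace. For any $\gamma\in\Gamma$ the three components remain structured: $\gamma(p/q)$ belongs to the finite set $\frac{1}{q}\Z^d/\Z^d$; $\gamma v$ lies in a fixed ball $\Ball_{V_0}(0,R)$, since by definition $V_0$ is the sum of compact factors and $G$ acts on it through a compact subgroup of $\GL(V_0)$; and $\gamma w$ lies in the finite union $\bigcup_{i=1}^{k}\gamma_iW_0$ of proper rational subspaces, using the $G^\circ$-invariance of $W_0$ together with the fact that $[\Gamma:\Gamma\cap G^\circ]<\infty$. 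Consequently
\[F\ \subset\ \frac{1}{q}\Z^d+\Ball_{V_0}(0,R)+\bigcup_{i=1}^{k}\gamma_iW_0 \mod \Z^d.\]

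The remaining step, and the only genuinely non-formal one, is to verify that this enclosing set is a proper subset of $\Tbb^d$. The case $V_0=\R^d$ is immediate: then all Lyapunov exponents vanish, so $G$ is compact, and $\Gamma\subset G\cap\GL_d(\Z)$ is a discrete subset of a compact set, hence finite; therefore $F=\Gamma x_0$ is itself finite and trivially proper. In the generic case $V_0\neq\R^d$, a direct measure argument suffices whenever $W_0+V_0\neq\R^d$, since the enclosing set is then contained in the bounded neighborhood of a proper subspace modulo $\Z^d$, which has Lebesgue measure zero in $\Tbb^d$. The delicate residual sub-case, in which $W_0$ and $V_0$ together span $\R^d$, I would handle by passing to the $\Gamma$-equivariant quotient torus $\Tbb^d/(W_0+\Z^d)$, where the $W_0$-part vanishes and one is reduced to showing that a bounded orbit under the compact-image $\Gamma$-action on the quotient cannot be dense; this is where I expect the main obstacle to lie, and where one may have to invoke semisimplicity of $G$ more substantively to rule out dense orbits of compact factors on the quotient.
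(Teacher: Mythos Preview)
Your overall strategy is exactly what the paper intends (it states this corollary as a ``reformulation'' of Corollary~\ref{cr:qual} without proof), and your reduction to the residual case $V_0+W_0=\R^d$ is correct. However, your sketch for that case does not quite work as written.

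First, the quotient $\Tbb^d/(W_0+\Z^d)$ is not $\Gamma$-equivariant: $W_0$ is only $G^\circ$-invariant, so $\Gamma$ does not act on this quotient. Second, even on a correct equivariant quotient, the principle ``a bounded orbit under a compact-image action cannot be dense'' is false on a torus --- every orbit on a torus is bounded, and irrational rotations give dense orbits. What you actually need is \emph{finiteness} of the image of $\Gamma$, not mere compactness.

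Here is the clean resolution. Because distinct $V_i$ share no irreducible $G^\circ$-summand, the algebra $E$ is block-diagonal with respect to $V=\bigoplus_i V_i$, and one checks that $V_0+W_0=\R^d$ holds precisely when $a_0$ lies in the annihilator of $V_1\oplus\dots\oplus V_r$. Whenever such a nonzero integer $a_0$ exists, the smallest rational subspace $U$ containing $V_1\oplus\dots\oplus V_r$ is proper (it sits inside the rational hyperplane $a_0^\perp$). This $U$ is $\Gamma$-invariant: for $\gamma\in\Gamma\subset\GL_d(\Z)$, the space $\gamma U$ is rational and contains $\gamma(V_1\oplus\dots\oplus V_r)=V_1\oplus\dots\oplus V_r$, hence contains $U$, and likewise for $\gamma^{-1}$. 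On $\R^d/U$ the group $G$ acts through a compact image (all non-compact factors are killed), while $\Gamma$ preserves the lattice $\Z^d/(U\cap\Z^d)$; thus the image of $\Gamma$ in $\GL(\R^d/U)$ is discrete inside a compact group, hence finite. Consequently every $\Gamma$-orbit in the quotient torus is finite, and $\overline{\Gamma x_0}$ lies in a finite union of cosets of the proper subtorus $U\bmod\Z^d$. Note that in this sub-case the conclusion holds for \emph{every} $x_0$, with no Fourier hypothesis needed.
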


A particularly simple case of the above results is when the group $\Gamma$ acts strongly irreducibly on $\Q^d$, that is, when $\Gamma$ preserves no nontrivial finite union of proper subspaces of $\Q^d$.
Then, for any $a_0\in \Z^d \setminus \{0\}$ and any $\gamma \in G$, one must have $a_0\gamma E=(\R^d)^*$, so we obtain a simpler equidistribution statement.

\begin{coro}[Equidistribution of $\Q^d$-irreducible random walks]
\label{thm:qirr}
Assume that $G$ is semisimple and acts strongly irreducibly on $\Q^d$.
Then for every $\lambda\in(0,1)$, there exist $C=C(\mu,\lambda)\geq 0$ such that the following holds.

Given $x_0\in\Tbb^d$, 
assume that for some $t\in(0,\frac{1}{2})$, $a_0\in\Z^d$, and $n \geq C\log\frac{\norm{a_0}}{t}$,
\[
\abs{\hhat{(\mu^{*n}*\delta_{x_0})}(a_0)} \geq t.
\]
Then there exists $v \in V_0$, $p \in \Z^d$ and $q\in\Z \setminus \{0\}$ such that
\(
\max(\norm{v}, \abs{q}) \leq \left(\frac{\norm{a_0}}{t}\right)^C
\)
and 
\[
\dtil\bigl(x_0-\frac{p}{q}-v, 0\bigr) \leq e^{-n\lambda}.
\]
In particular, if $x_0$ does not lie on a rational translate of the $V_0$ leaf in $\Tbb^d$, then $\mu^{*n}*\delta_{x_0}$ converges to $\mes_{\Tbb^d}$.
\end{coro}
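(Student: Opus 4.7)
My plan is to derive the corollary directly from Theorem~\ref{thm:finali}: the hypotheses coincide, and the conclusions differ only in the subspace against which the distance is measured, the main theorem bounding $\dtil(x_0 - p/q - v, W_0)$ with $W_0 = (a_0\gamma E)^\perp$ while here we want the distance to $0$. The whole task therefore reduces to showing that, under strong $\Q$-irreducibility, one has $W_0 = \{0\}$ for every $a_0 \in \Z^d\setminus\{0\}$ and every $\gamma \in G$; equivalently, $a_0\gamma E = (\R^d)^*$.

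I would first eliminate $\gamma$. Since $G^\circ$ is normal in $G$, conjugation by $\gamma$ preserves $G^\circ$, hence its $\R$-linear span $E$, giving $\gamma E = E\gamma$; consequently $a_0\gamma E = (a_0 E)\gamma$, and since $\gamma$ is invertible it suffices to show $a_0 E = (\R^d)^*$, or equivalently that $W := (a_0 E)^\perp \subset \R^d$ is trivial.

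Two properties of $W$ then drive the argument. First, $W$ is a left $E$-submodule of $\R^d$ by associativity of $E$ and the definition of the annihilator, hence $G^\circ$-invariant. Second, $W$ is defined over $\Q$: the Zariski closure $G$ of $\Gamma \subset \GL_d(\Z)$ is $\Q$-defined, hence so is $G^\circ$; moreover $\Gamma \cap G^\circ$ has finite index in $\Gamma$ and is Zariski dense in $G^\circ$ (otherwise $\Gamma$ would be contained in finitely many cosets of a proper closed subgroup of $G^\circ$, contradicting Zariski density of $\Gamma$ in $G$ together with connectedness of $G^\circ$); therefore $E = \Span(\Gamma \cap G^\circ)$ admits a basis of integer matrices, which makes $a_0 E$ and its annihilator $W$ rational subspaces.

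Finally, I would finish by contradiction: if $W \neq \{0\}$, its $\Gamma$-orbit is a finite family $\gamma_1 W, \ldots, \gamma_k W$ with $\gamma_i \in \Gamma \subset \GL_d(\Z)$, and each $\gamma_i W$ is a proper rational subspace (proper because $a_0 \neq 0$ forces $W \neq \R^d$; rational because $\gamma_i$ permutes $\Q^d$, so $(\gamma_i W)\cap\Q^d$ has $\Q$-dimension equal to $\dim_\R \gamma_i W$ and spans it). Their union is a $\Gamma$-invariant finite union of proper $\Q$-subspaces, contradicting strong $\Q$-irreducibility. Thus $W = \{0\}$ and Theorem~\ref{thm:finali} yields the quantitative conclusion; the final ``in particular'' clause then follows by the same Weyl-criterion argument that deduced Corollary~\ref{cr:qual} from Theorem~\ref{thm:finali}. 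I expect the only subtle step to be the $\Q$-rationality of $W$, which hinges on the Zariski density of $\Gamma \cap G^\circ$ in $G^\circ$; the remaining steps are essentially formal manipulations.
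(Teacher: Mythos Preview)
Your proposal is correct and follows exactly the approach the paper indicates: the paper simply asserts, in the sentence preceding the corollary, that under strong $\Q$-irreducibility one has $a_0\gamma E=(\R^d)^*$ for every nonzero $a_0\in\Z^d$ and $\gamma\in G$, so that $W_0=\{0\}$ and Theorem~\ref{thm:finali} yields the statement directly. Your argument supplies the details the paper leaves implicit --- reducing to $\gamma=1$ via normality of $G^\circ$, establishing $\Q$-rationality of $(a_0E)^\perp$ through Zariski density of $\Gamma\cap G^\circ$ in $G^\circ$, and concluding via the finite $\Gamma$-orbit of this rational $G^\circ$-invariant subspace --- and these details are all sound.
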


It was observed by Benoist and Quint \cite[Corollary~1.4]{bq2} that if $G$ is semisimple without compact factors and acts irreducibly on $\Q^d$, then $\mes_{\Tbb^d}$ is the only atom-free $\mu$-stationary probability measure on $\Tbb^d$.
By the results of \cite{bq3}, this implies that the Cesàro averages $\frac{1}{n}\sum_{k=0}^{n-1}\mu^{*k}*\delta_{x_0}$ converge to $\mes_{\Tbb^d}$.
The above corollary immediately shows that convergence also holds without the averaging process.
When $\mu$ is a symmetric probability measure on $\SL_d(\Z)$, a general result of Bénard~\cite[Theorem~1]{benard} implies this qualitative statement, but without the symmetry assumption the result seems to be new.

\begin{coro}
Assume that $G$ is semisimple without compact factors and acts strongly irreducibly on $\Q^d$.
Then, for every $x_0$ irrational in $\Tbb^d$, the sequence of measures $(\mu^{*n}*\delta_{x_0})_{n\geq 0}$ converges in law to $\mes_{\Tbb^d}$.
\end{coro}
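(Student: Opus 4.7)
The plan is to derive this corollary as an immediate specialization of the preceding Corollary~\ref{thm:qirr}. First I would unpack the role of $V_0$: by the setup preceding Theorem~\ref{thm:finali}, $V_0$ is characterized as the maximal sum of simple $G$-submodules of $\R^d$ on which $\mu$ has top Lyapunov exponent equal to zero, and Theorem~\ref{thm:finali} identifies this with the sum of all compact factors of $G$ inside $\R^d$. Hence, under the hypothesis that $G$ has no compact factors, one automatically has $V_0=\{0\}$.

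With $V_0 = \{0\}$, the ``$V_0$-leaf'' through any point of $\Tbb^d$ degenerates to that single point, so a rational translate of the $V_0$-leaf reduces to a rational point $\frac{p}{q}+\Z^d$. Therefore the hypothesis ``$x_0$ does not lie on a rational translate of the $V_0$-leaf'' appearing in Corollary~\ref{thm:qirr} becomes simply the condition $x_0\not\in\Q^d/\Z^d$, i.e.\ that $x_0$ be irrational. Applying Corollary~\ref{thm:qirr} to such an $x_0$ then directly yields $\mu^{*n}*\delta_{x_0} \rightharpoonup^* \mes_{\Tbb^d}$, which is the claim.

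Since this statement is obtained merely by unravelling the conclusion of Corollary~\ref{thm:qirr} in the no-compact-factors case, there is no genuine obstacle to clear: the entire mathematical content is already packaged inside Theorem~\ref{thm:finali} and its $\Q^d$-irreducible specialization. The only small point to be careful about is matching the definition of $V_0$ given via Lyapunov exponents with the geometric description in terms of compact factors, which has already been established in the preamble to Theorem~\ref{thm:finali}.
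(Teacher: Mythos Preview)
Your proposal is correct and matches the paper's approach: the paper states that this corollary follows immediately from Corollary~\ref{thm:qirr}, and you have correctly unpacked why---namely that the absence of compact factors forces $V_0=\{0\}$, so the exceptional set of ``rational translates of the $V_0$-leaf'' collapses to $\Q^d/\Z^d$.
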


One motivation to carry out the rather technical proof presented here is its application to the spectral gap property for subgroups of algebraic groups, modulo arbitrary integers.
Indeed, following a strategy of Bourgain and Varjú \cite{bv}, one can use Theorem~\ref{thm:finali} to answer a question of Salehi Golsefidy and Varjú \cite[Question~2]{SGV}.
A particular case of the problem was studied in \cite{HeSaxce_expansion}, and we hope to generalize those results in a forthcoming paper.

\subsection{Outline of the proof}
The paper is entirely devoted to the proof of Theorem~\ref{thm:finali}, for which we use the strategy introduced in \cite{BFLM}, and more precisely the variant used in \cite{HS2019} to avoid the proximality assumption.
Section~\ref{sec:sumprod} deals with discretized algebraic combinatorics in semisimple algebras: we prove some Fourier decay estimate for multiplicative convolutions of measures satisfying natural non-concentration conditions, Theorem~\ref{thm:fourier}, generalizing results of Bourgain \cite{Bourgain2010} for the real line.
The main input for our proof is a sum-product theorem for representations of real Lie groups \cite[Theorem~1.1]{HS2018}, which easily implies the discretized sum-product theorem in semisimple algebras; then we use some $L^2$-flattening lemma similar to the one used by Bourgain and Gamburd in their work on the spectral gap property.

After that, in order to apply the combinatorial results of the previous section to the random walk, we need to check that the measure $\mu^{*n}$ appropriately rescaled is not concentrated near proper affine subspaces of $E$, nor near singular elements; this is done in Section~\ref{sec:nonconcentration}.
Just as in \cite{HS2019}, the argument ultimately relies on the spectral gap property modulo primes obtained by Salehi Golsefidy and Varjú \cite{SGV}.
However, because the rescaling automorphism is no longer a homothety, the proof involves a detailed analysis of the behavior of the random walk with respect to a quasi-norm on the algebra $E$.
To help the reader understand the main ideas of the proof without having to go through all the technical details, we start with the simpler case where $E$ is simple; even in that case, the argument is different and simpler than the one presented in \cite{HS2019}, where similar estimates are needed.

In Section~\ref{sec:fourier}, we prove Theorem~\ref{thm:decaymun}, an important Fourier decay estimate for the law of the random walk.
This simply follows from a combination of the two previous sections when the group $G$ is connected, but becomes more complicated without this assumption.
We follow the argument used in \cite[Appendix B]{HLL2021}, with minor modifications.

Section~\ref{sec:wiener} makes the link between the random walk on $G$ and the random walk on $\Tbb^d$.
The Fourier decay obtained in the previous section shows that if $\mu^{*n}*\delta_{x_0}$ has one large Fourier coefficient, then reducing slightly the value of $n$, the measure $\mu^{*n}*\delta_{x_0}$ has many large Fourier coefficients.
Using a quantitative version of Wiener's lemma, one infers a first \enquote{granulation statement}: $\mu^{*n}*\delta_{x_0}$ is concentrated near a finite set of well-separated points in $\Tbb^d$.

To conclude the proof of Theorem~\ref{thm:finali}, we run backwards the random walk, starting from the granulation estimate mentioned above.
The argument uses in particular the diophantine properties of the random walk, and the exponential unstability of closed invariant subsets, obtained using a drift function, as in Eskin-Margulis \cite{em} or Benoist-Quint \cite{bq2}.
This is the content of Section~\ref{sec:granulation}.

\subsection{Concluding remarks}

\noindent\textit{Affine random walks.}
After some first results of J.-B. Boyer \cite{Boyer}, it was explained in \cite{HLL2020} how to obtain quantitative equidistribution of affine random walks on the torus, under the assumption that the action on $\R^d$ is strongly irreducible.
The arguments in that paper could be adapted to our setting.

\bigskip

\noindent\textit{More general homogeneous spaces.}
Benoist and Quint \cite{bq1,bq2,bq3} have obtained equidistribution results that are valid in the much more general setting of homogeneous spaces of Lie groups.
One drawback is that their convergence theorems are not quantitative, and only concern the Cesàro averages $\frac{1}{n}\sum_{k=0}^{n-1}\mu^{*n}*\delta_{x_0}$.

On this subject, the first author has obtained, in collaboration with Lakrec and Lindenstrauss, some partial results for affine random walks on nilmanifolds \cite{HLL2021}; these spaces may be seen as the simplest generalization of tori, but the analysis already becomes much more intricate.
Very recently, in collaboration with Bénard~\cite{benardhe}, using a new approach avoiding Fourier analysis, the first author has also been able to obtain results for random walks on finite-volume spaces of the form $G/\Lambda$, where $G$ is $\SO(2,1)$ or $\SO(3,1)$, and $\Lambda$ a lattice in $G$.

In a slightly different direction, W. Kim \cite{kim} studied effective equidistribution of expanding translates in the space of affine lattices.
Also in a different direction, Lindenstrauss and Mohammadi~\cite{LM}, Yang~\cite{yang}, and Lindenstrauss, Mohammadi and Wang~\cite{lmw} have studied effective density and equidistribution in some homogeneous spaces.
Although these equidistribution results do not deal with random walks, some of the techniques used there are similar enough to ours to be mentioned here.

\subsection{Notation} Here is a list of notation we use.
\begin{itemize}
\item $f \ll g$, $g \gg f$, $f = O(g)$, there exists a constant $C > 0$ such that $f \leq C g$.
\item $f \asymp g$ if $f \ll g$ and $g \ll f$.
\item $\Ball(x,r)$, the ball of center $x$ and radius $r$. 
\item $\Ball_V(\mybullet,\mybullet)$, ball in the ambient space $V$. 
\item $H^\circ$, the identity component with respect to the Zariski topology of the algebraic group $H$.
\item $V^*$, the space of linear forms on a linear space $V$.
\item $\lambda_1(\mu,V)$, the top Lyapunov exponent associated to the random walk on a Euclidean space $V$ defined by a probability measure $\mu$ supported on a group acting linearly on $V$.
\item $\mu*\nu$, multiplicative convolution.
\item $\mu^{*k} = \mu * \dotsm * \mu$, multiplicative convolution power.
\item $\mu \pp \nu$, additive convolution.
\item $\mu^{\pp k} = \mu \pp \dotsb \pp \mu$, additive convolution power.
\item $\mu \mm \nu$, the image measure of $\mu \otimes \nu$ under the map $(x,y) \mapsto x - y$.
\item $\1_A(x) = 1$ if $x \in A$, $\1_A(x) = 0$ otherwise. 
\item $\#A$, cardinality of a finite set $A$.  
\item $\abs{A}$, Lebesgue measure for subsets $A$ of an Euclidean space or a torus.  
\item $\qnorm{\mybullet}$, a quasi-norm
\item $\tilde{d}(\mybullet,\mybullet)$, a quasi-distance, usually associated to a quasi-norm.
\item $\tilde{\Ball}(\mybullet,\mybullet)$,  ball with respect to $\tilde{d}$.
\item $\Pbb[\mybullet]$ and $\Pbb[\mybullet\mid \mybullet]$, probability and conditional probability.
\item $f_*\mu$, image measure of $\mu$ under the map $f$.
\item $\Mat_d(\R)$, the space of $d \times d$ real matrices.
\item $\Pcal(X)$, the space of Borel probability measure on a topological space.
\item $\bracket{\mybullet,\mybullet}$, according to the context, the natural pairing $V^* \times V \to \R$ or the natural pairing $\Z^d \times \Tbb^d  \to \Tbb$. 
\end{itemize}

\section{Sum-product, \texorpdfstring{$L^2$}{L2}-flattening and Fourier decay}
\label{sec:sumprod}

In this section, we study multiplicative convolutions of measures on a semisimple associative algebra $E$.
Our goal is to derive Theorem~\ref{thm:fourier} below, which shows that under some natural non-concentration assumptions, such multiplicative convolutions admit a polynomial Fourier decay.
This generalizes results of Bourgain~\cite{Bourgain2010} for $E=\R$, of Li~\cite{Li2018} for $E=\R\oplus\dots\oplus\R$, and of \cite{HS2019} for a simple algebra $E$.

\bigskip

Let $E$ be a normed real algebra of finite dimension.
The determinant $\det_E(a)$ of an element $a\in E$ is simply defined as the determinant of the multiplication map $E\to E$, $x\mapsto ax$.
Given $\rho>0$, we let
\[
S_E(\rho) = \set{ x\in E}{\abs{\det\nolimits_E(x)} \leq\rho }.
\]
If $W\subset E$ is any subset, we let $W^{(\rho)}$ denote the $\rho$-neighborhood of $W$, defined by
\[
W^{(\rho)} = \set{x\in E}{\exists w\in W:\, \norm{x-w}<\rho}.
\]
The following definition summarizes the non-concentration conditions we shall need in order to prove some Fourier decay for multiplicative convolutions.

\begin{definition}[Non-concentration conditions]
Let $\eps > 0$, $\kappa > 0$, $\tau > 0$ be parameters.
We say a measure $\eta$ on $E$ satisfies $\NC_0(\eps,\kappa,\tau)$ at scale $\delta > 0$ if
\begin{enumerate}
\item $\Supp \eta \subset \Ball(0,\delta^{-\eps})$;
\item for every $x \in E$, $\eta(x + S_E(\delta^{\eps})) \leq \delta^{\tau}$;
\item for every $\rho \in[\delta,1]$ and every proper affine subspace $W \subset E$, $\eta(W^{(\rho)}) \leq \delta^{-\eps} \rho^\kappa$.
\end{enumerate}
We say that a measure $\eta$ on $E$ satisfies $\NC(\eps,\kappa,\tau)$ at scale $\delta > 0$ if it can be written as a sum of measures
\[
\eta = \eta_0 + \eta_1
\quad\mbox{with}\quad 
\left\{\begin{array}{l}
\eta_0\ \mbox{satisfying}\ \NC_0(\eps,\kappa,\tau)\\
\eta_1(E) \leq \delta^{\tau}.
\end{array}\right.
\]
\end{definition}

Given a finite measure $\mu$ on $E$, its Fourier transform $\hat{\mu}$ is the function on the dual space $E^*$ given by the expression
\[
\forall \xi \in E^*,\quad \hat{\mu}(\xi) = \int_E e^{2i\pi\bracket{\xi,x}}\dd\mu(x).
\]
If $\nu$ is another finite measure on $E$, the multiplicative convolution $\mu*\nu$ is defined as the image measure of $\mu\otimes\nu$ on $E\times E$ under the map $(x,y)\mapsto xy$.
It should not be confused with the additive convolution $\mu\pp\nu$, image of $\mu\otimes\nu$ under the map $(x,y)\mapsto x+y$.

\begin{thm}[Fourier decay of multiplicative convolutions]
\label{thm:fourier}
Let $E$ be a normed finite-dimensional semisimple algebra over $\R$. 
Given $\kappa > 0$, there exists $s = s(E,\kappa) \in \N$ and $\eps = \eps(E,\kappa) > 0$ such that for any parameter $\tau \in {(0, \eps \kappa)}$ the following holds for any scale $\delta > 0$ sufficiently small.

If $\eta_1, \dotsc, \eta_s$ are probability measures on $E$ satisfying $\NC(\eps,\kappa,\tau)$ at scale $\delta$,
then for all $\xi \in E^*$ with $\delta^{-1 + \eps} \leq \norm{\xi} \leq \delta^{-1 - \eps}$,
\[
\abs{(\eta_1 * \dotsm * \eta_s)^{\wedge}(\xi)} \leq \delta^{\eps \tau}.
\]
\end{thm}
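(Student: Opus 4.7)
The plan is to follow the Bourgain--Gamburd scheme adapted to semisimple algebras, as was done in \cite{HS2019} for the simple case: a discretized sum--product theorem on $E$ drives an $L^2$-flattening lemma for multiplicative convolutions, and a final Cauchy--Schwarz argument promotes flattening to pointwise Fourier decay. Throughout I write $\nu_k = \eta_1 * \dotsm * \eta_k$, and $\nu^\delta$ for the $\delta$-smoothing $\nu \pp (\abs{\Ball(0,\delta)}^{-1}\cdot \1_{\Ball(0,\delta)})$.

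The first ingredient is a discretized sum--product theorem on $E$: there exists $\eps_0 = \eps_0(E,\kappa) > 0$ such that any $\delta$-separated set $A \subset \Ball(0,\delta^{-\eps})$ satisfying a set-theoretic version of $\NC_0(\eps,\kappa,\tau)$ (in particular, avoiding the $\delta$-neighborhood of any proper affine subspace and of the singular locus $S_E(\delta^\eps)$) satisfies
\[
\max\bigl(\,\#(A+A)_\delta,\ \#(A\cdot A)_\delta\,\bigr)\ \geq\ \delta^{-\eps_0}\,\#A.
\]
This would be deduced from the sum--product theorem for representations of real Lie groups \cite[Theorem~1.1]{HS2018}, applied to the group of units of $E$ acting on $E$ by left multiplication, via the decomposition $E \simeq \prod_i \Mat_{d_i}(\mathbb{D}_i)$ with $\mathbb{D}_i \in \{\R,\C,\HH\}$ provided by semisimplicity. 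The role of the $S_E$-avoidance is precisely to guarantee non-degeneracy of multiplication on each simple factor.

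Next I would use this to establish an $L^2$-flattening lemma: for some $\alpha = \alpha(E,\kappa) > 0$, any probability measures $\rho,\sigma$ supported in $\Ball(0,\delta^{-\eps})$ with $\rho$ satisfying $\NC(\eps,\kappa,\tau)$ at scale $\delta$ obey
\[
\norm{(\rho*\sigma)^\delta}_2^2\ \leq\ \delta^{\alpha\tau}\,\norm{\sigma^\delta}_2^2\ +\ \delta^{-\dim E+\alpha\tau}.
\]
The argument is by contradiction: failure of this bound, combined with the multiplicative Balog--Szemerédi--Gowers theorem, extracts a sizeable $\delta$-set $A \subset \Supp\rho$ with simultaneously small multiplicative and additive doubling at scale $\delta$, contradicting the sum--product theorem above once $\eps \ll \eps_0$ and $\tau \ll \eps\kappa$. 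Iterating with $\sigma = \nu_{k-1}$ and $\rho = \eta_k$, after $s_0 = s_0(E,\kappa)$ steps one reaches $\norm{\nu_{s_0}^\delta}_2^2 \leq \delta^{-\dim E + \alpha'\tau}$.

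Finally, to pass to pointwise Fourier decay at $\xi$ with $\delta^{-1+\eps} \leq \norm{\xi} \leq \delta^{-1-\eps}$, I would exploit one extra convolution. Writing $y^\sharp : E^* \to E^*$ for the transpose of left multiplication by $y \in E$, the identity $\hhat{\nu_{s_0}*\eta}(\xi) = \int_E \hhat{\nu_{s_0}}(y^\sharp\xi)\,\dd\eta(y)$ followed by Cauchy--Schwarz gives
\[
\abs{\hhat{\nu_{s_0+1}}(\xi)}^2\ \leq\ \int_E \abs{\hhat{\nu_{s_0}}(y^\sharp\xi)}^2\,\dd\eta_{s_0+1}(y).
\]
The non-concentration of $\eta_{s_0+1}$ away from proper subspaces and from $S_E(\delta^\eps)$ implies that its pushforward under $y \mapsto y^\sharp\xi$ is, after $\delta$-smoothing, bounded in $L^\infty$ by $\delta^{-\dim E + \beta\tau}$ on the annulus $\norm{\zeta} \asymp \delta^{-1}$ for some $\beta > 0$; combining with Plancherel applied to $\hhat{\nu_{s_0}}$ and the flattening bound above yields $\abs{\hhat{\nu_s}(\xi)}^2 \leq \delta^{\eps\tau}$ with $s = s_0+1$, after possibly shrinking $\eps$. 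The main obstacle I anticipate is propagating the non-concentration conditions of $\NC_0$ through the iteration: in the semisimple setting, $S_E$ decomposes as a union of hypersurfaces, one per simple factor of $E$, and controlling non-concentration near each of them simultaneously---while keeping track of the quasi-norm structure on $E$ as a whole---is the most delicate step and dictates the permissible relative sizes of $\eps$, $\kappa$, and $\tau$.
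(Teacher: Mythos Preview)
Your overall architecture (discretized sum--product $\Rightarrow$ $L^2$-flattening $\Rightarrow$ pointwise Fourier decay) matches the paper's, and your final step is essentially the paper's Lemma~\ref{lm:L2Fourier}. But the flattening step as you state it has a genuine gap, and the paper's mechanism for it is different in an important way.

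Your flattening claim is $\norm{(\rho*\sigma)^\delta}_2^2 \leq \delta^{\alpha\tau}\norm{\sigma^\delta}_2^2 + \delta^{-\dim E+\alpha\tau}$, and you argue its failure plus multiplicative Balog--Szemer\'edi--Gowers produces a set $A$ with \emph{both} small additive and small multiplicative doubling. But failure of that inequality is a statement about \emph{multiplicative} energy between $\rho$ and $\sigma$; multiplicative BSG yields approximate multiplicative structure, not additive. There is no mechanism in your outline to force $\#(A+A)_\delta$ to be small, so you cannot feed into a sum--product contradiction. The paper avoids this by flattening a different quantity, namely $\norm{\eta*\eta*\eta \mm \eta*\eta*\eta}_{2,\delta}$ (Proposition~\ref{pr:l2flat}): the $\mm$ makes this an \emph{additive} energy, so ordinary (additive) BSG on the vector space $E$ applies (Lemma~\ref{lm:bsg}) and directly produces $A$ with $\Ncal(A - gA,\delta)$ small for many $g$. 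The two outer copies of $\eta$ provide the multiplicative input: the relevant $g$ live in the image of $\eta\otimes\eta$ in $E\otimes E^{\op}$ acting by two-sided multiplication, and the sum--product theorem used (Theorem~\ref{th:sumproduct}) is stated for $\Ncal(A+A,\delta)+\Ncal(A+fA,\delta)$ with $f\in E\otimes E^{\op}$, not for $\#(AA)_\delta$. The two-sided action is not incidental: on a simple factor $\Mat_n(\R)$ with $n\geq 2$, left multiplication alone is not irreducible, and the growth Lemma~\ref{lm:EEop1onA} genuinely needs $E\otimes E^{\op}$.

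Two further points. First, because the paper flattens $\eta_k^{*3}\mm\eta_k^{*3}$ rather than $\eta^{*k}$, it needs an unwinding step (Lemma~\ref{lm:eta3mmeta3}, based on the Cauchy--Schwarz identity $\abs{(\eta*\eta'*\eta'')^\wedge}^2 \leq (\eta*(\eta'\mm\eta')*\eta'')^\wedge$) to convert Fourier decay of the iterated construction back into decay of $\widehat{\eta^{*s}}$. Your direct iteration on $\nu_k$ would avoid this, but only if the flattening you claim actually held. Second, you do not address the case of distinct $\eta_1,\dotsc,\eta_s$; the paper first treats $\eta_1=\dots=\eta_s$ and then reduces to it by applying the equal-measure case to convex combinations $\eta_\lambda=\sum_i\lambda_i(\eta_i\mm\eta_i)$ and reading off the mixed convolutions as coefficients of a polynomial in $\lambda$.
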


For $E = \R$, this is due to Bourgain~\cite[Lemma 8.43]{Bourgain2010}. 
For algebras of the form $E = \R \oplus \dotsb \oplus \R$, this is due to Li~\cite[Theorem 1.1]{Li2018}.
We shall first prove this theorem when all $\eta_i$ are equal, i.e. $\eta_1 = \dots = \eta_s = \eta$ and then deduce the general statement from this particular case following the argument in \cite[Proof of Theorem B.3]{HLL2020}.
Alternatively, one could adapt the first part of the proof to handle directly the general case, but this would make notation cumbersome.

The proof we give for Theorem~\ref{thm:fourier} follows a strategy originating in the work of Bourgain, Glibichuk and Konyagin \cite{bgk} on exponential sums in finite fields: one deduces the bound on the exponential sum from a combinatorial \enquote{sum-product} statement, using an $L^2$-flattening statement.
In our case, the combinatorial input is a discretized sum-product theorem in semisimple algebras, which follows from a general sum-product statement for representations of real Lie groups obtained in \cite[Theorem~2.3]{HS2018}.

\subsection{Sum-product in semisimple algebras}

Sum-product estimates go back to the work of Erd\H{o}s and Szemerédi \cite{erdosszemeredi} who showed that there exists some positive constant $\eps$ such that for any subset $A$ of integers,
\[
\abs{A+A} + \abs{AA} \geq \abs{A}^{1+\eps}
\]
where $A+A$ and $AA$ denote respectively the sum-set and the product-set of $A$, defined by
\(
A+A = \{a+b\ ;\ a,b\in A\}
\)
and
\(
AA = \{ab\ ;\ a,b\in A\}
\).
In the following, we consider a normed semisimple algebra $E$ of finite dimension over $\R$, and our goal is to prove a similar statement for subsets $A\subset E$, with the cardinality replaced by the covering number $\Ncal(A,\delta)$ of $A$ at small scale $\delta>0$.
Recall that by definition, $\Ncal(A,\delta)$ is the minimal cardinality of a cover of $A$ by balls of radius $\delta$ in $E$.
In order to ensure that the covering number of $A$ at scale $\delta$ grows under addition or multiplication, one of course has to assume that $A$ is not essentially equal to a ball in some subalgebra of $E$.
We make a stronger assumption and require that $A$ is not concentrated near any proper affine subspace of $E$.

\begin{definition}[Affine non-concentration]
Let $V$ be a Euclidean space, and $\eps,\kappa > 0$ two parameters. 
We say a subset $A \subset V$ satisfies $\ANC(\eps,\kappa)$ at scale $\delta$ if 
\begin{enumerate}
\item $A \subset \Ball(0,\delta^{-\eps})$ and
\item for every $\rho \geq \delta$ and every proper affine subspace $W \subset V$, $\Ncal(A\cap W^{(\rho)},\delta) \leq \delta^{-\eps} \rho^\kappa \Ncal(A,\delta)$.
\end{enumerate}
\end{definition}

Essentially, we want to show that if $E$ is a semisimple algebra, then for every $\kappa>0$, there exists $\eps>0$ such that for any set $A\subset \Ball_E(0,1)$ satisfying $\ANC(\eps,\kappa)$ and $\delta^{-\kappa}\leq \Ncal(A,\delta)\leq\delta^{-\dim E+\kappa}$, one has $\Ncal(A+A,\delta)+\Ncal(AAA,\delta)\geq\delta^{-\eps}\Ncal(A,\delta)$.
We shall prove a slightly more technical growth statement, involving the tensor algebra $E\otimes E^\op$, where $E^\op$ denotes the algebra with the same linear structure as $E$ but with multiplication $(a,b) \mapsto ba$.
Note that the algebra $E \otimes E^\op$ acts naturally on $E$ by 
\[
\forall a,x \in E,\, \forall b\in E^\op, \quad (a\otimes b)x = axb.
\]

\begin{thm}[Sum-product in semisimple algebras]
\label{th:sumproduct}
Let $E$ be a finite-dimensional real semisimple algebra.
Given $\kappa > 0$, there exists $\eps = \eps(E,\kappa)$ such that the following holds for all $\delta > 0$ sufficiently small.
\begin{enumerate}
\item Let $A$ be a subset of $E$ satisfying $\ANC(\eps,\kappa)$ at scale $\delta$ and 
\item $\delta^{-\kappa} \leq \Ncal(A,\delta) \leq \delta^{- \dim E + \kappa}$.
\item Let $B \subset E \otimes E^\op$ be a subset satisfying $\ANC(\eps,\kappa)$ at scale $\delta$.
\end{enumerate}
Then there exists $f \in B$ such that 
\[
\Ncal(A + A,\delta) + \Ncal(A + fA,\delta) \geq \delta^{-\eps} \Ncal(A,\delta).
\]
\end{thm}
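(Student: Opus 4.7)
The strategy is to deduce Theorem~\ref{th:sumproduct} directly from the general sum-product statement for representations of real Lie groups of \cite[Theorem~2.3]{HS2018}, as already signaled in the introduction. The setup is the natural one: since $E$ is a real semisimple algebra, by Wedderburn--Artin it decomposes as a product of matrix algebras $\prod_i M_{n_i}(D_i)$ over the real division algebras $\R$, $\C$, $\HH$. The algebra $E \otimes E^\op$ acts on $E$ through $(a \otimes b)\cdot x = axb$, and each simple factor of $E$ is an irreducible $(E \otimes E^\op)$-submodule. In particular, $E$ becomes a faithful semisimple representation of the Lie group of invertible elements of $E \otimes E^\op$, and the algebra generated by this group action is all of $E \otimes E^\op$. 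This is precisely the framework of \cite[Theorem~2.3]{HS2018}.

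I would then verify the hypotheses. The ANC condition on $A \subset E$ provides the required affine non-concentration of $A$ in the representation space $V = E$, and the ANC condition on $B \subset E \otimes E^\op$ supplies the corresponding non-concentration of $B$ in the algebra of operators. The two-sided bound $\delta^{-\kappa} \leq \Ncal(A,\delta) \leq \delta^{-\dim E + \kappa}$ excludes the two trivial regimes, namely when $A$ is essentially a single small ball and when $A$ essentially fills the ambient space (in which case $A + A$ and $A + fA$ would have no room to grow). Invoking \cite[Theorem~2.3]{HS2018} then yields the expected dichotomy: either $\Ncal(A+A,\delta) \geq \delta^{-\eps}\Ncal(A,\delta)$, or there exists $f \in B$ with $\Ncal(A+fA,\delta) \geq \delta^{-\eps}\Ncal(A,\delta)$. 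In either case, the conclusion of the theorem follows.

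\textbf{The main obstacle} will be to align the precise formulation of the hypotheses of \cite[Theorem~2.3]{HS2018}---stated for Lie group actions, typically with non-concentration phrased in the Lie algebra---with the purely algebraic formulation used here for $B \subset E \otimes E^\op$. The most subtle point is that the ANC condition on $B$ does not a priori exclude concentration of $B$ near the singular locus of $E \otimes E^\op$, a proper algebraic hypersurface. I would handle this either by observing that \cite[Theorem~2.3]{HS2018} already tolerates singular operators $f$ (what matters for the growth argument is that $x \mapsto fx$ is not essentially trivial on $A$, a property ensured by the ANC condition on $B$ together with the affine non-concentration of $A$, since the kernel of a nonzero such operator lies in a proper subspace), or, if a non-singular element of $B$ is genuinely needed, by extracting a non-singular subset of $B$ using that the singular hypersurface is proper, so that the ANC hypothesis transfers to the restriction at the cost of an acceptable loss in $\eps$. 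Apart from this translation step, the reduction is essentially formal.
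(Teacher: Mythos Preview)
Your proposal has a genuine gap at the step ``Invoking \cite[Theorem~2.3]{HS2018} then yields the expected dichotomy''. As one sees from the way the paper uses it (proof of Proposition~\ref{pr:generateE}), \cite[Theorem~2.3]{HS2018} is a \emph{bounded generation} statement: under non-concentration hypotheses on a set $A_0$ near the identity in the acting group and a set $A_1$ in the representation space, its conclusion is that $\bracket{A_0,A_1}_s$ contains a ball $\Ball(0,\delta^{\eps_0})$ up to error $\delta$. It does \emph{not} output a dichotomy of the form ``either $\Ncal(A+A,\delta)$ grows or $\Ncal(A+fA,\delta)$ grows for some single $f\in B$''. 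Getting from bounded generation to a single-operator growth statement is precisely where the work lies, and your plan skips it.

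The paper supplies this missing bridge with two ingredients that do not appear in your outline. First, it applies bounded generation (Proposition~\ref{pr:generateE}) not to the pair $(B,A)$ acting on $E$, but to $B$ alone, viewed as a subset of the semisimple algebra $E\otimes E^\op$; this yields $\Ball_{E\otimes E^\op}(0,\delta^{\eps_0})\subset \bracket{B}_s+\Ball(0,\delta)$. Second, assuming for contradiction that $B\cup\{1\}\subset R_\delta(A,\delta^{-\eps})$ where $R_\delta(A,K)=\{f:\Ncal(A+fA,\delta)\le K\Ncal(A,\delta)\}$, it uses the Pl\"unnecke--Ruzsa inequality to show that $R_\delta(A,K)$ is approximately closed under sums and products, hence absorbs $\bracket{B}_s$ and then the whole unit ball of $E\otimes E^\op$. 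The contradiction is furnished by a separate energy computation (Lemma~\ref{lm:EEop1onA}): for a \emph{random} $f\in\Ball_{E\otimes E^\op}(0,1)$ one has $\Ncal(A+fA,\delta)>\delta^{-\eps}\Ncal(A,\delta)$, and this is exactly where the upper bound $\Ncal(A,\delta)\le\delta^{-\dim E+\kappa}$ is used. Your plan mentions neither the Pl\"unnecke--Ruzsa step nor this random-$f$ lemma, and without them one cannot pass from bounded generation to growth of $A+fA$ for one $f$.

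Incidentally, the ``main obstacle'' you single out---possible concentration of $B$ near singular elements---is a non-issue in the paper's route: $B$ enters only through Proposition~\ref{pr:generateE} applied inside $E\otimes E^\op$, where invertibility plays no role.
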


The theorem above is almost equivalent to the fact that one can obtain from $A$ a small ball in $E$ using a bounded number of sums and products.
This is the content of the proposition below, which we obtain as a simple application of \cite[Theorem~2.3]{HS2018}.
For a subset $A$ in an algebra $E$ and $s\in\N^*$, we let $\bracket{A}_s$ denote the set of elements in $E$ that can be obtained as sums of at most $s$ products of at most $s$ elements of $A$ or $-A$.

\begin{prop}[Bounded generation in semisimple algebras]
\label{pr:generateE}
Let $E$ be a finite-dimensional real semisimple algebra.
Given $\kappa > 0$ and $\eps_0>0$, there exists $\eps = \eps(E,\kappa,\eps_0) > 0$ and $s = s(E,\kappa,\eps_0) \geq 1$ such that the following holds for all $\delta > 0$ sufficiently small. 
If $A \subset \Ball(0,\delta^{-\eps})$ satisfies $\ANC(\eps,\kappa)$ at scale $\delta$ in $E$, then
\[\Ball(0,\delta^{\eps_0}) \subset \bracket{A}_s + \Ball(0,\delta).\]
\end{prop}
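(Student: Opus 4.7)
The plan is to deduce Proposition~\ref{pr:generateE} directly from [HS2018, Theorem~2.3], a sum-product / bounded-generation statement for linear representations of real reductive Lie groups. The strategy is to view $E$ itself as a representation of a suitable Lie group built from its own multiplicative structure, and then check that both the ANC hypothesis on $A$ and the desired conclusion translate correctly between the two frameworks.

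More concretely, view $E$ as a module over $H = (E \otimes E^{\op})^{\times}$ via the two-sided action $(a \otimes b) \cdot x = axb$. Since $E$ is semisimple, $H$ is a real reductive algebraic group, and its action on $E$ decomposes as a direct sum of faithful irreducible representations, one for each simple factor $E_i$ of $E$ (each being the standard two-sided $\GL_n(K)\times\GL_n(K)$-module on $\Mat_n(K)$ for $K \in \{\R,\C,\HH\}$). This places us in the setting covered by [HS2018, Theorem~2.3]. The essential combinatorial observation linking the two settings is that an iterated product $a_1 a_2 \cdots a_k$ in $E$ equals the $H$-translate $(a_1 \otimes 1)(a_2 \otimes 1) \cdots (a_{k-1} \otimes 1) \cdot a_k$. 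Hence any element produced by [HS2018, Theorem~2.3] --- a sum of $H$-translates of elements of $A$ by group elements themselves generated through bounded sums and products of elements of $A$ under the embedding $a \mapsto a \otimes 1$ --- lies in $\bracket{A}_{Cs}$ for some constant $C$ depending only on $E$.

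The main obstacle, and the only point requiring real verification, is that the non-concentration hypotheses of [HS2018, Theorem~2.3] are met. For the vector set $A \subset V = E$, this is immediate from the $\ANC(\eps,\kappa)$ hypothesis. For the subset of $H$ one generates from $A$, one needs to show that $\{a \otimes 1 : a \in A\}$ is not essentially contained in any proper closed algebraic subgroup of $H^{\circ}$. Here the semisimplicity of $E$ is decisive: any proper closed algebraic subgroup of $H^{\circ}$ preserves some proper subspace of $E$ (because the action is completely reducible with faithful irreducible pieces), so concentration of $\{a \otimes 1 : a \in A\}$ near such a subgroup would force $A$ itself to concentrate near a proper affine subspace of $E$, contradicting ANC. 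Once this verification is carried out --- with a quantitative accounting showing that the loss in the $\eps$-parameter is tolerable --- the conclusion $\Ball(0,\delta^{\eps_0}) \subset \bracket{A}_s + \Ball(0,\delta)$ follows directly from [HS2018, Theorem~2.3], with $\eps$ and $s$ depending only on $E$, $\kappa$ and $\eps_0$.
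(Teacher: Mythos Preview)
Your approach --- passing to the two-sided action of $H = (E \otimes E^{\op})^\times$ on $E$ and embedding $A$ via $a \mapsto a \otimes 1$ --- has a genuine gap at the verification of condition (ii) of \cite[Theorem~2.3]{HS2018}. That condition asks, for every subspace $W \subset E$ that is \emph{not} $H$-invariant, for an element of your group-subset bounded away from $\Stab_H(W)^\circ$. But the $H$-invariant subspaces of $E$ are exactly the two-sided ideals, so a proper \emph{left} ideal $W$ (for instance a column space inside a single simple factor $\Mat_n(\R)$, $n\geq 2$) is not $H$-invariant, yet satisfies $(a \otimes 1)\cdot W = aW \subset W$ for every $a \in E$. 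Hence $E^\times \otimes 1 \subset \Stab_H(W)$, and your entire set $\{a \otimes 1 : a \in A\}$ lies inside the stabilizer; no amount of ANC on $A$ can produce an element $\delta^\eps$-far from it. Your sentence ``concentration of $\{a \otimes 1 : a \in A\}$ near such a subgroup would force $A$ itself to concentrate near a proper affine subspace of $E$'' is precisely where the argument breaks: the embedding $a \mapsto a \otimes 1$ already lands entirely in a proper subgroup of $H$, so the implication is vacuous.

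The paper sidesteps this by using the smaller group $G = E^\times$ acting on $E$ by \emph{left} multiplication alone. Then the $G$-invariant subspaces are exactly the left ideals, and these are handled by condition (iii): ANC gives elements of $A_1 = A$ far from any proper left ideal, which is a proper linear subspace. For a subspace $W$ that is not a left ideal, the algebra generated by $\Stab_G(W)$ is a proper subalgebra of $E$, hence a proper linear subspace, and ANC on (a translate of) $A$ furnishes the escape required by condition (ii). One further step you omit is the pigeonhole argument producing a translate $A_0 = (A - a)\cap U$ inside a fixed neighbourhood $U$ of the identity; this is needed because \cite[Theorem~2.3]{HS2018} requires $A_0 \subset U$.
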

\begin{proof}
We consider the group $G=E^\times$ of invertible elements in $E$ and its action by multiplication on $V=E$.
By semisimplicity, we may decompose $E$ into a sum of non-trivial irreducible representations $E=\oplus_iV_i$.
Let $\pi_i\colon G\to\GL(V_i)$ denote the representation of $G$ on $V_i$.
By \cite[Theorem~2.3]{HS2018}, there is a neighbourhood $U$ of the identity in $G$ and constants $\eps = \eps(E,\kappa,\eps_0) > 0$ and $s = s(E,\kappa,\eps) \geq 1$ such that the following holds for all $\delta > 0$ sufficiently small. 
Let $A_0$ be a subset of $U$ and $A_1$ a subset of $\Ball_V(0,1)$.
Assume
\begin{enumerate}
\item for all $i = 1,\dotsc,k$, for all $\rho \geq \delta$, $\Ncal(\pi_i(A_0),\rho) \geq \delta^\eps \rho^{-\kappa}$,
\item for any linear subspace $W \subset V$ which is not $G$-invariant, there is $a \in A_0$ such that $d(a, \Stab_G(W)^\circ) \geq \delta^\eps$,
\item for any proper $G$-invariant linear subspace $W \subset V$, there is $a \in A_1$ such that $d(a,W) \geq \delta^\eps$. 
\end{enumerate}
Then 
\[
\Ball_V(0,\delta^{\eps_0}) \subset \bracket{A_0,A_1}_s + \Ball(0,\delta).
\]
Here, $\bracket{A_0,A_1}_s$ denotes the set of elements in $V$ that can be obtained as sums of at most $s$ products of at most $s$ elements of $A_0$ and elements  of $A_1 \cup (-A_1)$.
In the argument below, we apply this result with $\eps$ replaced by $O(\eps/\kappa)$.

Our set $A$ is not necessarily contained in the neighborhood $U$, but we may cover $A$ by translates of $U$ in $E$, and then, by the pigeonhole principle, there is $a \in A$ such that $A_0 = (A - a) \cap U$ satisfies
\[
\Ncal(A_0,\delta) \gg_U \delta^{O(\eps)} \Ncal(A,\delta).
\]
This set $A_0$ satisfies $\ANC(O(\eps),\kappa)$ at scale $\delta$.
This non-concentration condition applied to affine suspaces parallel to $\oplus_{j\neq i}V_j$ shows that the first condition above is verified.
Moreover, if $W\subset E$ is not $G$-invariant, then the algebra generated by $\Stab_G(W)$ is a proper subalgebra of $E$.
In particular, it is included in a proper affine subspace of $E$, and by $\ANC(O(\eps),\kappa)$, there must exist $a$ in $A_0$ such that $d(a,\Stab_G(W))\geq\delta^{O(\eps/\kappa)}$; so the second condition is also satisfied.
To conclude, take $A_1=A_0$, which satisfies the third condition with $\eps$ replaced by $O(\eps/\kappa)$.
\end{proof}

In short, Theorem~\ref{th:sumproduct} will follow from Proposition~\ref{pr:generateE} applied to the set $B$ in the tensor algebra $E\otimes E^\op$, and from the Plünnecke-Ruzsa inequality.

\begin{proof}[Proof of Theorem~\ref{th:sumproduct}]
For $K \geq 1$, define
\[
R_\delta(A,K) = \setbig{f \in E\otimes E^\op}{ \Ncal(A + f A, \delta) \leq K \Ncal(A,\delta)}.
\]
Let us show that $R_\delta(A,K)$ is almost stable under addition and multiplication.
By Ruzsa's covering lemma, if $f\in R_\delta(A,K)$, there exists a set $X_f$ such that $\Ncal(X_f,\delta)=O(K)$ and
\[
fA \subset A-A + X_f.
\]
Therefore, for $f_1,f_2$ in $\R_\delta(A,K)$, one has
\[
A+(f_1+f_2)A \subset A+f_1A+f_2A \subset 3A-2A + X_{f_1} + X_{f_2}.
\]
With the Plünnecke-Ruzsa inequality, this yields $\Ncal(A+(f_1+f_2)A,\delta)\leq K^{O(1)}\Ncal(A,\delta)$, i.e. $f_1+f_2$ is in $R_\delta(A,K^{O(1)})$.
Similarly, $f_1f_2\in R_\delta(A,K^{O(1)})$.
By induction, this implies that for $s\in\N$,
\[
\bracket{R_\delta(A,K)}_s + \Ball_{E\otimes E^\op}(0,\delta) \subset R_\delta(A,K^{O_s(1)}).
\] 
Now assume for a contradiction that $B \cup \{1\} \subset R_\delta(A,\delta^{-\eps})$.
Since $E$ is a semisimple algebra, $E \otimes E^\op$ is also one.
Thus, by Proposition~\ref{pr:generateE} applied to the set $B\subset E \otimes E^\op$, for any $\eps_0 > 0$, there is $s=s(E,\kappa,\eps_0) \geq 1$ such that
\[
\Ball_{E\otimes E^\op}(0,\delta^{\eps_0}) \subset \bracket{B}_s + \Ball(0,\delta)
\]
and therefore,
\[
\Ball_{E\otimes E^\op}(0,\delta^{\eps_0}) \subset \bracket{B}_s + \Ball(0,\delta) \subset R_\delta(A,\delta^{-O_s(\eps)}).
\] 
In particular, $\delta^{\eps_0} \in R_\delta(A,\delta^{-O_s(\eps)})$.
This certainly implies $\delta^{-\eps_0} \in R_\delta(A,\delta^{-O_s(\eps_0 + \eps)})$ and then, using once more stability of $R_\delta(A,K)$ under product,
\[
\Ball_{E\otimes E^\op}(0,1) \subset R_\delta(A,\delta^{-O_s(\eps_0 + \eps)}).
\]
If $\eps_0$ and $\eps$ are chosen small enough, this contradicts Lemma~\ref{lm:EEop1onA} below.
\end{proof}

We are left to show the next lemma, stating that if $A$ has $\ANC(\eps,\kappa)$ at scale $\delta$, then $\Ball_{E\otimes E^\op}(0,1)$ is not contained in $R_\delta(A,\delta^{-\eps})$.
\begin{lem}
\label{lm:EEop1onA}
Let $E= E_1 \oplus \dots \oplus E_r$ be a finite-dimensional real semisimple algebra decomposed as a direct sum of minimal two-sided ideals.
Write $\pi_j \colon E \to E_j$ for the corresponding projections.

Given $\kappa > 0$, there exists $\eps = \eps(E,\kappa) > 0$ such that the following holds for all $\delta>0$ sufficiently small.
Let $A \subset \Ball(0,\delta^{-\eps})$ be a subset of E. 
Assume
\begin{enumerate}
\item $\Ncal(A,\delta) \leq \delta^{-\dim E + \kappa}$
\item for each $j = 1,\dotsc, r$, $\max_{x\in E_j} \Ncal( A \cap \pi_j^{-1}(\Ball_{E_j}(x,\rho)),\delta) \leq \rho^\kappa \Ncal(A,\delta)$,
where $\rho=\delta^{\frac{\kappa}{\kappa+\dim E}}$.
\end{enumerate}
Then there exists $f \in \Ball_{E\otimes E^\op}(0,1)$ such that 
\[
\Ncal(A + fA, \delta) > \delta^{-\eps} \Ncal(A,\delta).
\]
\end{lem}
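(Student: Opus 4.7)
The plan is to argue by contradiction via an energy / second-moment estimate. Suppose that $\Ncal(A+fA,\delta)\leq \delta^{-\eps}N$ holds for \emph{every} $f\in\Ball_{E\otimes E^\op}(0,1)$, where $N=\Ncal(A,\delta)$; we will derive a contradiction with hypothesis~(1). Replacing $A$ by a maximal $\delta$-separated subset of size $\asymp N$, Cauchy--Schwarz gives, for every such $f$, the additive energy bound
\[
E^{+}(A,fA) := \#\{(a_1,a_2,a_3,a_4)\in A^4 : a_1+fa_2=a_3+fa_4 \bmod \delta\} \geq \tfrac{N^{4}}{\Ncal(A+fA,\delta)} \geq \delta^{\eps}N^{3}.
\]
Averaging this against the normalized Lebesgue measure $m$ on $\Ball_{E\otimes E^\op}(0,1)$ and inverting the order of summation yields
\[
\delta^{\eps}N^{3} \leq \sum_{(a_1,a_2,a_3,a_4)\in A^4} m\bigl\{f\in \Ball_{E\otimes E^\op}(0,1) : f(a_2-a_4)=a_3-a_1 \bmod \delta\bigr\}.
\]

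The heart of the plan is to split the right-hand side according to the size of $v=a_2-a_4$, with a parameter $\eta>0$ to be optimized. The diagonal $a_2=a_4$ contributes at most $N^2$. For the \emph{regular} range $\|\pi_j(v)\|\geq \eta$ for every $j$, the linear map $\Phi_v\colon E\otimes E^\op\to E$, $f\mapsto fv$, splits as $\bigoplus_j \Phi^{(j)}_{\pi_j(v)}$; an explicit singular-value computation in each simple factor $E_j=\Mat_{n_j}(D_j)$, $D_j\in\{\R,\C,\HH\}$, shows that all non-zero singular values of $\Phi^{(j)}_a$ are comparable to $\|a\|$. Hence every non-zero singular value of $\Phi_v$ is $\gg \eta$, the $m$-measure of the $\delta$-preimage of any point under $\Phi_v$ is $\lesssim (\delta/\eta)^{\dim E}$, and the regular contribution is at most $N^4(\delta/\eta)^{\dim E}$. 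For the \emph{singular} range $\|\pi_j(v)\|<\eta$ for some $j$, non-concentration hypothesis~(2) with $x=\pi_j(a_2)$ and radius $\eta$ bounds the number of such pairs $(a_2,a_4)$ by $r\eta^\kappa N^2$; a Fubini estimate on the inner sum (for each fixed $f$, at most $N$ pairs $(a_1,a_3)$ have $a_1-a_3$ equal to a given value) contributes at most $N$ per pair, for a total of $r\eta^\kappa N^3$.

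Combining these three contributions yields
\[
\delta^{\eps}N^{3}\leq N^{2}+N^{4}(\delta/\eta)^{\dim E}+r\eta^{\kappa}N^{3}.
\]
Choosing $\eta$ in the range $\delta^{(\kappa-\eps)/\dim E}\lesssim \eta\lesssim \delta^{\eps/\kappa}$ -- which is non-empty as soon as $\eps\leq \kappa^{2}/(\kappa+\dim E)$ -- makes each of the last two terms absorbed into $\tfrac{1}{3}\delta^{\eps}N^{3}$, so that the surviving inequality is $\delta^{\eps}N^{3}\lesssim N^{2}$, i.e.\ $N\lesssim \delta^{-\eps}$. Hypothesis~(2) applied with $\rho=\delta$ and $x=\pi_j(a_0)$ for an arbitrary $a_0\in A$ also forces the matching lower bound $N\geq \delta^{-\kappa}$. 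Thus $\delta^{-\kappa}\leq N\lesssim \delta^{-\eps}$, which is impossible once $\eps<\kappa$ and $\delta$ is small enough, contradicting our assumption and proving the lemma.

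The main obstacle I anticipate is the singular-value analysis in the regular range: one must verify \emph{uniformly} across the three types of simple factors $\Mat_{n_j}(D_j)$ that $\Phi^{(j)}_a$ has every non-zero singular value $\asymp \|a\|$, and then carefully track how the exponent $\dim E$ propagates through the balancing argument to yield the quantitative threshold $\eps<\kappa^2/(\kappa+\dim E)$ on the parameter $\eps=\eps(E,\kappa)$ claimed in the statement.
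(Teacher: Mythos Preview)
Your argument is correct and is essentially the paper's proof: both bound the expected additive energy $\Ebb_f[\Ecal_\delta]$ of $(x,y)\mapsto x+fy$ over random $f$ by splitting on whether $\min_j\|\pi_j(a_2-a_4)\|\geq\eta$, then balance the regular contribution $N^4(\delta/\eta)^{\dim E}$ against the singular contribution $\eta^\kappa N^3$ with the same optimal choice $\eta=\delta^{\kappa/(\dim E+\kappa)}$. The paper frames this via Jensen's inequality $\Ebb[\Ncal(A+fA,\delta)]\geq N^4/\Ebb[\Ecal_\delta]$ rather than by contradiction, and samples $f=\sum_j f_j$ with each $f_j$ uniform on $\Ball_{E_j\otimes E_j^{\op}}(0,1)$ instead of $f$ uniform on all of $\Ball_{E\otimes E^{\op}}(0,1)$; since the cross-terms $E_i\otimes E_j^{\op}$ with $i\neq j$ act trivially on $E$, this difference is immaterial.

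On your anticipated obstacle: no case-by-case computation over $D_j\in\{\R,\C,\HH\}$ is needed. Simplicity of $E_j$ alone implies that $E_j$ is an irreducible $E_j\otimes E_j^{\op}$-module, so $\Phi^{(j)}_a$ is surjective for every nonzero $a\in E_j$; combined with the homogeneity $\Phi^{(j)}_{\lambda a}=\lambda\,\Phi^{(j)}_a$ and compactness of the unit sphere, this already gives that the smallest nonzero singular value of $\Phi^{(j)}_a$ is $\gg\|a\|$, which is exactly the measure bound $\ll(\delta/\eta)^{\dim E}$ that you need.
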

\begin{proof}
The image of $E \otimes E^\op$ in $\End(E)$ is equal to the image of $\bigoplus_{j = 1}^r E_j \otimes E_j^\op$. 
Let $f_j$, $j=1,\dots,r$ be a family of jointly independent random elements of $\Ball_{E_j\otimes E_j^\op}(0,1)$ distributed according to the Lebesgue measure on $E_j \otimes E_j^\op$, and set
\[
f = f_1 + \dots + f_r
\]
regarded as a random element of $\End(E)$.
In the following argument, probabilities and expectations are taken with respect to these random variables.
For each~$j$, since the algebra $E_j$ is simple, the action of $E_j \otimes E_j^\op$ on $E_j$ is irreducible. 
Hence $E_j \otimes E_j^\op(y) = E_j$ for any non-zero $y \in E_j$ and consequently,
\begin{equation}
\label{eq:fiymoinsx}
\forall \delta > 0,\, \forall x, y \in E_j,\quad \Probbig{ \norm{f_j(y) - x} \leq \delta} \ll \delta^{\dim E_j} \norm{y}^{-\dim E_j}.
\end{equation}

Consider the map
\[
\begin{array}{llcl}
\phi \colon & A \times A & \to & E\\
& (x,y) & \mapsto & x + fy
\end{array}
\]
The energy of the map $\phi$ at scale $\delta > 0$ is defined as
\[
\Ecal_\delta(\phi,A\times A)  = \Ncal\bigl( \set{(a,a',b,b') \in A \times A\times A\times A\ }{\norm{\phi(a,b) - \phi(a',b')} \leq \delta}, \delta\bigr).
\]
By the Cauchy-Schwarz inequality --- see also \cite[Lemma 12(i)]{He2016},
\[
\Ncal(\phi(A\times A),\delta) = \Ncal(A + fA,\delta) \geq \frac{\Ncal(A,\delta)^4}{\Ecal_\delta(\phi,A \times A)}.
\]
Taking expectations and applying Jensen's inequality, we find
\begin{equation}
\label{eq:JensenEnergy}
\Ebb\bigl[\Ncal(A + fA,\delta) \bigr] \geq \frac{\Ncal(A,\delta)^4}{\Ebb\bigl[\Ecal_\delta(\phi,A \times A) \bigr]}
\end{equation}
so it suffices to bound $\Ebb\bigl[\Ecal_\delta(\phi,A \times A) \bigr]$ from above.

For that, let $\tilde{A}$ be a maximal $\delta$-separated subset of $A$.
By \cite[Lemma~12(ii)]{He2016},
\[
\Ebb\bigl[\Ecal_\delta(\phi,A \times A)\bigr] \leq \sum_{x,y,x',y' \in \tilde{A}} \Probbig{ f(y'-y) \in \Ball(x - x',5\delta)}.
\]
Let $\rho = \delta^{\frac{\kappa}{\dim E+ \kappa}}$. 
We split the sum into two parts according to whether 
\[
\forall j = 1,\dotsc, r,\quad \norm{\pi_j(y' - y)} \geq \rho.
\] 
If this is the case, then \eqref{eq:fiymoinsx} implies 
\[
\Probbig{ f(y'-y) \in \Ball(x - x',5\delta)} \ll \delta^{\dim E} \rho^{-\dim E}.
\]
Otherwise, there is $j \in \{1,\dotsc,r\}$ such that $\pi_j(y') \in \Ball(\pi_j(y),\rho)$. For fixed $y$ the number of such $y'$ in $\tilde{A}$ is
\[
\# \bigl(\tilde{A} \cap \pi_j^{-1}(\Ball(\pi_j(y),\rho))\bigr) \ll \Ncal( A \cap \pi_j^{-1}(\Ball(\pi_j(y),\rho)),\delta) \leq \rho^\kappa \Ncal(A,\delta).
\]
Moreover for fixed $y,y'$ and $x$, we have
\[
\sum_{x' \in \tilde{A}} \Probbig{ f(y'-y) \in \Ball(x - x',5\delta)} \ll 1
\]
because the balls $\Ball(x',5\delta)$ have overlap multiplicity at most $O(1)$.
Putting these considerations together, we obtain
\begin{align*}
\Ebb\bigl[\Ecal_\delta(\phi,A \times A)\bigr] &\ll \delta^{\dim E} \rho^{-\dim E} \Ncal(A,\delta)^4 + \rho^\kappa \Ncal(A,\delta)^3 \\
&\leq \bigl( \delta^{\kappa} \rho^{-\dim E} + \rho^\kappa \bigr) \Ncal(A,\delta)^3\\
&\ll \delta^{\frac{\kappa^2}{\dim E+ \kappa}} \Ncal(A,\delta)^3
\end{align*}
Combined with \eqref{eq:JensenEnergy}, this finishes the proof of the lemma.
\end{proof}

\subsection{\texorpdfstring{$L^2$}{L2}-flattening}
Our goal is now to translate the sum-product theorem obtained above in terms of measures on the semisimple algebra $E$.
The result we obtain is an $L^2$-flattening lemma for additive and multiplicative convolutions of measures on $E$.
Statements of this form already appear implicitly in the work of Bourgain \cite{bourgain_evkt,Bourgain2010} on the Erd\H{o}s-Volkmann ring conjecture, and were later much popularized by their application to the spectral gap problem by Bourgain and Gamburd \cite{bourgaingamburd_sl2p,bourgaingamburd_su2}.
They are usually derived from the analogous combinatorial growth statement, via a decomposition of the measures into dyadic level sets, combined with an application of the Balog-Szemerédi-Gowers lemma.

Before we can state our result, we give a non-concentration condition for measures on $E$, analogous to the one given for subsets in the previous paragraph.

\begin{definition}[Affine non-concentration for measures]
Let $V$ be a Euclidean space, and $\eps,\kappa > 0$ two parameters. 
We say that a measure $\eta$ on $V$ satisfies $\ANC(\eps,\kappa)$ at scale $\delta$ if 
\begin{enumerate}
\item $\Supp\eta \subset \Ball(0,\delta^{-\eps})$;
\item for every $\rho \geq \delta$ and every proper affine subspace $W \subset V$, $\eta(W^{(\rho)}) \leq \delta^{-\eps} \rho^\kappa$.
\end{enumerate}
\end{definition}

In this paper, measures are often studied at some fixed small positive scale $\delta$.
For that reason, it is convenient to define the \emph{regularized measure} $\eta_\delta$ of a measure $\eta$ on $E$ at scale $\delta$ by
\[
\eta_\delta = \eta\pp P_\delta
\]
where $P_\delta=\frac{\1_{\Ball(0,\delta)}}{\abs{\Ball(0,\delta)}}$ is the normalized indicator function of the ball of radius $\delta$ centered at $0$.
The measure $\eta_\delta$ will be identified with its density with respect to the Lebesgue measure on $E$, and we write
\[
\norm{\eta}_{2,\delta} = \norm{\eta_\delta}_2.
\]

\begin{prop}[\texorpdfstring{$L^2$}{L2}-flattening]
\label{pr:l2flat}
Let $E$ be finite-dimensional semisimple algebra over $\R$.
Given $\kappa > 0$, there exists $\eps = \eps(E,\kappa)$ such that the following holds for all $\delta > 0$ sufficiently small.
Let $\eta$ be a probability measure on $E$ satisfying
\begin{enumerate}
\item $\eta$ is supported on $E \setminus S_E(\delta^\eps)$;
\item $\eta$ satisfies $\ANC(\eps,\kappa)$ at scale $\delta$ on $E$;
\item $\delta^{-\kappa + \eps} \leq \norm{\eta}_{2,\delta}^2 \leq \delta^{- \dim E + \kappa - \eps}$.
\end{enumerate}
Then,
\[
\norm{\eta \ff \eta \ff \eta \mm \eta \ff \eta \ff \eta}_{2,\delta} \leq \delta^{\eps} \norm{\eta}_{2,\delta}.
\]
\end{prop}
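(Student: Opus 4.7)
I would argue by contradiction, assuming
\[
\norm{\eta \ff \eta \ff \eta \mm \eta \ff \eta \ff \eta}_{2,\delta} > \delta^{\eps}\norm{\eta}_{2,\delta}
\]
for a small $\eps$ to be fixed later, and aim to contradict Theorem~\ref{th:sumproduct}. This is an $L^2$-flattening statement in the style of Bourgain-Gamburd, whose combinatorial input is the semisimple sum-product theorem just proved. Using a dyadic pigeonhole on the density of $\eta_\delta$, I can replace $\eta$ by a submeasure of mass $\geq \delta^{O(\eps)}$ that is, up to a further factor $\delta^{O(\eps)}$, proportional to the normalized indicator of a $\delta$-neighborhood of some set $A \subset E$. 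The hypotheses transfer to $A$: it avoids $S_E(\delta^{O(\eps)})$, satisfies $\ANC(O(\eps),\kappa)$ at scale $\delta$, and its covering number $K:=\Ncal(A,\delta)$ lies in the range $\delta^{-\kappa+O(\eps)}\leq K\leq \delta^{-\dim E+\kappa-O(\eps)}$ demanded by Theorem~\ref{th:sumproduct}.

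Next, unfolding $\norm{\eta^{\ff 3}\mm\eta^{\ff 3}}_{2,\delta}$, the contradictory lower bound expresses a large additive energy for the triple-product map $A^3\to E$, $(a,b,c)\mapsto abc$: at scale $\delta$, the number of tuples $(a_i,b_i,c_i)_{i=1}^4\in A^{12}$ satisfying
\[
a_1 b_1 c_1 - a_3 b_3 c_3 = a_2 b_2 c_2 - a_4 b_4 c_4 + O(\delta)
\]
is at least $\delta^{O(\eps)} K^{11}$. A discretized Balog-Szemerédi-Gowers-Plünnecke argument in the spirit of~\cite[Section~8]{Bourgain2010}, applied after pigeonholing the ``outer'' variables $(a_1,c_1,a_3,c_3)$ and $(a_2,c_2,a_4,c_4)$ to typical values, then produces a subset $A_1\subset A$ with $\Ncal(A_1,\delta)\geq\delta^{O(\eps)}K$ and simultaneously
\[
\Ncal(A_1+A_1,\delta)\leq\delta^{-O(\eps)}\Ncal(A_1,\delta),\quad \Ncal(A_1+f A_1,\delta)\leq\delta^{-O(\eps)}\Ncal(A_1,\delta),
\]
where $f = a_1^{-1}a_3\otimes c_3c_1^{-1}\in E\otimes E^\op$; the avoidance of $S_E(\delta^{O(\eps)})$ guarantees that the relevant inverses exist in $E$ with operator norm at most $\delta^{-O(\eps)}$.

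Letting $B\subset E\otimes E^\op$ be the set of multipliers $f$ as the outer variables range over the good pigeonhole fiber, both the $\ANC$ of $A$ and the singularity avoidance show that $B$ inherits $\ANC(O(\eps),\kappa)$ in $E\otimes E^\op$. Applying Theorem~\ref{th:sumproduct} to the pair $(A_1,B)$ then yields some $f\in B$ with
\[
\Ncal(A_1+A_1,\delta)+\Ncal(A_1+fA_1,\delta)\geq \delta^{-\eps'}\Ncal(A_1,\delta)
\]
for some $\eps'\gg\eps$, contradicting the upper bounds just obtained and completing the proof modulo the choice of $\eps$.

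The main technical obstacle is the verification that the multiplier set $B$ satisfies $\ANC$ in $E\otimes E^\op$: this requires transferring the affine non-concentration of four copies of $A\subset E$ through the nonlinear map $(a_1,c_1,a_3,c_3)\mapsto a_1^{-1}a_3\otimes c_3c_1^{-1}$, while using the singularity avoidance to keep this map well-defined and Lipschitz at scale $\delta^{-O(\eps)}$. Keeping track of the many $\delta^{O(\eps)}$ losses accumulated through regularization, pigeonholing, and Balog-Szemerédi-Gowers, so that they can be absorbed in the $\eps'$ supplied by Theorem~\ref{th:sumproduct}, is the most delicate bookkeeping of the argument.
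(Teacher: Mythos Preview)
Your plan is the paper's plan, and it works; but the paper's execution dissolves the ``main technical obstacle'' you flag rather than confronting it. The paper writes $\eta^{*3}=\mu*\eta$ with $\mu$ the pushforward of $\eta\otimes\eta$ under $(a,c)\mapsto L_aR_c\in\GL(E)$, and isolates the Balog--Szemer\'edi--Gowers step as a standalone lemma (Lemma~\ref{lm:bsg}) that directly produces a \emph{single} set $A$ and a \emph{fixed} $g_1=a_1\otimes c_1\in\Supp\mu$ such that $\Ncal(A-gg_1^{-1}A,\delta)\leq\delta^{-O(\eps)}\Ncal(A,\delta)$ for a $\delta^{O(\eps)}$-proportion of $g=a\otimes c$. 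Your sketch leaves implicit how one passes from one BSG output per choice of outer variables to a single $A_1$ working for all $f\in B$; in the paper this is handled inside the lemma by a further Cauchy--Schwarz pigeonhole on the BSG subsets followed by Ruzsa's triangle inequality.

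The payoff of fixing $a_1,c_1$ at this stage is that the multiplier becomes $gg_1^{-1}=(aa_1^{-1})\otimes(c_1^{-1}c)$ with only $(a,c)$ varying, so the measure on $E\otimes E^\op$ is $(\eta*a_1^{-1})\dot\otimes(c_1^{-1}*\eta)$: a bilinear tensor pushforward of two measures, each satisfying $\ANC(O(\eps),\kappa)$ because right or left multiplication by a fixed element outside $S_E(\delta^\eps)$ is $\delta^{-O(\eps)}$-bi-Lipschitz. A short lemma (Lemma~\ref{lm:tensorANC}) then shows that any such tensor pushforward satisfies $\ANC(O(\eps),\kappa/2)$, and Lemma~\ref{lm:ANC2set}\ref{it:ANC2set2} extracts the set $B$. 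Thus the nonlinear inversion in your four-variable map $(a_1,c_1,a_3,c_3)\mapsto a_1^{-1}a_3\otimes c_3c_1^{-1}$ never appears: two of the four arguments are frozen by the BSG pigeonhole, reducing the ANC verification to the purely bilinear tensor map.
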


We wish to deduce this proposition from Theorem~\ref{th:sumproduct}.
A first useful observation is that the non-concentration condition for measures is closely related to non-concentration for subsets.

\begin{lem}\label{lm:ANC2set}
Given an Euclidean space $V$, and parameters $\epsilon > 0$ and $\kappa > 0$,
the following holds for all $\delta > 0$ sufficiently small.
\begin{enumerate}
\item If $A \subset V$ has $\ANC(\eps,\kappa)$ at scale $\delta$, then there is a measure supported on $A$ which has $\ANC(2\eps,\kappa)$ at scale $\delta$.
\item \label{it:ANC2set2} Let $\eta$ be a probability measure on $V$ satisfying $\ANC(\eps,\kappa)$ at scale $\delta$.
If $A \subset V$ is a subset such that $\eta(A) \geq \delta^\eps$ then there is a subset $A' \subset A$ which satisfies $\ANC(6\eps,\kappa)$ at scale $\delta$.
\end{enumerate}
\end{lem}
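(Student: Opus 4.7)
For part (i), the natural choice is the uniform probability measure on a $\delta$-net. Let $\tilde A$ be a maximal $\delta$-separated subset of $A$ and set $\eta=\#\tilde A^{-1}\sum_{a\in\tilde A}\delta_a$. By maximality, the balls $\Ball(a,\delta)$ for $a\in\tilde A$ cover $A$, so $\#\tilde A\geq\Ncal(A,\delta)$; conversely, $\delta$-separation of $\tilde A$ gives $\#(\tilde A\cap S)\leq O_{\dim V}(1)\cdot\Ncal(S,\delta)$ for any set $S$. Applied with $S=A\cap\Nbd(W,\rho)$ and combined with the $\ANC(\eps,\kappa)$ hypothesis on $A$, this yields
\[
\eta(\Nbd(W,\rho))=\frac{\#(\tilde A\cap\Nbd(W,\rho))}{\#\tilde A}\leq O(1)\frac{\Ncal(A\cap\Nbd(W,\rho),\delta)}{\Ncal(A,\delta)}\leq O(\delta^{-\eps})\rho^\kappa,
\]
which is bounded by $\delta^{-2\eps}\rho^\kappa$ for $\delta$ small. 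The support inclusion $\Supp\eta\subset\tilde A\subset\Ball(0,\delta^{-\eps})\subset\Ball(0,\delta^{-2\eps})$ is automatic.

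For part (ii), the strategy is a standard dyadic pigeonhole on the $\eta$-mass carried by each point of a $\delta$-net in $A$. Let $\tilde A$ be a maximal $\delta$-separated subset of $A$, and partition $A$ into Voronoi-type cells $B_a\subset\Ball(a,\delta)$ indexed by $a\in\tilde A$, so that $\sum_{a\in\tilde A}\eta(B_a)=\eta(A)\geq\delta^\eps$. Classify the points of $\tilde A$ according to the dyadic interval $(2^{-k-1},2^{-k}]$ containing $\eta(B_a)$. Truncating at a negligible threshold $\delta^N$ (with $N$ large, chosen so that the discarded mass is at most $\delta^\eps/2$), there remain $O(\log(1/\delta))$ classes, and pigeonhole delivers a subset $A'\subset\tilde A$ and a scale $\mu>0$ such that $\eta(B_a)\asymp\mu$ for every $a\in A'$, and
\[
\mu\cdot\#A'\gtrsim\frac{\delta^\eps}{\log(1/\delta)}\geq\delta^{2\eps}
\]
for $\delta$ small.

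To verify $\ANC(6\eps,\kappa)$ for $A'$, the support condition is immediate. For any proper affine subspace $W\subset V$ and any $\rho\geq\delta$, the cells $B_a$ for $a\in A'\cap\Nbd(W,\rho)$ are pairwise disjoint and all contained in $\Nbd(W,\rho+\delta)\subset\Nbd(W,2\rho)$, hence
\[
\mu\cdot\#(A'\cap\Nbd(W,\rho))\lesssim\eta(\Nbd(W,2\rho))\leq 2^\kappa\delta^{-\eps}\rho^\kappa.
\]
Dividing by $\mu\cdot\#A'\geq\delta^{2\eps}$ gives $\#(A'\cap\Nbd(W,\rho))\leq O(\delta^{-3\eps})\rho^\kappa\cdot\#A'$, and the $\delta$-separation of $A'$ lets one replace cardinalities by $\delta$-covering numbers at the cost of $O(1)$. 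All these constants are absorbed into $\delta^{-6\eps}$ for $\delta$ sufficiently small.

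The only delicate point is ensuring that the dyadic pigeonhole truly produces a set $A'$ large enough for the target bound, and not a pathologically small one; the key observation is that the loss factor $\log(1/\delta)$ is subpolynomial in $1/\delta$, so comfortably absorbed by the $\delta^{-\eps}$ slack built into the enlargement of $\eps$ to $6\eps$. No other serious obstacle arises.
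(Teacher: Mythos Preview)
Your proof is correct and follows essentially the same approach as the paper. Part (i) is identical (uniform measure on a maximal $\delta$-separated subset), and part (ii) uses the same dyadic pigeonhole idea on local $\eta$-mass; the paper classifies points of $A$ by the dyadic level of $\eta(\Ball(a,2\delta))/\abs{\Ball(0,\delta)}$ rather than by the mass of Voronoi cells over a $\delta$-net, but these are equivalent bookkeeping choices leading to the same estimate.
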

\begin{proof}
For the first item, let $\tilde{A}$ be a maximal $\delta$-separated subset of $A$.
The normalized counting measure on $\tilde{A}$ satisfies the desired property.
The second item is slightly more subtle.
Since the normalized restriction of $\eta$ to $A$ satisfies $\ANC(2\eps,\kappa)$, we may assume without loss of generality that $A = \Supp\eta$.
Let $i_{\min}$ be the largest integer such that $2^{i_{\min}} \leq \delta^{2\eps \dim V}$.
For every integer $i \geq i_{\min}$, set 
\[
A_{i,0} = \setBig{a \in A}{2^{i-1} < \frac{\eta(\Ball(a,2\delta))}{\abs{\Ball(0,\delta)}} \leq 2^i}.
\]
and then
\[
A_{-,0} = A \setminus \bigcup_{i \geq i_{\min}} A_{i,0}.
\]
Next, for every $i \geq i_{\min}$, set $A_i = A_{i,0}^{(\delta)}$ and also $A_- = A_{-,0}^{(\delta)}$.
By this construction,
\begin{equation}
\label{eq:A-+Ai}
\eta_\delta \ll  \delta^{2 \eps \dim V} \1_{A_-} + \sum_{i \geq i_{\min}} 2^i \1_{A_i} 
\end{equation}
and 
\begin{equation}
\label{eq:Aieta3delta}
\forall i \geq i_{\min},\quad 2^i \1_{A_i}  \ll \eta_{3\delta}.
\end{equation}

Note that $A_i$ is empty whenever $i \geq -\frac{\log \abs{\Ball(0,\delta)}}{\log 2} + 1$.
Thus, integrating \eqref{eq:A-+Ai} and recalling  $\Supp \eta  \subset B(0,\delta^{-\eps})$ from $\ANC(\eps,\kappa)$ for $\eta$, we obtain some $i \geq i_{\min}$ such that 
\(
2^i \abs{A_i} \geq \delta^\eps.
\)
Fix this $i$ and set $A'=A_{i,0}$.
If $W$ is a proper affine subspace in $V$ and $\rho\geq\delta$, we can bound using \eqref{eq:Aieta3delta} and $\ANC(\eps,\kappa)$ for $\eta$,
\begin{align*}
\Ncal(A'\cap W^{(\rho)},\delta)
	& \ll \delta^{-\dim V} \int_V \1_{A_i\cap W^{(\rho)}}(x)\dd x\\
	& \ll \delta^{-\dim V} \int_V 2^{-i}\1_{W^{(\rho)}}(x) \dd\eta_{3\delta}(x)\\
	& = \delta^{-\dim V} 2^{-i}\eta_{3\delta}(W^{(\rho)})\\
	& \leq \delta^{-\dim V} 2^{-i} \delta^{-\eps}\rho^\kappa
\end{align*}
and using the above lower bound on $2^i\abs{A_i}$, we get
\begin{align*}
\Ncal(A'\cap W^{(\rho)},\delta)
	& \ll \delta^{-\dim V} \abs{A_i} \delta^{-2\eps}\rho^\kappa\\
	& \ll \delta^{-2\eps} \rho^\kappa \Ncal(A_i,\delta)
\end{align*}
This shows that $A'$ satisfies $\ANC(3\eps,\kappa)$ at scale $\delta$.
\end{proof}

The next lemma is similar in spirit to the previous one.
Roughly speaking, given measures $\eta$ on $V$ and $\mu$ on $\GL(V)$ such that the convolution $\mu*\eta\mm\mu*\eta$ has large $L^2$-norm at scale $\delta$, we construct related subsets $A\subset V$ and $B\subset\GL(V)$ such that $A-BA$ is not much larger than $A$. 
This is the central part of the proof of Proposition~\ref{pr:l2flat}; it relies on the Balog-Szemerédi-Gowers lemma.

\begin{lem}
\label{lm:bsg}
Let $V$ be a Euclidean space and $\mu$ a probability measure on $\GL(V)$ such that 
\[
\forall g \in \Supp\mu,\quad \norm{g} + \norm{g^{-1}} \leq \delta^{-\eps}.
\]
Let $\eta$ be a probability measure on $\Ball_V(0,\delta^{-\eps})$ such that 
\[
\norm{\mu*\eta \mm \mu*\eta}_{2,\delta} > \delta^{\eps} \norm{\eta}_{2,\delta}.
\]
Then there exist a subset $A \subset \Ball_V(0,\delta^{-O(\eps)})$ and an element $g_1 \in \Supp\mu$ such that
\[
\delta^{-\dim V + O(\eps)} \norm{\eta}_{2,\delta}^{-2} \leq \Ncal(A,\delta) \leq \delta^{-\dim V - O(\eps)} \norm{\eta}_{2,\delta}^{-2}
\]
and 
\[
\mu\bigl( \setbig{g \in \GL(V)}{ \Ncal(A - gg_1^{-1}A,\delta) \leq \delta^{-O(\eps)}\Ncal(A,\delta)} \bigr) \geq \delta^{O(\eps)}.
\]
If moreover $\eta$ satisfies $\ANC(\eps,\kappa)$ in $V$ at scale $\delta$ for some $\kappa > 0$ then $A$ satisfies $\ANC(O(\eps),\kappa)$.
\end{lem}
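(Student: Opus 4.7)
The plan is to follow the standard Bourgain--Gamburd $L^2$-flattening strategy: convert the $L^2$ lower bound into a large additive $\delta$-energy statement, perform a dyadic reduction to an approximately $\delta$-uniform set, and then invoke an asymmetric covering-number Balog--Szemerédi--Gowers (BSG) lemma. The element $g_1$ and the set $A$ will be produced together through a sequence of pigeonhole arguments.

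First I would expand $\mu*\eta=\int_{\GL(V)} g\cdot\eta\,d\mu(g)$ and apply Cauchy--Schwarz over the pair $(g,h)\in\Supp\mu\times\Supp\mu$ to obtain
\[
\norm{\mu*\eta\mm\mu*\eta}_{2,\delta}^2 \leq \iint \norm{g\eta\mm h\eta}_{2,\delta}^2\,d\mu(g)\,d\mu(h),
\]
so that the hypothesis yields an averaged lower bound on the right-hand side. A Markov-type pigeonhole in $\mu$ then furnishes $g_1\in\Supp\mu$ together with a subset $\Gcal\subset\Supp\mu$ of $\mu$-mass at least $\delta^{O(\eps)}$ such that $\norm{g\eta\mm g_1\eta}_{2,\delta}^2\geq\delta^{O(\eps)}\norm{\eta}_{2,\delta}^2$ for every $g\in\Gcal$.

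Next, I would carry out a dyadic decomposition of $\eta$ along the values of its $\delta$-regularization, in the spirit of the proof of Lemma~\ref{lm:ANC2set}(\ref{it:ANC2set2}), to extract an approximately $\delta$-uniform level set $A_0\subset\Supp\eta$ with $\Ncal(A_0,\delta)\asymp\delta^{-\dim V}\norm{\eta}_{2,\delta}^{-2}$. The restriction of $\eta$ to $A_0$ carries a $\delta^{O(\eps)}$ fraction of the $L^2$ mass and, via Lemma~\ref{lm:ANC2set}(\ref{it:ANC2set2}), inherits $\ANC(O(\eps),\kappa)$ from $\eta$. Combining this with Step~1 and one more pigeonhole produces, for each $g$ in a subset of $\Gcal$ of $\mu$-mass at least $\delta^{O(\eps)}$, a covering-number additive $\delta$-energy bound between $gA_0$ and $g_1A_0$ of order $\delta^{O(\eps)}\Ncal(A_0,\delta)^3$.

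I would then apply the asymmetric covering-number Balog--Szemerédi--Gowers lemma (cf.~\cite[Lemma~15]{He2016}) to obtain, for each such $g$, subsets $X_g\subset gA_0$ and $Y_g\subset g_1A_0$ of covering number $\geq \delta^{O(\eps)}\Ncal(A_0,\delta)$ with $\Ncal(X_g-Y_g,\delta)\leq\delta^{-O(\eps)}\Ncal(A_0,\delta)$. A further pigeonhole, clustering the preimages $g_1^{-1}Y_g\subset A_0$ and $g^{-1}X_g\subset A_0$ by $\delta$-Hausdorff-approximate coincidence, produces a single subset $A\subset A_0$ of comparable covering number which serves simultaneously for a $\mu$-positive family of $g$'s. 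Using the identity $A-gg_1^{-1}A=g_1^{-1}(g_1A-gA)$ and the hypothesis $\norm{g_1^{-1}}\leq\delta^{-\eps}$, this translates into the required bound $\Ncal(A-gg_1^{-1}A,\delta)\leq\delta^{-O(\eps)}\Ncal(A,\delta)$; the size estimate on $\Ncal(A,\delta)$ follows from its comparability with $\Ncal(A_0,\delta)$, and the $\ANC$ clause transfers from $A_0$ to $A\subset A_0$.

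The main technical obstacle will be the common-refinement step: BSG naturally outputs a set depending on $g$, and turning this into a single $A$ compatible with a positive $\mu$-mass of $g$'s requires either an averaged form of BSG or a careful additional pigeonhole, arranged so that all successive polynomial losses combine to the stated $\delta^{O(\eps)}$ budget. A secondary point of care is verifying that affine non-concentration survives the dyadic level-set extraction and the subsequent refinements, which is where the explicit invocation of Lemma~\ref{lm:ANC2set}(\ref{it:ANC2set2}) is needed.
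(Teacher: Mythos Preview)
Your overall architecture matches the paper's: dyadic level-set decomposition of $\eta_\delta$, pigeonhole in $\mu\otimes\mu$, BSG applied for each surviving $g$, and a final common-refinement to produce a single $A$. You also correctly flag the common-refinement as the crux. Two concrete points need attention.

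\textbf{The identity is wrong.} You write $A-gg_1^{-1}A=g_1^{-1}(g_1A-gA)$, but the right-hand side equals $A-g_1^{-1}gA$, not $A-gg_1^{-1}A$; these differ in $\GL(V)$. With your choice $A\subset A_0$ and the BSG output controlling $gX_g-g_1Y_g$, what you can access after pulling back is $A-g_1^{-1}gA$, which is not what the statement asks for. The paper fixes this by taking $A=g_0A_{g_1}$, a \emph{translate} of a subset of the level set by an element of $\Supp\mu$; then $A-gg_1^{-1}A=g_0A_{g_1}-gg_1^{-1}g_0A_{g_1}$ is exactly the quantity that falls out of the Ruzsa chain. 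In your setup the simplest repair is to set $A=g_1A'$ for a suitable $A'\subset A_0$, so that $A-gg_1^{-1}A=g_1A'-gA'$.

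\textbf{The common-refinement mechanism.} ``Clustering the preimages by $\delta$-Hausdorff-approximate coincidence'' is not a workable device: there are far more than $\delta^{-O(1)}$ candidate subsets, so a naive pigeonhole on approximate shapes does not give a $\delta^{O(\eps)}$-mass family. The paper's method is different and worth naming explicitly. After BSG produces, for each $g$ in a set of $\mu$-mass $\gtrsim\delta^{O(\eps)}$, subsets $A_g\subset A_i$ and $A'_g\subset A_j$ with $\abs{A_g}\sim\abs{A_i}$, one applies a Cauchy--Schwarz/second-moment pigeonhole to the family $\{A_g\}$ to find a single $g_1$ and a sub-family $B_1$ with $\mu(B_1)\gtrsim\delta^{O(\eps)}$ such that $\abs{A_{g_1}\cap A_g}\sim\abs{A_{g_1}}$ for all $g\in B_1$ (and similarly for the $A'_g$). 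Then one invokes Ruzsa's triangle inequality for the relation $X\approx Y\iff\Ncal(X-Y,\delta)\lesssim\Ncal(X,\delta)^{1/2}\Ncal(Y,\delta)^{1/2}$: from $g_0A_g\approx gA'_g$ for each $g$, together with $A_{g_1}\approx A_g$ and $A'_{g_1}\approx A'_g$ (via the large intersections), one chains
\[
g_0A_{g_1}\approx g_0A_g\approx gA'_g\approx gA'_{g_1}\approx gg_1^{-1}g_0A_{g_1},
\]
which is precisely $\Ncal(A-gg_1^{-1}A,\delta)\leq\delta^{-O(\eps)}\Ncal(A,\delta)$ for $A=g_0A_{g_1}$. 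This intersection-plus-Ruzsa step is the missing idea in your sketch; once you have it, the rest of your outline goes through, and the $\ANC$ clause indeed transfers via Lemma~\ref{lm:ANC2set}\ref{it:ANC2set2} since $\eta(A_i)\gtrsim\delta^{O(\eps)}$.
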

\begin{proof}
We use the following rough comparison notation : for positive quantities $f$ and $g$, we write $f \lesssim g$ for $f \leq \delta^{-O(\eps)} g$ and $f \sim g$ for $f \lesssim g$ and $g \lesssim f$.

We have
\[
\norm{\mu*\eta_\delta \mm \mu*\eta_\delta}_2 \gtrsim \norm{\mu*\eta \mm \mu*\eta}_{2,\delta} \gtrsim \norm{\eta_\delta}_2.
\]
As in the proof of Lemma~\ref{lm:ANC2set}, we can approximate $\eta_\delta$ using dyadic level sets : 
there are $\delta$-discretized sets\footnote{A $\delta$-discretized set is a union of balls of radius $\delta$.} $(A_i)_{i\geq 0}$ in $\Ball_V(0,\delta^{-\eps})$ such that $A_i$ is empty for $i \gg \log\frac{1}{\delta}$ and
\begin{equation}
\label{eq:Aiapproxeta}
\eta_\delta \ll \sum_{i \geq 0} 2^i \1_{A_i} \lesssim \eta_{3\delta} + \1_{A_0}.
\end{equation}
By the pigeonhole principle, there are $i,j\geq 0$ such that
\begin{align*}
\norm{\eta_\delta}_2 &\lesssim \norm{\mu*\eta_\delta \mm \mu*\eta_\delta}_2\\
&\lesssim 2^{i+j}\norm{\mu*\1_{A_i} \mm \mu*\1_{A_j}}_2\\
&\lesssim 2^{i+j} \int_{\GL(V) \times \GL(V)} \norm{\1_{gA_i} \mm \1_{g'A_j}}_2 \dd(\mu \otimes \mu) (g,g').
\end{align*}
In the last inequality, we used $g*\1_{A_i} = \abs{\det{g}}^{-1}\1_{g A_i}$ and $\abs{\det g} \sim 1$ for all $g \in \Supp\mu$.
By the right-hand inequality in \eqref{eq:Aiapproxeta}, we have
\[
2^i\abs{A_i} \lesssim 1 \quad \text{and} \quad 2^j\abs{A_j}^{\frac{1}{2}} \lesssim \norm{\eta_\delta}_2
\]
and similarly
\[
2^j\abs{A_j} \lesssim 1 \quad \text{and} \quad 2^i\abs{A_i}^{\frac{1}{2}} \lesssim \norm{\eta_\delta}_2.
\]
By Young's inequality and the estimate on $\det g$, we have for all $g,g' \in \Supp\mu$, 
\[2^{i+j}  \norm{\1_{gA_i} \mm \1_{g'A_j}}_2  \lesssim \norm{\eta_\delta}_2.\]
Thus, by the pigeonhole principle again, there exists $g_0 \in \Supp\mu$ and a set $B_0 \subset \Supp\mu$ such that $\mu(B_0)\gtrsim 1$ and for all $g \in B_0$,
\[
\norm{\eta_\delta}_2 \gtrsim 2^{i+j} \norm{\1_{g_0A_i} \mm \1_{gA_j}}_2 \gtrsim \norm{\eta_\delta}_2.
\]
By the above estimates, this implies
\begin{align*}
\norm{\1_{g_0A_i} \mm \1_{gA_j}}_2^2 & \gtrsim 2^{-2i-2j}\norm{\eta_\delta}_2^2\\
& \gtrsim 2^{-i-j}\abs{A_i}^{\frac{1}{2}} \abs{A_j}^{\frac{1}{2}}\\
& \gtrsim \abs{A_i}^{\frac{3}{2}}\abs{A_j}^{\frac{3}{2}}\\
& \sim \abs{g_0A_i}^{\frac{3}{2}}\abs{gA_j}^{\frac{3}{2}}.
\end{align*}
By the Balog-Szemerédi-Gowers lemma \cite[Theorem~6.10]{Tao2008}, for each $g \in B_0$ there are $\delta$-discretized subsets $A_g \subset A_i$ and $A'_g \subset A_j$ such that
\[
\abs{A_g} \sim \abs{A_i},\,\abs{A'_g} \sim \abs{A_j} \text{, and}\quad  \Ncal(g_0A_g - gA'_g,\delta)\lesssim \Ncal(g_0A_g,\delta)^{\frac{1}{2}} \Ncal(gA'_g,\delta)^{\frac{1}{2}}.
\]
Set $X=A_i\times A_j$ and $X_g=A_g\times A'_g\subset X$ and write
\begin{align*}
\iint \abs{X_{g_1}\cap X_{g_2}}\dd\mu(g_1)\dd\mu(g_2)  & = \iint \int \1_{X_{g_1}}(x)\1_{X_{g_2}}(x) \dd x\dd\mu(g_1)\dd\mu(g_2) \\
& = \int \left(\int \1_{X_g(x)}\dd\mu(g)\right)^2 \dd x\\
& \geq \frac{1}{\abs{X}} \left(\iint \1_{X_g}(x)\dd x\dd\mu(g)\right)^2\\
& \gtrsim \abs{X}.
\end{align*}
This shows that there exists $g_1$ and $B_1\subset B_0$ such that $\mu(B_1)\gtrsim 1$ for all $g$ in $B_1$, $\abs{X_{g_1}\cap X_{g}}\gtrsim \abs{X}$.
Equivalently,
\[
\forall g \in B_1,\quad \abs{A_{g_1} \cap A_g} \sim \abs{A_{g_1}} \sim \abs{A_g}\text{ and } \abs{A'_{g_1} \cap A'_g} \sim \abs{A'_{g_1}} \sim \abs{A'_g}.
\]
For subsets $A,A' \in V$, write $A \approx A'$ if $\Ncal(A-A',\delta) \lesssim \Ncal(A,\delta)^{\frac{1}{2}} \Ncal(A',\delta)^{\frac{1}{2}}$.
Ruzsa's triangle inequality~\cite[Lemma 2.6]{TaoVu} is still valid for covering numbers at scale $\delta$,
so if $A\approx A'$ and $A'\approx A''$, then $A\approx A''$, and moreover, if $A\approx A'$ for some sets $A$ and $A'$, then $A\approx A$ and $A'\approx A'$.

The above shows that for every $g\in B_0$, $g_0A_g\approx gA_g'$.
This implies $g_0A_g\approx g_0A_g$, and since $g$ is $\delta^{-\eps}$-Lipschitz, $A_g\approx A_g$.
Therefore, for $g_1\in B_0$ and $g\in B_1$ as above, we find $A_{g_1}\approx A_{g_1}\cap A_g\approx A_g$.
Similarly, $A'_{g_1}\approx A'_g$,.
Finally
\[
g_0A_{g_1} \approx g_0A_g \approx gA'_g \approx gA'_{g_1} \approx g g_1^{-1}g_0A_{g_1},
\]
showing that $A = g_0 A_{g_1}$ has all the desired properties.

For last assertion, note that $\eta(A_i)\gtrsim 1$. By the proof of Lemma~\ref{lm:ANC2set}\ref{it:ANC2set2}, $A_i$ satisfies $\ANC(O(\eps),\kappa)$ and hence so do $A_{g_1}$ and $A$.
Note also that 
\[
\Ncal(A,\delta)\sim \Ncal(A_{g_1},\delta)\sim \Ncal(A_i,\delta) \sim \delta^{-\dim V}\abs{A_i} \sim \delta^{-\dim V}\norm{\eta}_{2,\delta}^2.
\]
\end{proof}

To prove Proposition~\ref{pr:l2flat} we use the above lemma for the action of $E\otimes E^\op$ on $E$, and then apply the sum-product theorem in $E$.

\begin{proof}[Proof of Proposition~\ref{pr:l2flat}]
Let $\mu$ be the image measure of $\eta\otimes\eta$ in $\GL(E)$, so that $\mu * \eta \mm \mu * \eta = \eta \ff \eta \ff \eta \mm \eta \ff \eta \ff \eta$.
We argue by contradiction:
Assuming
\[
\norm{\mu*\eta \mm \mu*\eta}_{2,\delta} > \delta^{\eps} \norm{\eta}_{2,\delta},
\]
we shall construct sets $A,B$ satisfying all assumptions of Theorem~\ref{th:sumproduct}, with $\kappa$ and $\eps$ replaced by $\kappa/2$ and $O(\eps)$ but violating its conclusion.
By Lemma~\ref{lm:bsg} there is a subset $A \subset E$ satisfying $\ANC(O(\eps),\kappa)$ at scale $\delta$ and an element $g_1 \in \Supp\mu$ such that 
\[
\delta^{-\kappa + O(\eps)}\leq \Ncal(A,\delta) \leq \delta^{- \dim E + \kappa - O(\eps)}.
\]
and
\[
\mu\bigl( \setbig{g \in \GL(V)}{ \Ncal(A - gg_1^{-1}A,\delta) \leq \delta^{-O(\eps)}\Ncal(A,\delta)} \bigr) \geq \delta^{O(\eps)}.
\]
By definition of $\mu$, we may write $g_1=a_1\otimes b_1$, and the above inequality becomes
\begin{equation}\label{baba}
(\eta * a_1^{-1})\otimes (b_1^{-1} *\eta)\bigl( \setbig{(a,b) \in E \times E}{ \Ncal(A - aAb,\delta) \leq \delta^{-O(\eps)}\Ncal(A,\delta)}  \bigr) \geq \delta^{O(\eps)}.
\end{equation}
Since $a_1,b_1 \not\in S_E(\delta^\eps)$, the measures $\eta * a_1^{-1}$ and $b_1^{-1} *\eta$ satisfy $\ANC(O(\eps),\kappa)$ at scale $\delta$.
Moreover, Lemma~\ref{lm:tensorANC} below shows that $(\eta * a_1^{-1})\dot\otimes (b_1^{-1} *\eta)$ satisfies $\ANC(O(\eps),\frac{\kappa}{2})$.
By Lemma~\ref{lm:ANC2set}\ref{it:ANC2set2}, there exists a subset $B\subset\Supp((\eta * a_1^{-1})\dot\otimes (b_1^{-1} *\eta))$ satisfying $\ANC(O(\eps),\frac{\kappa}{2})$.
Equation~\eqref{baba} shows that $\Ncal(A+fA,\delta)\leq\delta^{-O(\eps)}\Ncal(A,\delta)$ for all $f$ in $B$.
Since $\eta$ is supported on $E\setminus S_E(\delta^\eps)$, one also has $\Ncal(fA,\delta)\geq\delta^{O(\eps)}\Ncal(A,\delta)$, and so by Plünnecke's inequality one also has $\Ncal(A+A,\delta)\leq\delta^{-O(\eps)}\Ncal(A,\delta)$ and in turn.
\[
\Ncal(A+A,\delta) + \Ncal(A+fA,\delta) \leq 
\delta^{-O(\eps)}\Ncal(A,\delta).
\]
Thus, $A$ and $B$ violate the conclusion of Theorem~\ref{th:sumproduct} with parameters $\kappa/2$ and $O(\eps)$.
This yields the desired contradiction, provided $\eps$ is chosen small enough.
\end{proof}

\begin{lem}
\label{lm:tensorANC}
Let $V_1$ and $V_2$ be finite-dimensional linear spaces.
For each $i = 1,2$, let $\eta_i$ be a measure on $V_i$ and denote by $\eta_1 \dot\otimes \eta_2$ the image measure of $\eta_1 \otimes \eta_2$ by the natural bilinear map $V_1 \times V_2 \to V_1 \otimes V_2$.

Given two parameters $\eps,\kappa > 0$, the following holds for $\delta > 0$ sufficiently small.
If $\eta_1$ and $\eta_2$ both satisfy $\ANC(\eps,\kappa)$ at scale $\delta$, then $\eta_1 \dot\otimes \eta_2$ satisfies $\ANC(2\eps,\frac{\kappa}{2})$ in $V_1 \otimes V_2$ at scale $\delta$.
\end{lem}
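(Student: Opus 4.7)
The plan is to verify, in turn, the two defining properties of $\ANC(2\eps, \kappa/2)$ for the image measure $\eta_1 \dot\otimes \eta_2$. The support condition is immediate, since the Hilbert tensor norm satisfies $\norm{v_1 \otimes v_2} = \norm{v_1}\cdot\norm{v_2} \leq \delta^{-2\eps}$ under the support hypothesis on $\eta_1$ and $\eta_2$.

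For the non-concentration estimate, fix $\rho \geq \delta$ and a proper affine subspace $W \subset V_1 \otimes V_2$. The inclusion $\Nbd(W, \rho) \subset \Nbd(H, \rho)$ for any hyperplane $H \supseteq W$ reduces the task to the case where $W = H$ is itself a hyperplane. After choosing orthonormal bases, $H$ is cut out by a bilinear equation $x^T A y = c$ with $A$ a nonzero matrix (which we may normalize to unit Frobenius norm) and $c \in \R$; writing $x, y$ for the coordinate vectors of $v_1, v_2$, the condition $v_1 \otimes v_2 \in \Nbd(H,\rho)$ becomes $\abs{x^T A y - c} \leq \rho$.

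The main step is a splitting argument on the size of the linear form $x \mapsto A^T x \in V_2$. Set $T = \max(\sqrt\rho, \delta)$ and consider $B_1 = \{v_1 : \norm{A^T x} \leq T\}$. Since $\norm{A}_F = 1$, some column of $A$ has norm bounded below by a dimensional constant; picking this column gives a proper hyperplane in $V_1$ around which $B_1$ sits in a slab of width $O(T)$, so the $\ANC$ assumption on $\eta_1$ yields $\eta_1(B_1) \lesssim \delta^{-\eps} T^\kappa$. For $v_1 \notin B_1$, the constraint $\abs{\bracket{A^T x, y} - c} \leq \rho$ confines $y$ to a slab of width $\rho/\norm{A^T x} \leq \rho/T$ around a hyperplane in $V_2$; applying $\ANC$ on $\eta_2$ (directly when this width is $\geq \delta$, otherwise by enlarging to the corresponding $\delta$-slab) bounds the $\eta_2$-measure by $\delta^{-\eps} T^\kappa$ as well. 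Integrating gives $(\eta_1 \dot\otimes \eta_2)(\Nbd(H, \rho)) \lesssim \delta^{-\eps} T^\kappa$.

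The closing step is a case check that $\delta^{-\eps} T^\kappa \leq \delta^{-2\eps}\rho^{\kappa/2}$ for all $\rho \geq \delta$, which is where the factor-of-$1/2$ loss in the exponent appears. For $\rho \geq \delta^2$ we have $T = \sqrt\rho$ and the bound becomes $\delta^{-\eps}\rho^{\kappa/2}$, absorbing the implicit dimensional constant into a second factor $\delta^{-\eps}$ for $\delta$ small. For $\delta \leq \rho < \delta^2$ we have $T = \delta$ and the bound becomes $\delta^{\kappa - \eps}$; since $\rho^{\kappa/2} \geq \delta^{\kappa/2}$, comparing to $\delta^{-2\eps + \kappa/2}$ reduces to $\kappa/2 + \eps \geq 0$, trivially true. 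I do not anticipate any serious obstacle beyond this case check; the content of the proof is the splitting step, which converts the bilinear constraint defining $H$ into a genuine linear constraint on $y$ outside a controlled set of $v_1$.
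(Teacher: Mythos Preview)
Your proof is correct and follows essentially the same splitting argument as the paper: reduce to a hyperplane, view it as the level set of a bilinear form, and split according to whether the partial evaluation (in your notation, $A^T x$) is small or large, applying $\ANC$ to $\eta_1$ in the first case and to $\eta_2$ in the second. The paper conditions on $v_2$ rather than $v_1$, but the roles are symmetric.

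One harmless redundancy: your second case $\delta \leq \rho < \delta^2$ is empty, since $\delta < 1$ forces $\delta^2 < \delta$. Thus $\rho \geq \delta$ already implies $\sqrt{\rho} \geq \delta$, so $T = \sqrt{\rho}$ always and the whole argument collapses to your first case.
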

\begin{proof}
Let $v_1$ and $v_2$ be independent random variables taking values respectively in $V_1$ and $V_2$ and distributed according to $\eta_1$ and $\eta_2$.
To establish $\ANC(2\eps,\frac{\kappa}{2})$ for $\eta_1 \dot\otimes \eta_2$, it is enough to show that for any linear form $\phi \in (V_1 \otimes V_2)^*$ with $\norm{\phi} = 1$, any $t \in \R$ and any $\rho \geq \delta$, we have
\begin{equation}
\label{eq:tensorANCP0}
\Probbig{\abs{\phi(v_1 \otimes v_2) - t} < \rho} \ll \delta^{-\eps}\rho^{\kappa/2}.
\end{equation}

Note that $(V_1 \otimes V_2)^* = V_1^* \otimes V_2^*$.
Hence, letting $(\psi_1,\dotsc,\psi_d)$ be a orthonormal basis of $V_1^*$, we can write $\phi \in (V_1 \otimes V_2)^*$ as 
\[\phi = \psi_1 \otimes \phi_1 + \dots + \psi_d \otimes \phi_d\]
where $\phi_1, \dotsc, \phi_d \in V_2^*$ are uniquely determined.
Moreover,
\begin{equation}
\label{eq:normphi}
1 = \norm{\phi}^2 = \norm{\phi_1}^2 + \dots + \norm{\phi_d}^2.
\end{equation}

On the one hand, when $v_2$ is fixed, the map
\[
v_1 \mapsto \phi(v_1 \otimes v_2) = \sum_{i=1}^d \psi_i(v_1) \phi_i(v_2)
\]
is the linear form $  \sum_{i=1}^d \phi_i(v_2) \psi_i \in V_1^*$, which has norm $\sum_{i=1}^d \abs{\phi_i(v_2)}^2$.
Thus, by independence of $v_1$ and $v_2$ and property $\ANC(\eps,\kappa)$ for $\eta_1$, we can estimate the conditional probability
\begin{equation}
\label{eq:tensorANCP1}
\ProbcondBig{\abs{\phi(v_1 \otimes v_2) - t} < \rho}{\sum_{i=1}^d \abs{\phi_i(v_2)}^2 \geq \rho^{1/2}} \leq  \delta^{-\eps}\rho^{\kappa/2}.
\end{equation}

On the other hand, by property $\ANC(\eps,\kappa)$ for $\eta_2$, for each $i = 1,\dotsc,d$,
\[
\Probbig{\abs{\phi_i(v_2)} \leq \rho^{1/2}\norm{\phi_i}} \leq \delta^{-\eps} \rho^{\kappa/2}.
\]
Hence, recalling \eqref{eq:normphi},
\begin{equation}
\label{eq:tensorANCP2}
\ProbBig{\sum_{i=1}^d \abs{\phi_i(v_2)}^2 < \rho} \leq d\delta^{-\eps} \rho^{\kappa/2}.
\end{equation}
Inequalities \eqref{eq:tensorANCP1} and \eqref{eq:tensorANCP2} together imply \eqref{eq:tensorANCP0} and finish the proof of the lemma.
\end{proof}

\subsection{Fourier decay}
To prove Theorem~\ref{thm:fourier} we apply the $L^2$-flattening Proposition~\ref{pr:l2flat} repeatedly.
The measures we obtain are images of tensor powers $\eta^k$ under polynomial maps $E^k \to E$, and we need to compare their Fourier decay to that of simple multiplicative convolutions of $\eta$.
This is the content of the next lemma.
This technique will also be useful to weaken slightly the assumptions of Theorem~\ref{thm:fourier}, see Corollary~\ref{cor:fourier} below.

\begin{lem}
\label{lm:eta3mmeta3}
Let $E$ be any real associative algebra, and let $\eta$ be a measure on $E$ with $\eta(E) \leq 1$.
Let $\mu = \eta * \eta * \eta \mm \eta * \eta * \eta$ then for any integer $m \geq 1$,
\[
\forall \xi \in E^*,\quad \abs{\widehat{\eta^{*3m}}(\xi)}^{2^m} \leq \widehat{\mu^{*m}}(\xi).
\]
\end{lem}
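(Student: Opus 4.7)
The plan is an inductive use of the Cauchy--Schwarz inequality, built on the identity $|\hhat{\alpha}(\xi)|^{2} = \hhat{\mu}(\xi)$ that follows directly from $\mu = \alpha \mm \alpha$, where I set $\alpha := \eta * \eta * \eta$. Note that $\eta^{*3m} = \alpha^{*m}$, so the target inequality reads $|\hhat{\alpha^{*m}}(\xi)|^{2^{m}} \leq |\hhat{\mu^{*m}}(\xi)|$, and the base case $m=1$ is in fact an equality because $\hhat{\mu}$ is real non-negative.

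I would introduce the interpolating family $\nu^{(k)} := \alpha^{*(m-k)} * \mu^{*k}$ for $k = 0, 1, \dots, m$, so that $\nu^{(0)} = \eta^{*3m}$ and $\nu^{(m)} = \mu^{*m}$. Concretely, $\nu^{(k)}$ is the law of the product $Y_{1} \cdots Y_{m-k} W_{k} \cdots W_{1}$ with independent $Y_{i} \sim \alpha$ and $W_{j} \sim \mu$. The main step is to show that for every $0 \leq k \leq m-1$,
\[
\bigl|\hhat{\nu^{(k)}}(\xi)\bigr|^{2} \leq \hhat{\nu^{(k+1)}}(\xi).
\]
To prove this, I would condition on every variable except the last $\alpha$-factor $Y_{m-k}$; set $A = Y_{1} \cdots Y_{m-k-1}$ and $B = W_{k} \cdots W_{1}$; and define the pulled-back linear form $\xi_{A,B} \in E^*$ by $\bracket{\xi_{A,B}, y} = \bracket{\xi, AyB}$. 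Then $\hhat{\nu^{(k)}}(\xi) = \Ebb_{A,B}\bigl[\hhat{\alpha}(\xi_{A,B})\bigr]$. Since the joint law of $(A,B)$ has total mass at most $\alpha(E)^{m-k-1}\mu(E)^{k} \leq 1$, Cauchy--Schwarz gives $|\hhat{\nu^{(k)}}(\xi)|^{2} \leq \Ebb_{A,B}\bigl[|\hhat{\alpha}(\xi_{A,B})|^{2}\bigr]$, and the identity $|\hhat{\alpha}|^{2} = \hhat{\mu}$ converts the right-hand side into $\Ebb_{A,B}\bigl[\hhat{\mu}(\xi_{A,B})\bigr]$. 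Unfolding this last expression against an independent $W \sim \mu$ recovers exactly $\hhat{\nu^{(k+1)}}(\xi)$, with $W$ playing the role of $W_{k+1}$.

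A short induction on $k$ then shows $\hhat{\nu^{(k)}}(\xi) \geq 0$ for every $k \geq 1$ (being expressed as a non-negative expectation), so squaring the main inequality iteratively yields
\[
\bigl|\hhat{\nu^{(0)}}(\xi)\bigr|^{2^{m}} \leq \hhat{\nu^{(m)}}(\xi) = \bigl|\hhat{\mu^{*m}}(\xi)\bigr|,
\]
which is the claim.

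The argument is elementary and I do not anticipate any serious obstacle: the only subtlety is careful bookkeeping of the non-commutative multiplication order. The $\alpha$-factor being integrated out is squeezed between an \enquote{$A$-side} on the left and a \enquote{$B$-side} on the right, and at each step a fresh $\mu$-variable is inserted in precisely the same position, so the product structure $\alpha^{*(m-k-1)} * \mu^{*(k+1)}$ comes out on the nose. In particular no semisimplicity or any structure on $E$ beyond associativity is needed, in line with the hypotheses of the lemma.
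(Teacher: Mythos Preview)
Your proof is correct and follows essentially the same approach as the paper: both argue along the interpolating chain between $\alpha^{*m}$ and $\mu^{*m}$ (the paper writes $\mu^{*k}*\alpha^{*(m-k)}$ while you write $\alpha^{*(m-k)}*\mu^{*k}$, which is immaterial), and the key step $\bigl|\hhat{\nu^{(k)}}(\xi)\bigr|^2 \leq \hhat{\nu^{(k+1)}}(\xi)$ is precisely the Cauchy--Schwarz inequality $\bigl|(\eta*\eta'*\eta'')^\wedge(\xi)\bigr|^2 \leq \bigl(\eta*(\eta'\mm\eta')*\eta''\bigr)^\wedge(\xi)$ that the paper quotes as \cite[Lemma~B.6]{HLL2021}. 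Your version is a bit more self-contained since you spell out the conditioning argument rather than citing the external lemma.
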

\begin{proof}
By \cite[Lemma B.6]{HLL2021}, if $\eta$, $\eta'$, $\eta''$ are probability measures on $E$, then the Fourier transform of $\eta * (\eta' \mm \eta') * \eta''$ takes non-negative real values and moreover,
\[
\forall \xi \in E^*,\quad \abs{(\eta * \eta' * \eta'')^{\wedge}(\xi)}^{2} \leq \bigl(\eta * (\eta' \mm \eta') * \eta''\bigr)^{\wedge}(\xi).
\]
By a simple scaling argument we see that the same holds when $\eta$, $\eta'$, $\eta''$ are finite measures with total mass $\eta(E), \eta'(E), \eta''(E) \leq 1$.  
Using this inequality $m$ times with measure $\eta'=\eta^{*3}$, so that $\mu=\eta'\mm\eta'$, we get
\begin{align*}
\hhat{\mu^{*m}}(\xi) & \geq \abs{\widehat{\mu^{*(m-1)}*\eta^{*3}}(\xi)}^2
 \geq \dots
 \geq \abs{\widehat{\eta^{*3m}}(\xi)}^{2^m}.
\end{align*}
\end{proof}

We shall also need a lemma on Fourier decay for multiplicative convolutions of measures with small $L^2$-norm.
In the case where $E=\R$, such bounds originate in the work of Falconer~\cite{falconer} on projection theorems, and appear explicitly in Bourgain~\cite[Theorem~7]{Bourgain2010}.
The result below is taken from \cite[Lemma 2.9]{HS2019}.

\begin{lem}
\label{lm:L2Fourier}
Let $E$ be a finite-dimensional real associative algebra with unit.
The following holds for any parameters $\kappa > 0$ and $\eps > 0$ and any scale $\delta > 0$ small enough. 
Let $\eta$ and $\nu$ be probability measures on $E$. Assume 
\begin{enumerate}
\item $\norm{\eta}_{2,\delta}^2 \leq \delta^{-\kappa}$,
\item $\Supp\eta\subset \Ball(0,\delta^{-\eps})$ and $\Supp\nu \subset \Ball(0,\delta^{-\eps})$,
\item for every proper affine subspace $W \subset E$, $\nu(W^{(\delta)}) \leq \delta^{2\kappa}$.
\end{enumerate}
Then for $\xi \in E^*$ with $\delta^{-1+\eps} \leq \norm{\xi} \leq \delta^{-1-\eps}$,
\[
\abs{\widehat{\eta \ff \nu}(\xi)} \leq \delta^{\frac{\kappa}{\dim E + 3} - O(\eps)}.
\]
\end{lem}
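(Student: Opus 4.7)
The plan is to adapt Bourgain's one-dimensional Fourier-decay argument (\cite[Theorem~7]{Bourgain2010}) to the algebra $E$. Applying Cauchy--Schwarz to the probability measure $\nu$ gives
\[
\abs{\widehat{\eta \ff \nu}(\xi)}^2 \leq \int_E \abs{\hat\eta(\xi \circ R_y)}^2 \, \dd\nu(y),
\]
where $R_y \colon x \mapsto xy$ denotes right multiplication. Since $\norm{\xi \circ R_y} \leq \delta^{-1 - O(\eps)}$ for $y \in \Supp \nu \subset \Ball(0, \delta^{-\eps})$, and $\abs{\widehat{P_\delta}}$ is bounded below by $\delta^{O(\eps)}$ in this Fourier range, I can replace $\hat\eta$ by $\widehat{\eta_\delta}$ at the cost of a factor $\delta^{-O(\eps)}$. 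Using the identity $\abs{\widehat{\eta_\delta}}^2 = \widehat{\eta_\delta \mm \eta_\delta}$ and Fubini, the resulting quantity rewrites as
\[
\int_E (\eta_\delta \mm \eta_\delta)(u) \, \hat\nu(\xi \circ L_u) \, \dd u,
\]
where $L_u \colon y \mapsto uy$.

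A second Cauchy--Schwarz, this time in $u$, combined with $\norm{\eta_\delta \mm \eta_\delta}_2 \leq \norm{\eta_\delta}_2 \leq \delta^{-\kappa/2}$ (Young's inequality), reduces the task to estimating
\[
I := \int_{\Ball(0, 2\delta^{-\eps})} \abs{\hat\nu(\xi \circ L_u)}^2 \, \dd u.
\]
I smooth $\nu$ to $\nu_\delta$ (valid in our Fourier range), apply Plancherel $\int \abs{\widehat{\nu_\delta}}^2 = \norm{\nu_\delta}_2^2 \leq \norm{\nu_\delta}_\infty$, and bound $\norm{\nu_\delta}_\infty \leq \delta^{2\kappa - \dim E}$ using the pointwise non-concentration $\nu(\Ball(\cdot, \delta)) \leq \delta^{2\kappa}$ given by hypothesis~(iii) applied to a singleton. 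If the linear map $\Psi_\xi \colon E \to E^*$, $u \mapsto \xi \circ L_u$, were invertible with $\abs{\det \Psi_\xi} \asymp \delta^{-\dim E}$, a change of variables would yield $I \lesssim \delta^{2\kappa - O(\eps)}$, and assembling everything would give the Bourgain-style bound $\abs{\widehat{\eta \ff \nu}(\xi)} \leq \delta^{\kappa/4 - O(\eps)}$.

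The main obstacle is that $\Psi_\xi$ need not be invertible. Note however that $\Psi_\xi(1) = \xi \neq 0$, so $\ker \Psi_\xi$ is always a proper linear subspace of $E$. To treat degenerate $\xi$, I decompose the $u$-integral into a $\rho$-neighborhood of $\ker \Psi_\xi$ and its complement. The first piece is controlled by the affine non-concentration of $\nu$: hypothesis~(iii) applied to $W = \ker \Psi_\xi$, extended from scale $\delta$ to scale $\rho \geq \delta$ by a standard covering argument. On the complement, $\Psi_\xi$ acts non-degenerately at scale $\rho$ on the quotient, and a change of variables combined with Plancherel gives a usable estimate. Optimising $\rho$ against the worst-case rank of $\Psi_\xi$ produces the claimed exponent $\kappa/(\dim E + 3)$, which specialises to $\kappa/4$ when $\dim E = 1$, recovering Bourgain's bound. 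The dimensional loss reflects the fact that in higher dimensions $\xi$ can be poorly adapted to the algebra multiplication, forcing the weaker estimate.
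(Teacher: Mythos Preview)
The paper does not prove this lemma; it simply quotes \cite[Lemma~2.9]{HS2019}. So there is no in-paper proof to compare against, and the question is whether your sketch stands on its own. Your overall two-Cauchy--Schwarz strategy is the standard one and is on the right track, but the treatment of degenerate $\xi$ contains a real gap.

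You decompose the $u$-integral $I=\int_{\Ball(0,2\delta^{-\eps})}|\hat\nu(\Psi_\xi(u))|^2\,\dd u$ into a $\rho$-neighbourhood of $\ker\Psi_\xi$ and its complement, and then claim the first piece is controlled by hypothesis~(iii) applied to $W=\ker\Psi_\xi$. But the $u$-integral is against \emph{Lebesgue} measure, not $\nu$; hypothesis~(iii) is a statement about $\nu$. For $u$ close to $\ker\Psi_\xi$ the frequency $\Psi_\xi(u)$ is small, so $|\hat\nu(\Psi_\xi(u))|$ is close to $1$, and the contribution of this piece is simply the Lebesgue volume of a tube --- that is not where (iii) enters. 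Worse, on the complement you still need the Jacobian of the change of variables $\zeta=\Psi_\xi(u)$, and that Jacobian involves \emph{all} nonzero singular values of $\Psi_\xi$, not just its rank. Nothing in the hypotheses bounds the small nonzero singular values of $\Psi_\xi$ from below, so the Plancherel step on the complement is uncontrolled.

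The clean fix is to avoid inverting $\Psi_\xi$ altogether. After the second Cauchy--Schwarz, expand $|\hat\nu(\xi\circ L_u)|^2=\widehat{\nu\mm\nu}(\xi\circ L_u)$ and integrate in $u$ first against a smooth majorant $\chi_R\geq \1_{\Ball(0,2\delta^{-\eps})}$. Fubini gives
\[
I \leq \int_E \hat\chi_R(\xi\circ R_v)\,\dd(\nu\mm\nu)(v),
\]
and rapid decay of $\hat\chi_R$ reduces everything to bounding $(\nu\mm\nu)\bigl(\{v:\norm{\xi\circ R_v}\leq O(1)\}\bigr)$. The linear map $v\mapsto \xi\circ R_v$ has operator norm $\asymp\norm{\xi}$ (evaluate at $v=1_E$), so this set is contained in a slab of width $O(\norm{\xi}^{-1})\leq O(\delta^{1-\eps})$ around a single hyperplane $H$. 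Now Fubini in $(y,y')$ and hypothesis~(iii) applied to the affine hyperplanes $y'+H$ give $(\nu\mm\nu)(\text{slab})\leq \delta^{2\kappa-O(\eps)}$. Only the \emph{largest} singular value is used, so no degeneracy issue arises. Assembling, one in fact obtains $\abs{\widehat{\eta*\nu}(\xi)}\leq \delta^{\kappa/4 - O(\eps)}$ regardless of $\dim E$; the exponent $\kappa/(\dim E+3)$ quoted from \cite{HS2019} is sufficient for the paper's purposes but apparently not sharp.
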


We can finally derive Theorem~\ref{thm:fourier}.

\begin{proof}[Proof of Theorem~\ref{thm:fourier}]
\underline{First case: $\eta_1=\dots=\eta_s=\eta$.}\\
For a measure $\eta$ satisfying $\NC(\eps,\kappa,\tau)$ at scale $\delta$, we write $\ess(\eta)$ to denote the essential part of $\eta$, defined as a measure on $E$ satisfying
\begin{enumerate}
\item $\ess(\eta) \leq \eta$ and $\ess(\eta)(E) \geq \eta(E) - 3 \delta^\tau$,
\item $\ess(\eta)$ is supported on $\Ball(0,\delta^{-\eps}) \setminus S_E(\delta^\eps)$,
\item $\ess(\eta)$ satisfies $\ANC(\eps,\kappa)$ at scale $\delta$.
\end{enumerate}

The second and third conditions from $\NC_0$ are invariant under translation, so that if $\mu$ is a measure satisfying $\NC_0(\eps,\kappa,\tau)$ and $\nu$ any measure supported on $\Ball_E(0,\delta^{-\eps})$, then $\mu\pp\nu$ always satisfies $\NC_0(2\eps,\kappa,\tau)$.
Therefore, if $\eta$ and $\eta'$ satisfy $\NC(\eps,\kappa,\tau)$ at scale $\delta$, then $\eta \pp \eta'$ satisfy $\NC(O(\eps),\kappa,\frac{\tau}{2})$ at scale $\delta$, with essential part $\ess(\eta\pp\eta')=\ess(\eta)\pp\ess(\eta')$.
Similarly, $\eta \mm \eta'$ and $\eta \ff \eta'$ satisfy $\NC(O(\eps),\kappa,\frac{\tau}{2})$.
We may therefore define inductively $\eta_0 = \ess(\eta)$, and for $k \geq 0$,
\[
\eta_{k+1} = \ess\bigl( \eta_k^{*3} \mm \eta_k^{*3} \bigr),
\]
to get, for each $k \geq 0$,
\begin{enumerate}
\item $\eta_k(E) \geq 1 - \delta^{\frac{\tau}{O_k(1)}}$,
\item $\eta_k$ is supported on $\Ball(0,\delta^{-O_k(\eps)}) \setminus S_E(\delta^{O_k(\eps)})$,
\item $\eta_k$ satisfies $\ANC(O_k(\eps),\kappa)$ at scale $\delta$.
\end{enumerate}
Note that $\ANC(O_k(\eps),\kappa)$ implies
\[
\norm{\eta_k}_{2,\delta}^2 \leq \delta^{- \dim E + \kappa - O_k(\eps)}.
\]
Set $\kappa'=\frac{\kappa}{3}$.
By Proposition~\ref{pr:l2flat} applied with $\kappa'$ instead of $\kappa$, there exists $\eps_1 = \eps_1(E,\kappa')$
such that, provided $\eps>0$ is small enough, we have for each $0 \leq k \leq \left\lceil \frac{\dim E}{\eps_1}  \right\rceil$, either $\norm{\eta_k}^2_{2,\delta} \leq \delta^{-\kappa'}$ or 
\[\norm{\eta_{k+1}}_{2,\delta}^2 \leq \delta^{\eps_1} \norm{\eta_k}_{2,\delta}^2.\]
Hence there exists $s \leq \left\lceil \frac{\dim E}{\eps_1} \right\rceil$ such that 
\[
\norm{\eta_s}^2_{2,\delta} \leq \delta^{-\kappa'}.
\]
By Lemma~\ref{lm:L2Fourier} applied with $\kappa'$ instead of $\kappa$, for $\xi \in E^*$ with $\delta^{-1 + \eps} \leq \norm{\xi} \leq \delta^{-1 - \eps}$, 
\[
\absbig{\hhat{\eta_s^{*2}}(\xi)} \leq \delta^{\frac{\kappa'}{O(1)} -O(\eps)}
	\leq \delta^{\frac{\kappa'}{O(1)}}
	\leq \delta^{\tau},
\]
provided $\eps$ is chosen sufficiently small.
Now, a first application of Lemma~\ref{lm:eta3mmeta3} to $\mu=\eta_{s-1}^{*3}\mm\eta_{s-1}^{*3}$ with $m=2$ yields
\[
\hhat{\eta_s^{*2}}(\xi)+\delta^{\frac{\tau}{O_s(1)}}
	\geq \hhat{\mu^{*2}}(\xi)
	\geq \abs{\hhat{\eta_{s-1}^{*2\cdot 3}}(\xi)}^{2^2}.
\]
A second application of the same lemma to $\mu_1=\eta_{s-2}^{*3}\mm\eta_{s-2}^{*3}$ with $m=2\cdot 3$ gives
\[
\hhat{\eta_{s-1}^{*2\cdot 3}}(\xi) + \delta^{\frac{\tau}{O_s(1)}}
	\geq \hhat{\mu_1^{*2\cdot 3}}(\xi)
	\geq \abs{\hhat{\eta_{s-2}^{*2\cdot 3^2}}(\xi)}^{2^{2\cdot 3}}
\]
and repeating this process $s$ times, we finally obtain
\[
\absbig{\hhat{\eta_0^{*2 \cdot 3^s}}(\xi)}^{O_s(1)} \leq \absbig{\hhat{\eta_s^{*2}}(\xi)} + \delta^{\frac{\tau}{O_s(1)}}
\leq \delta^\tau + \delta^{\frac{\tau}{O_s(1)}} \leq \delta^{\frac{\tau}{O_s(1)}}.
\]
This allows to conclude:
\[
\absbig{\hhat{\eta^{*2 \cdot 3^s}}(\xi)} \leq \absbig{\hhat{\eta_0^{*2 \cdot 3^s}}(\xi)} + O_s(\delta^\tau) \leq \delta^{\eps \tau},
\]
provided $\eps$ is sufficiently small.
This proves the theorem, with parameter $s(E,\kappa)=2\cdot 3^s$.

\noindent{\underline{General case}}\\
To deduce the general case from the previous one, we follow \cite[Proof of Theorem~B.3]{HLL2021}.
In short, one applies the previous case to the measures
\[
\eta_\lambda = \lambda_1(\eta_1\mm\eta_1) + \dots + \lambda_s(\eta_s\mm\eta_s),
\]
where $\lambda=(\lambda_1,\dots,\lambda_s)$ in $\R_+^s$ is such that $\lambda_1+\dots+\lambda_s\leq 1$.
The Fourier decay for $\eta_1*\dots*\eta_s$ can be deduced from that of $\eta_\lambda*\dots*\eta_\lambda$ for every $\lambda$ using the fact that Fourier coefficients of $\eta_\lambda*\dots*\eta_\lambda$ can be written as polynomials in $\lambda$ whose coefficients are essentially Fourier coefficients of $\eta_1*\dots*\eta_s$.
The reader is referred to \cite{HLL2021} for details.
\end{proof}

We conclude this section by showing that the conclusion of Theorem~\ref{thm:fourier} still holds if the non-concentration assumption is only satisfied for some additive convolution of the measures $\eta_i$, $i=1,\dots,s$.
This will be useful when we study Fourier decay of random walks on linear groups.

\begin{coro}
\label{cor:fourier}
Let $E$ be a normed finite-dimensional semisimple algebra over $\R$.
Given $D\in\N^*$ and $\kappa > 0$, there exists $s = s(E,\kappa) \in \N$ and $\eps = \eps(E,\kappa,D) > 0$ such that for any parameter $\tau \in {(0, \eps \kappa)}$ the following holds for any scale $\delta > 0$ sufficiently small.

If $\eta_i$, $i=1,\dots,s$ are probability measures on $E$ such that each $\eta_i^{\pp D}$ satisfies $\NC(\eps,\kappa,\tau)$ at scale $\delta$,
then for all $\xi \in E^*$ with $\delta^{-1 + \eps} \leq \norm{\xi} \leq \delta^{-1 - \eps}$,
\[
\abs{(\eta_1 * \dotsm * \eta_s)^{\wedge}(\xi)} \leq \delta^{\eps \tau}.
\]
\end{coro}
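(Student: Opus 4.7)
The plan is to reduce the corollary to the general case of Theorem~\ref{thm:fourier} applied not to the $\eta_i$ themselves but to the symmetrized measures
\[
\bar\mu_i := \eta_i^{\pp D}\mm\eta_i^{\pp D} = (\eta_i\mm\eta_i)^{\pp D},
\]
whose Fourier transform is the pointwise power $|\widehat{\eta_i}|^{2D}\geq 0$. A direct conditioning argument on one of the two i.i.d.\ copies appearing in the difference shows that $\eta\mm\eta$ inherits $\NC$ from $\eta$ (up to constants in $\eps$ and $\tau$), so each $\bar\mu_i$ satisfies $\NC(O(\eps),\kappa,O(\tau))$ at scale $\delta$. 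Applying the general case of Theorem~\ref{thm:fourier} with $s=s(E,\kappa)$ therefore yields, for every $\xi$ in the required annulus,
\[
|\widehat{\bar\mu_1*\dotsm*\bar\mu_s}(\xi)| \leq \delta^{\eps\tau}.
\]

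To transfer this bound to $|\widehat{\eta_1*\dotsm*\eta_s}(\xi)|$, I would interpose two iterated inequalities, each applied one position at a time. The first is a Jensen-type upgrade. Using
\[
\widehat{A*B*C}(\xi) = \int \widehat{B}(L_a^* R_c^*\xi)\,\mathrm{d}(A\otimes C)(a,c)
\]
(with $L_a$ and $R_c$ denoting left and right multiplication on $E$) and applying Jensen's inequality $\bigl(\int f\,\mathrm{d}P\bigr)^D \leq \int f^D\,\mathrm{d}P$ with $P = A\otimes C$ to the non-negative integrand $f = |\widehat{\eta_{k+1}}(L_a^* R_c^*\xi)|^2$, one obtains
\[
\widehat{A*(\eta_{k+1}\mm\eta_{k+1})*C}(\xi)^D \leq \widehat{A*\bar\mu_{k+1}*C}(\xi).
\]
Iterating over $k = 0,1,\dotsc,s-1$ with the choices $A = \bar\mu_1*\dotsm*\bar\mu_k$ and $C = (\eta_{k+2}\mm\eta_{k+2})*\dotsm*(\eta_s\mm\eta_s)$, which remain probability measures throughout, telescopes to
\[
\widehat{(\eta_1\mm\eta_1)*\dotsm*(\eta_s\mm\eta_s)}(\xi) \leq \widehat{\bar\mu_1*\dotsm*\bar\mu_s}(\xi)^{1/D^s}.
\]
The second is iterated Cauchy--Schwarz, exactly in the spirit of Lemma~\ref{lm:eta3mmeta3}, symmetrizing each $\eta_i$ in turn via the basic inequality $|\widehat{\eta*\eta'*\eta''}(\xi)|^2 \leq \widehat{\eta*(\eta'\mm\eta')*\eta''}(\xi)$, giving
\[
|\widehat{\eta_1*\dotsm*\eta_s}(\xi)|^{2^s} \leq \widehat{(\eta_1\mm\eta_1)*\dotsm*(\eta_s\mm\eta_s)}(\xi).
\]

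Chaining the three displays yields $|\widehat{\eta_1*\dotsm*\eta_s}(\xi)|^{2^sD^s} \leq \delta^{\eps\tau}$, from which the conclusion follows by choosing $\eps' = \eps'(E,\kappa,D) \leq \eps(E,\kappa)/(2^sD^s)$. The main technical point is to keep track of the fact that the intermediate factors $A$ and $C$ remain probability measures at every step of the Jensen upgrade, so that the inequality applies cleanly; this is automatic because $\eta_i\mm\eta_i$ and $\bar\mu_i$ each have total mass $\eta_i(E)^2 = 1$ and multiplicative convolution preserves total mass. Beyond this bookkeeping, no new ingredient is needed past Theorem~\ref{thm:fourier} itself.
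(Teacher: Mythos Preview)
Your proof is correct and follows essentially the same route as the paper's. The paper applies Theorem~\ref{thm:fourier} to the measures $\eta_i^{\pp D}\mm\eta_i^{\pp D}$ and then invokes \cite[Lemma~B.6]{HLL2021} ``$s$ times'' to obtain the exponent loss $(2D)^s$; your explicit two-stage breakdown---Cauchy--Schwarz giving the factor $2^s$ and the Jensen upgrade giving the factor $D^s$---is precisely what that citation unpacks to, so the arguments coincide.
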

\begin{proof}
Let $\xi\in E^*$ with $\delta^{-1+\eps}\leq\norm{\xi}\leq\delta^{-1-\eps}$.
Since all the measures $\eta_i^{\pp D}\mm\eta_i^{\pp D}$, $i=1,\dots,s$ satisfy $\NC(\eps,\kappa,\tau)$ at scale $\delta$, Theorem~\ref{thm:fourier} shows that
\[
\abse{\big((\eta_1^{\pp D}\mm\eta_1^{\pp D}) * \dotsm * (\eta_s^{\pp D}\mm\eta_s^{\pp D})\big)^{\wedge}(\xi)} \leq \delta^{\eps \tau}.
\]
Applying \cite[Lemma~B.6]{HLL2021} repeatedly $s$ times, we see that
\begin{align*}
& \abse{\big((\eta_1^{\pp D}\mm\eta_1^{\pp D}) * \dotsm * (\eta_s^{\pp D}\mm\eta_s^{\pp D})\big)^{\wedge}(\xi)}\\
& \qquad\qquad \geq \abse{\big((\eta_1^{\pp D}\mm\eta_1^{\pp D}) * \dotsm * (\eta_{s-1}^{\pp D}\mm\eta_{s-1}^{\pp D})*\eta_s\big)^{\wedge}(\xi)}^{2D}\\
& \qquad\qquad \geq \dots\\
& \qquad\qquad \geq \abs{(\eta_1 * \dotsm * \eta_s)^{\wedge}(\xi)}^{(2D)^s}
\end{align*} 
so that 
\[
\abs{(\eta_1 * \dotsm * \eta_s)^{\wedge}(\xi)} \leq \delta^{\frac{\eps \tau}{(2D)^s}}.
\]
\end{proof}

\section{Non-concentration for random walks on semisimple groups}
\label{sec:nonconcentration}

In this section we consider a probability measure $\mu$ on $\SL_d(\Z)$, and we prove some non-concentration property for the law of the associated random walk, viewed as a measure on the algebra generated by $\mu$.  
Let $\Gamma$ be the group generated by the support of $\mu$ and $G$ be the Zariski closure of $\Gamma$ in $\SL_d(\R)$.
We assume that $G$ is semisimple and Zariski connected.

\bigskip

Let $E$ denote the $\R$-linear span of $G$ in $\Mat_d(\R)$, which is also the subalgebra generated by $G$ in $\Mat_d(\R)$.
Since $G$ is semisimple, one may decompose $E$ into a direct sum of irreducible $G$-modules.
This gives a decomposition of $E$ into minimal left ideals, so that by the fundamental theorem of semisimple rings \cite[\S 117]{VanderWaerden}, $E$ is a semisimple algebra.
Let 
\begin{equation}\label{tesum}
E = E_1 \oplus \dots \oplus E_r
\end{equation}
be the decomposition of $E$ into simple factors, i.e. into minimal two-sided ideals.
For $j = 1,\dotsc,r$, let $\pi_j \colon E \to E_j$ denote the corresponding projections.
Consider the top Lyapunov exponent associated to $\mu$ on each of the factors $E_j$, defined by
\[
\lambda_1(\mu,E_j) = \lim_{n \to +\infty} \frac{1}{n} \int \log \norm{\pi_j(g)} \dd \mu^{*n}(g).
\]
In order to study the law at time $n$ of the random walk, we shall use the rescaling automorphism $\phi_n\colon E\to E$ defined by
\begin{equation}
\label{eq:phin}
\phi_n(g) = \sum_{j=1}^r e^{-n \lambda_1(\mu,E_j)} \pi_j(g).
\end{equation}

Recall that by Furstenberg's theorem \cite{Furstenberg1963} on the positivity of the Lyapunov exponent, one has $\lambda_1(\mu,E_j)\geq 0$ with equality if and only if $\pi_j(G)$ is compact.
After reordering the factors, we may assume that $\lambda_1(\mu,E_j) > 0$ if and only if $j \leq s$ for some integer $s \leq r$. 
Let $E' = E_1 \oplus \dots \oplus E_s$ and $\pi' \colon E \to E'$ the corresponding projection.
Finally, for $n\geq 1$ we define
\[
\mu_{n} = (\pi' \circ \phi_n)_*(\mu^{*n}).
\]
The goal of this section is as follows.

\begin{prop}[Non-concentration]
\label{pr:nc2}
Let $\mu$ be a probability measure on $\SL_d(\Z)$ having a finite exponential moment.
Let $G$ denote the algebraic group generated by $\mu$.
Assume that $G$ is semisimple and Zariski connected, and denote by $E\subset \Mat_d(\R)$ the algebra generated by $G$.
Writing $D = \dim E$, there exists $\kappa = \kappa(\mu) > 0$ such that for any $\eps > 0$ there exists $\tau > 0$ such that
$\mu_{n}^{\pp D}$ satisfies $\NC(\eps,\kappa,\tau)$ at scale $e^{-n}$ in $E'$ for all $n$ sufficiently large.
\end{prop}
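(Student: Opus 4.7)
The plan is to verify each of the three conditions defining $\NC_0(\eps,\kappa,\tau)$ separately for $\mu_n^{\pp D}$, at the cost of subtracting from $\mu_n^{\pp D}$ an exceptional piece $\eta_1$ of total mass $\leq \delta^\tau$. The support condition is the easiest: Le Page's large deviation theorem for products of random matrices under the finite exponential moment assumption gives, for each simple factor $E_j$ and each $\alpha > 0$,
\[
\mu^{*n}\bigl\{g : \norm{\pi_j(g)} > e^{n(\lambda_1(\mu,E_j) + \alpha)}\bigr\} \leq e^{-c(\alpha)n}.
\]
By the very definition of $\phi_n$ this yields $\norm{(\pi' \circ \phi_n)(g)} \leq \delta^{-\alpha}$ outside an exponentially small set, and choosing $\alpha = \eps/D$ before passing to the $D$-fold additive convolution gives $\Supp \mu_n^{\pp D} \subset \Ball(0,\delta^{-\eps})$ up to mass $\leq \delta^\tau$.

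The heart of the proof is the affine non-concentration. I would argue by contradiction: suppose that
\[
\mu_n^{\pp D}(\Nbd(W,\rho)) > \delta^{-\eps}\rho^\kappa
\]
for some proper affine subspace $W \subset E'$ and some $\rho \geq \delta = e^{-n}$. Lifting through $\phi_n^{-1} \circ (\pi')^{-1}$, this amounts to saying that a positive fraction of $\mu^{*nD}$ remains close to an affine variety inside $E$, the distance being measured in the anisotropic quasi-norm $\abs{v} = \max_j \norm{\pi_j(v)}^{1/\lambda_1(\mu,E_j)}$ that incorporates the different natural scales on the different factors. The plan is then to reduce modulo a carefully chosen squarefree integer $q$ with $\log q \asymp n$, so that the reduction modulo $q$ precisely matches the working scale $\delta$, and to derive a contradiction from the spectral gap modulo primes of Salehi Golsefidy-Varjú~\cite{SGV}: the assumed concentration would force $\mu^{*n} \mymod q$ to cluster in a proper subset of $E/qE$, contradicting the equidistribution of the random walk in $\Gamma/\Gamma(q)$ at rate $e^{-cn}$.

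The main obstacle is implementing this reduction in the presence of the anisotropic rescaling $\phi_n$. When $E$ is simple, there is a single Lyapunov exponent and $\phi_n$ is a genuine homothety, so one can pick $q$ of size essentially $e^{n\lambda_1(\mu,E)}$ and proceed much as in \cite{HS2019}; in the general semisimple case one must work on each factor $E_j$ at its own natural scale $e^{n\lambda_1(\mu,E_j)}$ and combine the estimates through a Fubini-type argument carefully synchronised with the quasi-norm above. Following the outline of the paper, I would first carry out this patching argument in the case where $E$ is simple (which is already simpler than the argument in \cite{HS2019}) and then extend it to the general semisimple case, treating the factors $E_j$ one at a time.

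Finally, the non-concentration near singular elements (condition (ii)) reduces to the affine non-concentration just established. The set $x + S_{E'}(\delta^\eps)$ is the $\delta^\eps$-sublevel set of the polynomial $y \mapsto \det_{E'}(y - x)$, of degree $\dim E'$ with non-vanishing top-degree part, and a standard polynomial sublevel-set covering argument à la Yomdin-Remez writes it as a controlled union of tubes of radius $\rho$ polynomial in $\delta^\eps$ around affine hyperplanes of $E'$. Applying affine non-concentration to each tube and summing yields the required bound $\mu_n^{\pp D}(x + S_{E'}(\delta^\eps)) \leq \delta^\tau$ uniformly in $x$, provided $\eps$ was chosen small enough relative to $\kappa$ and $\dim E'$. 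Combining the three parts, $\mu_n^{\pp D}$ satisfies $\NC(\eps,\kappa,\tau)$ at scale $e^{-n}$.
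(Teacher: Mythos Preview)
Your treatment of the support condition via large deviations is correct and matches the paper. For the affine non-concentration, your mod-$q$ reduction is in the spirit of \cite{HS2019} and can presumably be made to work, but the paper takes a different route: it uses \cite{SGV} with Lang--Weil to get exponential escape from \emph{zero sets} of affine functions on $G$, upgrades this via the integer structure of $\Gamma$ to a Diophantine estimate $\mu^{*n}\{g : \abs{f(g)} \leq e^{-Cn}\norm{f}\} \ll e^{-cn}$, specialises to $f(g) = \phi_W(gv)$, and then rescales to cover all $\rho \geq e^{-n}$. In the non-simple case the paper isolates two quasi-norm lemmas (Lemmas~\ref{lm:norm2} and~\ref{lm:qn}) to carry this through, in place of your ``Fubini patching across factors''.

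The genuine gap is in your treatment of condition~(ii). You claim that singular non-concentration follows from affine non-concentration via a Yomdin--Remez covering of $\{y : \abs{\det\nolimits_{E'}(y-x)} \leq \delta^\eps\}$ by tubes around affine hyperplanes. But such a covering argument is measure-agnostic: if it were valid it would show that \emph{any} measure satisfying~(iii) also satisfies~(ii). This is false. As the paper remarks just after the statement of Proposition~\ref{pr:nc2}, the un-convolved measure $\mu_n$ fails~(ii) --- for $G = \SL_2$, $E = \Mat_2(\R)$ one has $\det\nolimits_E(\phi_n(g)) = e^{-4\lambda_1(\mu,E) n}$ for every $g \in G$, so $\mu_n\bigl(S_E(\delta^\eps)\bigr) = 1$ once $\eps < 4\lambda_1(\mu,E)$ --- even though $\mu_n$ \emph{does} satisfy~(iii) by Proposition~\ref{anc}. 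The covering itself also fails quantitatively: the annulus $\{\abs{y_1^2+y_2^2-1}\leq t\}$ needs $\asymp t^{-1/4}$ strips of width $t^{1/2}$, and the bound $t^{-1/4}\cdot t^{\kappa/2}$ obtained from~(iii) only decays if $\kappa > \tfrac{1}{2}$, whereas the $\kappa$ of Proposition~\ref{anc} is just some small spectral-gap constant.

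The paper therefore proves~(ii) by an independent argument (Lemma~\ref{lm:detE'}). It invokes \cite[Proposition~3.17]{HS2019}, a degree-$D$ analogue of the Diophantine estimate above, together with \cite[Lemma~3.18]{HS2019}, which shows that the top-Lyapunov projection of the polynomial $(g_1,\dotsc,g_D)\mapsto\det\nolimits_{E_j}\bigl(\pi_j(g_1+\dotsb+g_D)-y\bigr)$ on $G^D$ is bounded below by $\norm{\det\nolimits_{E_j}}$ uniformly in $y$. This is exactly where the $D$-fold additive convolution enters, and it is not captured by your reduction.
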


The readers can easily convince themselves that $\mu_n$ does not satisfy $\NC(\eps,\kappa,\tau)$, especially the non-concentration condition near singular matrices.
Hence taking an additive convolution power is necessary.

\subsection{Non-concentration near affine subspaces}
In this subsection we show that if $\mu$ is a probability measure on $\SL_d(\Z)$ generating a connected semisimple algebraic group $G$, the law at time $n$ of the random walk associated to $\mu$ is not concentrated near proper affine subspaces of the algebra generated by $\mu$.

We introduce a quasi-norm adapted to the random walk on the algebra $E$ generated by $\mu$.
Given an element $g$ in $E$, we write $g=\sum_{i=1}^r g_i$ according to the direct sum decomposition \eqref{tesum} and set
\[
\qnorm{g} = \max_{1\leq i\leq s}\norm{g_i}^{\frac{1}{\lambda_1(\mu,E_i)}}.
\]
Note that $\abs{g}=0$ if and only if $g$ lies in the sum
\[
E_0:=E_{s+1}\oplus\dots\oplus E_r
\]
of all compact factors.
We denote by $\dtil$ the quasi-distance on $E$ given by $\dtil(x,y)=\qnorm{x-y}$.
For instance, if $W$ is any affine subspace of $E$, we write
\[
\dtil(g,W) = \inf_{w\in W}\qnorm{g-w}.
\]
Our goal is the following proposition.

\begin{prop}[Affine non-concentration on $E$]
\label{anc}
Let $\mu$ be a probability measure on $\SL_d(\Z)$ having a finite exponential moment.
Let $G$ denote the algebraic group generated by $\mu$.
Assume that $G$ is semisimple and Zariski connected, and denote by $E\subset \Mat_d(\R)$ the algebra generated by $G$.

There exists $\kappa=\kappa(\mu)>0$ such that for every $n \geq 0$ and  $\rho \geq e^{-n}$, for every affine hyperplane $W \subset E$ such that $W-W\supset E_0$,
\[
\mu^{*n}\bigl(\setbig{g \in G}{\dtil(g,W) < \rho \min_{j \in J_W} \qnorm{\pi_j(g)}} \bigr) \ll \rho^\kappa
\]
where $J_W = \set{1 \leq j \leq r}{V_j \not\subset W - W}$.
\end{prop}

\begin{remark}
In general, it is not possible to replace the minimum $\min_{j\in J_W}\qnorm{\pi_j(g)}$ by $\qnorm{g}$.
This can be seen for example by taking $G=G_1\times G_1$ and $\mu=\mu_1\otimes\mu_1$; in other words, the random walk is the direct product of two independent copies of a random walk on $G_1$.
By the central limit theorem for random matrix products~\cite[Theorem~5.1, page 121]{BougerolLacroix}, the probability to obtain at time $n$ an element $g=(g_1,g_2)$ such that $\norm{g_1}\leq e^{-\sqrt{n}}\norm{g_2}$ has a positive limit $c$. 
Therefore, for large $n$,
\[
\mu^{*n}\bigl(\setbig{g=(g_1,g_2) \in G}{\norm{g_1} < e^{-\sqrt{n}}\norm{g}} \bigr) \geq \frac{c}{2}.
\]
and taking $W=\{0\}\times \Span_\R(G_1)$, we find
\[
\mu^{*n}\bigl(\setbig{g \in G}{\dtil(g,W) < e^{-\sqrt{n}}\qnorm{g}}) \geq \frac{c}{2}.
\]
\end{remark}


\subsubsection{The case of a simple algebra}
For clarity, we first explain the proof of Proposition~\ref{anc} when the algebra $E$ generated by $G$ is simple.
In that case, the quasi-norm is a norm on $E$ and $\min_{j\in J_W}\qnorm{\pi_j(g)}=\norm{g}^{\frac{1}{\lambda_1}}$.
The key result in the proof is the following proposition, which we shall later apply to the irreducible action of $G\times G$ on $E$.

\begin{prop}
\label{ncirr}
Let $\mu$ be a probability measure on $\SL_d(\Z)$ with a finite exponential moment.
Assume that the algebraic group $G$ generated by $\mu$ is Zariski connected and acts irreducibly on $V=\R^d$.
There exists $\kappa=\kappa(\mu)$ such that for every $v\in V$, and any affine hyperplane $W \subset V$,
\[
\mu^{*n}\bigl(\set{g\in G}{d(gv,W)\leq\rho\norm{gv}}\bigr) \ll \rho^\kappa.
\]
\end{prop}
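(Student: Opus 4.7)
The plan is to combine Guivarc'h's Hölder regularity theorem for the Furstenberg stationary measure with the large-deviation theorem for random matrix products. Both ingredients are available under our hypotheses: since $\mu$ has a finite exponential moment and $G$ is a Zariski-connected semisimple subgroup of $\SL_d(\R)$ acting irreducibly on $V$, Furstenberg's theorem yields a positive top Lyapunov exponent $\lambda_1 = \lambda_1(\mu, V) > 0$, and Guivarc'h's regularity gives constants $\kappa_0 = \kappa_0(\mu) > 0$ and $C_0 = C_0(\mu)$ such that, uniformly in $n$, unit $u \in V$, and unit $\xi' \in V^*$,
\[
\mu^{*n}\bigl(\{g : \abs{\langle gu, \xi'\rangle} \leq \rho \norm{gu}\}\bigr) \leq C_0 \rho^{\kappa_0}. \quad(\star)
\]

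After rescaling, I may assume $v$ is a unit vector and that $W = \{w : \langle w, \xi\rangle = c\}$ for some unit $\xi \in V^*$ and $c \in \R$. The bound to prove is then on $\mu^{*n}\bigl(\{g : \abs{\langle gv, \xi\rangle - c} \leq \rho \norm{gv}\}\bigr)$. The affine offset $c$ is handled by splitting according to the size of $\norm{gv}$ relative to $|c|/\rho$. In the regime $\norm{gv} \geq 2|c|/\rho$, the condition $\abs{\langle gv, \xi\rangle - c} \leq \rho\norm{gv}$ forces $\abs{\langle gv, \xi\rangle} \leq \tfrac{3}{2}\rho\norm{gv}$, so $(\star)$ directly yields a bound $\ll \rho^{\kappa_0}$. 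In the complementary regime $\norm{gv} < 2|c|/\rho$, the large-deviation theorem for random matrix products provides $\beta > 0$ such that $\mu^{*n}(\{g : \norm{gv} \leq e^{n\beta}\}) \ll e^{-n\beta}$; this kills the regime whenever $2|c|/\rho \leq e^{n\beta}$, contributing at most $\ll e^{-n\beta}$, which is absorbed into $\rho^\kappa$ as long as $\rho \gtrsim e^{-n\beta/\kappa}$.

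The main obstacle is the intermediate regime where $|c|$ is comparable to the typical scale $e^{n\lambda_1}$: here neither $(\star)$ nor the crude large-deviation bound alone closes the estimate. To handle it, I would use the following stronger form of Guivarc'h's regularity (also available under our hypotheses): uniformly in $n$, unit $u, \xi$, and $t \in \R$,
\[
\mu^{*n}\bigl(\{g : \abs{\langle gu, \xi\rangle - t\norm{gu}} \leq \rho \norm{gu}\}\bigr) \leq C_0 \rho^{\kappa_0}.
\]
Granted this uniform statement, I would dyadically decompose $\{g : \norm{gv} \in [2^k, 2^{k+1}]\}$: on each such shell the condition translates to $\abs{\langle gv/\norm{gv}, \xi\rangle - c/\norm{gv}} \leq \rho$ with $c/\norm{gv}$ essentially fixed up to a small correction controlled by the shell width. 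Summing the resulting bound over the $O(n)$ shells that are typical under large deviations (the atypical ones being negligible), and absorbing the polynomial-in-$n$ factor into a slightly smaller exponent $\kappa < \kappa_0$, delivers the desired estimate $\ll \rho^\kappa$ for the relevant range of $\rho$ (namely $\rho \geq e^{-cn}$ for some $c > 0$, which is the range used in the application to Proposition~\ref{anc}).
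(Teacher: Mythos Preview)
Your approach is genuinely different from the paper's, and it has a real gap.

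\textbf{The paper's route.} The paper does not use any Guivarc'h-type regularity at all. Instead it uses the arithmetic structure of $\SL_d(\Z)$: first the spectral gap modulo primes \cite{SGV} together with Lang--Weil gives exponential escape from proper affine subvarieties of $G$; then a Diophantine argument (using that the walk has integer entries, so a linear map with integer matrix of bounded norm has a quantitative inverse) upgrades this to $\mu^{*n}(\{g:\abs{f(g)}\le e^{-Cn}\norm{f}\})\ll e^{-cn}$ for any affine functional $f$. Applying this to $f(g)=\phi_W(gv)$ and then scaling via $\mu^{*n}=\mu^{*m}*\mu^{*(n-m)}$ gives the claim. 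The affine offset $c$ causes no difficulty here, because the Diophantine step treats an arbitrary affine functional $f$ on $G$ directly.

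\textbf{The gap in your argument.} Your reduction from the affine case to the linear case via the ``stronger form''
\[
\mu^{*n}\bigl(\{g : \abs{\langle gu, \xi\rangle - t\norm{gu}} \leq \rho \norm{gu}\}\bigr) \leq C_0 \rho^{\kappa_0}
\]
uniformly in $t$ is not justified, and it does not follow from Guivarc'h's theorem. Guivarc'h's H\"older regularity controls the stationary measure near \emph{projective hyperplanes}; the set $\{u\in S^{d-1}:\langle u,\xi\rangle \in [t-\rho,t+\rho]\}$ is a thin spherical strip, which projects to a neighbourhood of a \emph{quadric} in $\mathbb{P}V$, not of a hyperplane. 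Regularity near such sets is a different statement that you would have to prove separately. Without it, your dyadic shell argument does not close in the critical regime $\abs{c}\asymp e^{n\lambda_1}$, because on each shell the target value $c/\norm{gv}$ is essentially an arbitrary point of $(-1,1)$ and you have no control over concentration there. (A secondary point: Guivarc'h's theorem in its classical form also requires proximality, which is not assumed here; the non-proximal analogues exist, e.g.\ in Benoist--Quint, but you should cite those rather than Guivarc'h.)

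\textbf{What each approach buys.} If the stronger regularity you invoke could be established, your argument would be more robust in that it would not use the integrality of $\Supp\mu$. The paper's proof, by contrast, leans essentially on $\Supp\mu\subset\SL_d(\Z)$ (through the Diophantine step and the spectral gap mod primes), but in return handles the affine offset for free and never needs any fine information about the angular distribution of $gv$.
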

\begin{proof}
\underline{First step: escape from affine subvarieties.}\\
We claim that there exists $c>0$ such that for every affine map $f$ on $E$ that is not identically zero on $G$,
\[
\mu^{*n}\bigl(\set{g\in G}{f(g)=0}\bigr) \ll e^{-cn}.
\]
Indeed, by \cite[Lemme~8.5]{bq1}, the group $G$ is semisimple.
So the desired inequality is a particular case of \cite[Proposition~3.7]{HS2019}, whose proof is a combination of the spectral gap property modulo prime integers~\cite{SGV} and the Lang-Weil estimates on the number of points on algebraic varieties in finite fields.

\noindent\underline{Second step: a small neighborhood via a Diophantine property.}\\
Let us show that there exist $C,c>0$ such that for every non-zero polynomial map $f$ of degree at most $1$ on $G$,
\[
\mu^{*n}\bigl(\set{g\in G}{\abs{f(g)}\leq e^{-Cn}\norm{f}}\bigr) \ll e^{-cn}.
\]
In the above, a polynomial map $f$ on $G$ is simply the restriction to $G$ of a polynomial map on $E$; it is said to have degree at most $D$ if it is the restriction of a polynomial map on $E$ of degree at most $D$.
We endow the finite-dimensional space of polynomial maps of degree at most $1$ with a fixed norm $\norm{\cdot}$, say $\norm{f}=\sup_{g\in G\cap B_E(1,1)}\abs{f(g)}$.
By the large deviation principle (see Theorem~\ref{thm:LargeD} below), there exists $c>0$ such that for all $n$ large enough, $\mu^{*n}(\{g\in G\ |\ \norm{g}>e^{2n\lambda_1(\mu,V)}\})\leq e^{-cn}$.
Therefore, to prove the desired inequality, it suffices to show that for $C\geq 0$ large enough, the subset $A_n \subset \Mat_d(\Z)$ defined as
\[
A_n=\setbig{g\in \Gamma}{\abs{f(g)}\leq e^{-Cn}\norm{f} \text{ and } \norm{g}\leq e^{2n\lambda_1(\mu,V)}}
\]
is included in $G\cap\ker\psi$ for some affine map $\psi:E\to\R$ not identically zero on $G$.

Suppose for a contradiction that this is not the case.
Letting $k=\dim\R_{\leq 1}[G]$ be the dimension of the space of polynomial maps on $G$ of degree at most $1$, we may choose $g_1,\dots,g_k$ in $A_n$ such that the linear map
\[
\begin{array}{lccc}
L \colon & \R_{\leq 1}[G] & \to & \R^k\\
& \psi & \mapsto & (\psi(g_1),\dots,\psi(g_k))
\end{array}
\]
 is bijective.
Since it has integer coefficients and norm at most $e^{C_0n}$, we get $\norm{L^{-1}}\leq e^{C_1n}$.
In particular, $\norm{f}\leq e^{C_1n}\norm{Lf}=e^{C_1n}\max_{1\leq i\leq k}\abs{f(g_i)} \leq e^{C_1n}e^{-Cn}\norm{f}$, which is the desired contradiction if $C>C_1$.

\noindent\underline{Third step: distance to proper subspaces.}\\
We claim that there exist $C,c>0$ such that for every $v\in V$ and every affine hyperplane $W \subset V$,
\[
\mu^{*n}(\{g\in G\ |\ d(gv,W)\leq e^{-Cn}\norm{v}\}) \ll e^{-cn}.
\]
Indeed, let $\phi_W:V\to\R$ be an affine map such that $\ker \phi_W=W$, and consider the affine map on $G$ given by
\[
f_{v,W}(g)=\frac{\phi_W(gv)}{\norm{\phi_W}}.
\]
Note that $\abs{f_{v,W}(g)} \asymp d(gv,W)$.
Let $B=\Ball_{G}(1,1)$ denote the unit ball centered at the identity in $G$.
Note that $\norm{v}\asymp \norm{f_{v,W}} = \sup_{g\in B} \abs{f_{v,W}(g)}$ within constants independent of $v$ and $W$.
Indeed, otherwise, we may find $v_n$ and $W_n$ such that $\sup_{g\in B} d(gv_n,W_n)\to 0$.
Extracting subsequences if necessary, we may assume that $v_n\to v$ and $W_n\to W$; then for every $g\in B$, $d(gv,W)=\lim d(gv_n,W_n)=0$.
This implies that $G\cdot v\subset W$ and contradicts the assumption that $G$ acts irreducibly on $V$.
The desired inequality therefore follows from the previous step.

\noindent\underline{Fourth step: scaling.}\\
First observe that increasing $C$ slightly, we can assume that for every $v\in V$ and every affine hyperplane $W \subset V$,
\[
\mu^{*n}\bigl(\set{g\in G}{d(gv,W)\leq e^{-Cn}\norm{gv}}\bigr) \ll e^{-cn}.
\]
Indeed, by the large deviation estimate, 
\[ \mu^{*n}\bigl(\set{g\in G}{\norm{gv} \leq e^{2\lambda_1 n}\norm{v} }\bigr) \geq 1-e^{-cn}\]
where $\lambda_1 = \lambda_1(\mu,V)$ is the top Lyapunov exponent of $\mu$.
To conclude, let $\kappa=\frac{c}{C}$, where $C,c>0$ are the constants obtained above.
Choose $m\in\N^*$ such that $\rho=e^{-Cm}$ and write
\begin{align*}
& \mu^{*n}\bigl(\set{g\in G}{d(gv,W)\leq\rho\norm{gv}}\bigr)\\
& \qquad\qquad = \int \mu^{*m}\bigl(\set{g\in G}{d(gg_1v,W)\leq e^{-Cm}\norm{gg_1v}}\bigr) \dd\mu^{*(n-m)}(g_1)\\
& \qquad\qquad \leq e^{-cm} = \rho^\kappa.
\end{align*}
\end{proof}

\begin{proof}[Proof of Proposition~\ref{anc}, case where $E$ is simple]
For $x\in E$, let $L_x:E\to E$ and $R_x:E\to E$ denote the left and right multiplication by $x$, respectively.
Given a probability measure $\mu$ on $G$, we define a probability measure $\tmu$ on $\GL(E)$ by
\[
\tmu = \frac{1}{2}L_*\mu + \frac{1}{2}R_*\mu.
\]
The group generated by $\tmu$ is isomorphic to $G\times G$ and acts irreducibly on $E$.
Moreover, in an appropriate basis, the elements of $\Supp\tmu$ have integer coefficients, so we may apply Proposition~\ref{ncirr} to $\tmu$, with vector $v=1_{E}$, the unit of $E$.
Note that if $\bar{g}$ is a random element distributed according to $\tmu^{*n}$, then $\bar{g}\cdot 1_{E}$ has law $\mu^{*n}$, and therefore we find, uniformly over all affine hyperplanes $W \subset E$,
\[
\mu^{*n}\bigl(\set{g\in G}{d(g,W)\leq\rho\norm{g}}\bigr) \ll \rho^\kappa,
\]
which is exactly the content of Proposition~\ref{anc} in the case where $E$ is simple.
\end{proof}

\subsubsection{General case}
The proof of Proposition~\ref{anc} in the general case follows the same strategy as in the simple case, but the argument becomes slightly more technical, because the norm on $E$ is replaced by a quasi-norm, and $E$ contains proper ideals.

To state the appropriate generalization of Proposition~\ref{ncirr}, we consider a probability measure $\mu$ on $\SL_d(\Z)$ with some finite exponential moment, and let $G$ be the algebraic group generated by $\mu$.
We assume that $G$ is Zariski connected and that the space $V=\R^d$ can be decomposed into a sum of irreducible representations of $G$:
\[
V = V_1 \oplus \dots \oplus V_r.
\]
We denote by $\pi_j\colon V\to V_j$, $j=1,\dots,r$ the corresponding projections.
To define a quasi-norm on $V$, we fix $\alpha = (\alpha_1,\dotsc,\alpha_s)$ an $s$-tuple of positive real numbers, where $s$ is some fixed integer $1\leq s\leq r$, and set
\[
\qnorm{v} = \qnorm{v}_\alpha = \max_{1 \leq j \leq s} \norm{\pi_j(v)}^{\alpha_j}.
\]
For example, $\qnorm{v} = 0$ if and only if $v \in V_0:=V_{s+1} \oplus \dots \oplus V_r$.
The quasi-distance associated to $\qnorm{\mybullet}$ on $V$ is given by
\[
\dtil(v,w) = \dtil_\alpha(v,w) = \qnorm{v - w}_\alpha.
\]
It satisfies a weak form of the triangle inequality:
\[
\forall u,v,w \in V,\quad \dtil(u,w) \ll_\alpha \dtil(u,v) + \dtil(v,w).
\]
Given a subset $W \subset V$, and $v\in V$, we define the distance from $v$ to $W$ by
\[
\dtil(v,W) = \inf_{w \in W} \dtil(v,w).
\]

\begin{remark}
In all our applications, we shall take $s$ so that $V_0 = V_{s+1}\oplus\dots\oplus V_r$ is the sum of all compact factors and
\[
\alpha_j = \frac{1}{\lambda_1(\mu,V_j)} \quad\text{for}\ j=1,\dotsc,s
\]
to obtain a quasi-norm adapted to the random walk associated to $\mu$, same as the one defined in the introduction.
However, the proof works in the more general setting of any choice of $s$ and $\alpha$.
\end{remark}

In the remainder of this subsection, $s$ and $\alpha$ are fixed and the implied constants in all Landau and Vinogradov notations may depend on $d$ and $\alpha$.
\begin{prop}
\label{pr:ncaff2}
Assume that $G$ is Zariski connected and that the linear span of $G$ in $\End(V)$ contains $\pi_j$ for $j = 1, \dotsc, s$.
Then there exists $\kappa = \kappa(\mu,\alpha) > 0$ such that for any $v \in V$ and any affine hyperplane $W \subset V$ with $V_0 \subset W - W$,
\[
\forall n \geq 0,\; \forall \rho \geq e^{-n},\quad \mu^{*n}\bigl(\setbig{g \in G}{\dtil(gv,W) < \rho \min_{j \in J_W} \qnorm{\pi_j(gv)}} \bigr) \ll \rho^\kappa
\]
where $J_W = \set{1 \leq j \leq r}{V_j \not\subset W - W}$.
\end{prop}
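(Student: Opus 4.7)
The plan is to mirror the four-step argument used in the proof of Proposition~\ref{ncirr}. Steps 1 and 2 transfer essentially verbatim to this setting: for any proper affine subvariety $V \subsetneq G$, the spectral gap modulo primes of~\cite{SGV} combined with the Lang-Weil estimates gives $\mu^{*n}(V) \ll e^{-cn}$; and for any nonzero affine function $\psi$ on $G$ and any sufficiently large $C$, a pigeonhole argument built on the large deviation estimate yields $\mu^{*n}\{g : |\psi(g)| \leq e^{-Cn}\norm{\psi}\} \ll e^{-cn}$.

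The core reduction is to encode the quasi-distance condition to $W$ via an affine function on $G$. Write $W = \{x : \phi(x) = c\}$ with $\phi = \sum_{j \in J_W}\phi_j$, $\phi_j \in V_j^* \setminus \{0\}$, and set $\psi(g) = \phi(gv) - c$. I claim that $\psi$ is nonzero on $G$ whenever the probability to bound is nontrivial. Indeed, if $\psi \equiv 0$ on $G$, then since $G$ affinely spans $E$ (the linear span of $G$ contains $e$), one gets $\phi(xv) = c$ for all $x \in E$, forcing $c = 0$. Applying this with $x = g\pi_j$ for $g \in G$ (using the hypothesis $\pi_j \in E$) shows that $\phi$ vanishes on $G\pi_j(v) \subset V_j$; by $G$-irreducibility of $V_j$, either $\phi_j = 0$ (so $j \notin J_W$) or $\pi_j(v) = 0$. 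Hence $\pi_j(v) = 0$ for all $j \in J_W$, which forces $\min_{j \in J_W}\qnorm{\pi_j(gv)} = 0$ and renders the event empty. When $\psi \not\equiv 0$, picking an approximate minimizer $w^* \in W$ of $\qnorm{gv - w^*}$ and bounding $\norm{\pi_k(gv - w^*)} \leq \dtil(gv, W)^{1/\alpha_k}$ in each component yields
\[|\psi(g)| \leq \sum_{k \in J_W}\norm{\phi_k}\dtil(gv,W)^{1/\alpha_k},\]
so that the quasi-distance condition $\dtil(gv, W) < \rho M$ with $M = \min_{j \in J_W}\qnorm{\pi_j(gv)}$ converts into a bound of the form $|\psi(g)| \leq \epsilon\norm{\psi}$ with effective $\epsilon$ depending on $\rho$ and $M$.

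Combined, the previous steps give non-concentration at the fixed small scale $\rho = e^{-Kn}$ for $K$ large. The main technical obstacle is extending this to all $\rho \geq e^{-n}$ via a scaling argument. In the simple case, one splits $g = g_2 g_1$ with $g_1 \sim \mu^{*(n-m)}$, $g_2 \sim \mu^{*m}$, matches $\rho = e^{-Cm}$, and applies the fixed-scale estimate to $g_2$ acting on $v' = g_1 v$. Here the factors $V_j$ grow under the walk at different Lyapunov rates $\lambda_j > 0$, so the quasi-norm expands at the heterogeneous rates $\alpha_j\lambda_j$ and a single splitting does not simultaneously calibrate all components. The large deviation principle applied factor by factor gives $\norm{\pi_j(g_1 v)} \asymp e^{(n-m)\lambda_j}\norm{\pi_j(v)}$ with exponentially high probability, and a careful choice of $m$ as a function of $\log(1/\rho)$ and the Lyapunov exponents $\alpha_j\lambda_j$ allows the rescaled fixed-scale estimate to close up. The $\min$ over $J_W$ is handled by a union bound over the index realizing it, which costs only a factor $|J_W| \leq r$.
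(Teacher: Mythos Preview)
Your Steps 1 and 2 transfer correctly. The real issue is in your Step 3 reduction, where there is a genuine gap. You correctly identify the affine function $\psi(g) = \phi(gv) - c$ and give a clean argument that $\psi \not\equiv 0$ on $G$ unless the event is vacuous; you also correctly bound $|\psi(g)| \leq \sum_{k \in J_W} \norm{\phi_k}(\rho M)^{1/\alpha_k}$. But to invoke Step 2 you need $|\psi(g)| \leq e^{-Cn}\norm{\psi}$, and for this you need a \emph{quantitative} lower bound on $\norm{\psi} = \sup_{h \in B}|\psi(h)|$ in terms of the component data, something like $\norm{\psi} \gg \max_k \norm{\phi_k}\norm{\pi_k(v)}$. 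Your nonzeroness argument is purely qualitative and does not rule out that the summands $g \mapsto \phi_k(g\pi_k(v))$ nearly cancel on a ball in $G$, making $\norm{\psi}$ much smaller than each $\norm{\phi_k}\norm{\pi_k(v)}$. The paper supplies exactly this missing ingredient as Lemma~\ref{lm:norm2}, proved by a compactness-and-contradiction argument that uses the hypothesis $\pi_j \in \Span G$ in an essential quantitative way (via a wedge-product computation). Combined with Lemma~\ref{lm:qn}, which expresses $\dtil(v,W)$ as $\min_i |\phi_W(v)/l_W(u_i)|^{\alpha_{j(i)}}$, this converts the quasi-distance inequality directly into $|\phi_W(gv)| \ll e^{-C_1 n/\alpha_{j(i)}}\sup_{h\in B}|\phi_W(hv)|$, which is exactly the shape required by Step~2.

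Your Step 4 is based on a misconception. Once the fixed-scale estimate is established in the form
\[
\mu^{*m}\bigl(\setbig{g}{\dtil(gv',W) < e^{-C_2 m}\min_{j\in J_W}\qnorm{\pi_j(gv')}}\bigr) \ll e^{-cm}
\]
uniformly in $v'$, the scaling is immediate: set $m = \lfloor C_2^{-1}\log(1/\rho)\rfloor$, write $g = g_2 g_1$ with $g_2 \sim \mu^{*m}$ and $g_1 \sim \mu^{*(n-m)}$, and apply the bound with $v' = g_1 v$. The heterogeneous Lyapunov rates play no role because both sides of the defining inequality involve $gv$, so the event is invariant under replacing $v$ by $g_1 v$; no ``simultaneous calibration'' is needed. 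This is precisely what the paper does in its fourth step. Your final remark about a union bound over the index realising the minimum is also unnecessary: the minimum is a single scalar for each $g$, and the event is already a single event.
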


\begin{remark}
The requirement that the linear span of $G$ contain $\pi_j$, $j=1,\dots,s$ is here to exclude examples such as $V=V_1\oplus V_1$, with $G$ acting irreducibly on $V_1$.
Indeed, in that case the diagonal subspace $W=\{(v_1,v_1)\ ;\ v_1\in V_1\}$ is stable under $G$, so the proposition cannot hold.
\end{remark}

In the proof, we will use Lemma~\ref{lm:norm2} and Lemma~\ref{lm:qn}, whose proofs will be given right after.

\begin{proof}
\underline{First and second step: spectral gap and Diophantine property.}\\
Arguing exactly as in the proof of Proposition~\ref{ncirr} we obtain that there exist $C,c>0$ such that for every polynomial map $f$ of degree at most $1$ on $G$,
\[
\mu^{*n}\bigl(\set{g\in G}{\abs{f(g)}\leq e^{-Cn}\norm{f}}\bigr) \ll e^{-cn}.
\]
As before, the norm on polynomial maps of degree at most $1$ is defined by $\norm{f}=\sup_{g\in B_G(1,1)}\abs{f(g)}$.
\\
\noindent
\underline{Third step: distance to proper subspaces}.\\
We claim that there exist $C_1,c>0$ such that for every $v\in V$ and every affine hyperplane $W$ such that $W-W\supset V_0$,
\[
\mu^{*n}\bigl(\setbig{g\in G}{\dtil(gv,W)\leq e^{-C_1n}\min_{j\in J_W}\qnorm{\pi_j(v)}}\bigr) \ll e^{-cn}.
\]
To prove this, Lemma~\ref{lm:norm2} below shows that it is enough to show that if $B$ is some large ball in $G$, then
\[
\mu^{*n}\bigl(\setbig{g\in G}{\dtil(gv,W)\leq e^{-C_1n}\sup_{h \in B} \dtil(hv,W)}\bigr) \ll e^{-cn}.
\]
Now let $\phi_W \colon V \to \R$ be an affine map such that $\ker\phi_W = W$, and denote by $l_W$ the linear part of $\phi_W$; by Lemma~\ref{lm:qn} below, the distance to $W$ for the quasi-norm is given by
\[
\forall v\in V,\quad 
 \dtil(v,W) \asymp \min_{i : l_W(u_i) \neq 0} \abse{\frac{\phi_W(v)}{l_W(u_i)}}^{\alpha_{j(i)}},
\]
where $(u_i)_{1\leq i\leq d}$ is an orthonormal basis compatible with the quasi-norm and $u_i\in V_{j(i)}$ for $i=1,\dots,d$.
Therefore, if $g$ satisfies $\dtil(gv,W)\leq e^{-C_1n}\sup_{h \in B} \dtil(hv,W)$, there must exist $i$ such that
\[
\abse{\frac{\phi_W(gv)}{l_W(u_i)}}^{\alpha_{j(i)}} \ll_\alpha e^{-C_1n}\sup_{h \in B}\abse{\frac{\phi_W(hv)}{l_W(u_i)}}^{\alpha_{j(i)}}
\]
whence
\[
\abs{\phi_W(gv)} \ll_\alpha e^{-\frac{C_1n}{\alpha_{j(i)}}}\sup_{h\in B}\abs{\phi_W(hv)}.
\]
If $\frac{C_1}{\alpha_{j(i)}}>C$, the previous step applied to the affine map $f\colon g\mapsto\phi_W(gv)$ shows that the $\mu^{*n}$-measure of such points is bounded above by $e^{-cn}$, so the desired statement is proved.

\noindent\underline{Fourth step: scaling}\\
Note that there are $C_2 = C_2(\mu,\alpha) > 1$ and $c = c(\mu) > 0$ such that for any vector $v \in V$ and any affine hyperplane $W \subset V$ with $V_0 \subset W - W$,
\begin{equation}
\label{eq:ncaff2step2}
\forall n \geq 0,\quad \mu^{*n}\bigl( \setbig{g \in G}{ \dtil(gv,W) \leq e^{-C_2 n}\min_{j \in J_W} \qnorm{\pi_j(gv)}  } \bigr) \ll e^{-cn}.
\end{equation}
This readily follows from the previous step, and from the fact that, by the exponential moment assumption, there are $C_3 = C_3(\mu) > 1$  and $c = c(\mu) > 0$ such that 
\[
\mu^{*n}\bigl( \set{g\in G}{ \norm{g} \geq e^{C_3n} }\bigr) \ll e^{-cn}.
\]
Noting that for any $j = 1,\dotsc,s$, $\qnorm{\pi_j(gv)} \leq \norm{g}^{\alpha_j} \qnorm{\pi_j(v)}$, we obtain \eqref{eq:ncaff2step2} by taking $C_2 = C_1 + (\max_{1\leq j\leq s} \alpha_j) C_3$.

Finally, given $e^{-n}\leq \rho \leq 1$, set $m = \left\lfloor\frac{-\log \rho}{C_2}\right\rfloor$.
Writing $\mu^{*n} = \mu^{*m} * \mu^{*(n-m)}$ and using the fact that \eqref{eq:ncaff2step2} holds uniformly in $v$, we find
\begin{align*}
&\mu^{*n} \bigl(\setbig{g \in G}{\dtil(gv,W) < \rho \min_{j \in J_W} \qnorm{\pi_j(gv)} } \bigr) \\
\qquad & \leq  \int_G  \mu^{*m} \bigl(\setbig{g \in G}{\dtil(ghv,W) \leq e^{-C_2 m} \min_{j \in J_W} \qnorm{\pi_j(ghv)} } \bigr) \dd \mu^{*(n-m)}(h)\\
\qquad & \ll e^{-cm}\\
\qquad & \ll  \rho^{c/C_2}.
\end{align*}
This finishes the proof of Proposition~\ref{pr:ncaff2}.
\end{proof}

We are left to show the two technical lemmas on quasi-norms and distances to hyperplanes that we used in the proof.

\begin{lem}
\label{lm:norm2}
Assume that the linear span of $G$ in $\End(V)$ contains $\pi_j$, for $j = 1, \dotsc, s$.
Then there exists a ball $B\subset G$ such that for any affine hyperplane $W \subset V$ with $V_0 \subset W- W$, and any $v\in V$,
\[
\min_{j \in J_W} \qnorm{\pi_j(v)} \ll \sup_{g \in B} \dtil(gv,W).
\]
\end{lem}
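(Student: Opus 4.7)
The plan is a contradiction-and-compactness argument, using the hypothesis $\pi_j \in \Span(G)$ as a bridge between estimates on the full orbit $Bv$ and estimates on each component $\pi_j v$. Assume no ball $B$ works; I will choose $B$ large enough to contain the identity and group elements realizing a fixed decomposition $\pi_j = \sum_i c_i^{(j)} g_i^{(j)}$ for each $j = 1, \dots, s$, and for each $n$ find $v_n, W_n$ violating the inequality by a factor $n$. Several reductions simplify the setup: subtracting from $v_n$ its components in $V_j$ for $j \notin J_{W_n}$ leaves both sides invariant (those subspaces lie in $W_n - W_n$), so I may assume $\pi_j v_n = 0$ for such $j$; a subsequence makes $J_{W_n} = J$ constant; the $G$-commuting rescaling $\sigma_t : v \mapsto \sum_j t^{1/\alpha_j} \pi_j v$ multiplies both sides of the inequality by the same factor $t$, so I normalize $\min_{j \in J} \qnorm{\pi_j v_n} = 1$ and obtain $\sup_{g \in B} \dtil(g v_n, W_n) \to 0$. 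Finally, normalize $\norm{l_{W_n}} = 1$ and extract $l_{W_n} \to l^*$.

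By the forthcoming Lemma \ref{lm:qn}, the smallness of the quasi-distance translates into $|\phi_{W_n}(g v_n)| \to 0$ uniformly for $g \in B$. Setting $q_n(X) := l_{W_n}(X v_n)$, this linear functional on $\End(V)$ is nearly constant, equal to $c_n$, on $B$; hence $|q_n(Y)| = o(1)$ for every $Y \in \Span(B - B)$, with constants depending only on a fixed decomposition of $Y$. For each $j \in J$ (on a further subsequence where $\norm{l_{W_n}|_{V_j}}$ is bounded below), Jacobson density applied to the simple $G$-module $V_j$ provides, for every unit vector $\omega \in V_j$, an operator $T_\omega \in \Span(G)$ with $T_\omega v_n^{(j)} = \omega$ and operator norm $O(1/\norm{v_n^{(j)}})$. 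Then $q_n(T_\omega) = \lambda_j^{(n)}(\omega)$, where $\lambda_j^{(n)} = l_{W_n}|_{V_j}$; decomposing $T_\omega = D_{T_\omega} g_0 + Y_{T_\omega}$ with $|D_{T_\omega}|, \norm{Y_{T_\omega}} = O(\norm{T_\omega})$ yields $\lambda_j^{(n)}(\omega) = O(|c_n|/\norm{v_n^{(j)}}) + o(1)$, uniformly in unit $\omega$. Taking the supremum over $\omega$, and using that $\norm{\lambda_j^{(n)}}$ is bounded below, I get $\norm{v_n^{(j)}} = O(|c_n|) + o(1)$; combined with $\norm{v_n^{(j)}}^{\alpha_j} \geq 1$, this forces $|c_n| \geq c > 0$.

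To conclude, I compute $l_{W_n}(v_n)$ in two ways: directly it equals $q_n(\mathrm{id}) = c_n + o(1)$; and via the decompositions $\pi_j = \sum_i c_i^{(j)} g_i^{(j)}$ together with the reduction $v_n = \sum_{j \in J} \pi_j v_n$, it equals $\sum_{j, i} c_i^{(j)} q_n(g_i^{(j)}) = \bigl(\sum_{j \in J} C_j\bigr) c_n + o(1)$, where $C_j := \sum_i c_i^{(j)}$. Hence $\bigl(1 - \sum_{j \in J} C_j\bigr) c_n = o(1)$. When the decomposition can be chosen with $\sum_{j \in J} C_j \neq 1$, we conclude $c_n \to 0$, contradicting $|c_n| \geq c > 0$. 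The main obstacle I anticipate is the rigid case $\sum_{j \in J} C_j = 1$, which is forced exactly when the sum-of-coefficients functional on $\Span(G)$ is well-defined (with value $1$ on $\mathrm{id}$, since $\mathrm{id} \in G$). To treat this residual case, I plan to rescale $v_n$ by $|c_n|^{-1}$—the sequence is then bounded by the estimate $\norm{v_n^{(j)}} = O(|c_n|)$—and extract a limit $(v^*, W^*)$ with $B v^* \subset W^*$ and hence $G v^* \subset W^*$ ($B$ being chosen Zariski-dense in the connected group $G$). Using $\pi_j \in \Span(G)$ and the $G$-irreducibility of each $V_j$, the $G$-invariant subspace $\Span\{(g - I)v^* : g \in G\}$ equals $\bigoplus_{j : \pi_j v^* \neq 0} V_j$ and lies in $W^* - W^*$, contradicting the survival in the limit of a nonzero component $\pi_j v^*$ with $j \in J_{W^*}$.
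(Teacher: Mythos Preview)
Your overall contradiction strategy and the use of $q_n(X)=l_{W_n}(Xv_n)$ are sound, and the non-rigid branch (where one can choose $\sum_{j\in J}C_j\neq 1$) works once you kill the cross terms in $q_n(T_\omega)$ by taking $T_\omega=\pi_j S$ with $S\in\Span(G)$, $Sv_n^{(j)}=\omega$; this uses that $\pi_j\in\Span(G)$ and that $\Span(G)$ is closed under multiplication. The genuine gap is in the residual case. Your bound $\norm{v_n^{(j)}}=O(|c_n|)$ comes from the estimate $\norm{\lambda_j^{(n)}}\cdot\norm{v_n^{(j)}}=O(|c_n|)+o(1)$, and this yields control on $\norm{v_n^{(j)}}$ only when $\norm{\lambda_j^{(n)}}=\norm{l_{W_n}|_{V_j}}$ stays bounded below, i.e.\ for $j$ in $J^*:=\{j:\,l^*|_{V_j}\neq 0\}$. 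Your parenthetical ``on a further subsequence where $\norm{l_{W_n}|_{V_j}}$ is bounded below'' cannot be arranged for $j\in J\setminus J^*$: membership in $J=J_{W_n}$ only says $l_{W_n}|_{V_j}\neq 0$, not bounded away from $0$. Hence for $j\in J\setminus J^*$ you have no bound on $\norm{v_n^{(j)}}$ relative to $|c_n|$, the rescaled sequence $v_n/|c_n|$ need not be bounded, and the extraction of $(v^*,W^*)$ fails. Relatedly, the ``survival'' of a nonzero $\pi_j v^*$ with $j\in J_{W^*}$ would require $|c_n|$ bounded above, which is also not established.

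The paper's argument avoids any such case split. After reducing to $V_0=\{0\}$, it fixes the hypothetical bad pair $(v,W)$ and uses the irreducible case in each $V_j$ to produce vectors $u_i\in(\Ball_G(1,1)-1)\pi_j v$ forming a basis of $V$ with $\norm{u_1\wedge\dotsm\wedge u_d}\gg\prod_j\norm{\pi_j v}^{\dim V_j}$. Writing $\pi_j=\sum_k\beta_{j,k}g_k$ and using the contradiction hypothesis for all $g$ in a sufficiently large ball gives $\dtil(u_i,W_0)\ll c\min_{j\in J_W}\qnorm{\pi_j v}$. One then chooses $w_i\in W_0$ with $w_i-u_i\in\bigoplus_{j\in J_W}V_j$ realizing this, and estimates directly $\norm{w_1\wedge\dotsm\wedge w_d-u_1\wedge\dotsm\wedge u_d}\ll c^{\min_j 1/\alpha_j}\prod_j\norm{\pi_j v}^{\dim V_j}$. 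For $c$ small this forces $w_1\wedge\dotsm\wedge w_d\neq 0$, contradicting $\dim W_0=d-1$. This wedge-product comparison is robust to $J_W\subsetneq\{1,\dots,s\}$ and to components $\norm{\pi_j v}$ of wildly different sizes, which is exactly where your compactness extraction breaks down.
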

\begin{proof}
In the particular case $r = 1$ (irreducible case), one may assume that the quasi-norm is equal to the Euclidean norm.
So the desired inequality with $B=\Ball_G(1,1)$ has already been proved in the third step of the proof of Proposition~\ref{ncirr}.

For the general case, first observe that by working in the quotient space $V/V_0$, we may assume that $V_0 = \{0\}$, that is, $V = V_1 \oplus \dots \oplus V_s$.
Then assume for a contradiction that for arbitrarily large $R$ and arbitrarily small $c > 0$, there exist $v\in V$ and $W \subset V$ such that
\begin{equation}
\label{eq:dgvW}
\forall g \in \Ball_G(1,R),\quad \dtil(gv,W) \leq c \min_{j \in J_W} \qnorm{\pi_j(v)}.
\end{equation}

Fix $j\in\{1,\dots,s\}$.
Let us construct a basis of $V_j$ such that
\[
\normbig{v_{j,1} \wedge \dotsm \wedge v_{j,\dim V_j}} \gg \norm{\pi_j(v)}^{\dim V_j}
\]
and
\begin{equation}
\label{eq:vj1}
\forall i=1,\dots,\dim V_j,\qquad v_{j,i} \in \Ball_G(1,1) \pi_j(v) - \pi_j(v).
\end{equation}
For that, we proceed iteratively.
Assuming $v_{j,1},\dots,v_{j,i}$ have been constructed, we know from the irreducible case applied in $V_j$ with vector $v=\pi_j(v)$ and subspace $W=\pi_j(v)+\Span(v_{j,1},\dots,v_{j,i})$, that there exists $g$ in $\Ball_G(1,1)$ such that $d(g\pi_j(v)-\pi_j(v),\Span(v_{j,k};k\leq i))\gg\norm{\pi_j(v)}$.
So we set $v_{j,i+1}=g\pi_j(v)-\pi_j(v)$.
By construction, the vectors $v_{j,i}$, $i\geq 1$ are linearly independent, and in the end, we get a basis for $V_j$ with the desired property.
Concatenate these bases to get a basis $(u_1,\dotsc, u_d)$ of $V$, which has the property that 
\begin{equation}
\label{eq:wedgeui}
\norm{u_1 \wedge \dotsm \wedge u_d} \gg \prod_{j=1}^{s} \norm{\pi_j(v)}^{\dim V_j}. 
\end{equation}

Let $W_0 = W - W$ denote the direction of $W$. 
By assumption, for $j=1,\dots,s$, there exist constants $\beta_{j,k} \in \R$ and elements $g_k$ in $G$ such that
\[
\pi_j = \sum_k \beta_{j,k}g_k.
\]
Set $R$ large enough so that for all $k$, $\Ball_G(1,1)g_k  \subset \Ball_G(1,R)$.
Fix $i\in\{1,\dots,d\}$.
From \eqref{eq:vj1} we may write $u_i=g\pi_j(v)-\pi_j(v)$ for some $g\in\Ball_G(1,1)$, so
\[
u_i = \sum_k \beta_{j,k}(gg_k-g_k)v
\]
and \eqref{eq:dgvW} allows us to bound
\[
\dtil(u_i,W_0) \leq 2c \sum_{k} \abs{\beta_{j,k}} \min_{j \in J_W} \qnorm{\pi_j(v)}.
\]
For each $i= 1,\dotsc,d$, let $w_i\in W_0$ be such that
\[
\dtil(u_i,w_i) \ll c \min_{j \in J_W} \qnorm{\pi_j(v)},
\]
where the involved constant depends on the numbers $\beta_{j,k}$.
Using the assumption that $\bigoplus_{j \not\in J_W} V_j \subset W_0$, after adjusting $w_i$, we can moreover ensure that 
\[w_i - u_i \in \bigoplus_{j \in J_W} V_j.\]
We can bound
\begin{equation}
\label{eq:wedgewiui}
\normbig{w_1 \wedge \dotsm \wedge w_d  -u_1 \wedge \dotsm \wedge u_{d}}
\ll c^{\min_j\frac{1}{\alpha_j}}\prod_{j=1}^s\norm{\pi_j(v)}^{\dim V_j}.
\end{equation}
Indeed, developing the first wedge product using $w_i = u_i + (w_i - u_i)$ and then decomposing each vector along $V_1 \oplus \dots \oplus V_s$ and further developing the sum, we can express 
$w_1 \wedge \dotsm \wedge w_d  -u_1 \wedge \dotsm \wedge u_{d}$ as a sum of wedge products of $d$ vectors of the following types
\begin{enumerate}
\item (first type) $\pi_j(w_i-u_i)$ with $j \in J_W$, or
\item (second type) $\pi_j(u_i)$ with $1 \leq j \leq s$.
\end{enumerate} 
In each wedge product, the first type appears at least once and the product is zero unless $\pi_j$ appears exactly $\dim V_j$ times. 
We can bound vectors of the first type by 
\[\norm{\pi_j(w_i-u_i)} \leq \dtil(w_i,u_i)^{\frac{1}{\alpha_j}}  \ll c^{\frac{1}{\alpha_j}}  \norm{\pi_j(v)}\]
and vectors of the second type by
\[\norm{\pi_j(u_i)} \ll \norm{\pi_j(v)}.\]
This proves \eqref{eq:wedgewiui}.

To conclude, we choose $c$ be to small enough so that \eqref{eq:wedgewiui} combined with~\eqref{eq:wedgeui} implies $w_1 \wedge \dotsm \wedge w_d \neq 0$ contradicting the condition that $W_0$ is a proper linear subspace of $V$.
\end{proof}

The second lemma is an elementary computation using the definition of the quasi-norm.
It is instructive to convince oneself with a picture that the lemma holds when the quasi-norm is simply the euclidean norm on $\R^d$.

\begin{lem}
\label{lm:qn}
Let $(u_i)_{1\leq i \leq d}$ be a union of orthonormal bases of each of the $V_j$, $j=1,\dots,r$.
For $i = 1,\dotsc,d$, denote by $j(i)$ the unique integer such that $u_i \in V_{j(i)}$.

Let $v \in V$ and let $W \subset V$ be an affine hyperplane with with $V_0\subset W - W$.
Let $\phi_W \colon V \to \R$ be an affine map such that
\[W = \set{v \in V}{ \phi_W(v) = 0}.\]
Let $l_W \colon V \to \R$ denote the linear part of $\phi_W$.
We have for any $v \in V$,
\[ \dtil(v,W) \asymp \min_{i : l_W(u_i) \neq 0} \abse{\frac{\phi_W(v)}{l_W(u_i)}}^{\alpha_{j(i)}}.\]
\end{lem}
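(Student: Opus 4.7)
The plan is to prove the two inequalities separately, obtaining an explicit upper bound by exhibiting convenient points in $W$ and a matching lower bound by a direct coordinate computation using the orthonormal basis $(u_i)$.

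For the upper bound, I would fix any index $i$ with $l_W(u_i) \neq 0$ and set $t_i = \phi_W(v)/l_W(u_i)$. Then $v - t_i u_i$ lies in $W$ because $\phi_W(v - t_i u_i) = \phi_W(v) - t_i l_W(u_i) = 0$. Since $u_i \in V_{j(i)}$ and $\norm{u_i} = 1$, the definition of the quasi-norm gives
\[
\dtil(v, v - t_i u_i) = \qnorm{t_i u_i} = \abs{t_i}^{\alpha_{j(i)}} = \abse{\frac{\phi_W(v)}{l_W(u_i)}}^{\alpha_{j(i)}}.
\]
Minimising over such $i$ yields $\dtil(v,W) \leq \min_{i : l_W(u_i) \neq 0} \abs{\phi_W(v)/l_W(u_i)}^{\alpha_{j(i)}}$.

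For the lower bound, I would pick $w \in W$ realising (or approximately realising) the quasi-distance, and expand $v - w = \sum_i c_i u_i$ in the orthonormal basis. Since $\phi_W(w) = 0$,
\[
\phi_W(v) = l_W(v - w) = \sum_{i : l_W(u_i) \neq 0} c_i\, l_W(u_i),
\]
the sum being restricted to indices with $j(i) \leq s$ because $V_0 \subset \ker l_W$. Bounding the sum by its maximal term, there exists some index $i^*$ with $l_W(u_{i^*}) \neq 0$ and $\abs{\phi_W(v)} \ll \abs{c_{i^*}\, l_W(u_{i^*})}$. Since $(u_i)$ restricted to $V_{j(i^*)}$ is orthonormal, $\abs{c_{i^*}} \leq \norm{\pi_{j(i^*)}(v - w)} \leq \qnorm{v-w}^{1/\alpha_{j(i^*)}} = \dtil(v,W)^{1/\alpha_{j(i^*)}}$. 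Rearranging and raising to the power $\alpha_{j(i^*)}$ gives
\[
\abse{\frac{\phi_W(v)}{l_W(u_{i^*})}}^{\alpha_{j(i^*)}} \ll \dtil(v,W),
\]
which bounds the minimum over admissible $i$ from above by $\dtil(v,W)$, up to a constant depending only on $d$ and $\alpha$.

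There is no real obstacle here; the only subtlety is the non-uniformity of the implicit constants when some $\alpha_j$ are very small or when the $l_W(u_i)$ vary wildly. That is harmless because, as the paper states, constants are allowed to depend on $d$ and $\alpha$, and the quasi-triangle inequality already carries such a dependence. The hypothesis $V_0 \subset W - W$ is used precisely to ensure that every $u_i$ with $l_W(u_i) \neq 0$ sits in a factor $V_{j(i)}$ with $j(i) \leq s$, so that the exponent $\alpha_{j(i)}$ is well-defined and the comparison $\abs{c_i} \leq \dtil(v,w)^{1/\alpha_{j(i)}}$ makes sense.
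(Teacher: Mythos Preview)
Your proof is correct and follows essentially the same approach as the paper's: both the upper bound (moving along a single basis vector $u_i$ to land in $W$) and the lower bound (expanding $v-w$ in the basis $(u_i)$, then pigeonholing on the sum $\phi_W(v)=\sum_i c_i\,l_W(u_i)$) are identical in substance. The only cosmetic difference is that the paper argues the lower bound for an arbitrary $w\in W$ rather than a minimiser, which sidesteps your remark about approximate realisation of the infimum.
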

\begin{proof}
Note that $l_W(u_i) \neq 0$ implies that $j(i) \in J_W$ and $J_W \subset \{1,\dotsc,s\}$ because $V_0 \subset W - W$.
It follows that $\alpha_{j(i)}$ is defined and positive.

For any $i \in \{1,\dotsc, d\}$ with $l_W(u_i) \neq 0$, we have $v - \frac{\phi_W(v)}{l_W(u_i)}u_i \in W$.
Hence 
\[\dtil(v,W) \leq \abse{\frac{\phi_W(v)}{l_W(u_i)}}^{\alpha_{j(i)}}.\]
Let  $u \in V$ be such that $v - u \in W$. 
Write $u = \sum_{i = 1}^d x_i u_i$. 
Then
\[\phi_W(v) = \phi_W(v-u) + l_W(u) = \sum_{i = 1}^d x_i l_W(u_i).\]
It follows that there exists $i$ with $l_W(u_i) \neq 0$ and such that
\[\abs{x_i} \geq \frac{1}{d} \abse{\frac{\phi_W(v)}{l_W(u_i)}}.\]
This allows to conclude since
\[\dtil(v,v - u) = \qnorm{u} \geq \norm{\pi_{j(i)}(u)}^{\alpha_{j(i)}} \geq \abs{x_i}^{\alpha_{j(i)}}.\] 
\end{proof}

%
%
To conclude, we explain how to obtain Proposition~\ref{anc} from Proposition~\ref{pr:ncaff2}.
The argument is essentially the same as the one used in the particular case where $E$ is simple.

\begin{proof}[Proof of Proposition~\ref{anc}, general case]
For $x\in E$, let $L_x:E\to E$ and $R_x:E\to E$ denote the left and right multiplication by $x$, respectively.
Then, define
\[
\begin{array}{rccc}
L \colon &  E & \to & \End E\\
& x & \mapsto & L_x
\end{array}
\quad\mbox{and}\quad
\begin{array}{rccc}
R \colon &  E & \to & \End E\\
& x & \mapsto & R_x
\end{array}
\]
Given a probability measure $\mu$ on $G$, we define a probability measure $\tmu$ on $\GL(E)$ by
\[
\tmu = \frac{1}{2}L_*\mu + \frac{1}{2}R_*\mu.
\]
The group $\bar{G}$ generated by $\tmu$ is isomorphic to $G\times G$ and the decomposition of $E$ into irreducible $\bar{G}$-submodules is simply the decomposition into simple ideals $E=\oplus_j E_j$.
By definition, the algebra generated by $G$ contains the unit $1_{E_j}$ of $E_j$ for each $j$.
It follows that the linear span of $\bar{G}$ contains all projections $\pi_j \colon E\to E_j$.
Moreover, in an appropriate basis, the elements of $\Supp\tmu$ have integer coefficients, so we may apply Proposition~\ref{pr:ncaff2} to $\tmu$, with vector $v=1_{E}$.
Note that if $\bar{g}$ is a random element distributed according to $\tmu^{*n}$, then $\bar{g}\cdot 1_{E}$ has law $\mu^{*n}$, and therefore we obtain $\kappa >0$ such that uniformly over all affine hyperplanes $W \subset E$ with $W-W\supset E_0$,
\[
\forall n \geq 0,\; \forall \rho \geq e^{-n},\quad \mu^{*n}\bigl(\setbig{g\in G}{\dtil(g,W) \leq \rho\min_{j\in J_W}\qnorm{\pi_j(g)} }) \ll \rho^\kappa.
\]
\end{proof}

\subsection{Non-concentration at singular matrices}
As in the previous paragraph, $\mu$ denotes a probability measure on $\SL_d(\Z)$.
We assume that the algebraic group $G$ generated by $\mu$ is semisimple and connected, and let $E$ be the algebra generated by $G$ in $\Mat_d(\R)$.
Recall that for $x\in E$, we defined $\det_E(x)$ to be the determinant of the map $E \to E$, $y \mapsto xy$.
Note that $\det_E$ is a homogeneous polynomial function on $E$ of degree equal to $D=\dim E$.
Recall also that
\[
\mu_n = (\pi'\circ\phi_n)_*(\mu^{*n}),
\]
where $\pi' \colon E \to E'$ is the projection to the direct sum $E'=  E_1 \oplus \dots \oplus E_s$ of all simple ideals with non-zero Lyapunov exponent, and $\phi_n \colon E \to E$ is the scaling map defined in \eqref{eq:phin}.
As before, we write $\pi_j:E\to E_j$, $j=1,\dots,s$ for the projection to the simple factors.

\begin{lem}
\label{lm:detE'}
Given $\omega > 0$ there exists $c = c(\mu,\omega) > 0$ such that the following holds.
\[\forall n \geq 0,\, \forall y \in E',\quad \mu_n^{\pp D}\bigl(\setbig{x \in E'}{\abs{\det\nolimits_{E'}(x-y)} \leq e^{-\omega n}} \bigr) \ll e^{-c n}.\]
\end{lem}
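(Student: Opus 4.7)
The plan is to combine affine non-concentration for $\mu_n$ on $E'$ (which follows from Proposition~\ref{anc} via the pushforward $\pi'\circ\phi_n$) with a polynomial Remez-type estimate applied to the degree-$D'$ polynomial $x\mapsto\det_{E'}(x-y)$, where $D'=\dim E'$.

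First, using the factorisation $\det_{E'}=\prod_{j=1}^{s}\det_{E_j}\circ\pi_j$ and a union bound over $j$, I would reduce to the case where $E'$ is a simple algebra. Next, I would transfer the affine non-concentration of $\mu^{*n}$ to one for $\mu_n$ on $E'$: since $\pi'\circ\phi_n\colon E\to E'$ has kernel $E_0$, the preimage of an affine hyperplane $W\subset E'$ is an affine hyperplane $\tilde W\subset E$ with $\tilde W-\tilde W\supset E_0$. Unpacking the definitions of $\phi_n$ and of the quasi-norm, the condition $\pi'\phi_n(g)\in\Nbd(W,\rho)$ forces $\dtil(g,\tilde W)\ll \rho^{1/\lambda_\ast} e^n$ with $\lambda_\ast=\max_{j\leq s}\lambda_1(\mu,E_j)$. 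Combining the large deviation estimate $\qnorm{\pi_j(g)}\geq e^{n(1-o(1))}$ for all $j\leq s$ (valid with probability $1-e^{-cn}$) with Proposition~\ref{anc} applied to $\tilde W$ then yields, for some $\kappa_1,c_1>0$,
\[
\mu_n(\Nbd(W,\rho)) \ll \rho^{\kappa_1} + e^{-c_1 n}, \qquad \rho\geq e^{-n}.
\]

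The core analytic input is the observation that $\det_{E'}(x-y)$, viewed as a polynomial in $x$, has its top-degree homogeneous part equal to $\det_{E'}(x)$, a non-zero polynomial of degree $D'$ \emph{independent of $y$}. Fixing once for all an element $v\in E'$ with $|\det_{E'}(v)|\geq c_0>0$ (for instance any invertible element), the univariate polynomial $t\mapsto\det_{E'}(x_0+tv-y)$ has degree exactly $D'$ with leading coefficient $\det_{E'}(v)$, independent of $x_0$ and $y$. By the classical Remez inequality on $[-R,R]$, with $R\leq e^{n\eta_0}$ a large-deviation upper bound for the support of $\mu_n^{\pp D}$ and $\eta_0>0$ arbitrarily small, the one-dimensional level set $\{t\in[-R,R] : |\det_{E'}(x_0+tv-y)|\leq\epsilon\}$ has measure $O\bigl(R(\epsilon/(c_0R^{D'}))^{1/D'}\bigr)$.

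To convert this line-by-line bound into a $\mu_n^{\pp D}$-measure estimate, I would write $x=x_1+\dots+x_D$ with $x_i\sim\mu_n$ i.i.d., fix $(x_2,\dots,x_D)$, and apply Remez in the $v$-direction to $t\mapsto\det_{E'}(x_1^{(0)}+tv+z)$, where $z=x_2+\dots+x_D-y$ and $x_1=x_1^{(0)}+tv$ with $x_1^{(0)}$ in a fixed hyperplane transverse to $v$. The affine non-concentration established above controls the distribution of $x_1^{(0)}$ near hyperplanes perpendicular to $v$, and integrating over $x_1^{(0)}$ and then over $(x_2,\dots,x_D)$ gives
\[
\mu_n^{\pp D}\bigl(\{x:|\det_{E'}(x-y)|\leq e^{-\omega n}\}\bigr) \ll e^{-\omega n\,\kappa_1/D'}\,e^{O(n\eta_0)} + e^{-cn},
\]
which, for $\eta_0$ small enough, is bounded by $e^{-c'n}$ with $c'=c'(\mu,\omega)>0$. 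The main technical obstacle is this final combining step: the Remez intervals in $t$, for each base point $x_1^{(0)}$ and each value of $z$, sit at different locations along $v$, so the bad set is not a priori contained in a clean family of slabs around a bounded number of fixed hyperplanes; one needs to cover it by hyperplane neighbourhoods whose total $\mu_n^{\pp D}$-measure is still controlled by the affine non-concentration, which should go through by Fubini once the uniform lower bound $|\det_{E'}(v)|\geq c_0$ is used to control the $v$-direction slice at every base point.
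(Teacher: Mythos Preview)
Your reduction to simple factors and your identification of the key fact that the top-degree homogeneous part of $x\mapsto\det_{E'}(x-y)$ is $\det_{E'}$, independent of $y$, are both correct and match the paper's approach. However, the combining step you flag as ``the main technical obstacle'' is a genuine gap, not merely a technical inconvenience.

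The difficulty is this: affine non-concentration for $\mu_n$ together with a one-dimensional Remez bound does \emph{not} imply non-concentration near a degree-$D'$ hypersurface. A clean counterexample in $\R^2$: let $\mu$ be the uniform measure on the unit circle and $P(x)=\lvert x\rvert^2-1$. Then $\mu$ has affine non-concentration (any line neighbourhood of width $\rho$ carries mass $\ll\rho^{1/2}$), the leading part of $P$ is $\lvert x\rvert^2$ with $P(v)\neq 0$ for every $v\neq 0$, yet $\mu(\{\lvert P\rvert\le\epsilon\})=1$ for all $\epsilon>0$. The Remez slices in any direction $v$ are short, but their union is the whole circle. No Fubini argument can repair this, because $\mu_n$ carries no product structure and no controlled conditional density along lines; all you know is a hyperplane bound, and the sublevel set of $\det_{E'}(\cdot-y)$ is a neighbourhood of an \emph{irreducible} degree-$D'$ hypersurface, not a finite union of slabs. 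Moreover, by conditioning on $(x_2,\dotsc,x_D)$ you have thrown away the very reason $\mu_n^{\pp D}$ appears in the statement: for a single copy of $\mu_n$ the non-concentration near singular elements is simply false (as the paper remarks right before Proposition~\ref{pr:nc2}).

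The paper avoids this entirely by working not with affine non-concentration but with a \emph{polynomial} non-concentration result on $G^D$, namely \cite[Proposition~3.17]{HS2019} (quoted here as Proposition~\ref{pr:Prop317}). The additive $D$-fold structure is used through Lemma~\ref{lm:Lemma318}, which shows that for $F(g_1,\dotsc,g_D)=\det_{E_j}(\pi_j(g_1)+\dotsb+\pi_j(g_D)-y)$ the projection of $F$ onto the subrepresentation of $\R[G^D]_{\le D_j}$ with top Lyapunov exponent $\ge D_j\lambda_1(\mu,E_j)$ has norm $\gg\lVert\det_{E_j}\rVert$, uniformly in $y$. Proposition~\ref{pr:Prop317} (itself proved via spectral gap and a Diophantine argument, as in your first two steps of Proposition~\ref{pr:ncaff2}, but for polynomials of degree $\le D$) then gives the bound directly. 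In short: the degree-$D'$ non-concentration is established at the source (on $G^D$, for polynomials), not bootstrapped from the degree-$1$ case on $E'$.
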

Note that for all $x$ in $E'$,
\(
\det\nolimits_{E'}(x) = \prod_{j= 1}^s \det\nolimits_{E_j}(\pi_j(x))\)
and hence for every $n \geq 0$, and every $x\in E$,
\[
\det\nolimits_{E'}(\pi'\circ \phi_n(x)) = \prod_{j= 1}^s e^{- (\dim E_j) \lambda_1(\mu,E_j) n} \det\nolimits_{E_j}(\pi_j(x)).
\]
This immediately reduces the proof of Lemma~\ref{lm:detE'} to the following.
\begin{lem}
\label{lm:detEj}
Given $\omega > 0$ there exists $c = c(\mu,\omega) > 0$ such that the following holds for every $j = 1,\dotsc,s$, all $n \geq 0$ and all $y \in E_j$,
\[(\mu^{*n})^{\pp D}\bigl(\setbig{x \in E}{\abs{\det\nolimits_{E_j}(\pi_j(x) - y)} \leq e^{(\dim E_j) \lambda_1(\mu,E_j) n -\omega n}} \bigr) \ll e^{-c n}.\]
\end{lem}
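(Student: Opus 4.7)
The plan is to prove Lemma~\ref{lm:detEj} by reducing, after appropriate rescaling, to a polynomial non-concentration estimate that follows from hyperplane non-concentration (Proposition~\ref{anc}) combined with the additive convolution structure. Write $d = \dim E_j$ and $\lambda = \lambda_1(\mu, E_j)$, and set $\nu_n = (e^{-n\lambda}\pi_j)_* \mu^{*n}$ and $z = e^{-n\lambda}y$. The desired estimate is equivalent to
\[
\nu_n^{\pp D}\bigl(\setbig{w \in E_j}{\abs{\det\nolimits_{E_j}(w - z)} \leq e^{-\omega n}}\bigr) \ll e^{-cn}.
\]
As a preliminary, affine non-concentration of $\nu_n$ in $E_j$ follows from Proposition~\ref{anc} applied to hyperplanes $W = H \oplus E_0 \oplus \bigoplus_{k\neq j} E_k$ of $E$, with $H$ a proper affine hyperplane of $E_j$: one has $J_W = \{j\}$ and $\dtil(g, W) = d(\pi_j(g), H)^{1/\lambda}$, so the proposition combined with the large deviation estimate $\norm{\pi_j(g)} \leq e^{(\lambda + o(1))n}$ yields hyperplane non-concentration for $\nu_n$ in $E_j$ at scale $e^{-n}$ with some exponent $\kappa' > 0$.

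For the main step, let $m \colon E_j^d \to \R$ denote the symmetric $d$-linear form polarizing $\det_{E_j}$, so that $\det_{E_j}(w) = m(w, \dotsc, w)$. Multilinear expansion gives, for i.i.d.\ samples $\tilde X_1, \dotsc, \tilde X_D \sim \nu_n$,
\[
\det\nolimits_{E_j}\Big(\sum_{i=1}^D \tilde X_i - z\Big) = d! \sum_{\substack{I \subset \{1,\dotsc,D\} \\ \abs{I}=d}} m(\tilde X_{i_1}, \dotsc, \tilde X_{i_d}) + R(\tilde X_1, \dotsc, \tilde X_D;\, z),
\]
where $R$ collects the terms in which either some $\tilde X_i$ appears in multiple slots or the constant $-z$ appears. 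I would lower bound the leading multilinear sum by induction on $k \leq d$: for any fixed $y_{k+1}, \dotsc, y_d \in E_j$ making the $k$-linear form $m(\mybullet, \dotsc, \mybullet, y_{k+1}, \dotsc, y_d)$ non-trivial and for i.i.d.\ $\tilde X_1, \dotsc, \tilde X_k \sim \nu_n$,
\[
\Pbb\bigl[\,\abs{m(\tilde X_1, \dotsc, \tilde X_k, y_{k+1}, \dotsc, y_d)} \leq e^{-\omega_k n}\,\bigr] \ll e^{-c_k n},
\]
with inductively defined exponents $\omega_k, c_k > 0$. The base case $k = 0$ uses only the non-degeneracy of $m$. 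The inductive step conditions on $\tilde X_2, \dotsc, \tilde X_k$ and observes that $w \mapsto m(w, \tilde X_2, \dotsc, \tilde X_k, y_{k+1}, \dotsc, y_d)$ is a linear form on $E_j$ whose operator norm exceeds $e^{-\omega_{k-1}n}$ with high probability --- this is obtained by applying the inductive hypothesis to the values $m(e_\ell, \tilde X_2, \dotsc, \tilde X_k, y_{k+1}, \dotsc, y_d)$ taken on an orthonormal basis $(e_\ell)$ of $E_j$, and using that for at least one $\ell$ the $(k-1)$-linear form $m(\mybullet, \dotsc, \mybullet, e_\ell, y_{k+1}, \dotsc, y_d)$ is non-trivial by multilinearity --- and then invokes the hyperplane non-concentration of $\tilde X_1$.

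The main obstacle is to control the remainder $R$, whose terms a priori have the same order of magnitude as the leading multilinear sum in the rescaled variables. A case analysis according to $\norm{z}$ is likely necessary: for $\norm{z}$ exponentially large, $\det_{E_j}(\cdot - z)$ is directly bounded below by a trivial estimate; for $\norm{z}$ moderate, the key point is that each term of $R$ involves strictly fewer than $d$ distinct $\tilde X_i$'s, so one can absorb it into a refined inductive argument where $-z$ is treated as a fixed auxiliary argument alongside the $y_i$'s within the polarization-based estimate. A careful bookkeeping of the exponents $\omega_k$ and $c_k$ is also required to ensure that the final $\omega_d$ is compatible with the prescribed $\omega$ in the statement.
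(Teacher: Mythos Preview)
Your approach has a genuine gap, and it is precisely the one you flag: controlling the remainder $R$. The suggestion to handle it by a case analysis on $\norm{z}$ and by ``absorbing into a refined induction'' does not work. After rescaling, all terms in the polarization expansion of $\det_{E_j}(\sum_i \tilde X_i - z)$ --- those with repeated $\tilde X_i$'s and those involving $z$ --- are of the same order as the distinguished multilinear terms $m(\tilde X_{i_1},\dotsc,\tilde X_{i_d})$; there is no small parameter separating them. Worse, your induction only shows that each \emph{individual} term $m(\tilde X_{i_1},\dotsc,\tilde X_{i_d})$ is large with high probability, not that the \emph{sum} $\sum_{|I|=d} m(\tilde X_I)$ is large: the terms share variables, so cancellation cannot be ruled out by this argument alone. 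In effect you are trying to deduce degree-$d$ polynomial non-concentration from degree-$1$ (hyperplane) non-concentration, and the polarization trick does not bridge that gap.

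The paper's proof avoids this entirely by invoking two results from \cite{HS2019}: a polynomial non-concentration estimate valid for \emph{any} degree (Proposition~\ref{pr:Prop317}), whose hypothesis is that the polynomial have a non-trivial component in the top-Lyapunov-exponent part of $\R[G^D]_{\leq D_j}$; and a lemma (Lemma~\ref{lm:Lemma318}) showing that for $F(x_1,\dotsc,x_D) = \det_{E_j}(\pi_j(x_1)+\dotsb+\pi_j(x_D) - y)$ this component has norm $\gg \norm{\det_{E_j}} \gg 1$, because the degree-$D_j$ homogeneous part of $F$ is governed by $\det_{E_j}$ itself and is insensitive to $y$. The additive convolution $(\mu^{*n})^{\pp D}$ enters not to ``spread out'' the measure as in your plan, but to furnish enough variables so that Lemma~\ref{lm:Lemma318} applies ($D \geq \dim E_j$). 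This Lyapunov-exponent bookkeeping is exactly the missing structural input; Proposition~\ref{anc} alone (which is essentially the degree-$1$ case of Proposition~\ref{pr:Prop317}) is not sufficient.
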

The idea is to apply \cite[Proposition 3.2]{HS2019}, where the case where $E$ is simple was treated.
However, upon projecting to a simple factor, the random walk might no longer be defined with integer coefficients: simple factors of $E$ are only defined over a number field.
So we cannot apply \cite[Proposition 3.2]{HS2019} as it is stated. 
Nevertheless, we can remark that, in the proof of \cite[Proposition 3.2]{HS2019}, \cite[Lemma 3.13]{HS2019} holds more generally for the projected random walk from $E$ to each $E_j$ and then the rest of the proof of \cite[Proposition 3.2]{HS2019} for a projected random walk is identical.

Here is the detailed proof.
We need two ingredients from \cite{HS2019}.
For a probability measure $\mu$ on a semisimple Lie group $G$ and a finite-dimensional linear representation $(\rho,V)$ of $G$ over $\R$, recall that
\[\lambda_1(\mu,V) = \lim_{n \to +\infty} \frac{1}{n} \int_G \log \norm{\rho(g)} \dd \mu^{*n}(g)\]
denotes the top Lyapunov exponent associated to the random walk induced on $V$.
By semisimplicity $V$ is a sum of irreducible sub-representations.
The sum of  irreducible sub-representations of same top Lyapunov exponent is a sum of isotypical components.
For $\lambda \in \R$, we will denote by $p_\lambda \colon V \to V$ the $G$-equivariant projection onto 
\[
\sum_{\substack{V' \subset V, \text{ irreducible}\\ \lambda_1(\mu ,V') \geq \lambda}} V'.
\]
We also write $\R[G]_{\leq D}$ for the set of polynomial maps of degree at most $D$ on $G$, i.e. restrictions to $G$ of polynomial maps of degree at most $D$ on $E$.
We fix a norm on $\R[G]_{\leq D}$, for instance $\norm{f}=\sup_{g\in\Ball_G(1,1)}\abs{f(g)}$.
The following is \cite[Proposition 3.17]{HS2019}.
\begin{prop}
\label{pr:Prop317}
Let $\mu$ be a probability measure on $\SL_d(\Z)$ having a finite exponential moment. 
Let $G$ denote the Zariski closure of the subgroup generated by $\Supp(\mu)$ in $\SL_d(\R)$.
Assume that $G$ is semisimple and Zariski connected.
Given $D \geq 1$, $\lambda \geq 0$, and $\omega > 0$, there is $c = c(\mu,D,\lambda,\omega) > 0$ such that the following holds for every $f \in \R[G]_{\leq D}$.
\[\forall n \geq 0,\quad \mu^{*n}\bigl( \set{g \in G}{ \abs{f(g)} \leq e^{(\lambda - \omega)n} \norm{p_\lambda(f)} } \bigr) \ll e^{-cn}.\]
Here $p_\lambda \colon \R[G]_{\leq D} \to \R[G]_{\leq D}$ is defined as above.
\end{prop}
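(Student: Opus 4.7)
The plan is to prove Proposition~\ref{pr:Prop317} by combining a Lyapunov-type norm growth estimate for the representation $\rho$ on $\R[G]_{\leq D}$ with a Diophantine non-concentration estimate at the identity, in the spirit of the proof of Proposition~\ref{ncirr} above. Decompose $f = p_\lambda(f) + f^\perp$, where $f^\perp := (1-p_\lambda)(f)$ lies in the sum of irreducible $G$-subrepresentations of $\R[G]_{\leq D}$ with top Lyapunov exponent strictly less than $\lambda$; let $\lambda' < \lambda$ be the maximum such exponent. Choose an inner product on $\R[G]_{\leq D}$ making the isotypic decomposition orthogonal. By the Benoist--Quint large deviation principle, outside an exceptional set of $\mu^{*n}$-measure $\ll e^{-c_1 n}$ we have $\norm{\rho(g) f^\perp} \leq e^{(\lambda'+\omega/8)n}\norm{f^\perp}$, and hence $\abs{f^\perp(g)} = \abs{(\rho(g)f^\perp)(e)} \leq e^{(\lambda'+\omega/8)n}\norm{f^\perp}$.

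For the main component $p_\lambda(f)$, split the random walk as $g = g'' g'$ with $g' = g_{n-m}\cdots g_1$ and $g'' = g_n\cdots g_{n-m+1}$ for $m = \lfloor \eps n\rfloor$, $\eps > 0$ small. Decompose $p_\lambda(f) = \sum_i v_i$ into isotypic components and let $i^*$ maximise $\norm{v_{i^*}}$; since all components have top Lyapunov exponent $\geq \lambda$, the Benoist--Quint lower large deviation estimate yields, outside a $\mu^{*(n-m)}$-set of measure $\ll e^{-c_2 (n-m)}$,
\[
\norm{\rho(g') p_\lambda(f)} \geq \norm{\rho(g') v_{i^*}} \geq e^{(\lambda-\omega/4)(n-m)}\norm{v_{i^*}} \gtrsim e^{(\lambda-\omega/4)(n-m)}\norm{p_\lambda(f)},
\]
with the implicit constant depending only on $\dim \R[G]_{\leq D}$. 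Writing $F := \rho(g')p_\lambda(f) \in \R[G]_{\leq D}$, so that $p_\lambda(f)(g) = F(g'')$, the task reduces to lower bounding $\abs{F(g'')}$ by $e^{-Cm}\norm{F}$ with high probability.

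This last step is the main obstacle and requires the following uniform Diophantine non-concentration estimate: for every nonzero polynomial $F \in \R[G]_{\leq D}$,
\[
\mu^{*m}\bigl(\set{g'' \in G}{\abs{F(g'')} \leq e^{-Cm}\norm{F}}\bigr) \ll e^{-c_3 m},
\]
with constants $C, c_3$ depending only on $\mu$ and $D$. This is precisely the statement proved in the second step of the proof of Proposition~\ref{ncirr} above, deduced there from the spectral gap modulo primes of Salehi Golsefidy--Varj\'u~\cite{SGV} combined with the Lang--Weil estimates on point counts for varieties over finite fields; uniformity in $F$ is built in because the constants depend only on the ambient degree $D$ and on $\mu$, so one may apply the estimate with $F = \rho(g')p_\lambda(f)$ despite its dependence on $g'$. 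Combining the two estimates and choosing $\eps$ small enough that $(\lambda - \omega/4)(n-m) - Cm \geq (\lambda - \omega/2)n$, we conclude $\abs{p_\lambda(f)(g)} \geq e^{(\lambda - \omega/2)n}\norm{p_\lambda(f)}$ outside a $\mu^{*n}$-exceptional set of measure $\ll e^{-c\eps n}$. Together with the upper bound on $\abs{f^\perp(g)}$ and the triangle inequality, this yields $\abs{f(g)} \geq e^{(\lambda-\omega)n}\norm{p_\lambda(f)}$, after a case split to handle the edge regime $\norm{f^\perp} \gg e^{(\lambda-\lambda')n}\norm{p_\lambda(f)}$ where one instead argues that $\abs{f(g)} \asymp \abs{f^\perp(g)}$ dominates the target bound directly.
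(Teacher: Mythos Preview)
The paper does not actually prove this proposition: it is quoted verbatim from \cite[Proposition~3.17]{HS2019}, so there is no in-paper proof to compare against. That said, your strategy---split the walk as $g=g''g'$ with $g'\sim\mu^{*(n-m)}$ and $g''\sim\mu^{*m}$ for $m=\lfloor\eps n\rfloor$, use large deviations along $g'$ to grow $\|\rho(g')p_\lambda(f)\|$, and then a Diophantine estimate along $g''$ to convert norm into pointwise value---is exactly the approach used in \cite{HS2019}, and is correct in outline. Two points deserve correction.

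First, your handling of the ``edge regime'' $\norm{f^\perp}\gg e^{(\lambda-\lambda')n}\norm{p_\lambda(f)}$ does not work as written: you assert $\abs{f(g)}\asymp\abs{f^\perp(g)}$ and that this dominates the target, but you have only proved an \emph{upper} bound on $\abs{f^\perp(g)}$, never a lower bound. The clean fix is to avoid the case split entirely: apply the Diophantine step to the full function $F=\rho(g')f$, not to $\rho(g')p_\lambda(f)$ alone. Since the decomposition $\rho(g')f=\rho(g')p_\lambda(f)+\rho(g')f^\perp$ is orthogonal (both summands live in $G$-invariant subspaces made orthogonal by your choice of inner product), you have $\norm{F}\geq\norm{\rho(g')p_\lambda(f)}\geq e^{(\lambda-\omega/4)(n-m)}\norm{p_\lambda(f)}$ outside the large-deviation exceptional set. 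Then $\abs{f(g)}=\abs{F(g'')}\geq e^{-Cm}\norm{F}$ gives the conclusion directly with no reference to $\norm{f^\perp}$.

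Second, the Diophantine step you invoke (``the second step of the proof of Proposition~\ref{ncirr}'') is stated and proved there only for affine maps, i.e.\ degree $\leq 1$. The degree-$D$ version you need is the content of \cite[Proposition~3.7]{HS2019}; the argument is the same (escape from subvarieties via spectral gap mod $p$ and Lang--Weil, then a Diophantine pigeonhole in $\R[G]_{\leq D}$), but you should cite it rather than point to the degree-$1$ case. Also, for the large-deviation lower bound $\norm{\rho(g')v_{i^*}}\geq e^{(\lambda-\omega/4)(n-m)}\norm{v_{i^*}}$ you need the component containing $v_{i^*}$ to be irreducible (Theorem~\ref{thm:LargeD}\ref{it:LargeDnc} requires this); decompose into irreducibles rather than merely isotypics.
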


The following is \cite[Lemma 3.18]{HS2019}. For $k \geq 1$ and a measure $\mu$ on $G$, $\mu^{\otimes k} = \mu \otimes \dotsm \otimes \mu$ denotes the product measure on $G^k = G \times \dotsm \times G$.
Again, $\R[G^k]_{\leq D}$ denotes the space of restrictions to $G^k$ of polynomial functions of degree at most $D$ on $E^k$.

\begin{lem}
\label{lm:Lemma318}
Let $V$ be a Euclidean space.
Let $\mu$ be a Borel probability measure  on $\SL(V)$ having a finite exponential moment.
Let $G$ denote the Zariski closure of the subgroup generated by $\Supp(\mu)$ in $\SL_d(\R)$.
Assume that $G$ is Zariski connected, is not compact and acts irreducibly on $V$.

Let $E$ denote the $\R$-span of $G$ in $\End(V)$ and $k\geq\dim E$ an integer.
Let $D\geq 1$ an integer and $f \in \R[E]_{\leq D}$ be such that its homogeneous part $f_D$ of degree $D$ does not vanish on $E$.
Define $F \in \R[G^k]_{\leq D}$ to be the polynomial function
\[
\forall (x_1,\dotsc,x_k) \in G^k,\quad F(x_1,\dotsc,x_k) = f(x_1 + \dots + x_k).
\]
Then we have
\[
p_{D\lambda_1(\mu,V)}(F) \neq 0
\]
where $p_{D\lambda_1(\mu,V)} \colon  \R[G^k]_{\leq D} \to  \R[G^k]_{\leq D}$ denotes the projection to the sum of irreducible $G^k$-subrepresentations $M \subset  \R[G^k]_{\leq D}$ with $\lambda_1(\mu^{\otimes k},M) \geq D\lambda_1(\mu,V)$.
\end{lem}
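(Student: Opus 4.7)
The plan is to prove Lemma~\ref{lm:Lemma318} in two main steps: first reducing the problem to the degree-$D$ homogeneous part of $f$ via a Lyapunov-exponent comparison, then extracting the needed lower bound through a polarization argument.

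Decompose $F = \sum_{d=0}^{D} F_d$ according to total degree in $(x_1, \dots, x_k)$, where $F_d(x_1, \dots, x_k) = f_d(x_1+\dots+x_k)$ and $f_d$ is the degree-$d$ homogeneous part of $f$. Any irreducible $G^k$-subrepresentation $M$ of polynomials of degree $\leq d$ on $G^k$ has top Lyapunov exponent at most $d\lambda_1(\mu,V)$ under $\mu^{\otimes k}$, since each matrix entry of a factor $g_i$ grows at rate at most $\lambda_1(\mu,V)$. Hence for $d < D$, the projection $p_{D\lambda_1(\mu,V)}$ annihilates $F_d$, reducing the problem to the lower bound $\norm{f_D} \ll \norm{p_{D\lambda_1}(F_D)}$.

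For the second step, expand $F_D$ via the multinomial theorem,
\[
F_D = \sum_{|\alpha| = D,\; \alpha \in \N^k} \binom{D}{\alpha} F_D^{(\alpha)},
\]
where $F_D^{(\alpha)}$ is the multi-homogeneous component of multi-degree $\alpha$. The assumption $k \geq \dim E$, after a reduction to $D \leq \dim E$, yields $k \geq D$, so that the fully multilinear piece $F_D^{(1,\dots,1,0,\dots,0)}$ (with $D$ ones) is present; by standard polarization identities, its norm is comparable to $\norm{f_D}$ up to a combinatorial constant depending only on $D$.

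The main obstacle is to show that $p_{D\lambda_1}$ does not annihilate this multilinear component. Since every multi-homogeneous piece $F_D^{(\alpha)}$ with $|\alpha| = D$ has total Lyapunov exponent $D\lambda_1$, the projection cannot separate them by degree alone; instead, one must identify the top-Lyapunov sub-representation $W \subset E^*$ (well-defined by Furstenberg-Guivarc'h, thanks to the irreducibility of the $G$-action on $V$ and the non-compactness of $G$, which together ensure simplicity of the top Lyapunov exponent), and argue that the composition of polarization with projection onto $W^{\otimes D} \subset (E^*)^{\otimes k}$ (viewed through the first $D$ tensor factors) is an injection with controlled operator norm. This algebraic-geometric injectivity, which relies on irreducibility and Zariski-connectedness in an essential way, is the heart of the lemma and produces the desired lower bound $\norm{f_D} \ll \norm{p_{D\lambda_1}(F_D)}$.
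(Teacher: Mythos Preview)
The paper does not prove this lemma; it is quoted verbatim from \cite[Lemma~3.18]{HS2019}. So there is no proof in the present paper to compare against. That said, your outline has a genuine gap at the final step, and the argument you sketch there is based on a misconception.

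Your Step~1 (reducing to the degree-$D$ homogeneous part via the bound $\lambda_1(\mu^{\otimes k},M)\leq d\lambda_1$ for $M\subset\R[G^k]_{\leq d}$, together with $\lambda_1>0$ from non-compactness) is correct. In Step~2 you need $k\geq D$ to isolate a fully multilinear component; your ``reduction to $D\leq\dim E$'' is not justified by the hypotheses as stated, though it does hold in the only application in the paper (where $D=\dim E_j\leq\dim E=k$).

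The real problem is Step~3. You write that irreducibility plus non-compactness ``ensure simplicity of the top Lyapunov exponent'' and then invoke an unspecified ``algebraic-geometric injectivity'' for the projection onto $W^{\otimes D}$ with $W\subsetneq E^*$ the top Lyapunov line. Both parts are wrong: simplicity of $\lambda_1$ requires proximality in addition to (strong) irreducibility, which is not assumed here; and if $W$ were a proper subspace of $E^*$, the projection $\mathrm{Sym}^D(E^*)\to\mathrm{Sym}^D(W)$ is certainly not injective, so your proposed route cannot close.

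What actually makes Step~3 work is much simpler and uses irreducibility in a different way. Under the right-translation action $(g\cdot f)(x)=f(xg)$, one identifies $\R[G]_{\leq 1}/\R$ with $E^*$, and via the trace form $E^*\cong E$ with $G$ acting by \emph{left} multiplication. Since $G$ acts irreducibly on $V$, left multiplication makes $E\subset\End(V)\cong V^{\oplus\dim V}$ a direct sum of copies of the standard representation $V$. Thus $E^*$ is isotypic of type $V$: \emph{every} irreducible $G$-constituent of $E^*$ has Lyapunov exponent exactly $\lambda_1$. Consequently the multilinear component lives in $(E^*)^{\boxtimes D}$, which is a sum of copies of $V^{\boxtimes D}$ and therefore lies entirely in the $D\lambda_1$-part of $\R[G^k]_D$. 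So $p_{D\lambda_1}$ is the identity on the multilinear component, and since the multi-degree pieces of $\R[G^k]_D$ are $G^k$-invariant and in direct sum, one gets
\[
\norm{p_{D\lambda_1}(F_D)}\;\geq\;\norm{p_{D\lambda_1}(\tilde f_D)}\;=\;\norm{\tilde f_D}\;\asymp_D\;\norm{f_D}.
\]
In short: there is no delicate injectivity to establish---recognising the isotypic structure of $E^*$ (which is where irreducibility of $V$ enters) makes the ``main obstacle'' disappear.
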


\begin{remark}
The conclusion of the above lemma can be improved to
\[
\norm{p_{D\lambda_1(\mu,V)}(F)}_{\R[G^k]_{\leq D}} \gg_{\mu,D,k} \norm{f_D}_{\R[E]_{\leq D}}.
\]
Indeed, it is enough to check it when $f=f_D$ is homogeneous,
and then one may assume $\norm{f_D}_{\R[E]_{\leq D}}=1$.
The left-hand side is a positive continuous function of $f_D$, so it admits a uniform positive lower bound on the unit sphere $\norm{f_D}_{\R[E]_{\leq D}}=1$.
This shows the desired lower bound.
\end{remark}

\begin{proof}[Proof of Lemma \ref{lm:detEj}]
Fix $j \in\{ 1,\dotsc,s\}$. 
Remember that $E_j$ is a simple algebra over $\R$. 
Using Wedderburn's structure theorem, we can find a real vector space $V_j$ and an irreducible faithful linear representation $E_j \to \End(V_j)$.
It is easy to see that $\lambda_1(\mu,E_j) = \lambda_1\bigl({\pi_j}_*\mu, V_j\bigr)$.
The Zariski closure of the subgroup generated by $\Supp({\pi_j}_*\mu)$ is precisely $\pi_j(G)$. It spans $E_j$, is Zariski connected, acts irreducibly on $V_j$ and is not compact.  
Thus, we may apply Lemma~\ref{lm:Lemma318} to ${\pi_j}_*\mu$ with $D=D_j$ and $k=D$.

Let $y \in E_j$ and consider the polynomial function $f \in \R[E_j]$, $f(x) = \det_{E_j}(x-y)$.
The degree of $f$ is $D_j = \dim E_j$ and its degree $D_j$ homogeneous part is $\det_{E_j}$.
Recall $D = \dim E$. 
Consider $F \in \R[\pi_j(G)^D]_{\leq D_j}$ defined as
\[\forall x_1,\dotsc,x_D \in \pi_j(G),\quad F(x_1,\dotsc,x_D) = f(x_1 + \dots + x_D).\]
By Lemma~\ref{lm:Lemma318} and the remark that follows it
\[
\normbig{p_{D_j\lambda_1(\mu,E_j)}(F)}_{\R[\pi_j(G)^D]_{\leq D_j}} \gg \norm{\det\nolimits_{E_j}}_{\R[E_j]_{\leq D_j}} \gg_E 1.
\]

The linear map $\Theta_j \colon \R[\pi_j(G)^D] \to \R[G^D]$ obtained by precomposing $(\pi_j, \dotsc, \pi_j)$ is injective and sends irreducible $\pi_j(G)^D$-subrepresentations to irreducible $G^D$-subrepresentations.
Moreover, for any irreducible $\pi_j(G)^D$-subrepresentation $M \subset \R[\pi_j(G)^D]$, we have 
\[\lambda_1\bigl(({\pi_j}_*\mu)^{\otimes D},M\bigr) = \lambda_1\bigl(\mu^{\otimes D}, \Theta_j(M) \bigr).\]
It follows that 
\[\normbig{p_{D_j\lambda_1(\mu,E_j)}(F\circ(\pi_j, \dotsc, \pi_j))}_{\R[G^D]_{\leq D_j}} \gg_E 1.\]
Then we obtain Lemma~\ref{lm:detEj} by applying Proposition~\ref{pr:Prop317} to the measure $\mu^{\otimes D}$ and the polynomial function $F\circ(\pi_j, \dotsc, \pi_j) \in \R[G^D]_{\leq D_j}$.
\end{proof}


\subsection{Proof of Proposition~\ref{pr:nc2}}

In order to obtain the required non-concentration properties for the measure $\mu_n$, we shall use the basic large deviation estimates for matrix products that have already been used in the proof of Proposition~\ref{pr:ncaff2}.
The statement below is taken from Boyer~{\cite[Theorem A.5]{Boyer}, which generalizes previous results of Le Page~{\cite{LePage}} and Bougerol~{\cite[Theorem V.6.2]{BougerolLacroix}}.

\begin{thm}[Large deviation estimates]
\label{thm:LargeD}
Let $\mu$ be a Borel probability measure on $\GL_d(\R)$ having a finite exponential moment. 
For any $\omega > 0$, there is $c = c(\mu,\omega) > 0$, such that  the following holds.
\begin{enumerate}
\item \label{it:LargeDn} For all $n \geq 1$,
\[\mu^{*n} \Bigl(\setBig{ g \in \Gamma}{\abse{\frac{1}{n}\log \norm{g} - \lambda_1(\mu,\R^d)} \geq \omega} \Bigr)  \ll_\omega e^{-cn}.\]
\item \label{it:LargeDnc}  Assume further that the group generated by $\Supp(\mu)$ acts irreducibly on $\R^d$. For all $n \geq 1$ and all $v \in \R^d \setminus \{0\}$,
\[\mu^{*n} \Bigl(\setBig{ g \in \Gamma}{\abse{\frac{1}{n}\log \frac{\norm{gv}}{\norm{v}} - \lambda_1(\mu,\R^d)} \geq \omega} \Bigr) \ll_\omega e^{-cn}.\]
\end{enumerate}
\end{thm}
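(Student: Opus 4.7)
The plan is to follow the classical Le Page--Bougerol approach via spectral analysis of a transfer operator on projective space, with part~(ii) providing the main technical heart and part~(i) being deduced from it by reduction.

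For part~(ii), I would work on the projective space $\Pbb(\R^d)$ with the cocycle $\sigma(g,\bar v) = \log(\norm{gv}/\norm{v})$ and introduce the family of twisted transfer operators
\[
P_{\mu,z} f(\bar v) = \int e^{z\sigma(g,\bar v)} f(g\bar v)\,\dd\mu(g),\qquad z\in\C,
\]
acting on a Banach space of Hölder continuous functions on $\Pbb(\R^d)$. The finite exponential moment hypothesis ensures that $P_{\mu,z}$ is bounded for $z$ in some complex neighborhood $U$ of $0$ and that $z \mapsto P_{\mu,z}$ is analytic on $U$. The crucial step is to establish a spectral gap for $P_\mu = P_{\mu,0}$: the value $1$ is a simple eigenvalue, and the rest of the spectrum lies in a disc of radius strictly less than $1$. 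Assuming irreducibility (passing to a finite-index subgroup if needed to ensure strong irreducibility), this follows from a Doeblin--Fortet inequality on Hölder norms combined with the contraction-on-average property of the projective action.

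Granted the spectral gap, analytic perturbation theory produces a simple dominant eigenvalue $\lambda(z)$ of $P_{\mu,z}$ depending analytically on $z \in U$, with $\lambda(0)=1$ and $\lambda'(0) = \lambda_1(\mu,\R^d)$. Iterating gives a decomposition
\[
P_{\mu,z}^n = \lambda(z)^n \Pi_z + R_z^n
\]
with $\norm{R_z^n} \ll \rho^n$ for some $\rho < \abs{\lambda(z)}$, uniformly on a smaller neighborhood of $0$. Evaluating at the constant function yields
\[
\int e^{z\sigma(g_n\cdots g_1,\bar v)}\,\dd\mu^{*n}(g) = \lambda(z)^n\bigl(h_z(\bar v) + O(\rho^n/\abs{\lambda(z)}^n)\bigr).
\]
A Chernoff--Markov bound applied with a small real $z > 0$ then yields the upper tail $\Pbb[\sigma(g_n\cdots g_1,\bar v) \geq (\lambda_1(\mu,\R^d)+\omega)n] \ll e^{-cn}$, since $\log\lambda(z) = z\lambda_1(\mu,\R^d) + O(z^2)$; the lower tail follows symmetrically with small $z < 0$.

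To deduce part~(i), the upper bound on $\frac{1}{n}\log\norm{g}$ comes from subadditivity $\log\norm{g_n\cdots g_1} \leq \sum_i \log\norm{g_i}$, the exponential moment, and Cramér's theorem for i.i.d.\ sums. For the lower bound, I would use a Jordan--Hölder filtration of $\R^d$ with respect to $\Gamma$: the top Lyapunov exponent $\lambda_1(\mu,\R^d)$ is realised on at least one irreducible sub-quotient $W$, on which the induced random walk inherits the hypotheses of part~(ii); picking a vector whose image in $W$ is non-zero and using $\norm{g} \geq \norm{gv}/\norm{v}$ up to a constant transfers the large deviation estimate. The main obstacle is the spectral gap of $P_\mu$ beyond the proximal case: with proximality one has genuine contraction of directions in $\Pbb(\R^d)$ driving the Doeblin--Fortet argument, but otherwise the projective Markov chain can carry non-trivial rotational components on invariant circles. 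Boyer's extension handles this by passing to a finite cover on which a proximality-type property is restored modulo an Oseledets-type splitting, and this is the step requiring the most care.
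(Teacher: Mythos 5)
The paper does not actually prove this theorem: it is quoted from Boyer [Theorem~A.5], which generalizes Le Page and Bougerol, so there is no internal argument to compare against. Your sketch is precisely the strategy of those references --- Le Page's twisted transfer operators $P_{\mu,z}$ on H\"older functions over $\Pbb(\R^d)$, a spectral gap at $z=0$, analytic perturbation of the dominant eigenvalue with $\lambda'(0)=\lambda_1(\mu,\R^d)$, and a Chernoff--Markov bound --- together with a sound deduction of part~(i) from part~(ii): subadditivity plus Cram\'er for the upper tail, and a Jordan--H\"older subquotient realising the top exponent (Furstenberg--Kifer) for the lower tail. One parenthetical is wrong, however: you cannot ``pass to a finite-index subgroup to ensure strong irreducibility.'' If $\Gamma$ acts irreducibly but not strongly irreducibly, it permutes a finite family of proper subspaces, and the finite-index stabiliser of one of them acts \emph{reducibly}; no finite-index subgroup can act strongly irreducibly when the full group does not. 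Since the theorem assumes only irreducibility, this case must genuinely be handled --- by inducing to the stabiliser and tracking the finite permutation factor, or as Boyer does --- and it is part of the same delicate step (together with the loss of proximality and the resulting peripheral spectrum of $P_\mu$) that you correctly identify at the end as the heart of the matter and defer to Boyer.
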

To prove Proposition~\ref{pr:nc2}, we shall only need the first item; the second item will be used later in Section~\ref{sec:granulation}.

\begin{proof}[Proof of Proposition~\ref{pr:nc2}]
Note that condition $\NC(\eps,\kappa,\tau)$ was defined for algebras endowed with a norm, and not with a quasi-norm.
However, for some constants $\alpha,\beta>0$, we have, for every $v\in E'$,
\[
\left\{ \begin{array}{ll}
\norm{v} \leq {\qnorm{v}}^{\,\alpha} & \text{ if }\norm{v}\geq 1\\
\norm{v} \leq {\qnorm{v}}^{\,\beta} & \text{ if }\norm{v} <1.
\end{array}
\right.
\]
So if some measure satisfies condition $\NC(\eps,\kappa,\tau)$ for the quasi-norm $\qnorm{\mybullet}$ on $E'$, then it satisfies $\NC(\alpha\eps,\frac{\kappa}{\beta},\tau)$ for the usual norm $\norm{\cdot}$ on $E'$.
It is therefore sufficient to check the non-concentration properties of $\mu_n$ for the quasi-distance $\dtil$.

For that, let $\eps>0$ be some small parameter.
By Theorem~\ref{thm:LargeD}\ref{it:LargeDn} applied to each ${\pi_j}_*\mu$, there exists $\tau = \tau(\mu,\eps) > 0$ such that 
\[
\mu_n\bigl( \setbig{g \in E'}{\forall j = 1,\dotsc,s,\ \qnorm{\pi_j(g)}\geq e^{-\eps n}}\bigr) \geq 1 - e^{-\tau n}.
\]
Let $\nu_0$ be the restriction of $\mu_n$ to such $g$ and write
\[
\mu_n = \nu_0 + \nu_1
\]
so that $\nu_1(E)\leq e^{-\tau n}$.
By Proposition~\ref{anc}, there exists $\kappa = \kappa(\mu) > 0$ such that for any affine hyperplane $W \subset E$ with $E_0 \subset W - W$,
\[
\forall \rho \geq e^{-n},\quad \mu^{*n}\bigl( \setbig{g \in G}{ \dtil(g,W) < \rho \min_{j \in J_W}\qnorm{\pi_j(g)}} \bigr) \ll \rho^\kappa.
\]
By definition of $\phi_n$ and of the quasi-norm $\abs{\cdot}$ on $E$, we have 
\(
\abs{\phi_n(g)} = e^{-n} \abs{g}
\)
for every $g\in G$ and therefore, for every affine hyperplane $W \subset E'$,
\[
\forall \rho \geq e^{-n},\quad \mu_n\bigl( \setbig{g \in E'}{ \dtil(g,W) < \rho \min_{j \in J_W} \qnorm{\pi_j(g)}} \bigr) \ll \rho^\kappa.
\]
By definition of $\nu_0$, this implies
\begin{equation}\label{subs}
\forall \rho \geq e^{-n},\quad \nu_0\bigl( \setbig{g \in E'}{ \dtil(g,W) < \rho e^{-\eps n}} \bigr) \ll \rho^\kappa
\end{equation}
and this inequality is still valid for any convolution $\nu_0\pp\eta$, where $\eta$ is a finite measure with $\eta(E)\leq 1$.
On the other hand, Lemma~\ref{lm:detE'} shows that for some $\tau_1>0$,
\begin{equation}\label{dt}
\forall y \in E',\quad \mu_n^{\pp D}\bigl(\setbig{x \in E'}{\abs{\det\nolimits_{E'}(x-y)} \leq e^{-\eps n}} \bigr) \ll e^{-\tau_1 n}.
\end{equation}
Let $\eta_0$ be the restriction of $\nu_0\pp\mu_n^{\pp(D-1)}$ to $\Ball_{E'}(0,e^{2\eps n})$, and write
\[
\mu_n^{\pp D} = \eta_0 + \eta_1.
\]
By Theorem~\ref{thm:LargeD}\ref{it:LargeDn}, we have $\eta_1(E')\leq e^{\tau n}$ for some $\tau_2=\tau_2(\eps)>0$, and by equations \eqref{subs} and \eqref{dt}, the measure $\eta_0$ satisfies $\NC_0(2\eps,\frac{\kappa}{2},\tau)$ with $\tau=\min(\tau_1,\tau_2)$.
\end{proof}

\section{Fourier spectrum of the random walk}
\label{sec:fourier}

Let $\mu$ be a probability measure on $\GL_d(\Z)$.
Denote by $\Gamma \subset \GL_d(\Z)$ the subgroup generated by $\Supp(\mu)$ and $G \subset \GL_d(\R)$ the Zariski closure of $\Gamma$ in $\GL_d(\R)$.
Under the assumption that $\mu$ has a finite exponential moment and that $G$ is semisimple, we want to show some Fourier decay property for the measure $\mu^{*n}$ on the algebra $E$ generated by $G$.

\subsection{Connected case}
The result we need about Fourier decay for random walks is particularly transparent and easy to prove when the algebraic group $G$ generated by $\mu$ is Zariski connected.
So we first explain this particular case.
Recall that $\phi_n\colon E\to E$ is the rescaling automorphism given by \eqref{eq:phin}, that $\pi'\colon E\to E'$ denotes the projection to the direct sum of all non-compact factors in $E$, and that for any integer $n\geq 1$, we let
\[
\mu_n = (\pi'\circ\phi_n)_*(\mu^{*n})
\]
be the image of $\mu^{*n}$ after rescaling and projection to $E'$.
The proof of the Fourier decay for $\mu_n$ will be a consequence of the results of Section~\ref{sec:sumprod} for multiplicative convolutions on semisimple algebras, and of the multiplicative structure of $\mu_n$ simply expressed as
\[
\forall m,n,\quad \mu_{n+m} = \mu_m*\mu_n.
\]
We will denote by ${E'}^*$ the space of linear forms on $E'$ over the real numbers.

\begin{thm}[Fourier decay for random walks in $E'$]
\label{thm:decayconnected}
Assume that $G$ is semisimple and Zariski connected, and that $\mu$ has a finite exponential moment.
Then there exists $\alpha_0 = \alpha_0(\mu) > 0$ such that for every $\alpha_1 \in (0,\alpha_0)$, there exists $c_0 = c_0(\mu, \alpha_1) > 0$ such that for all $n$ sufficiently large, for all $\xi \in {E'}^*$ with
\[
e^{\alpha_1 n} \leq \norm{\xi} \leq e^{\alpha_0 n}
\]
the following estimate on the Fourier transform of $\mu^{*n}$ holds:
\[
\abse{\widehat{\mu_n}(\xi)} \leq e^{-c_0n}.
\]
\end{thm}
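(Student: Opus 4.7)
The plan is to combine Proposition~\ref{pr:nc2} (non-concentration for $\mu_n^{\pp D}$) with Corollary~\ref{cor:fourier} (polynomial Fourier decay for multiplicative convolutions in semisimple algebras), exploiting the key identity $\mu_m^{*s} = \mu_{sm}$ that holds for the rescaled walk. This identity is an immediate consequence of the relation $\phi_{n_1}(g_1)\phi_{n_2}(g_2) = \phi_{n_1+n_2}(g_1 g_2)$---verified on each simple factor $E_j$ using $\pi_j(g_1)\pi_k(g_2) = \delta_{jk}\pi_j(g_1 g_2)$---together with the fact that $\pi'$ is a ring homomorphism.

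First I would select $\kappa = \kappa(\mu) > 0$ via Proposition~\ref{pr:nc2} and feed it, together with $D = \dim E$, into Corollary~\ref{cor:fourier} to obtain $s = s(E,\kappa) \in \N$ and $\eps_0 > 0$. For a small $\eps \in (0,\eps_0)$, Proposition~\ref{pr:nc2} provides $\tau > 0$ such that $\mu_m^{\pp D}$ satisfies $\NC(\eps,\kappa,\tau)$ at scale $e^{-m}$ for every sufficiently large $m$. Corollary~\ref{cor:fourier} applied to $s$ copies of $\mu_m$ then yields, via $\mu_m^{*s} = \mu_{sm}$,
\[
\absbig{\widehat{\mu_{sm}}(\zeta)} \leq e^{-\eps\tau m} \quad \text{for every } \zeta \in (E')^* \text{ with } e^{(1-\eps)m} \leq \norm{\zeta} \leq e^{(1+\eps)m}.
\]
Taking $m = \lfloor n/s \rfloor$ already proves the theorem whenever $\norm{\xi}$ lies in the narrow range $[e^{(1-\eps)n/s}, e^{(1+\eps)n/s}]$.

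To cover the wider range $[e^{\alpha_1 n}, e^{\alpha_0 n}]$ with $\alpha_0 := (1+\eps)/s$, I would for each $\xi$ with $\norm{\xi} = e^{\alpha n}$, $\alpha \in (0,\alpha_0)$, choose $m \leq n/s$ so that $\norm{\xi}$ lies in the above range, decompose $\mu_n = \mu_{n-sm} * \mu_{sm}$, and apply Fubini:
\[
\widehat{\mu_n}(\xi) = \int_{E'} \widehat{\mu_{sm}}(L_y^T\xi)\,\dd\mu_{n-sm}(y), \qquad (L_y^T\xi)(x) := \xi(yx).
\]
The task reduces to showing that $\norm{L_y^T\xi} \in [e^{(1-\eps)m}, e^{(1+\eps)m}]$ for a set of $y$ of probability close to $1$. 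The upper bound $\norm{L_y^T\xi} \leq \norm{y}\norm{\xi}$ is controlled by Theorem~\ref{thm:LargeD}, which gives $\norm{y} \leq e^{\eps m}$ with probability $1 - e^{-cn}$. The lower bound comes from $\norm{L_y^T\xi} \geq \norm{\xi}/\norm{y^{-1}}$ together with the matrix inequality $\norm{y^{-1}} \leq \norm{y}^{D-1}/\absbig{\det\nolimits_{E'}(y)}$, which reduces matters to a lower bound on $\abs{\det_{E'}(y)}$. Such a bound is obtained by applying Proposition~\ref{pr:Prop317} to the polynomial $f(g) = \prod_j \det_{E_j}(\pi_j(g))$ on $G$, whose corresponding $G$-subrepresentation of $\R[G]_{\leq D}$ has top Lyapunov exponent computable from the spectrum of $\mu$. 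Combining both bounds and inserting the Corollary's estimate at $L_y^T\xi$ gives $\absbig{\widehat{\mu_n}(\xi)} \leq e^{-\eps\tau m} + e^{-cn} \leq e^{-c_0 n}$ with $c_0 = c_0(\mu,\alpha_1) > 0$.

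The main obstacle will be the lower bound on $\abs{\det_{E'}(y)}$ for $y \sim \mu_{n-sm}$: unlike the additive-convolution version in Lemma~\ref{lm:detE'}, the single-draw measure $\mu_{n-sm}$ can have $\abs{\det_{E'}}$ decaying at a fixed exponential rate tied to the Lyapunov spectrum of $\mu$, so getting an arbitrarily small $\omega > 0$ requires the representation-theoretic fact that $f$ projects non-trivially onto the top Lyapunov weight space. When this verification is delicate, one can bypass it by bounding $\absbig{\widehat{\mu_n^{\pp D}}(\xi)} = \absbig{\widehat{\mu_n}(\xi)}^D$ instead, since Lemma~\ref{lm:detE'} applies cleanly to $\mu_n^{\pp D}$ with arbitrary $\omega > 0$, and then extracting the $D$-th root to recover the desired decay for $\mu_n$.
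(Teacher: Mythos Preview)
Your approach shares the paper's skeleton, but the handling of the residual factor $\mu_{n-sm}$ has a genuine gap when $k := n-sm$ is comparable to $n$ (which is unavoidable in your scheme once $\norm{\xi}$ is near $e^{\alpha_1 n}$). You propose to control $\norm{y^{-1}}$ via $\abs{\det_{E'}(y)}$, but for $y = \pi'\circ\phi_k(g)$ with $g \in G$ one has $\det_{E'}(y) = \prod_j e^{-k\lambda_1(\mu,E_j)\dim E_j}\det_{E_j}(\pi_j(g))$, and $g \mapsto \det_{E_j}(\pi_j(g))$ is an algebraic character of the connected semisimple group $G$, hence identically~$1$. So $\abs{\det_{E'}(y)} = e^{-k\Lambda}$ with $\Lambda = \sum_j \lambda_1(\mu,E_j)\dim E_j > 0$ \emph{deterministically}, and your bound only yields $\norm{L_y^T\xi} \geq e^{-k\Lambda}\norm{\xi}$ up to lower order, far below $e^{(1-\eps)m}$; Proposition~\ref{pr:Prop317} is of no use here since the polynomial $f$ you propose is constant on $G$. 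The workaround via $\mu_n^{\pp D}$ does not help either: $\mu_n^{\pp D}$ has no multiplicative factorisation, so Lemma~\ref{lm:detE'} never touches the quantity $\norm{L_y^T\xi}$ that arises from the split $\mu_n = \mu_{n-sm}*\mu_{sm}$.

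The paper's fix is a single observation you are missing: $\NC(\tfrac{\alpha_1\eps}{2},\kappa,\tau)$ at scale $e^{-m}$ formally implies $\NC(\eps,\kappa,\tau)$ at \emph{every} scale $\delta \in [e^{-m},e^{-\alpha_1 m/2}]$, so that Corollary~\ref{cor:fourier} (with $\delta = \norm{\zeta}^{-1}$) already gives $\abs{\widehat{\mu_{sm}}(\zeta)} \leq e^{-\eps\tau m}$ in the \emph{wide} window $e^{\alpha_1 m/2} \leq \norm{\zeta} \leq e^m$. One then simply takes $m = \lfloor n/s\rfloor$, so that $r = n-sm < s$ is bounded and $\mu_r$ perturbs $\xi$ only mildly (outside a set of $\mu_r$-measure $\leq e^{-cn}$, via the exponential moment), well within the wide window. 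For the record, your decomposition can be salvaged, but via Theorem~\ref{thm:LargeD}\ref{it:LargeDnc} applied to the right action on each $E_j$ to obtain $\norm{L_y^T\xi} \asymp \norm{\xi}$ directly with probability $1-e^{-ck}$; the determinant route is a dead end.
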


We let $E'$ act on ${E'}^*$ on the right by
\[\forall \xi \in {E'}^*,\; \forall x,y \in E',\quad (\xi \cdot x)(y) = \xi(xy).\]
Moreover, we let $E$ act on ${E'}^*$ via $\pi'$.

\begin{proof}
Let $D=\dim E'$, $\eps=\eps(E',\kappa,D)$ and $s=s(E',\kappa)$ be the quantities given by Corollary~\ref{cor:fourier}.
By Proposition~\ref{pr:nc2}, given $\alpha_1\in(0,1)$, there exists $\kappa>0$ such that for any $\eps>0$, there exists $\tau>0$ such that $\mu_n^{\pp D}$ satisfies $\NC(\frac{\alpha_1\eps}{2},\kappa,\tau)$ at scale $e^{-n}$ in $E'$ for all $n$ sufficiently large.
This formally implies that $\mu_n^{\pp D}$ satisfies $\NC(\eps,\kappa,\tau)$ at all scales $\delta\in[e^{-n},e^{-\frac{\alpha_1n}{2}}]$.

Without loss of generality, we may of course assume that $\tau\in(0,\eps\kappa)$.
Let $\xi\in {E'}^*$ be such that $e^{\frac{\alpha_1 n}{2}}\leq\norm{\xi}\leq e^{n}$.
Taking $\delta=\norm{\xi}^{-1}$, we have $\delta\in [e^{-n},e^{-\frac{\alpha_1 n}{2}}]$ so $\mu_n^{\pp D}$ satisfies $\NC(\eps,\kappa,\tau)$ at scale $\delta$.
Therefore, Corollary~\ref{cor:fourier} shows that $\mu_{sn}=\mu_n*\dots*\mu_n$ satisfies
\[
\abs{\widehat{\mu_{sn}}(\xi)} \leq e^{-\eps\tau n}.
\]
This shows the desired property if $n\in s\Z$.
In general, take $\alpha_0=\frac{1}{4s}$.
For $n$ large and $\xi\in{E'}^*$ such that $e^{\alpha_1n}\leq\norm{\xi}\leq e^{\alpha_0n}$, write $n=sm+r$, with $0\leq r<s$ and
\[
\widehat{\mu_n}(\xi) = \int_G \widehat{\mu_{sm}}(\xi \cdot x)\dd\mu_r(x).
\]
Then, observe from the exponential moment assumption that outside of a set of $\mu_r$-measure at most $e^{-cn}$, one has $e^{-\frac{\alpha_1 n}{2}}\norm{\xi} \leq \norm{\xi \cdot x}\leq e^{\frac{n}{2s}}\norm{\xi}$ and so
\[
e^{\frac{\alpha_1m}{2}} \leq e^{\frac{\alpha_1n}{2}}\leq\norm{\xi \cdot x}\leq e^{\frac{n}{2s}}e^{\frac{n}{4s}} \leq e^{m}.
\]
For such $\xi \cdot x$, we may bound
\[
\abse{\widehat{\mu_{sm}}(\xi \cdot x)} \leq e^{-\eps\tau m} \leq e^{-\frac{\eps\tau n}{2s}}
\]
whence
\[
\abs{\widehat{\mu_n}(\xi)} \leq e^{-\frac{\eps\tau n}{2s}} + e^{-cn} \leq e^{-c_0n}
\]
with $c_0=\min(\frac{c}{2},\frac{\eps\tau}{4s})$.
\end{proof}

\subsection{Disconnected case}
As before, $\mu$ denotes a probability measure on $\GL_d(\Z)$, and $G$ the algebraic group generated by $\mu$.
We still assume that $\mu$ has a finite exponential moment and that $G$ is semisimple but no longer that it is Zariski connected.
The identity component $G^\circ$ is then a finite index subgroup in $G$.
We now write $\tE$ for the subalgebra generated by $G$ in $\Mat_d(\R)$.
As before, we decompose
\begin{equation*}
\tE = \tE_1 \oplus \dots \oplus \tE_r
\end{equation*}
into simple ideals.
The rescaling automorphism $\phi_n\colon \tE\to \tE$ is now defined by
\begin{equation}
\label{eq:phin2}
\phi_n(g) = \sum_{j=1}^r e^{-n \lambda_1(\mu,\tE_j)} \pi_j(g)
\end{equation}
where $\lambda_1(\mu,\tE_j)$ denotes the top Lyapunov exponent associated to $\mu$ on each of the factors $\tE_j$.
Also, we assume that $\lambda_1(\mu,\tE_j) = 0$ if and only if $j > s$ and denote by $\pi' \colon \tE \to \tE'=\tE_1\oplus\dots\oplus\tE_s$ the projection to the non-compact factors.

\begin{example}
When $G$ is not Zariski connected, we shall write $\tE$ for the algebra generated by $G$, and let $E$ denote the algebra generated by the identity component $G^\circ$ of $G$.
Let $a_0=\begin{pmatrix}1 & 1\\ 0 & 1\end{pmatrix}$, $a_1=\begin{pmatrix} 1 & 0\\ 1 & 1\end{pmatrix}$ and $w=\begin{pmatrix}0 & 1\\-1 & 0\end{pmatrix}$.
Then define by blocks
\( A_0=\begin{pmatrix} w & 0\\
			0  & a_0
\end{pmatrix}
\)
and
\(
A_1=\begin{pmatrix} w & 0\\
			0  & a_1
\end{pmatrix}
\)
in $\SL_4(\Z)$
and set
\[
\mu = \frac{1}{4}(\delta_{A_0}+\delta_{A_1}+\delta_{A_0^{-1}}+\delta_{A_1^{-1}}).
\]
One has $G\simeq (\Z/4\Z)\times\SL_2(\R)$ and $\tE\simeq\C\times M_2(\R)$.
On the other hand, the algebra generated by $G^\circ$ is $E\simeq\R\times M_2(\R)$ if one identifies $\C\simeq\{\begin{pmatrix}a & b\\-b & a\end{pmatrix}\ ;\ a,b\in\R\}$ and $\R\simeq\R 1$.
The law $\mu^{*n}$ of the random walk at time $n$ is supported by $E$ if $n$ is even, and by $A_0E\simeq i\R\times M_2(\R)$ if $n$ is odd.
It is always supported on a proper subspace of $\tE$.
\end{example} 

To overcome this issue, we shall use the algebra $E\subset\tE$ generated by the identity component $G^\circ$ in $G$.
The group $G^\circ$ has finite index in $G$ and we let
\[
F= G/G^\circ.
\]
With a slight abuse of notation, we identify $F$ with a set of representatives in $G$ and write $G$ as a disjoint union
\[
G = \bigsqcup_{\gamma\in F} \gamma G^\circ.
\]
Any measure $\nu$ on $G$ can then be decomposed uniquely in the form
\[
\nu = \sum_{\gamma\in F} \gamma_*\nu_\gamma
\]
where each $\nu_\gamma$ is a measure on $E$.
Finally, we let $E'=\pi'(E)$, and for $n\geq 1$ and $\gamma\in F$,
\[
\mu_{n,\gamma} = (\pi' \circ \phi_n)_*[(\mu^{*n})_\gamma].
\]
Fourier decay for (integer coefficient) random walks on non-connected semisimple groups can be stated as follows.

\begin{thm}[Fourier decay for random walks in $E'$]
\label{thm:decaymun}
Let $\mu$, $G$, $G^\circ$ and $F$ be as above.
Then there exists $\alpha_0 = \alpha_0(\mu) > 0$ such that for every $\alpha_1 \in (0,\alpha_0)$, there exists $c_0 = c_0(\mu, \alpha_1) > 0$ such that for all $n$ sufficiently large, all $\gamma\in F$ and $\xi \in {E'}^*$ with
\[
e^{\alpha_1 n} \leq \norm{\xi} \leq e^{\alpha_0 n}
\]
the following estimate on the Fourier transform of $\mu^{*n}$ holds:
\[
\abse{\widehat{\mu_{n,\gamma}}(\xi)}
\leq e^{-c_0 n}.
\]
\end{thm}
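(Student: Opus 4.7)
The plan is to reduce to the connected Theorem~\ref{thm:decayconnected} by conditioning on the coset sequence visited by the random walk in $F = G/G^\circ$, following the scheme of \cite[Appendix B]{HLL2021}. First, decompose $\mu = \sum_{\gamma \in F}\gamma_*\tilde\mu_\gamma$ with each $\tilde\mu_\gamma$ a finite measure supported on $G^\circ \subset E$. Iterating this $n$ times and pulling each coset representative to the left via the normality of $G^\circ$ in $G$, one expresses
\[
(\mu^{*n})_\gamma = \sum_{\underline\gamma \in F^n_\gamma} (c_{u_n})_*\tilde\mu_{\gamma_n} \ff \cdots \ff (c_{u_1})_*\tilde\mu_{\gamma_1},
\]
where $F^n_\gamma = \setbig{(\gamma_1,\dots,\gamma_n) \in F^n}{\gamma_n \cdots \gamma_1 = \gamma}$, $u_i = \gamma_{i-1} \cdots \gamma_1 \in G$ (with $u_1 = e$), and $c_u(x) = u^{-1}xu$ is conjugation, extended to an algebra automorphism of $E$ preserving $G^\circ$. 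The total mass of the right-hand side is $\mu^{*n}(\gamma G^\circ) \leq 1$.

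Second, consider the probability measure $\mu' = \sum_\gamma \tilde\mu_\gamma$ on $G^\circ$. Its support generates $\Gamma \cap G^\circ$, a finite-index normal subgroup of $\Gamma$ which is therefore Zariski-dense in the connected semisimple group $G^\circ$; hence Theorem~\ref{thm:decayconnected} applies to $\mu'$. The $F$-action on $E$ by conjugation permutes the simple factors $E_j$ but preserves the Lyapunov exponents $\lambda_1(\mu,E_j)$ (each $c_u$ being a norm-bounded algebra automorphism of $E$), so the rescaling $\phi_n$ and the projection $\pi'$ commute with every $c_u$ up to a permutation of the factors of $E'$. In particular, the non-concentration Proposition~\ref{pr:nc2} and the Fourier decay of Corollary~\ref{cor:fourier} are insensitive to conjugation of the ingredient measures, once one verifies that the coset-restricted analogs of Proposition~\ref{anc} and Lemma~\ref{lm:detE'} hold: this is the case because their proofs rest only on the spectral gap modulo primes from \cite{SGV} (valid for $G$ semisimple, not necessarily connected) and on the large deviation Theorem~\ref{thm:LargeD}, both of which are insensitive to which coset of $G^\circ$ we restrict to.

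For each fixed sequence $\underline\gamma \in F^n_\gamma$, one thus obtains, by the same chain of arguments used to prove Theorem~\ref{thm:decayconnected},
\[
\abse{\widehat{(\pi'\circ\phi_n)_*\bigl((c_{u_n})_*\tilde\mu_{\gamma_n} \ff \cdots \ff (c_{u_1})_*\tilde\mu_{\gamma_1}\bigr)}(\xi)} \leq e^{-c_0 n} \prod_{i=1}^n \tilde\mu_{\gamma_i}(E)
\]
for $\xi \in {E'}^*$ in the stated range, uniformly in $\underline\gamma$. Summing over $\underline\gamma \in F^n_\gamma$ and using $\sum_{\underline\gamma} \prod_i \tilde\mu_{\gamma_i}(E) = \mu^{*n}(\gamma G^\circ) \leq 1$ yields the claimed bound $\abse{\widehat{\mu_{n,\gamma}}(\xi)} \leq e^{-c_0 n}$, and the scaling/integration trick used at the end of the proof of Theorem~\ref{thm:decayconnected} extends this to the full range $e^{\alpha_1 n} \leq \norm{\xi} \leq e^{\alpha_0 n}$.

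The main technical obstacle is the uniformity in $\underline\gamma$ of the non-concentration and Fourier-decay estimates for the displayed convolution: the conjugations $c_{u_i}$ vary with both $i$ and $\underline\gamma$, so the convolution product is not literally $(\mu')^{*n}$. I expect this to be handled as in \cite{HLL2021}, by exploiting that each $c_u$ is an algebra automorphism whose Lipschitz constant (both directions) is bounded in terms of $|F|$ alone, so that applying the non-concentration machinery of Section~\ref{sec:nonconcentration} to any sequence of conjugated $\tilde\mu_{\gamma_i}$'s incurs only bounded losses in the constants, preserving the exponential Fourier decay uniformly over all $\underline\gamma \in F^n_\gamma$.
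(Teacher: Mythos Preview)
Your approach diverges from the paper's and has a genuine gap at precisely the point you flag as ``the main technical obstacle.'' The paper does \emph{not} decompose $\mu^{*n}$ according to the full coset sequence $\underline\gamma \in F^n$. Instead it uses the induced random walk on $G^\circ$: with $\tau(m)$ the $m$-th return time to $G^\circ$ and $\nu_l$ the law of $g_{\tau(m)}\dotsm g_1$ conditional on $\tau(m)=l$, one has $(\mu^\circ)^{*m}=\sum_l p_l\nu_l$ and $\mu^{*n}=\sum_{l_1+\dots+l_s+k=n} p_{l_1}\dotsm p_{l_s}\,\mu^{*k}*\nu_{l_s}*\dotsm*\nu_{l_1}+((\text{small}))$. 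The key point is that non-concentration is proved once, for $(\mu^\circ)^{*m}$ via Proposition~\ref{pr:nc2}, and then \emph{inherited} by each $\nu_l$ with $p_l\geq e^{-\frac{\alpha_1\tau}{2D}m}$ through the inequality $p_l^{2D}(\nu_l^{\pp D}\mm\nu_l^{\pp D})\leq ((\mu^\circ)^{*m})^{\pp D}\mm ((\mu^\circ)^{*m})^{\pp D}$; Corollary~\ref{cor:fourier} is then applied to the $\nu_l$'s.

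Your per-sequence estimate fails in general. For a fixed $\underline\gamma$, the block product $(c_{u_m})_*\tilde\mu_{\gamma_m}*\dotsm*(c_{u_1})_*\tilde\mu_{\gamma_1}$ need not satisfy any affine non-concentration: individual $\tilde\mu_\gamma$ can be supported on proper subalgebras of $E$ (take $G^\circ$ a product and $\tilde\mu_e$ supported on one factor), and then for the constant sequence $\underline\gamma=(e,\dots,e)$ the rescaled convolution is supported on a proper subspace, so for $\xi$ orthogonal to it the Fourier coefficient equals the full mass $\prod_i\tilde\mu_{\gamma_i}(E)$, not $e^{-c_0n}\prod_i\tilde\mu_{\gamma_i}(E)$. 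Bounded Lipschitz constants of the conjugations do nothing here; the obstruction is algebraic, not metric. The spectral gap modulo primes applies to convolution powers of a single measure whose support generates a Zariski-dense subgroup, not to an arbitrary product of possibly degenerate factors chosen in advance. (Incidentally, your $\mu'=\sum_\gamma\tilde\mu_\gamma$ is not $\mu^\circ$ and its support need not generate $\Gamma\cap G^\circ$.) The return-time decomposition is exactly what bypasses this: it packages the randomness over cosets into the measures $\nu_l$ \emph{before} one checks non-concentration.
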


One can show that the above theorem is still valid under the assumption that the measure $\mu$ is supported on the group $\GL_d(\overline{\Q})$ of matrices with algebraic coefficients.
It seems a difficult problem to prove the same statement without any such assumption on the support of $\mu$.

\subsection{Induced random walk on the identity component}
In order to prove Theorem~\ref{thm:decaymun}, we shall use the induced random walk on $G^\circ$, whose definition is given below.
Since by definition $G^\circ$ is connected, this will allow us to use the results of Section~\ref{sec:nonconcentration}.
The drawback is that we can no longer use the simple identity $\mu_{sn}=\mu_n*\dots*\mu_n$; so we shall have to write $\mu_{sn}$ as a weighted sum of convolutions related to the induced measure $\mu^\circ$ on the identity component, which makes the argument more technical.
The argument is identical to the one given in~\cite[Appendix~B]{HLL2021}, but we include it for completeness.

\bigskip

Let $(g_n)_{n \geq 1}$ be a sequence of independent random variables distributed according to $\mu$.
Consider the return times to $G^\circ$, 
\[
\tau(1) = \inf \{\, n \geq 1 \mid g_n \dotsm g_1 \in G^\circ \,\}
\]
and recursively for $m \geq 2$,
\[
\tau(m) = \inf \{\, n > \tau(m-1) \mid g_n \dotsm g_1 \in G^\circ \,\}.
\]
Those are the return times of a Markov chain on the finite space $G/G^\circ$, so that for every $m \geq 1$, $\tau(m)$ is almost surely finite.
In fact, by Kac's formula \cite[Lemma 5.4]{BenoistQuint}
\[
\Ebb[\tau(1)] = [G:G^\circ].
\]
The random variables $(g_{\tau(m)}\dots g_{\tau(m-1)+1})_{m\geq 0}$ are independent and identically distributed with law $\mu^\circ$, the law of $g_{\tau(1)} \dotsm g_1$.
Note that $\mu^\circ$ is a probability measure on $G^\circ$ and has the following properties \cite[Lemmas~4.40 and 4.42]{Aoun2011}.

\begin{lem}
\label{lm:mucirc}
Let $\mu$ be a probability measure on a real algebraic group $G$ and $\mu^\circ$ the induced measure on the identity component $G^\circ$.
Let $T=[G:G^\circ]$.
If $\mu$ admits some finite exponential moment, then:
\begin{enumerate}
\item The measure $\mu^\circ$ has some finite exponential moment;
\item For every $\omega > 0$, there exists $c = c(\mu,\omega) > 0$ such that for all $m$ sufficiently large, 
\(
\Pbb\bigl[ \abs{\tau(m) - T m} \geq \omega m\bigr] \leq e^{-cm}.
\)
\end{enumerate}
\end{lem}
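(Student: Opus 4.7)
The plan is to reduce both claims to classical Markov-chain and large-deviation arguments applied to the projected walk $\bar{Y}_n = g_n\dotsm g_1 G^\circ$ on the finite quotient $F = G/G^\circ$. Since $G$ is the Zariski closure of the group $\Gamma$ generated by $\Supp\mu$, the image of $\Gamma$ in $F$ is all of $F$, and since $F$ is finite the semigroup generated by the support of the pushforward of $\mu$ equals this image; hence $\bar{Y}_n$ is an irreducible Markov chain on the finite state space $F$. Its transition kernel is translation-invariant, therefore bi-stochastic, so the uniform probability $\pi$ on $F$ is stationary and Kac's formula gives $\Ebb[\tau(1)] = 1/\pi(\{e\}) = \lvert F\rvert = T$. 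Moreover, for any irreducible Markov chain on a finite set, the return time has an exponentially decaying tail: there exist $C > 0$ and $r \in (0,1)$ with $\Pbb[\tau(1) > k] \leq Cr^k$ for all $k \geq 0$.

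For part (i), I would bound the exponential moment of $\mu^\circ$ via submultiplicativity together with a Cauchy--Schwarz decoupling of the random time from the matrix norms. Indeed, $\norm{g_{\tau(1)}\dotsm g_1}^\alpha \leq \prod_{i=1}^{\tau(1)} \norm{g_i}^\alpha$, so decomposing along $\{\tau(1) = k\}$,
\[
\Ebb\Bigl[\prod_{i=1}^{k} \norm{g_i}^\alpha \mathbf{1}_{\tau(1) = k}\Bigr] \leq M(2\alpha)^{k/2}\, \Pbb[\tau(1) = k]^{1/2},
\]
where $M(\beta) = \Ebb_\mu[\norm{g}^\beta]$. By the finite exponential moment hypothesis for $\mu$, $M(2\alpha) < \infty$ for $\alpha$ small and $M(2\alpha) \to 1$ as $\alpha \to 0$. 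Combined with the bound $\Pbb[\tau(1) = k]^{1/2} \leq C^{1/2} r^{k/2}$ and summed over $k$, this yields a convergent geometric series as soon as $M(2\alpha)^{1/2} r^{1/2} < 1$, which holds for $\alpha$ sufficiently small, giving the desired finite exponential moment for $\mu^\circ$.

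For part (ii), the key observation is that by the strong Markov property applied at the hitting times $\tau(k)$ of $G^\circ$, the excursion lengths $X_k := \tau(k) - \tau(k-1)$ (with $\tau(0) = 0$) are i.i.d., each distributed as $\tau(1)$. Thus $\tau(m) = X_1 + \dotsb + X_m$ is a sum of i.i.d.\ random variables with mean $T$ and a finite exponential moment, and a standard Cram\'er-type Chernoff bound yields, for each $\omega > 0$, a constant $c = c(\mu,\omega) > 0$ such that $\Pbb[\lvert\tau(m) - Tm\rvert \geq \omega m] \leq e^{-cm}$ for $m$ large. I do not foresee any serious obstacle; the only modest points of care are verifying irreducibility of $\bar{Y}_n$ on all of $F$ (which uses the Zariski density of $\Gamma$ in $G$ together with the fact that subsemigroups of finite groups are subgroups) and choosing $\alpha$ small enough in part (i) so that $M(2\alpha)^{1/2} r^{1/2} < 1$.
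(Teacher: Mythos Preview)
Your argument is correct and self-contained: the reduction to the finite-state random walk on $F=G/G^\circ$, the Cauchy--Schwarz decoupling for part~(i), and the i.i.d.\ decomposition $\tau(m)=\sum_{k=1}^m X_k$ with Cram\'er's bound for part~(ii) all go through as you describe. The paper itself gives no proof of this lemma; it simply cites \cite[Lemmas~4.40 and 4.42]{Aoun2011}, and your sketch is essentially what lies behind that citation.
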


In order to prove Theorem~\ref{thm:decaymun}, we shall need to relate the random walk defined by $\mu$ and the one defined by $\mu^\circ$. 
For that, we introduce, for $m \geq 1$ and $l \geq 1$, the law $\nu_l$ of the random variable
\[
g_{\tau(m)} \dotsm g_1 \quad \text{conditional to the event} \quad \tau(m) = l.
\]
Naturally, $\nu_l$ is also the law of the variable $g_l \dotsm g_1$ conditional to $\tau(m)=l$.
On the one hand, we may relate the measures $\nu_l$ to $(\mu^\circ)^{*m}$ with the formula
\begin{equation}
\label{eq:plnul}
(\mu^\circ)^{*m} = \sum_{l \in \N} p_l \nu_l.
\end{equation}
where $p_l = \Pbb[\tau(m) = l]$.
Here, we are hiding the dependency of $\nu_l$ and $p_l$ on $m$ in order to make notation less cumbersome. 

On the other hand, writing $l_1 + \dotsb + l_s + k =  n$ for some natural integers $n,s$ and $l_1,\dots,l_s$, we have
\begin{equation}
\label{eq:RWcondRT}
\mu^{*n} = \sum_{l_1 + \dotsb + l_s + k = n} p_{l_1} \dotsm p_{l_s} \mu^{*k} * \nu_{l_s} * \dotsm * \nu_{l_1} + ((\Pbb[\tau(sm) > n]))
\end{equation}
where the notation $((t))$ for some positive quantity $t$ means some unspecified positive measure of total mass at most $t$.
These two formulae will allow us to use the non-concentration properties of $(\mu^\circ)^{*m}$ to prove some Fourier decay estimate for $\mu^{*n}$.

Before we derive Theorem~\ref{thm:decaymun}, we note that the scaling automorphism $\phi_m^\circ$ on $E$ associated to $\mu^\circ$ is simply given by $\phi_n^\circ=\phi_{mT}$, where $T=[G:G^\circ]$.
This readily follows from the fact that if $\tE_i$ is any simple ideal in $\tE$ and $V$ any $G^\circ$-irreducible submodule of $\tE_i$, then $\lambda_1(\mu^\circ,V) = T\lambda_1(\mu,\tE_i)$.

\begin{proof}[Proof of Theorem~\ref{thm:decaymun}]
Let $\alpha_1 > 0$ be a given small number.
Since the algebraic group generated by $\mu^\circ$ is connected, Proposition~\ref{pr:nc2} applies to the induced random walk on $G^\circ$.
We let $\kappa = \kappa(\mu^\circ) > 0$ be the constant given by that proposition.
Let $D=\dim E$ and $s = s(E,\kappa) \geq 1$ and $\eps = \eps(E,\kappa,D) > 0$ be the constants given by Corollary~\ref{cor:fourier}.

Given $\alpha_1>0$, Proposition~\ref{pr:nc2} shows that for all $m$ large enough, the measure
\[
(\pi'\circ \phi_{mT})_*\bigl( ((\mu^\circ)^{*m})^{\pp D} \mm ((\mu^\circ)^{*m})^{\pp D} \bigr)
\]
satisfies $\NC(\frac{\alpha_1\eps}{2},\kappa,\tau)$ in $E'$ at scale $e^{-m}$ for some $\tau > 0$.
This implies that the same measure satisfies $\NC(\frac{\eps}{2},\kappa,\tau)$ in $E'$ at all scales $\delta \in {[e^{-m},e^{-\alpha_1 m}]}$.
Without loss of generality, we may assume that $\tau < \kappa \eps/2$ and $\tau < \eps /2$.

Let $\omega = \omega(\mu,\alpha_1)$ be a constant whose value is to be determined later.
Fix $n \geq 1$ large, and set $m = \left\lfloor (1-2\omega) \frac{n}{Ts} \right\rfloor$, where $T=[G:G^\circ]$. 
Everything below is true for $n$ sufficiently large (larger than some $n_0$ depending on $\mu$ and $\alpha_1$).
The letter $c$ denotes a small positive constant, whose value may vary from one line to the other, depending on $\mu$ and $\alpha_1$ but independent of $n$.

By Lemma~\ref{lm:mucirc}, we have
\[\Pbb[\tau(sm) > n - \omega n] \leq e^{-cn}\]
and 
\[\Pbb[\tau(sm) < n - 3 \omega n ] \leq e^{-cn}.\]
Put 
\[\Lcal = \{\, l \in \N \mid p_l \geq e^{-\frac{\alpha_1\tau}{2D}m} \,\}.\]
We can bound 
\[
\sum_{(l_1,\dotsc,l_s) \not\in \Lcal^s} p_{l_1} \dotsm p_{l_s} \leq sn e^{-\frac{\alpha_1\tau}{2D}m} \leq e^{-cn}.
\]
Thus, \eqref{eq:RWcondRT} becomes
\[
\mu^{*n} = \sum_{\substack{l_1,\dotsc,l_s \in \Lcal,\, \omega n \leq k \leq 3 \omega n \\ l_1 + \dotsb + l_s + k = n}}  p_{l_1} \dotsm p_{l_s} \mu^{*k} * \nu_{l_s} * \dotsm * \nu_{l_1}  + ((e^{-cn})).
\]
Let $\gamma\in F$.
To finish the proof of the theorem, it suffices to establish an upper bound of the form $e^{-cn}$ for the quantity
\begin{align*}
I_{l_1,\dotsc,l_s,k}(\xi)
:=&  \int_{\gamma G^\circ} e\bigl(\xi \circ \pi' \circ \phi_n(\gamma^{-1}g)\bigr) \dd \bigl( \mu^{*k} * \nu_{l_s} * \dotsm * \nu_{l_1} \bigr) (g)\\
=& \iint_{g \in \gamma G^\circ} e\bigl(\xi \circ \pi' \bigl( \phi_{n-smT}(\gamma^{-1}g) \phi_{smT}(h)\bigr)\bigr) \dd \mu^{*k}(g) \dd(\nu_{l_s} * \dotsm * \nu_{l_1})(h)\\
=& \int_{\gamma G^\circ}  \bigl(  (\pi'\circ\phi_{smT})_*(\nu_{l_s} * \dotsm * \nu_{l_1}) \bigr)^\wedge \bigl( \xi \cdot \phi_{n-smT} (\gamma^{-1}g) \bigr) \dd \mu^{*k}(g)
\end{align*}
uniformly for all $l_1,\dotsc,l_s \in \Lcal$ and $\omega n \leq k \leq 3 \omega n$ with $l_1 + \dotsb + l_s + k = n$.

\smallskip

First, we claim that uniformly for all $l\in\Lcal$, the measure
\[
(\pi' \circ \phi_{mT})_* \bigl(\nu_l^{\pp D} \mm \nu_l^{\pp D}\bigr)
\]
satisfies $\NC(\eps,\kappa,\tau/2)$ in $E'$ at all scales $\delta \in {[e^{-m},e^{-2\alpha_1 m}]}$, provided that $m \geq 1$ is large enough.
Indeed, developing $((\mu^\circ)^{*m})^{\pp D} \mm ((\mu^\circ)^{*m})^{\pp D}$ using \eqref{eq:plnul}, we see that for any $l\geq 1$,
\[
((\mu^\circ)^{*m})^{\pp D} \mm ((\mu^\circ)^{*m})^{\pp D} = p_l^{2D} \bigl(\nu_l^{\pp D} \mm \nu_l^{\pp D}\bigr) + ((1)).
\]
Observe that given two measures $\eta,\eta'$ such that $\eta=\delta^\sigma\eta'+((1))$, if $\eta$ satisfies $\NC(\eps,\kappa,\tau)$, then $\eta'$ satisfies $\NC(\eps+\sigma,\kappa,\tau-\sigma)$.
Therefore, the inequality $p_l^{2D} \geq e^{-\alpha_1 \tau m}$ for $l \in \Lcal$ together with the fact that the left-hand side rescaled by $\pi' \circ \phi_{mT}$ satisfies $\NC(\frac{\eps}{2},\kappa,\tau)$ in $E'$ at all scales $\delta \in {[e^{-m},e^{-\alpha_1 m}]}$ show our claim.
By Corollary~\ref{cor:fourier}, this implies, for all $\zeta \in (E')^*$ such that $e^{2\alpha_1 m}\leq \norm{\zeta}\leq e^{m}$,
\[
\bigl\lvert \bigl(  (\pi'\circ\phi_{smT})_*(\nu_{l_s} * \dotsm * \nu_{l_1}) \bigr)^\wedge(\zeta) \bigr\rvert \leq e^{- \frac{\alpha_1 \eps \tau}{(2D)^s} m} \leq e^{-cn}.
\]
Note that for any $g \in \gamma G^\circ$,
\begin{equation}
\label{eq:xigammag}
\norm{\xi} \norm{g^{-1}}^{-1} \ll \norm{\xi \cdot \phi_{n-smT} (\gamma^{-1}g) } \ll \norm{\xi} \norm{\phi_{n-smT}} \norm{g}.
\end{equation}
On the one hand, we have $0 \leq n-smT \leq 3\omega n$.
Hence, there exists a constant $C = C(\mu) \geq 1$ such that 
\[
\norm{\phi_{n-smT}} \leq e^{C\omega n}.
\]
On the other hand, using the assumption that $\mu$ has a finite exponential moment and Markov's inequality, we can find a constant $C = C(\mu) \geq 1$ such that for any $k \geq 1$, the $\mu^{*k}$-measure of the set of $g \in \Gamma$ such that 
\begin{equation}
\label{eq:goodmuk}
\norm{g} \leq e^{Ck} \quad \text{and}\quad \norm{g^{-1}} \leq  e^{Ck}
\end{equation}
is at least $1 - e^{-k}$.

Set $\alpha_0 = \frac{1}{4Ts}$ and let $\xi \in (E')^*$ be such that $e^{\alpha_1 n} \leq \norm{\xi} \leq e^{\alpha_0 n}$. Using $k \leq 3 \omega n$, we have, for any $g \in \Supp(\mu^{*k})$ satisfying \eqref{eq:goodmuk},
\[
e^{(\alpha_1 - 4C\omega)n} \leq \norm{\xi \cdot \phi_{n-smT} (\gamma^{-1}g)} \leq e^{(\alpha_0 + 5C\omega)n}.
\]
With the choice $\omega = \min\{\frac{\alpha_1}{8C}, \frac{1}{20CTs}\}$, we can guarantee that this implies
\[
e^{\alpha_1 m} \leq e^{\alpha_1 n/2} \leq \norm{\xi \cdot \phi_{n-smT} (\gamma^{-1}g) } \leq e^m.
\]
Putting everything together, we obtain
\[\abs{I_{l_1,\dotsc,l_s,k}(\xi)} \leq e^{-cn} + e^{-k} \leq e^{-c n} +e^{-\omega n}.\]
for all $l_1,\dotsc,l_s \in \Lcal$, $\omega n \leq k \leq 3 \omega n$ with $l_1 + \dotsb + l_s + k = n$.
This concludes the proof of the theorem.
\end{proof}

\section{From Fourier decay to granular structure}
\label{sec:wiener}

As in the previous section, $\mu$ denotes a probability measure on $\GL_d(\Z)$ and we study the random walk associated to $\mu$ on $\Tbb^d$, with starting distribution $\nu\in\Pcal(\Tbb^d)$.
The law of the walk at time $n$ is $\nu_n=\mu^{*n}*\nu$.
The goal of this section is to show that if $\nu_n$ has a large Fourier coefficient, then the starting distribution $\nu$ must have some strong concentration property.

\subsection{Concentration statement for the random walk}

In order to state the main proposition of this section, we need to set up some notation.
As before, $G$ denotes the algebraic subgroup generated by $\mu$, $E\subset\Mat_d(\R)$ denotes the algebra generated by the identity component $G^\circ$ of $G$, $F$ denotes the finite group $G/G^\circ$ and $T = \# F$.

Changing notation slightly, we now consider a decomposition of $E$
\[
E = E_0\oplus E_1\oplus\dots\oplus E_r
\]
into maximal sums of minimal ideals with same Lyapunov exponent for the action of $\mu^\circ$.
We assume that the summands are ordered so that
\[
\lambda_1(\mu^\circ,E_1) > \dots > \lambda_1(\mu^\circ,E_r) > 0 = \lambda_1(\mu^\circ,E_0).
\]
Here, $E_0$ is eventually trivial.

The group $G$ acts naturally on the space $V = \R^d$ and for $1 \leq j \leq r$, we let $V_i$ be the sum of all simple $G^\circ$-submodules $W \subset V$ such that $\lambda_1(\mu^\circ,W) = \lambda_1(\mu^\circ,E_i)$.
Equivalently, $V_i$ is also the sum of all simple $G$-submodules $W \subset V$ such that $\lambda_1(\mu,W) = \frac{1}{T}\lambda_1(\mu^\circ,E_i)$.
We have
\[
V=V_0\oplus V_1 \oplus \dots \oplus V_r.
\]
Let $\pi_i \colon V \to V_i$ denote the corresponding projection.
Define a quasi-norm on $V$ by
\[
\qnorm{v} = \max_{0\leq i\leq s} \norm{\pi_i(v)}^{\frac{1}{\lambda_1(\mu,V_i)}}
\]
where by convention
\[
\norm{\pi_0(v)}^{\frac{1}{0}} = \norm{\pi_0(v)}^{+\infty} = \left\{
\begin{array}{cl}
0 & \mbox{if}\ \norm{\pi_0(v)}\leq 1\\
+\infty & \mbox{otherwise}.
\end{array}
\right.
\]

This induces a quasi distance on $\Tbb^d$. For $x, y \in \Tbb^d$, define  
\[\dtil(x,y) = \left\{ 
\begin{array}{ll}
\qnorm{v - w} & \quad \text{if there are lifts $v \in V$ of $x$ and $w \in V$ of $y$ such that $\norm{v-w}\leq \frac{1}{2}$,}\\ 
1 & \quad \text{otherwise.}
\end{array}\right.
\] 
Neighborhoods of subsets of $\Tbb^d$ with respect to this quasi-distance will be denoted by $\Nbdtil(\mybullet,\mybullet)$.
Finally, for a rational subspace $W \subset V$, we let $W \mymod \Z^d$ denote its projection in $\Tbb^d$, which is a subtorus.
\begin{prop}
\label{pr:RWwiener}
Let $\mu$ be a probability measure on $\GL_d(\Z)$ with some finite exponential moment and $\nu$ be a Borel probability measure on $\Tbb^d$.
If the algebraic group $G$ generated by $\mu$ is semisimple, then there exist $C=C(\mu)\geq 0$ and $\tau > 0$ such that the following holds.

Assume that for some $t\in(0,\frac{1}{2})$,
\[
\abs{\hhat{\mu^{*n}*\nu}(a_0)} \geq t
\quad\text{for some } a_0 \in \Z^d \text{ and } n \geq C\log\frac{\norm{a_0}}{t}.
\]
Then, there exists $\gamma\in F$ such that, denoting
\[
W = (a_0\gamma E)^\perp
\]
there exists a finite subset $X \subset \Tbb^d$ such that
\[
(X - X)\cap \Nbdtil(W \mymod \Z^d,e^{-(1- 2 \tau)n}) = \{0\}
\]
and
\[
\nu \bigl(X + \Nbdtil(W \mymod \Z^d,e^{-(1-\tau)n}) \bigr) \geq t^{O(1)}.
\]
\end{prop}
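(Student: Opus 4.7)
The plan is to combine the Fourier decay Theorem~\ref{thm:decaymun} with a quantitative Wiener-type argument, in the spirit of Bourgain--Furman--Lindenstrauss--Mozes~\cite{BFLM}. First, using the decomposition $\mu^{*n} = \sum_{\gamma} \gamma_*(\mu^{*n})_\gamma$ along the finite quotient $F = G/G^\circ$, we expand
$\hhat{\mu^{*n}*\nu}(a_0) = \sum_{\gamma \in F} \int_{G^\circ} \hhat{\nu}(a_0\gamma h)\,\dd(\mu^{*n})_\gamma(h)$,
and the pigeonhole principle yields $\gamma \in F$ for which the $\gamma$-summand has modulus at least $t/T$. Fix this $\gamma$ and set $W = (a_0\gamma E)^\perp$. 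Squaring via Cauchy--Schwarz and using $(\mu^{*n})_\gamma(E) \leq 1$ gives
\[
\frac{t^2}{T^2} \leq \int \absbig{\hhat{\nu}(a_0\gamma h)}^2\,\dd(\mu^{*n})_\gamma(h) = \int K_\gamma(x-y)\,\dd(\nu\otimes\nu)(x,y),
\]
with kernel $K_\gamma(z) = \int e^{2\pi i \bracket{a_0\gamma h, z}}\,\dd(\mu^{*n})_\gamma(h)$. Since the character $h \mapsto e^{2\pi i \bracket{a_0\gamma h, z}}$ factors through a linear form $L_{a_0\gamma,z} \in E^*$ which vanishes on $E$ precisely when $z \in W$, the kernel $K_\gamma$ descends to a function on the quotient torus $\Tbb^d/(W\mymod\Z^d)$ and attains its maximum on $W\mymod\Z^d$.

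The crucial step is to show that $\abs{K_\gamma(z)} \leq e^{-c_0 n}$ whenever $\dtil(z, W\mymod\Z^d) \geq e^{-(1-\tau)n}$. Decomposing $L_{a_0\gamma,z} = L_0 + L'$ along $E^* = E_0^* \oplus (E')^*$ and pulling $L'$ back via the adjoint of $\pi' \circ \phi_n$ yields $\eta_z \in (E')^*$ whose component on each factor $E_i^*$ scales essentially as $e^{n\lambda_1(\mu,V_i)}\,\norm{a_0|_{V_i}}\,\norm{\pi_i(z)}$. The quasi-norm $\qnorm{\cdot}$ has been defined using the reciprocals of the Lyapunov exponents precisely so that $\dtil(z,W) \geq e^{-(1-\tau)n}$ places $\norm{\eta_z}$ in the decay range $[e^{\alpha_1 n}, e^{\alpha_0 n}]$ of Theorem~\ref{thm:decaymun}, provided that $n$ is large enough with respect to $\log\norm{a_0}$. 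The compact-factor piece $L_0 \in E_0^*$ contributes a bounded modulus-one character $e^{2\pi i L_0(\pi_0(h))}$ to the integrand of $K_\gamma$; since $\pi_0(G^\circ) \subset E_0$ is bounded, this can be absorbed either by inspection of the proof of Theorem~\ref{thm:decaymun} (to extend it to measures $(\mu^{*n})_\gamma$ twisted by such characters), or by a separate discretization and pigeonholing on the compact directions. Either way, one obtains $\abs{K_\gamma(z)} = \abs{\hhat{\mu_{n,\gamma}}(\eta_z)} \leq e^{-c_0 n}$ throughout the relevant range of $z$.

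Combining the two steps, the contribution of pairs $(x,y)$ with $\dtil(x - y, W\mymod\Z^d) \geq e^{-(1-\tau)n}$ to the integral is at most $O(e^{-c_0 n})$, which under the hypothesis $n \geq C\log(\norm{a_0}/t)$ with $C$ large is negligible compared to $t^2/T^2$. Hence
\[
(\nu\otimes\nu)\bigl(\setbig{(x,y)\in\Tbb^d\times\Tbb^d}{\dtil(x-y,W\mymod\Z^d) \leq e^{-(1-\tau)n}}\bigr) \geq \frac{t^2}{2T^2},
\]
and a standard covering argument --- pick a maximal $e^{-(1-2\tau)n}$-quasi-separated family $X$ in $\Supp\nu$ modulo the $W$-direction, then apply Fubini and pigeonhole to the preceding inequality --- produces the desired finite, well-separated set with $\nu(X + \Nbdtil(W\mymod\Z^d, e^{-(1-\tau)n})) \geq t^{O(1)}$. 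The main obstacle is the kernel localization step: matching the anisotropic frequency cloud of the random walk (governed by the Lyapunov exponents $\lambda_1(\mu,V_i)$ in each simple direction) to the anisotropic quasi-distance neighborhood of $W$, and dealing with the compact-factor contribution which is not directly covered by Theorem~\ref{thm:decaymun}.
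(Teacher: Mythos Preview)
Your kernel approach has a genuine gap at the step you yourself flag as the main obstacle, and the difficulty is more serious than you indicate. Write out the norm of your pulled-back form: on each factor $E_i^*$ you correctly get
\[
\norm{\eta_z|_{E_i}} \asymp e^{n\lambda_1(\mu,V_i)}\,\norm{a_0|_{V_i}}\,\norm{\pi_i(z)},
\]
so for a generic $z \in \Tbb^d$ (lifted with $\norm{z}\le \tfrac12$) you only get $\norm{\eta_z}\lesssim e^{n\lambda_{\max}}\norm{a_0}$. The hypothesis $\dtil(z,W)\ge e^{-(1-\tau)n}$ provides a \emph{lower} bound on $\norm{\eta_z}$, not an upper bound. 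But the decay window in Theorem~\ref{thm:decaymun} is $[e^{\alpha_1 n},e^{\alpha_0 n}]$ with $\alpha_0=\alpha_0(\mu)$ a small structural constant (in the paper's proof $\alpha_0=\tfrac{1}{4Ts}$), typically much smaller than $\lambda_{\max}$. So $\eta_z$ lies far above the admissible range and Theorem~\ref{thm:decaymun} says nothing about $\hhat{\mu_{n,\gamma}}(\eta_z)$. The claim that the quasi-norm was ``defined precisely so that'' the frequency lands in the window is not correct; the quasi-norm matches the \emph{anisotropy} of the walk but not its overall magnitude. The compact-factor twist is a second, independent problem: the character $h\mapsto e^{2\pi i L_0(\pi_0(h))}$ does not factor through $\pi'\circ\phi_n$, and neither of your proposed fixes works as stated --- discretizing $\pi_0(G^\circ)$ reduces to Fourier decay for \emph{conditional} measures $(\mu^{*n})_\gamma|_{\pi_0^{-1}(P_j)}$, which Theorem~\ref{thm:decaymun} does not provide, and the multiplicative-convolution structure in its proof does not commute with an additive character on $E_0$.

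The paper takes a different route that avoids both issues. Rather than bounding a kernel pointwise, it first uses an additive-structure lemma (Lemma~\ref{lm:addstruct}) to produce a large set $A\subset\Mat_d(\Z)$ of integer frequencies $a_0g$ at which $\hhat\nu$ is big. Theorem~\ref{thm:decaymun} is then applied \emph{indirectly}, through the regularity lemma (Lemma~\ref{lm:regFourier}), to show that the rescaled image $A'\subset E'$ of this set is dense in a ball at scale $\delta=e^{-\alpha_0 n/2}$ --- here the relevant frequencies are genuinely in $[e^{\alpha_1 n},e^{\alpha_0 n}]$. Finally a polar-pair version of the quantitative Wiener lemma (Proposition~\ref{pr:polarwiener}) converts ``many large Fourier coefficients of $\nu$, dense in an anisotropic box'' into concentration of $\nu$ near translates of $W$, with the anisotropy and the compact-factor contribution encoded in the shapes of the convex bodies $B,C$ and their polars. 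The large scales $e^{n\lambda_i}$ enter only through the geometry of these convex bodies, never as Fourier frequencies for $\mu_{n,\gamma}$.
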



The proof of Proposition~\ref{pr:RWwiener} is in two steps: First, using the results of Section~\ref{sec:fourier}, one shows that the inequality $\abs{\hhat{\mu^{*n}*\nu}(a_0)}\geq t$ implies that $\mu^{*n}*\nu$ has many large Fourier coefficients (reducing slightly the value of $n$) and then, one applies a Fourier analysis lemma originating in the work of Bourgain, Furman, Lindenstrauss and Mozes \cite[Proposition~7.5]{BFLM}.
We start with the statement and proof of a general version of that lemma adapted to our needs.

\subsection{A quantitative version of Wiener's lemma}

Wiener's lemma in harmonic analysis states that a measure $\nu$ on the torus $\Tbb^d$ is atom-free if and only if its Fourier series tends to zero in density, i.e. given $t>0$, the proportion of vectors $a\in\Z^d$ in a large ball $B(0,N)$ such that $\abs{\hat{\nu}(a)}\geq t$, tends to zero as $N$ goes to infinity.
In their paper \cite{BFLM}, Bourgain, Furman, Lindenstrauss and Mozes observed that this statement could be made quantitative: If $B(0,N)$ contains a proportion at least $s>0$ of integer vectors satisfying $\abs{\hat{\nu}(a)}\geq t$, then there exists a ball $B= B(x,\frac{1}{N})$ of radius $\frac{1}{N}$ in $\Tbb^d$ such that $\nu(B)\gg (st)^3$, where the involved constant depends only on $d$.
In order to later be able to use the quasi-norm adapted to a random walk, we need to generalize this statement.
It turns out to be most convenient to formulate the lemma in terms of convex sets and polar pairs.

\bigskip

We will need to generalize slightly the notation $\Ncal(\mybullet,\delta)$ of covering number.
Instead of covering a set by balls, we will use translates of a convex body.
Recall that a \emph{convex body} is a compact convex set in $\R^d$, symmetric about $0$, i.e. such that $B=-B$, and containing $0$ in its interior.
Given a convex body $B \subset\R^d$, and $A \subset V$ a bounded non-empty subset, we define the \emph{covering number of $A$ by $B$} by
\[
\Ncal(A,B) = \min \setBig{N \geq 1}{ \exists x_1, \dotsc,x_N \in V,\, A \subset \bigcup_{i=1}^N (x_i+B)}.
\]
We shall also say that $A$ is \emph{$B$-separated}, if $(A - A)\cap B = \{0\}$.
Let us briefly list some useful properties of covering numbers.
These may be used without explicit mention in the rest of the paper; the elementary proofs are left to the reader.
Notice that if $\norm{\cdot}$ is the norm on $V$ for which $B$ is the unit ball, then $\Ncal(\cdot,B)$ is simply the covering number at scale $1$ for the distance associated to the norm.
\begin{itemize}
\item Let $f \colon V \to W$ be a linear map to another Euclidean space $W$. Then, for any set $A\subset V$ and any convex body $B\subset W$,
\begin{equation}
\label{eq:NfAfB}
\Ncal\bigl(f(A),f(B)\bigr) \leq \Ncal(A,B)
\end{equation}
with equality if $f$ is a linear isomorphism.
\item Let $B' \subset V$ be another convex body, then 
\[\Ncal(A,B') \leq \Ncal(A,B) \Ncal(B,B').\]
\item The previous point combined with John's ellipsoid theorem~\cite[Theorem~2A, page 87]{schmidt_da} shows that for any convex body $B$ there is an ellipsoid $E$ such that for all non-empty subsets $A \subset V$,
\[
\Ncal(A,B) \asymp \Ncal(A,E)
\]
where the implied constant in the $\asymp$ notation depends only on $\dim V$. 
In particular,
\[
\Ncal(A,2B) \asymp \Ncal(A,B)
\]
within constants depending only on $\dim V$.
\item In $A$, maximal $B$-separated subsets have cardinality at least $N(A,B)$.
\item If $B$ is symmetric and $A$ is $2B$-separated then $\Ncal(A,B) \geq \# A$.
\item For any set $A\subset V$ and any convex body $B\subset W$,
\[
\Ncal(A,2B) \asymp \Ncal(A,B)
\]
where the implied constant depends only on $\dim V$.
\item Let $f \colon V \to W$ be a surjective linear map between Euclidean spaces.
Let $B,C \subset V$ be convex bodies and let $A \subset C$ be a subset of $C$.
We have
\begin{equation}
\label{NfANfC}
\frac{\Ncal\bigl(f(A),f(B)\bigr)}{\Ncal\bigl(f(C),f(B)\bigr)} \gg  \frac{\Ncal(A,B)}{\Ncal(C,B)}
\end{equation}
where the implied constant in the $\gg$ notation depends only on $\dim V$. 
\end{itemize}

Let $\R^d$ be endowed with the usual scalar product.
Given a convex body $C \subset \R^d$, its \emph{polar set} $C^*$ is defined by
\[
C^* = \set{x \in \R^d}{\forall y\in C,\ \bracket{x,y} \leq 1}.
\]
If $C\supset B(0,2)$, then $C^*\subset B(0,\frac{1}{2})$ and we naturally identify $C^*$ with its projection to $\Tbb^d$.
The quantitative version of Wiener's lemma that we need is given by the proposition below.

\begin{prop}
\label{pr:polarwiener}
Let $\nu$ be a probability measure on $\Tbb^d$ and write for $t > 0$
\[A_t= \set{a \in \Z^d}{ \abs{\hat\nu(a)} \geq t}.\]
Assume that for some convex bodies $ B\subset C\subset\R^d$ containing $B(0,1)$, we have,
for some $c_0 \in \Z^d$ and some $s > 0$
\begin{equation}
\label{eq:AtcapC}
\Ncal(A_t\cap (c_0+C), B) \geq s\cdot \frac{\abs{C}}{\abs{B}}.
\end{equation}
Then there exists a $B^*$-separated subset $X \subset \Tbb^d$ such that
\[\nu(X + C^*) \gg_d s^{3/2}t^6.\]
\end{prop}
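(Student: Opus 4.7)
The plan is to adapt the quantitative Wiener lemma of Bourgain, Furman, Lindenstrauss, and Mozes \cite[Proposition~7.5]{BFLM} to a polar pair $(B,C)$ and its dual $(B^*,C^*)$ in place of Euclidean balls. The starting point is to extract from $A_t \cap (c_0+C)$ a maximal $2B$-separated subset $A' \subset A_t \cap (c_0+C)$; the elementary comparison between covering and packing numbers gives $\#A' \gg_d s\,|C|/|B|$. I then form the trigonometric polynomial
\[
P(x) \;:=\; \sum_{a \in A'} \omega_a\, e^{2\pi i \langle a, x\rangle}, \qquad \omega_a \;:=\; \overline{\hat\nu(a)}/|\hat\nu(a)|,
\]
so that $\int P\, d\nu = \sum_{a \in A'} |\hat\nu(a)| \geq t\,\#A'$; since $A'$ is $1$-separated (because $B \supset \Ball(0,1)$), the characters indexed by $A'$ are orthonormal and $\|P\|_2^2 = \#A'$. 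Applying Cauchy--Schwarz to $\int P\, d\nu$ yields $\int |P|^2\, d\nu \geq t^2\,(\#A')^2$.

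The key structural observation is that $|P|^2$ has Fourier spectrum contained in $A'-A' \subset 2C$ and is therefore essentially locally constant at scale $C^*$. Convolving with a Fejer-type kernel $\chi$ of scale $C^*/K$, normalised so that $\hat\chi \approx 1$ on $2C$ for a large enough dimensional constant $K$, one replaces
\[
\int |P|^2\, d\nu \;\approx\; \int |P|^2\,(\chi*\nu)\,dx \;\approx\; \sum_i |P(y_i)|^2\,\nu(Q_i),
\]
where $(Q_i)$ partitions $\Tbb^d$ into cells of volume $\asymp |C^*|$ and the $y_i$ are representatives. A second Cauchy--Schwarz applied to this discretised sum, together with the trivial bound $\|P\|_4^4 \leq \|P\|_\infty^2\,\|P\|_2^2 \leq (\#A')^3$ and the Riemann-sum approximation $\sum_i|P(y_i)|^4 \asymp \|P\|_4^4/|C^*|$, produces
\[
\sum_i \nu(Q_i)^2 \;\gg_d\; t^4\,\#A'\,|C^*|.
\]

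To conclude, I take $X$ to be the collection of cell centres $y_i$ for which $\nu(Q_i)$ exceeds a threshold of comparable order: the cells being $C^*$-separated and $C^* \subset B^*$, the set $X$ is automatically $B^*$-separated, and a pigeonhole on the $L^2$ inequality produces $\nu(X+C^*) \gg_d t^4\,\#A'\,|C^*|$. Combining with $\#A' \gg s\,|C|/|B|$ and the Bourgain--Milman/Santalo bound $|C|\,|C^*| \asymp_d 1$ gives a preliminary estimate of the form $\nu(X+C^*) \gg_d t^4 s/|B|$. The main technical obstacle is twofold. First, the smoothing error $\int(|P|^2 - |P|^2*\chi)\,d\nu$, controlled crudely by $\|1-\hat\chi\|_{L^\infty(2C)}\,\|\widehat{|P|^2}\|_{\ell^1}$, must stay below $t^2(\#A')^2$; since $\|\widehat{|P|^2}\|_{\ell^1}$ can reach $(\#A')^2$, this forces $K \gg t^{-1}$ and introduces a $t^{-O(d)}$ loss through $\|\chi\|_\infty \asymp K^d/|C^*|$. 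Second, the factor $1/|B|$ in the preliminary bound is more than adequate for small $|B|$ but insufficient when $|B|$ is comparable to $|C|$; to recover the uniform statement $\gg_d s^{3/2}t^6$, a further extraction using the $B$-separation of $A'$ (or equivalently, an energy-type refinement of the $\|P\|_4$ bound via the additive structure of $A'$) is required to interpolate between the two regimes, and carefully tracking these losses produces the particular exponents $3/2$ on $s$ and $6$ on $t$ in the conclusion.
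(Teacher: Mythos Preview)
Your argument contains a genuine error and an unfilled gap that together prevent it from reaching the conclusion.

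First, the sentence ``the cells being $C^*$-separated and $C^* \subset B^*$, the set $X$ is automatically $B^*$-separated'' is false: the implication runs the other way. Since $B \subset C$ gives $C^* \subset B^*$, the set $B^*$ is \emph{larger}, and being $B^*$-separated is a \emph{stronger} condition than being $C^*$-separated. Your partition into $C^*$-cells may have all the high-mass cells clustered inside a single translate of $B^*$, in which case a $B^*$-separated selection retains only one of them and your lower bound on $\sum_i \nu(Q_i)^2$ does not translate into the claimed lower bound on $\nu(X+C^*)$.

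Second, your approach only exploits the $2B$-separation of $A'$ to count $\#A'$, and never again; after that the argument lives entirely at the scale $C^*$. This is why you end up with the factor $1/|B|$ that you yourself flag as problematic, and why the proposed fix (``an energy-type refinement of $\|P\|_4$'') remains vague. The paper's proof is organised differently precisely to handle the two scales simultaneously. It introduces two auxiliary functions: a bump $\psi$ adapted to $C^*$ (so that $\hat\psi \gg \1_{2C}$, capturing the spectral support of $f$) and a function $\phi$ adapted to $B^*$ with $\hat\phi$ supported on $2B$. The crucial estimate, coming from the $2B$-separation of the frequency set $A$, is the \emph{local} $L^2$ bound
\[
\int_{y+B^*} |f|^2 \;\ll\; \frac{\#A}{|B|} \qquad \text{uniformly in } y,
\]
obtained by expanding $|f|^2$ against $\phi(\,\cdot - y)$ and using that $\hat\phi$ kills the off-diagonal terms. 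The Cauchy--Schwarz step is then carried out on a $4B^*$-separated family $(y_i)$, not on $C^*$-cells, so the $B^*$-separation of the output is built in from the start. A further trick with the normalised local density $h_i = |C^*| \max_{y_i+B^*}(\nu\pp\psi)/\nu(y_i+B^*+C^*)$, which is bounded above by an absolute constant, is used to select the good indices and close the argument with the exponents $s^{3/2}t^6$.

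In short: the missing idea is the local $L^2$ control of $f$ on $B^*$-balls via the $B$-separation of its spectrum. Without it, you cannot produce a $B^*$-separated $X$ with the required mass, and no refinement of the $\|P\|_4$ bound alone will supply this.
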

\begin{proof}
The implied constants in the Vinogradov notation in this proof depend only on $d$.
We shall need two auxiliary functions; the first one corresponds to the pair of convex sets $(C,C^*)$, the second to $(B,B^*)$:
\begin{enumerate}[label=(\arabic*)]
\item There exists a smooth function $\psi \colon \Tbb^d \to \R_{\geq 0}$ such that
\begin{enumerate}[label=(\alph*)]
\item $\int_{\Tbb^d} \psi = 1$,
\item $\psi \ll \frac{1}{\abs{C^*}}\1_{C^*}$,
\item $\hhat\psi \gg \1_{2C \cap \Z^d}$.
\end{enumerate}
\item There exists a smooth function $\phi \colon \Tbb^d \to \R_{\geq 0}$ such that
\begin{enumerate}
\item $\phi \gg \1_{B^*}$,
\item $\hhat\phi$ is real and positive and $\hhat\phi \ll \frac{1}{\abs{B}^2}\1_{B}\mm \1_{B} \leq \frac{1}{\abs{B}} \1_{2B}$.
\end{enumerate}
\end{enumerate}
One obtains $\psi$ by taking any smooth symmetric bump function supported on $\frac{1}{16}C^*$ with integral $\int \psi =1$.
The third property follows from the fact that for every $\xi\in 2C \cap \Z^d$ and $x\in\frac{1}{16}C^*$, one has $\bracket{\xi,x}\leq\frac{1}{8}$ and hence $\Re\bigl( e(\bracket{\xi,x}) \bigr) \geq \frac{1}{2}$.
The function $\phi$ can be given explicitly by the formula
\(
\phi(x) =\absBig{ \frac{1}{\abs{B}}\sum_{a\in\frac{1}{8}B\cap\Z^d} e(\bracket{a,x})}^2
\)
for all $x$ in $\Tbb^d$.
The second item is immediate by definition of $\phi$, and the first one follows from the fact that by Minkowski's first theorem on convex bodies, one has $\#(\frac{1}{8}B\cap\Z^d)\gg\abs{B}$.

Pick a maximal $2B$-separated subset $A' \subset A_t \cap (c_0+ C)$ such that all coefficients $\hat\nu(a)$, $a \in A'$ fall in the same quadrant of $\C$.
One still has $\# A' \gg \Ncal(A_t\cap (c_0 + C), B)$ and moreover
\[
\absBig{\sum_{a \in A'} \hat\nu(a)} \geq \frac{t \# A'}{\sqrt{2}}.
\]
By the Cauchy-Schwarz inequality,
\begin{equation*}
\sum_{a, b \in A'} \hat\nu(a -b) = \int_{\Tbb^d} \absbig{\sum_{a\in A'}e(\bracket{a,x})}^2 \dd \nu(x)
\geq \absBig{\int_{\Tbb^d} \sum_{a\in A'}e(\bracket{a,x}) \dd \nu(x)}^2
\gg t^2 (\# A')^2. 
\end{equation*}
Hence, there exists a translate $A$ of $A'$ such that $A \subset A' - A' \subset 2C$ and
\begin{equation*}
\absBig{\sum_{a\in A} \hat\nu(a)} \gg t^2 \# A
\end{equation*}
and 
\begin{equation}
\label{eq:AgeqBC}
\# A = \#A' \gg s\cdot\frac{\abs{C}}{\abs{B}}.
\end{equation}

Consider the function $f \colon \Tbb^d \to \R$ defined by
\[
\forall x \in \Tbb^d,\quad  f(x) = \sum_{a\in A} e(\bracket{a,x}).
\]
On the one hand, using the definition of $f$, the properties of $\phi$ and the fact that $A$ is $2B$-separated, one has, for any $y \in \Tbb^d$,
\begin{align*}
\int_{y+B^*} \abs{f}^2 &= \int_{\Tbb^d} \1_{B^*}(x - y) \abs{f(x)}^2 \dd x\\
& \ll  \sum_{a_1,a_2 \in A}  \int \phi(x-y) e(\bracket{a_1- a_2,x})\dd x\\
&\leq  \sum_{a_1,a_2 \in A} \hat\phi(a_1 - a_2)\\
&\ll  \frac{1}{\abs{B}} \sum_{a_1,a_2 \in A}  \1_{2B}(a_1 - a_2)\\
&\ll  \frac{\# A}{\abs{B}}.
\end{align*}
On the other hand, from the properties of $\psi$ and of those of $A$,
\[
\absBig{\int_{\Tbb^d} f \dd(\nu\pp\psi)} = \absBig{\sum_{a \in A} \hat\nu(a) \hat\psi(a)} \gg  \absBig{\sum_{a \in A} \hat\nu(a)} \gg t^2 \# A.
\] 

Let $(y_i)_{i\in I}$ be a maximal family of $(4B^*)$-separated points in $\Tbb^d$.
Then the translates $(y_i + B^*)_{i\in I}$ are disjoint and have a total volume $\gg 1$.
By Fubini's theorem, 
\[
\int \dd x \sum_{i \in I} \int_{x + y_i + B^*} f \dd (\nu\pp\psi) = \sum_{i \in I}\abs{y_i+B^*} \int_{\Tbb^d} f \dd (\nu\pp\psi).
\]
Hence, translating all $y_i$ by some $x\in\Tbb^d$ if necessary, we may assume that
\begin{equation}
\label{eq:sumI}
\sum_{i \in I} \absBig{\int_{y_i + B^*} f \dd (\nu\pp\psi)} \gg t^2 \# A.
\end{equation}
By the Cauchy-Schwarz inequality, for each $i \in I$, 
\begin{align*}
\absBig{\int_{y_i + B^*} f \dd (\nu\pp\psi)} & \leq \sqrt{\int_{y_i+B^*} \abs{f}^2} \sqrt{\int_{y_i+B^*} (\nu\pp\psi)^2}\\
&\ll \sqrt{\frac{\# A}{\abs{B}}} \sqrt{(\nu\pp\psi)(y_i+B^*) \max_{y_i +B^*}\nu\pp\psi}\\
&\ll \nu(y_i+B^* + C^*) \sqrt{\frac{h_i \# A \abs{C}}{\abs{B}}} 
\end{align*}
where $h_i = \frac{\abs{C^*}\max_{y_i+B^*}\nu\pp\psi}{\nu(y_i+B^* + C^*)}$.
Recalling \eqref{eq:AgeqBC} and \eqref{eq:sumI}, we obtain some constant $L=L(d) >1$ depending only on $d$ such that
\[
\sum_{i \in I} \nu(y_i+B^* + C^*) h_i^{1/2} \geq \frac{s^{1/2}t^2}{L}.
\]
On the other hand, since for every $x$ in $\Tbb^d$, one has $(\nu\pp\psi)(x) \ll \frac{\nu(x+C^*)}{\abs{C^*}}$, so
\[
h_i = \frac{\abs{C^*}\max_{y_i+B^*}\nu\pp\psi}{\nu(y_i+B^* + C^*)} \ll \frac{\max_{x\in y_i+B^*}\nu(x+C^*)}{\nu(y_i+B^*+C^*)} \leq 1
\]
and therefore, increasing the value of $L$ if necessary, we may assume that
\[
\forall i,\quad h_i \leq L.
\]
Finally, $(y_i)_{i \in I}$ is $4B^*$-separated so
\[
\sum_{i \in I} \nu(y_i+B^* + C^*) \leq \sum_{i \in I} \nu(y_i+2\cdot B^*) \leq 1
\]
and we may set $J = \set{i \in I}{h_i \geq \frac{st^4}{4L^2}}$ to find
\[
\sum_{i \in J} \nu(y_i + B^* + C^*) \geq \frac{s^{1/2}t^2}{2L^{3/2}}.
\]

For each $i \in J$, fix $x_i \in y_i+B^*$ such that
\[
(\nu \pp \psi)(x_i)= \max_{y_i + B^*} \nu \pp \psi
\]
and let
\[
X = \{x_i\ ;\ i\in J\}.
\]
Since the family $(y_i)_{i\in I}$ is $4B^*$-separated, the set $X$ is $B^*$-separated.
For the second property, note that for each $i$ in $J$,
\[
\nu(x_i + C^*) \geq \abs{C^*} (\nu \pp \psi)(x_i) = h_i \nu(y_i+B^*+C^*)
\gg st^4 \nu(y_i+B^*+C^*).
\]
so that
\[
\nu(X+C^*)  = \sum_{i \in J} \nu(x_i+C^*) \gg st^4 \sum_{i \in J}\nu(y_i+B^*+C^*) \gg s^{3/2}t^6.
\]
\end{proof}

\subsection{Proof of Proposition~\ref{pr:RWwiener}}
To prove Proposition~\ref{pr:RWwiener}, we follow the same pattern as in \cite{HS2019}. The only difference here is that we need to find the correct polar pairs $(B,B^*)$ and $(C,C^*)$ to apply Proposition~\ref{pr:polarwiener}.
Before we start the proof, we record to elementary lemmas from~\cite{HS2019}.

\begin{lem}[{\cite[Lemma~4.3]{HS2019}} Additive structure of large Fourier coefficients]
\label{lm:addstruct}
Let $\mu$ be a Borel probability measure on $\SL_d(\Z)$ and $\nu$ a Borel probability measure on $\Tbb^d$.
If 
\[ \abs{\widehat{\mu * \nu}(a_0)} \geq t_0 > 0,\]
then for any integer $k \geq 1$, the set
\[A = \bigl\{ g \in \Mat_d(\Z) \mid \abs{\hat{\nu}(a_0g)} \geq t_0^{2k}/2 \bigr\}\]
satisfies
\[\bigl(\mu^{\pp k} \mm \mu^{\pp k}\bigr) (A) \geq \frac{t_0^{2k}}{2}.\]
\end{lem}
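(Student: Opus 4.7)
The plan is to recognise the function $g\mapsto \hhat\nu(a_0 g)$ on $\Mat_d(\R)$ as a weighted superposition of additive characters of the abelian group $\Mat_d(\R)$, and then to exploit the elementary identity
$\hhat{\mu^{\pp k}\mm\mu^{\pp k}}(\chi) = \abs{\hhat\mu(\chi)}^{2k}$
relating the additive Fourier coefficients of $\mu$ and of $\mu^{\pp k}\mm\mu^{\pp k}$.

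For each $x\in\Tbb^d$, let $\chi_x \colon \Mat_d(\R) \to \R$ denote the linear form $g\mapsto \bracket{a_0,gx}$; then $\hhat\nu(a_0g) = \int_{\Tbb^d} e(\chi_x(g))\dd\nu(x)$. Writing $\hhat\mu$ for the additive Fourier transform of $\mu$ on $\Mat_d(\R)$, Fubini yields
\[
\hhat{\mu*\nu}(a_0) = \int_{\Tbb^d} \hhat\mu(\chi_x)\dd\nu(x) \quad\text{and}\quad
\int \hhat\nu(a_0 g)\dd(\mu^{\pp k}\mm\mu^{\pp k})(g) = \int_{\Tbb^d} \abs{\hhat\mu(\chi_x)}^{2k}\dd\nu(x),
\]
so that Jensen's inequality applied to the probability measure $\nu$ gives
\[
t_0^{2k}\leq \absbig{\hhat{\mu*\nu}(a_0)}^{2k} \leq \int_{\Tbb^d} \abs{\hhat\mu(\chi_x)}^{2k}\dd\nu(x) = \int \hhat\nu(a_0 g)\dd(\mu^{\pp k}\mm\mu^{\pp k})(g).
\]

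The last integral is real, since $\mu^{\pp k}\mm\mu^{\pp k}$ is invariant under $g\mapsto -g$ while $\hhat\nu(-a_0g)=\overline{\hhat\nu(a_0g)}$. As $\hhat\nu(a_0g)$ is bounded above by $1$ in absolute value, Markov's inequality produces a set of $(\mu^{\pp k}\mm\mu^{\pp k})$-mass at least $t_0^{2k}/2$ on which $\mathrm{Re}(\hhat\nu(a_0 g))\geq t_0^{2k}/2$, and this set is contained in $A$ because $\abs{\hhat\nu(a_0 g)}\geq \mathrm{Re}(\hhat\nu(a_0 g))$. The argument poses no real obstacle: the only idea is to recognise $\hhat\nu(a_0 g)$ as a superposition of characters of the \emph{additive} group $\Mat_d(\R)$, after which the passage from $\mu$ to $\mu^{\pp k}\mm\mu^{\pp k}$ is transparent and the remaining steps are routine applications of Jensen's and Markov's inequalities.
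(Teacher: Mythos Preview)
Your proof is correct. The paper does not supply its own argument for this lemma; it simply quotes it as \cite[Lemma~4.3]{HS2019} and calls it ``probably well-known'', so there is nothing to compare against beyond noting that your argument is indeed the standard one: express $\hhat\nu(a_0g)$ as $\int_{\Tbb^d} e(\chi_x(g))\,\dd\nu(x)$, use the additive Fourier identity $(\mu^{\pp k}\mm\mu^{\pp k})^{\wedge}(\chi_x)=\abs{\hhat\mu(\chi_x)}^{2k}$, apply Jensen, then Markov. One small point of hygiene: when you write ``let $\chi_x\colon\Mat_d(\R)\to\R$ be the linear form $g\mapsto\bracket{a_0,gx}$'' you are implicitly lifting $x\in\Tbb^d$ to $\R^d$; the computation is insensitive to this choice only because all measures involved are supported on $\Mat_d(\Z)$ and $a_0\in\Z^d$, so it would be cleaner to say so explicitly.
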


\begin{lem}[{\cite[Lemma~4.4]{HS2019}} Regularity from Fourier decay]
\label{lm:regFourier}
Given $D \geq 1$ and $\alpha > 0$ sufficiently small, there exist constants $c = c(D,\alpha) > 0$ and $C_1 = C_1(D,\alpha) > 0$ such that the following holds for all $0 < \delta < c t$.
Let $\eta$ be a Borel measure on $\R^D$, of total mass $\mu(\R^D) \leq 1$. 
Let $A$ be a subset of $\R^D$. Assume
\begin{enumerate}
\item $\Supp(\eta) \subset B(0, \delta^{-\alpha})$,
\item for all $\xi \in \R^D$ with $\delta^{-\alpha} \leq \norm{\xi} \leq \delta^{-1- \alpha}$, $\abs{\hat\eta(\xi)} \leq \norm{\xi}^{-C_1}$,
\item $\eta(A) \geq t$.
\end{enumerate}
Then there exists $a \in \R^D$ such that 
\[\Ncal(A \cap B(a, \delta^\beta), \delta) \geq c t^{D + 1} \Bigl( \frac{\delta^\beta}{\delta} \Bigr)^D,\]
where $\beta = (2D + 1) \alpha $.
\end{lem}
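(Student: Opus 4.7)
The plan is to combine a Fourier-based small-ball estimate on $\eta$ with a covering and pigeonhole localization, and then refine the resulting bound to reach the announced exponent $t^{D+1}$.

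\emph{Step 1 (Fourier decay to small-ball estimate).} Pick a Schwartz bump $\psi$ on $\R^D$ with $\psi \geq c_0 \1_{B(0,1)}$, $\int \psi = 1$, and $\hat\psi$ rapidly decreasing, and set $\psi_\delta(x) = \delta^{-D}\psi(x/\delta)$, so that $\widehat{\psi_\delta}(\xi) = \hat\psi(\delta\xi)$. The smoothed density $g := \eta \pp \psi_\delta$ satisfies $\|g\|_\infty \leq \|\hat g\|_1 = \int |\hat\eta(\xi)|\,|\hat\psi(\delta\xi)|\,d\xi$, which I split into three frequency regimes. The low range $\|\xi\|\leq \delta^{-\alpha}$ contributes at most the volume $\lesssim \delta^{-D\alpha}$ (using only $|\hat\eta|, |\hat\psi|\leq 1$). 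The medium range $\delta^{-\alpha}\leq \|\xi\|\leq \delta^{-1-\alpha}$ is handled by the Fourier decay hypothesis $|\hat\eta(\xi)|\leq \|\xi\|^{-C_1}$, yielding a contribution $\lesssim \delta^{\alpha(C_1-D)}$ which is dominated by the low-frequency one when $C_1=C_1(D,\alpha)$ is chosen large enough. The high range $\|\xi\|\geq \delta^{-1-\alpha}$ is killed by the rapid decay of $\hat\psi(\delta\xi)$. The upshot is $\|g\|_\infty \lesssim \delta^{-D\alpha}$, and since $\psi_\delta \gtrsim \delta^{-D}\1_{B(0,\delta)}$, this translates into the uniform small-ball estimate $\eta(B(x, \delta/2)) \lesssim \delta^{D(1-\alpha)}$ for every $x \in \R^D$.

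\emph{Step 2 (Covering and pigeonhole localization).} A minimal $\delta$-cover of $A$ gives $t \leq \eta(A) \leq \Ncal(A,\delta) \cdot \max_x \eta(B(x,\delta/2)) \lesssim \Ncal(A,\delta) \delta^{D(1-\alpha)}$, hence $\Ncal(A, \delta) \gtrsim t\delta^{-D(1-\alpha)}$. Covering $\Supp\eta \subset B(0,\delta^{-\alpha})$ by $M \lesssim \delta^{-D(\alpha+\beta)}$ balls of radius $\delta^\beta$ and applying pigeonhole, one finds $B(a,\delta^\beta)$ such that
\[\Ncal(A \cap B(a,\delta^\beta),\delta) \gtrsim \frac{\Ncal(A,\delta)}{M} \gtrsim t\,\delta^{2D\alpha}\Bigl(\frac{\delta^\beta}{\delta}\Bigr)^D.\]

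\emph{Step 3 (Reaching the exponent $t^{D+1}$).} The bound from Step~2 already implies the claim in the regime $t \lesssim \delta^{2\alpha}$, since there $t\delta^{2D\alpha} \gtrsim t^{D+1}$. For the complementary regime $t \gtrsim \delta^{2\alpha}$, the plan is to iterate the localization: restrict $\eta$ to the ball $B(a,\delta^\beta)$ produced in Step~2, recenter, and rescale to unit size; the resulting sub-probability measure inherits a Fourier-decay estimate on an appropriately translated range of frequencies, so that Steps 1--2 can be applied afresh to isolate an even finer sub-ball of higher density of~$A$. Each iteration converts one factor $\delta^{2\alpha}$ into a factor $t$ in the density. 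After at most $D$ iterations the total gain reaches $t^{D+1}$, and the specific choice $\beta = (2D+1)\alpha$ is made precisely so that the Fourier-decay range has enough room to survive $D$ nested rescalings. The main obstacle is exactly this iteration: one must check that the Fourier-decay hypothesis — with controlled parameters $c$ and $C_1$ — is preserved at each level, and keep track of how the implicit constants evolve through the $D$ refinements. The hypothesis $\delta < ct$ is used to ensure the iteration terminates with meaningful (nontrivial) covering numbers at the finest scale.
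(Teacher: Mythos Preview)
The paper does not supply its own proof of this lemma; it is simply quoted from \cite[Lemma~4.4]{HS2019}. So there is no in-paper argument to compare against, and your proposal has to stand on its own.

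Your Steps~1 and~2 are correct and yield
\[
\Ncal\bigl(A\cap B(a,\delta^\beta),\delta\bigr)\;\gtrsim\; t\,\delta^{2D\alpha}\,\Bigl(\frac{\delta^\beta}{\delta}\Bigr)^D,
\]
which, as you note, already gives the stated bound in the regime $t\lesssim\delta^{2\alpha}$.

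Step~3, however, is only a plan, and the plan has two concrete obstructions that you do not address. First, the preservation of the Fourier hypothesis under restriction and rescaling is not established. Multiplying $\eta$ by a smooth cutoff at scale $\delta^\beta$ convolves $\hat\eta$ with a kernel of width $\asymp\delta^{-\beta}$, and after rescaling the ball to unit size one needs decay on a \emph{new} frequency window; this window is not contained in the original one $[\delta^{-\alpha},\delta^{-1-\alpha}]$, and the convolution can transport mass into it from the uncontrolled low-frequency region $\lvert\xi\rvert\le\delta^{-\alpha}$. You flag this as ``the main obstacle'' but offer no resolution. Second, and more seriously, the arithmetic of the iteration goes the wrong way. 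A pigeonhole on $\eta$-mass gives $\eta(A\cap B(a,\delta^\beta))\gtrsim t\,\delta^{D(\alpha+\beta)}$, while the Frostman bound from Step~1 gives $\eta(B(a,\delta^\beta))\lesssim\delta^{D(\beta-\alpha)}$; hence the \emph{relative} mass of $A$ in the restricted, renormalised measure is only $\tilde t\gtrsim t\,\delta^{2D\alpha}$, which is \emph{smaller} than $t$. Each iteration therefore costs a factor $\delta^{2D\alpha}$ rather than gaining a factor $t$, so the assertion that ``each iteration converts one factor $\delta^{2\alpha}$ into a factor $t$'' is unsupported and, on this bookkeeping, false.

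In short: Steps~1--2 are fine, but Step~3 is a genuine gap. To reach the exponent $t^{D+1}$ one needs a different argument in the regime $t\gg\delta^{2\alpha}$ --- for instance, exploiting that the Fourier hypothesis forces $\eta_\delta$ to be close to the much smoother density $\eta_{\delta^\alpha}$ (with error $O(\delta^{\alpha(C_1-D)})$), so that $\eta_\delta$ is essentially constant on $\delta^\beta$-balls, which turns large $\eta$-mass of $A$ in such a ball directly into a large covering number. Working out an argument along these lines, rather than an iteration of Steps~1--2, is what is missing.
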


We are ready to prove the main proposition of this section.

\begin{proof}[{Proof of Proposition~\ref{pr:RWwiener}}]
We shall use Lemma~\ref{lm:regFourier} with $D = \dim E'$ and
\[
\alpha \leq \frac{\min_{1\leq i\leq r}\lambda_1(\mu,V_i)}{3(2D+1)\max_i\lambda_1(\mu,V_i)}
\quad\mbox{so}\quad
\beta \leq \frac{\min_{1\leq i\leq r}\lambda_1(\mu,V_i)}{3\max_{1\leq i\leq r}\lambda_1(\mu,V_i)}.
\]
Let $C_1=C_1(D,\alpha)$ be as in the lemma.
Take $\alpha_0 = \alpha_0(\mu)$ as in Theorem~\ref{thm:decaymun} with the additional condition that
\[
\alpha_0\beta <\min_{1\leq i\leq r}\lambda_1(\mu,V_i)
\]
and set $\alpha_1 = \frac{\alpha \alpha_0}{2}$ and $c_0=c_0(\mu, \alpha_1)$ as in Theorem~\ref{thm:decaymun}. 
Set also $k_0= \bigl\lceil\frac{\alpha_0 C_1}{c_0}\bigr\rceil$ and $k = Tk_0$, where $T=\# F$.

Assume
\[
\abs{\widehat{\mu^{*n}*\nu}(a_0)} \geq t.
\]
By Lemma~\ref{lm:addstruct}, there is a subset $A \subset \Mat_d(\Z)$ such that 
\[
\forall g \in A,\quad \abs{\hat\nu(a_0 g)} \geq \frac{t^{2k}}{2}
\]
and
\[\bigl((\mu^{*n})^{\pp k} \mm (\mu^{*n})^{\pp k}\bigr) (A) \geq \frac{t^{2k}}{2}.\]
Note that $\mu^{*n}$ is supported on $\bigcup_{\gamma\in F} \gamma E$.
We can cover $\mu^{*n}$ by its restrictions $(\mu^{*n})_{| \gamma E}$ to each subspace $\gamma E$.
Thanks to the choice of $k$ and the commutativity of additive convolutions, there exists $\gamma\in F$ such that
\[
\left((\mu^{*n})_{| \gamma E}\right)^{\pp k_0} \pp (\text{a probability measure}) (A) \geq \frac{t^{2k}}{2T^{2k}}.
\]
Hence for some $x_1 \in \Mat_d(\Z)$, we have
\[
\left((\mu^{*n})_{| \gamma E}\right)^{\pp k_0}(x_1 + A) \geq \frac{t^{2k}}{2T^{2k}}.
\]
Let $\eta'$ be the pushforward of $\left((\mu^{*n})_{| \gamma E}\right)^{\pp k_0}$ under the map
\(
g  \mapsto  (\pi'\circ \phi_{n})(\gamma^{-1}g)
\)
and let
\[
A' = (\pi'\circ \phi_{n}) \bigl(E\cap\gamma^{-1}(x_1+A)\bigr) \subset E'
\]
so that
\begin{equation}
\label{eq:etapAp}
\eta'(A') \gg t^{2k}.
\end{equation}

Lemma~\ref{lm:regFourier} will be used at scale $\delta = e^{-\frac{\alpha_0 n}{2}}$.
By the definition of $\mu_{n,\gamma}$ in Section~\ref{sec:fourier}, we have $\eta' = \mu_{n,\gamma}^{\pp k_0}$.
By Theorem~\ref{thm:decaymun}, for all $\xi \in E'^*$ with $\delta^{-\alpha}=e^{\alpha_1 n} \leq \norm{\xi} \leq e^{\alpha_0 n} = \delta^{-2}$, we have
\[ \abs{\widehat{\eta'}(\xi)} \leq e^{-k_0c_0 n} \leq \norm{\xi}^{-C_1}.\]
In view of the large deviation principle for $\mu^{*n}$, we may replace $\eta'$ by its restriction to $B(0,\delta^{-\alpha})$ while maintaining \eqref{eq:etapAp} and the conclusion of  Theorem~\ref{thm:decaymun}.
Thus by Lemma~\ref{lm:regFourier} applied to $\eta'$ and $A'$, there exists $x_2 \in \Ball_{E'}(0, \delta^{-\alpha})$ such that 
\begin{equation}
\label{eq:Ba2capA}
\Ncal\bigl(A'\cap B(x_2,\delta^{\beta}),\delta\bigr) \gg t^{O(1)} \delta^{-D(1-\beta)}.
\end{equation}
Now define convex bodies in $E$ by
\[
C_0 = \phi_{-n}\big(\Ball_{E'}(0,\delta^{\beta})\times \Ball_{E_0}(0,R)\big)
\quad\mbox{and}\quad
B_0 = \phi_{-n}\big(\Ball_{E'}(0,\delta)\times \Ball_{E_0}(0,R)\big)
\]
where $R = O_\mu(k)$ is a constant large enough so that $\gamma^{-1}A \subset E' \times \Ball_{E_0}(0,R)$. 
Note that $C_0\supset B_0$ and, since we took $\alpha_0\beta <\min_{1\leq i\leq r}\lambda_1(\mu,V_i)$,
\[
B_0\supset\Ball_E(0,1).
\]
Then inequality~\eqref{eq:Ba2capA} implies that for some $x_3$ in $E$,
\[
\Ncal(\gamma^{-1}A\cap (C_0 +x_3),B_0) \gg t^{O(1)}\delta^{-D(1-\beta)} \asymp t^{O(1)}\cdot\frac{\abs{C_0}}{\abs{B_0}}.
\]
Indeed, with
\[
\begin{array}{cccc}
f_1 \colon & E & \to & E'\\
& x & \mapsto & \pi'\circ\phi_{n}(\gamma^{-1}x_1+x)
\end{array}
\]
one has $f_1(\gamma^{-1}A) \supset A'$, $\pi'\circ \phi_{n}(B_0) = \Ball_{E'}(0,\delta)$, and taking $x_3 \in E'$ such that $\pi'\circ\phi_n(\gamma^{-1}x_1+x_3)=x_2$, $f_1(C_0 + x_3) = \Ball_{E'}(x_2,\delta^{\beta})$.
The choice of $R$ guarantees that $f_1\bigl(\gamma^{-1}A \cap (C_0 + x_3)\bigr) = f_1(\gamma^{-1}A) \cap f_1(C_0 + x_3)$. One concludes using the inequality~\eqref{eq:NfAfB} on $f_1$.

Now let
\[
C_1 = a_0\gamma C_0 
\quad\mbox{and}\quad
B_1 = a_0\gamma B_0 
\]
and apply \eqref{NfANfC} to $f\colon x\mapsto a_0\gamma x$ to obtain, with $c_0=a_0\gamma x_3$,
\[
\frac{\Ncal\bigl(a_0 A\cap(C_1+c_0),B_1\bigr)}{\Ncal(C_1,B_1)} \gg \frac{\Ncal\bigl(\gamma^{-1}A\cap(C_0+x_3),B_0\bigr)}{\Ncal(C_0,B_0)}
	\gg t^{O(1)}
\]
whence
\[
\Ncal\bigl(a_0 A\cap(C_1+c_0),B_1\bigr)\geq t^{O(1)} \frac{\abs{C_1}_{a_0\gamma E}}{\abs{B_1}_{a_0\gamma E}}.
\]
Since $C_1\subset B_1$ and $B_1$ contains a ball of radius $1$ in $a_0\gamma E$, we may set
\[
C = C_1 + \Ball_{\R^d}(0,2)
\quad\mbox{and}\quad
B = B_1 + \Ball_{\R^d}(0,2)
\]
to get convex bodies in $\R^d$ containing $\Ball_{\R^d}(0,2)$ such that
\[
\Ncal\bigl(a_0 A\cap(C+c_0),B\bigr)\geq t^{O(1)} \frac{\abs{C}}{\abs{B}}.
\]
Since $\hat\nu(a_0g)\geq t^{O(1)}$ for every $g\in A$, Proposition~\ref{pr:polarwiener} shows that there exists a $B^*$-separated subset $X\subset\Tbb^d$ such that
\[
\nu(X+C^*) \geq t^{O(1)}.
\]

To conclude, it remains to describe the sets $B^*$ and $C^*$.
For that, first consider a decomposition of the space of linear forms on $\R^d$ into irreducible components under the right action of $G^\circ$
\[
(\R^d)^* = V' = V^{(1)}\oplus\dots\oplus V^{(r)}
\]
and write $p_i$, $i=1,\dots,k$ for the corresponding projections.
Since $a_0\gamma$ is an integer vector, and each $V^{(i)}$ is defined over a number field,
there exists a constant $C > 0$ such that for each $i$ such that $p_i(a_0\gamma)\neq 0$, one has
\[
\norm{a_0}^{-C} \ll \norm{p_i(a_0\gamma)} \ll \norm{a_0}.
\]
Therefore, for any $\eps>0$, we may choose $C_\eps\geq 0$ such that $n\geq C_\eps\log\frac{\norm{a_0}}{t}$ implies, for $i=1,\dots,k$,
\[
p_i(a_0\gamma)=0
\quad\mbox{or}\quad
e^{-\eps n} \leq \norm{p_i(a_0\gamma)} \leq e^{\eps n}.
\]
Thus, if  $p_i(a_0\gamma)\neq 0$ and $\lambda_1(\mu,V^{(i)})\neq 0$, then
\[
p_i(a_0\gamma) B_0 \subset \Ball_{V^{(i)}}(0, e^{(\lambda_1(\mu,V^{(i)})+\eps) n}\delta)
\]
and
\[
p_i(a_0\gamma) C_0 \supset \Ball_{V^{(i)}}(0, e^{(\lambda_1(\mu,V^{(i)})-\eps) n}\delta^{\beta}).
\]
Now consider the decomposition of $V=\R^d$ according to Lyapunov exponents
\[
V = V_0\oplus V_1\oplus\dots\oplus V_r,
\]
where for $i=0,\dots,r$, $V_i$ is the sum of all irreducible $G$-submodules of $V$ with Lyapunov exponent $\lambda_1(\mu,E_i)$.
Since $W = (a_0\gamma E)^\perp$ is a submodule, it can be written
\[
W = W_0\oplus\dots\oplus W_r,
\quad\mbox{where}\quad W_i=W\cap V_i,\quad i=0,\dots,r.
\]
An elementary computation based on the above observations shows that for some compact subset $A_0\subset V_0$ containing $\Ball_{V_0}(0,\frac{1}{2})$, (we identify subsets of $\Ball_V(0,\frac{1}{2})$ with their projections in $\Tbb^d$)
\[
B^* \supset A_0 \times \prod_{1\leq i\leq r} \set{v_i\in V_i}{d(v_i,W_i)\leq e^{-(\lambda_1(\mu,V_i)+\eps)n}\delta^{-1}\ \mbox{and}\ \norm{v_i}\leq\frac{1}{2}}
\]
and
\[
C^* \subset A_0 \times \prod_{1\leq i\leq r} \set{v_i\in V_i}{d(v_i,W_i)\leq e^{-(\lambda_1(\mu,V_i)-\eps)n}\delta^{-\beta}\ \mbox{and}\ \norm{v_i}\leq\frac{1}{2}}.
\]
Recalling $\delta=e^{-\frac{\alpha_0 n}{2}}$ and setting $\tau=\frac{\alpha_0}{5\max_i\lambda_1(\mu,V_i)}>0$, we may choose $\eps>0$ small enough so that
\[
e^{-(\lambda_1(\mu,V_i)+\eps)n}\delta^{-1} = e^{-(\lambda_1(\mu,V_i)+\eps-\frac{\alpha_0}{2})n} \geq e^{-(1-2\tau)\lambda_1(\mu,V_i)n}
\]
and
\[
e^{-(\lambda_1(\mu,V_i)-\eps)n}\delta^{-\beta} = e^{-(\lambda_1(\mu,V_i)-\eps-\frac{\beta\alpha_0}{2})n} \leq e^{-(1-\tau)\lambda_1(\mu,V_i)n}.
\]
Finally, since $A_0$ can be covered by a bounded number of translates of $\Ball_{V_0}(0,\frac{1}{2})$, we may assume $A_0\subset\Ball_{V_0}(0,\frac{1}{2})$, and then $B^*\supset \Nbdtil(W \mymod \Z^d,e^{-(1- 2 \tau)n})$ while $C^*\subset \Nbdtil(W \mymod \Z^d,e^{-(1- \tau)n})$.
\end{proof}

For some technical reason, we shall have to work on a union of tori $\Tbb^d \times F$, where $\Gamma$ acts diagonally.
For a measure $\nu$ on $\Tbb^d \times F$ and $a \in \Z^d$, we write $\hhat{\mu^{*n}*\nu}(a,1)$ for the Fourier coefficient at frequency $a$ of the restriction of $\nu$ to $\Tbb^d \times \{1\}$ viewed as a measure on $\Tbb^d$.

A more careful look at the proof gives us the following slightly more precise version of Proposition~\ref{pr:RWwiener}.
\begin{coro}
\label{cr:RWwiener}
Let $\mu$ be a probability measure on $\GL_d(\Z)$ with finite exponential moment.
Assume the algebraic group $G$ generated by $\mu$ is semisimple and write $F = G/G^\circ$.
Let $\nu$ be a Borel probability measure on $\Tbb^d \times F$.
Then there exist $C = C(\mu)\geq 0$ and $\tau > 0$ such that the following holds.

Assume that for some $t\in(0,\frac{1}{2})$,
\[
\absbig{\hhat{\mu^{*n}*\nu}(a_0,1)} \geq t
\quad\text{for some } a_0 \in \Z^d \text{ and } n \geq C\log\frac{\norm{a_0}}{t}.
\]
Then, there exists $\gamma\in F$ such that, denoting
\[
W = (a_0\gamma E)^\perp
\]
there exists a finite subset $X \subset \Tbb^d \times \{\gamma^{-1}\}$ such that
\[
(X - X)\cap \Nbdtil(W \mymod \Z^d,e^{-(1- 2 \tau)n}) = \{0\}
\]
and
\[
\nu \bigl(X + \Nbdtil(W \mymod \Z^d,e^{-(1-\tau)n}) \bigr) \geq t^{O(1)}.
\]
\end{coro}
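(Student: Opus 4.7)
The plan is to reduce the corollary to the proof of Proposition~\ref{pr:RWwiener} via an initial pigeonhole over the coset set $F = G/G^\circ$. Since $g \in \gamma G^\circ$ acts diagonally, sending $(x,f) \in \Tbb^d \times F$ to $(gx,\gamma f)$, writing $\nu = \sum_{f \in F} \nu_f \otimes \delta_f$ with $\nu_f$ a positive measure on $\Tbb^d$, the restriction of $\mu^{*n}*\nu$ to $\Tbb^d \times \{1\}$ decomposes as
\[
(\mu^{*n}*\nu)|_{\Tbb^d \times \{1\}} = \sum_{\gamma \in F}(\mu^{*n}|_{\gamma G^\circ}) * \nu_{\gamma^{-1}}.
\]
Pigeonholing the hypothesis $\absbig{\hhat{\mu^{*n}*\nu}(a_0,1)} \geq t$, some $\gamma \in F$ satisfies $\absbig{\hhat{(\mu^{*n}|_{\gamma G^\circ}) * \nu_{\gamma^{-1}}}(a_0)} \geq t/T$, where $T = \#F$. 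This $\gamma$ is the one that will appear in the conclusion.

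Next, I replay the argument of Proposition~\ref{pr:RWwiener} with the positive measures $\mu' := \mu^{*n}|_{\gamma G^\circ}$ (of total mass $p_\gamma \in [t/T, 1]$, supported in $\gamma G^\circ \subset \gamma E$) and $\nu' := \nu_{\gamma^{-1}}$ in place of $\mu^{*n}$ and $\nu$. Lemma~\ref{lm:addstruct} applies after renormalization; accounting for the loss $p_\gamma^{2k_0}$ it produces a set $A \subset \Mat_d(\Z)$ with $\absbig{\hhat{\nu'}(a_0 g)} \gg t^{O(1)}$ for every $g \in A$ and $(\mu'^{\pp k_0} \mm \mu'^{\pp k_0})(A) \gg t^{O(1)}$, where $k_0$ is chosen as in the original proof. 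The key simplification is that since $\mu'$ already lives on a single coset, its additive convolution powers stay in the linear subspace $\gamma E$, eliminating the secondary pigeonhole over cosets used in Proposition~\ref{pr:RWwiener}: after left multiplication by $\gamma^{-1}$ and pushforward by $\pi' \circ \phi_n$, the measure $\mu'^{\pp k_0}$ becomes exactly $\mu_{n,\gamma}^{\pp k_0}$, to which Theorem~\ref{thm:decaymun} directly delivers the needed Fourier decay on the range $e^{\alpha_1 n} \leq \norm{\xi} \leq e^{\alpha_0 n}$.

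The remainder is a verbatim replay of the proof of Proposition~\ref{pr:RWwiener}: Lemma~\ref{lm:regFourier} produces a ball-localized subset of $E'$ of large covering number; inverting $\pi' \circ \phi_n$ and left multiplying by $a_0\gamma$ transports this to the covering-number estimate needed by Proposition~\ref{pr:polarwiener}, with the same convex bodies $B \subset C \subset \R^d$ as in the original proof; and the explicit identification of the polar bodies $B^*,C^*$ yields $B^* \supset \Nbdtil(W \mymod \Z^d, e^{-(1-2\tau)n})$ and $C^* \subset \Nbdtil(W \mymod \Z^d, e^{-(1-\tau)n})$ for $W = (a_0\gamma E)^\perp$. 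Proposition~\ref{pr:polarwiener} then furnishes a $B^*$-separated $X_0 \subset \Tbb^d$ with $\nu'(X_0 + C^*) \gg t^{O(1)}$, and setting $X := X_0 \times \{\gamma^{-1}\} \subset \Tbb^d \times F$ preserves the separation and, by definition of $\nu'$, transports the mass estimate to $\nu(X + \Nbdtil(W \mymod \Z^d, e^{-(1-\tau)n})) \gg t^{O(1)}$. No essentially new obstacle arises: the entire argument tracks how the single coset $\gamma$, selected at the outset by pigeonhole, appears consistently both in the subspace $a_0\gamma E$ defining $W$ and in the index $\gamma^{-1}$ of the lift of $X_0$; this consistency is automatic from the decomposition above.
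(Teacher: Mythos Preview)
Your proposal is correct and matches the paper's intent: the paper gives no separate proof for Corollary~\ref{cr:RWwiener}, only the remark that ``a more careful look at the proof'' of Proposition~\ref{pr:RWwiener} yields it, and you have spelled out precisely what that careful look is.

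The one structural difference worth noting is where the pigeonhole over $F$ is placed. In the paper's proof of Proposition~\ref{pr:RWwiener}, Lemma~\ref{lm:addstruct} is applied first with $k = Tk_0$, and only afterwards is $(\mu^{*n})^{\pp k}\mm (\mu^{*n})^{\pp k}$ decomposed over cosets to isolate a single $\gamma$. You instead pigeonhole the hypothesis $\absbig{\hhat{\mu^{*n}*\nu}(a_0,1)}\geq t$ over $F$ at the very start, obtaining $\absbig{\hhat{(\mu^{*n}|_{\gamma G^\circ})*\nu_{\gamma^{-1}}}(a_0)}\geq t/T$, and then apply Lemma~\ref{lm:addstruct} directly with $k_0$ to the single-coset measure. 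Both orderings work; yours is arguably cleaner for the corollary because it makes transparent from the outset why the same $\gamma$ governs both $W=(a_0\gamma E)^\perp$ and the component $\Tbb^d\times\{\gamma^{-1}\}$ where $X$ lives. The normalization bookkeeping (the losses $p_\gamma^{2k_0}$ you mention, together with $p_\gamma\geq t/T$ forced by the Fourier bound) is handled correctly and only affects implied constants in the $t^{O(1)}$.
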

Here, of course, the addition on $\Tbb^d \times \{\gamma^{-1}\}$ is defined for the torus coordinate.
\begin{proof}[Proof sketch]
Decomposing the measure $(\mu^{*n}*\nu)_{\Tbb^d\times 1}$ as
\[
(\mu^{*n}*\nu)_{|\Tbb^d\times\{1\}}
	= \sum_{\gamma\in F} \mu^{*n}_{|\gamma G^\circ}*\nu_{|\Tbb^d\times\{\gamma^{-1}\}}
\]
we find that for some $\gamma$, one has, up to a constant depending only on $[G:G^\circ]$,
\[
\absbig{(\mu^{*n}_{|\gamma G^\circ}*\nu_{|\Tbb^d\times\{\gamma^{-1}\}})^\wedge(a_0)} \gg t.
\]
Let $k=\dim E'$.
Lemma~\ref{lm:addstruct} shows that there exists a set $A\subset\Mat_d(\Z)$ such that
\[
\forall g\in A,\qquad \abs{\widehat{\nu_{|\Tbb^d\times\{\gamma^{-1}\}}}(a_0g)} \gg t^{2k}.
\]
and
\[
\left((\mu^{*n}_{|\gamma G^\circ})^{\boxplus k} \boxminus (\mu^{*n}_{|\gamma G^\circ})^{\boxplus k}\right)(A) \gg t^{2k}
\]
Reasoning as in the proof of Proposition~\ref{pr:RWwiener}, we deduce that $\nu_{|\Tbb^d\times\{\gamma^{-1}\}}$ has many large Fourier coefficients in $a_0\gamma E$ and therefore must be concentrated around a finite subset of well-separated translates of neighborhoods of $W=(a_0\gamma E)^\perp$.
\end{proof}

\section{Concentration and unstability of the random walk}
\label{sec:granulation}

In this section, we finally prove the main result of the paper.
We consider a probability measure $\mu$ on $\GL_d(\Z)$ and the associated random walk on the torus $\Tbb^d$, starting from a point $x_0\in\Tbb^d$.
Letting $\Gamma$ be the group generated by $\Supp\mu$ and $G$ the Zariski closure of $\Gamma$, we assume that $G$ is semisimple as an algebraic group, and we show --- in a quantitative way --- that if the law $\mu^{*n}*\delta_{x_0}$ of the random walk is not exponentially close to the Haar measure on $\Tbb^d$, then the starting point $x_0$ is exponentially close to a proper closed invariant subset.

\bigskip

But let us first introduce a new space on which it is convenient to study the random walk, especially to overcome issues related to being Zariski disconnected.
As before, $G^\circ$ denotes the identity component of $G$, and $F=G/G^\circ$.
The subalgebra of $\Mat_d(\R)$ generated by $G^\circ$ is denoted by $E$.
In order to keep track of the coset modulo $G^\circ$, we let
\[
Y_0 = \Tbb^d \times F
\]
and let $\Gamma$ act on $Y_0$ diagonally.

Let $W_0$ be a rational $G^\circ$-invariant subspace of $V = \R^d$.
We can define a factor of $Y_0\to Y$ by
\[
Y = \bigsqcup_{\gamma \in F} V/ (\gamma W_0+\Z^d) \times \{\gamma\}.
\] 
The action of $\Gamma$ on $Y$ is defined in the obvious way.
This way, the natural projection $Y_0 \to Y$ is $\Gamma$-equivariant.

Let $V_0$ denote the sum of all compact factors of $G$ in $V=\R^d$, that is, the sum of irreducible subrepresentations $W \subset V$ such that $\lambda_1(\mu,W)=0$.
Given $a_0\in\Z^d$ for which the random walk $\mu^{*n}*\delta_{x_0}$ has large Fourier at $a_0$, we shall set 
\[
W_0 = V_0 + (a_0E)^\perp
\]
and study the random walk on the space $Y$ associated to this $W_0$.

In the introduction, we defined the quasi-distance adapted to the random walk on $V=\R^d$ and on $\Tbb^d$.
Similarly, we can define a quasi-distance on each of the tori $V/ (\Z^d + \gamma W_0)$ in $Y$.
Together, theses quasi-distances define a quasi-distance on $Y$ by the formula
\begin{equation*}
\dtil\bigl( (x_1,\gamma_1),  (x_2,\gamma_2) \bigr)= \begin{cases} \dtil(x_1,x_2) & \text{if } \gamma_1 = \gamma_2,\\ +\infty &\text{otherwise.}\end{cases}
\end{equation*}

Below we will state a slightly more precise version of Theorem~\ref{thm:finali}.
We fix a $G$-invariant Euclidean norm on $V_0$ and write $\Ball_{V_0}(0,R)$ for the closed ball in $V_0$ with radius $R > 0$ with respect to this norm. 
Given some parameter $Q > 0$, we note that
\[
\Ball_{V_0}(0,Q)+\bigcup_{q\leq Q} \frac{1}{q} \Z^d \subset V=\R^d
\]
is $\Gamma$-invariant.
As a consequence, the set
\[
Z_{Q} =\bigsqcup_{\gamma \in F} \bigl(\Ball_{V_0}(0,Q)+\bigcup_{q\leq Q} \frac{1}{q} \Z^d \mymod (\gamma W_0 + \Z^d)\bigr)\times \{\gamma\}
\]
is a $\Gamma$-invariant closed subset of $Y$.

\begin{thm}
\label{thm:final}
Assume that $\mu$ has a finite exponential moment and the algebraic group $G$ is semisimple.
Then for every $\lambda\in(0,1)$, there exist $C=C(\mu,\lambda)\geq 0$ such that the following holds.

Given $a_0 \in \Z^d$, let $W_0=V_0+(a_0E)^\perp$ and $Y$ be defined as above.
For any $x_0\in \Tbb^d$, if
\begin{equation}
\label{eq:finalassp}
\absbig{(\hhat{\mu^{*n}*\delta_{x_0}})(a_0)} \geq t \quad \text{ for some } t\in(0,\frac{1}{2}) \text{ and } n \geq C\log\frac{\norm{a_0}}{t},
\end{equation}
then there is $\gamma_0 \in F$ such that writing $y_0 = (x_0 \mod \gamma_0 W_0, \gamma_0) \in Y$, we have
\[ \dtil(y_0,Z_Q) \leq e^{-\lambda n} \quad \text{ for some }  Q \leq \left(\frac{\norm{a_0}}{t}\right)^C.\]
\end{thm}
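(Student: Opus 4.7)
The plan is to derive Theorem~\ref{thm:final} by combining the granulation estimate of Section~\ref{sec:wiener} (Corollary~\ref{cr:RWwiener}) with a drift function / unstability argument that transports the resulting concentration information back to the starting point~$x_0$.

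First, I would lift the problem to $Y_0 = \Tbb^d \times F$ by introducing the measure $\nu_0 = \frac{1}{|F|}\sum_{\gamma\in F} \delta_{(x_0, \gamma)}$. Since $\Gamma$ acts diagonally on~$Y_0$, a direct computation gives $\widehat{\mu^{*n}*\nu_0}(a_0,1) = \frac{1}{|F|}\widehat{\mu^{*n}*\delta_{x_0}}(a_0)$, so the hypothesis~\eqref{eq:finalassp} yields the analogous bound for $\nu_0$ (with $t$ replaced by~$t/|F|$). Choosing an intermediate time $n_1$ of order $(1-\lambda)n/2$, I would apply Corollary~\ref{cr:RWwiener} to the probability measure $\nu_1 := \mu^{*(n-n_1)}*\nu_0$ at time~$n_1$; the identity $\mu^{*n_1}*\nu_1 = \mu^{*n}*\nu_0$ means the Fourier hypothesis is preserved. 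This yields $\gamma_0 \in F$, the linear subspace $W = (a_0\gamma_0 E)^\perp \subset \gamma_0 W_0$, and a finite, $e^{-(1-2\tau)n_1}$-separated set $X \subset \Tbb^d \times \{\gamma_0^{-1}\}$ with
\[
\nu_1\bigl(X + \Nbdtil(W \mymod \Z^d, e^{-(1-\tau)n_1})\bigr) \geq (t/|F|)^{O(1)}.
\]
Projecting from $Y_0$ to the factor~$Y$ collapses these $W$-neighbourhoods into neighbourhoods of a finite set $\bar{X}$ of points in the component $\R^d/(\gamma_0 W_0 + \Z^d) \times \{\gamma_0\}$, still separated at a comparable scale.

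Next, I would propagate this concentration back to $y_0 = (x_0 \mymod \gamma_0 W_0 + \Z^d, \gamma_0)$ via a drift function argument in the spirit of Eskin--Margulis~\cite{em} and Benoist--Quint~\cite{bq2}. The bound on $\nu_1$ says that with probability $\geq (t/|F|)^{O(1)}$, the image $(gx_0 \mymod \gamma_0 W_0+\Z^d, \overline{g}\gamma_0^{-1})$ at time $n-n_1$ lands in the $e^{-(1-\tau)n_1}$-neighbourhood of $\bar{X}$ inside~$Y$. I would use a drift function $f\colon Y \to [0,+\infty]$, weighted according to the Lyapunov exponents $\lambda_1(\mu,V_i)$ so as to respect the quasi-distance~$\dtil$, which quantifies the failure of $y$ to lie in~$Z_Q$. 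By iterating a one-step contraction estimate similar to the one used in \cite[Section~5]{HS2019}, together with the strata-by-strata large deviation estimates of Theorem~\ref{thm:LargeD}, one shows that unless $y_0$ already lies in $\Nbdtil(Z_Q, e^{-\lambda n})$ for some $Q \leq (\|a_0\|/t)^C$, the trajectory drifts away from $y_0$ at a rate strictly faster than $\bar{X}$ can accommodate. This contradicts the granulation, forcing the desired conclusion.

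The main obstacle is the construction and analysis of the drift function in the semisimple, possibly Zariski-disconnected setting, where the quasi-distance has distinct Lyapunov exponents across the strata $V_1,\dotsc,V_r$: one must weight the strata individually and glue the one-step estimates coherently, while handling the finite factor $F$ via the coset accounting built into $Y_0 \to Y$. The polynomial bound $Q \leq (\|a_0\|/t)^C$ ultimately arises by combining the polynomial cardinality of the granular set $\bar{X}$ (controlled via its separation) with the Liouville-type Diophantine estimate on the projections $p_i(a_0\gamma_0)$ already used in the proof of Corollary~\ref{cr:RWwiener}.
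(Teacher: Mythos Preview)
Your proposal has a genuine gap: you are missing the bridge between the granulation estimate and the closed invariant set $Z_Q$. Corollary~\ref{cr:RWwiener} gives concentration of $\nu_1$ near a finite separated set $\bar X$, but $\bar X$ is an \emph{arbitrary} finite set with no a priori relation to $Z_Q$. The drift function you invoke measures distance to $Z_Q$, and the Margulis inequality (Lemma~\ref{lm:foster}) relies essentially on the $\Gamma$-invariance of $Z_Q$; it tells you nothing about concentration near a set like $\bar X$ that is not $\Gamma$-invariant. So the contradiction you sketch (``the trajectory drifts away faster than $\bar X$ can accommodate'') does not follow from the ingredients you list.

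The paper fills this gap with two steps you have skipped. First, Proposition~\ref{high} \emph{bootstraps} the granulation: by running the walk backward in short increments and using large-deviation estimates (Lemma~\ref{bootstrap}), the separated set $\bar X$ is iteratively contracted until $\mu^{*(n-n_1)}*\delta_{y_0}$ has mass $\geq \rho^\eta$ on a \emph{single} ball $\tilde\Ball(y,\rho)$, with $n_1 \asymp \log\frac{\norm{a_0}}{t}$ (not of order $n$ as you propose). Second, the Diophantine lemma of \S6.2 uses that $\mu$ is supported on $\GL_d(\Z)$ together with the spectral gap modulo primes to show that any such $y$ must satisfy $\dtil(y,Z_Q) \leq \rho^{1-\beta}$ with $Q \leq \rho^{-\beta}$. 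Only after this identification of the concentration locus with $Z_Q$ is the drift function applicable. In particular, the bound on $Q$ does not come from the cardinality of $\bar X$ or from the Liouville estimates on $p_i(a_0\gamma_0)$ as you suggest; it is the denominator bound produced by the Diophantine step.
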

To obtain Theorem~\ref{thm:finali} from this theorem, it suffices to lift $y_0$ and $Z_Q$ to $Y_0=\Tbb^d\times F$ and then project to $\Tbb^d$.


We proceed to the proof of Theorem~\ref{thm:final}. We fix the meaning of $a_0 \in \Z^d$, $W_0 \subset V$,  $x_0 \in \Tbb^d$ as in the statement.
By the pigeonhole principle, \eqref{eq:finalassp} implies that there is $\gamma_0 \in F$ such that 
\begin{equation}
\label{eq:finalassp2}
\absbig{\hhat{\mu^{*n}*\delta_{(x_0,\gamma_0)}}(a_0,1)} \geq \frac{t}{\# F},
\end{equation}
where the notation $\hhat{\mu^{*n}*\delta_{(x_0,\gamma_0)}}(a_0,1)$ is defined in the paragraph preceding Corollary~\ref{cr:RWwiener}.
This choice of $\gamma_0$ determines $y_0 \in Y$. We fix this $y_0$ for the rest of the proof.

\subsection{Bootstrapping concentration}
In order to prove Theorem~\ref{thm:final}, we start from the granulation estimate obtained in the previous section as Proposition~\ref{pr:RWwiener}.
The first step is then to run backwards the random walk to increase the concentration.
\begin{prop}[High concentration]
\label{high}
Assume \eqref{eq:finalassp}.
Given $\eta>0$, there exists $n_1\asymp_\eta\log\frac{\norm{a_0}}{t}$ and $\rho>0$ with $\abs{\log\rho}\asymp n_1$ such that for some $y\in Y$,
\[
\mu^{*(n-n_1)}*\delta_{y_0}(\tilde\Ball(y,\rho)) \geq \rho^\eta.
\]
\end{prop}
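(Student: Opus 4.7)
The plan is to apply Corollary~\ref{cr:RWwiener} at time $n$, transfer the resulting granular structure back $n_1$ steps via $\mu^{*n}=\mu^{*n_1}*\mu^{*(n-n_1)}$, and extract single-ball concentration by a covering/pigeonhole argument that exploits the exponentially small Lebesgue volume of the pulled-back granular set.

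First, Corollary~\ref{cr:RWwiener} applied to \eqref{eq:finalassp2} gives $\gamma\in F$, the subspace $W=(a_0\gamma E)^\perp$, and a $\Nbdtil(W\mymod\Z^d,e^{-(1-2\tau)n})$-separated finite set $X\subset\Tbb^d\times\{\gamma^{-1}\}$ with
$\mu^{*n}*\delta_{y_0}(X+\Nbdtil(W\mymod\Z^d,e^{-(1-\tau)n}))\geq t^{O(1)}$.
The choice $W_0=V_0+(a_0E)^\perp$ at the start of the section is tailored so that $W\subset\gamma W_0$ for the relevant $\gamma$, hence this $\Nbdtil$-neighbourhood descends to a $\dtil$-ball in the corresponding coset of $Y$. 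Writing $A\subset Y$ for the projected set, $A$ is a disjoint union of $\#X$ $\dtil$-balls of radius $e^{-(1-\tau)n}$ and $\mu^{*n}*\delta_{y_0}(A)\geq t^{O(1)}$.

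Second, Fubini, Markov, and the large deviation bound (Theorem~\ref{thm:LargeD}\ref{it:LargeDn}, which discards $g$ with $\|g\|+\|g^{-1}\|>e^{Cn_1}$) yield $g_0\in\Supp(\mu^{*n_1})$ such that $\mu^{*(n-n_1)}*\delta_{y_0}(g_0^{-1}A)\geq t^{O(1)}/2$. The quasi-norm estimate $\qnorm{g_0^{-1}v}\leq e^{C'n_1}\qnorm{v}$ with $C'=C/\min_i\lambda_1(\mu,V_i)$, which follows directly from the product definition of $\qnorm{\,\cdot\,}$ and $\|g_0^{-1}|_{V_i}\|\leq e^{Cn_1}$, shows that $g_0^{-1}A$ is still a disjoint union of $\#X$ quasi-balls, now of radius $\leq e^{C'n_1-(1-\tau)n}$.

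Third, writing $D_Y=\sum_{i\geq 1}\lambda_1(\mu,V_i)\dim V_i$, the separation bound $\#X\lesssim e^{(1-2\tau)nD_Y}$ gives $|g_0^{-1}A|_{\mathrm{Leb}}\lesssim e^{(C'n_1-\tau n)D_Y}$, which is exponentially small in $n$, while a $\dtil$-ball of radius $\rho$ has Lebesgue volume $\asymp\rho^{D_Y}$. Setting $\rho=e^{-n_1}$ and $n_1=C_\eta\log(\norm{a_0}/t)$, one covers the bounded region of $Y$ containing $g_0^{-1}A$ by $\dtil$-balls of radius $\rho$ and pigeonholes: the resulting ball $\tilde\Ball(y,\rho)$ carries mass $\gtrsim t^{O(1)}\rho^{D_Y}/|g_0^{-1}A|_{\mathrm{Leb}}\cdot\rho^{D_Y}$ modulo the covering count, and for $n$ much larger than $n_1$ and $C_\eta$ chosen large enough depending on $\eta$ and $D_Y$ this exceeds $\rho^\eta=e^{-\eta n_1}$. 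The hypothesis $n\geq C\log(\norm{a_0}/t)$ with $C$ large then guarantees $n-n_1\geq n_1$.

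The main obstacle is the last covering step: the naive bound $|g_0^{-1}A|_{\mathrm{Leb}}/\rho^{D_Y}$ is only a packing lower bound on the covering number, not an upper bound, and $\#X$ small pieces dispersed throughout $Y$ would in principle require many $\rho$-balls to cover. Closing the argument requires using that $X$ inherits from Corollary~\ref{cr:RWwiener} a structured arrangement in $Y$ (concentrated in a single coset of bounded diameter) and that $g_0^{-1}$ is a near-isometry at scale $\rho$, so that the pieces of $g_0^{-1}A$ cluster into at most $e^{O(n_1)}$ $\dtil$-balls of radius $\rho$. This packing/clustering verification, together with careful bookkeeping of the constants linking $n_1$, $\rho$, and $\eta$, is the only non-routine step.
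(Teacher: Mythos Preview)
Your first step already misreads Corollary~\ref{cr:RWwiener}. The corollary says: if $\abs{\hhat{\mu^{*n}*\nu}(a_0,1)}\geq t$ then the \emph{initial} measure $\nu$ has the granular structure at scale $e^{-(1-\tau)n}$, not $\mu^{*n}*\nu$. Applied with $\nu=\delta_{y_0}$ and walk time $n$ it only tells you $y_0\in X+\Nbdtil(\dots)$, which is vacuous. The paper instead splits $n=n_0+(n-n_0)$ with $n_0\asymp\log\frac{\norm{a_0}}{t}$ and applies the corollary with walk time $n_0$ and $\nu=\mu^{*(n-n_0)}*\delta_{y_0}$; this yields a set $X_0$ of at most $e^{C_0n_0}$ points and balls of radius $\rho_0=e^{-(1-\tau)n_0}$ on which $\mu^{*(n-n_0)}*\delta_{y_0}$ has mass $\geq t^{O(1)}$. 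Note the scales are governed by $n_0$, not by $n$.

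Even after this correction, the direct pigeonhole you propose does not give the proposition for all $\eta>0$. Pigeonholing over $X_0$ gives a single ball of radius $\rho_0$ with mass $\geq t^{O(1)}e^{-C_0n_0}\gg e^{-C_1n_0}$, while $\rho_0^\eta=e^{-\eta(1-\tau)n_0}$; the inequality $e^{-C_1n_0}\geq \rho_0^\eta$ forces $\eta\geq C_1/(1-\tau)$, a constant depending only on $\mu$. A single-element pullback $g_0^{-1}$ cannot improve this, and the volume heuristic you sketch collapses exactly where you suspect: small Lebesgue measure of $g_0^{-1}A$ gives no upper bound on its covering number by $\rho$-balls when the pieces are many and scattered. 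The ``packing/clustering verification'' you flag is therefore not a bookkeeping detail but the entire missing idea.

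The paper's mechanism is an iterative bootstrap (Lemma~\ref{bootstrap}). At each step one pulls back by $d$ \emph{simultaneous} generic elements $g_1,\dots,g_d$ and intersects: $\bigcap_{i} g_i^{-1}X^{(\rho)}$. Separation ensures each $g_1^{-1}$-piece meets at most one $g_i^{-1}$-piece, so the number of non-empty intersections is $\leq\#X$; and because the $g_i$ together expand in every direction of $V/W_0$ (this is where $d$ elements are essential), each intersection has diameter $\leq e^{-(1-\eps)m}\rho$. Thus one step shrinks $\rho$ by $e^{-(1-\eps)m}$ and $r$ by $e^{-(1+\eps)m}$ while keeping $\#X$ fixed. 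After $k=k(\eta)$ iterations the radius $\rho_k$ is small enough that $\rho_k^\eta$ dominates the pigeonhole loss $e^{-C_0n_0}(t^{C_0}/2)^{d^k}$. This multi-element intersection trick is what your single-$g_0$ pullback cannot reproduce.
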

\begin{proof}
Using \eqref{eq:finalassp2} and Corollary~\ref{cr:RWwiener} and observing that $(a_0\gamma E)^\perp = \gamma^{-1} (a_0E)^\perp$, we obtain that for $n_0\geq \log\frac{\norm{a_0}}{t}$, there exists an $e^{-(1-2\tau)n_0}$-separated subset $X_0$ contained in a single torus in $Y$ such that
\[
\mu^{*(n-n_0)}*\delta_{y_0}\left( \Nbdtil(X_0,e^{-(1-\tau)n_0} ) \right) \geq t^{C_0}.
\]

Increasing $C_0$ if necessary, we may also assume that $\#X_0\leq e^{C_0n_0}$.
Fix some large $k\in\N$
and then $\eps>0$ such that
\(
2k\eps<1.
\)
Starting with
\[
m_0=\lfloor\frac{\tau n_0}{2d}\rfloor,\quad
r_0=e^{-(1-2\tau)n_0},
\quad\text{and}\quad
\rho_0=e^{-(1-\tau)n_0},
\]
we apply Lemma~\ref{bootstrap} below $k$ times successively.
This yields integers $m_i$, and scales $r_i>\rho_i$, defined inductively by
\[
\left\{
\begin{array}{l}
r_{i+1} = e^{-m_i(1+\eps)}r_i\\
\rho_{i+1} = e^{-m_i(1-\eps)}\rho_i\\
m_{i+1} = \lfloor m_i(1-\frac{2\eps}{d})\rfloor\\
\end{array}
\right.
\]
and at each step, an $r_i$-separated set $X_i$ such that $\#X_i\leq\#X_0$ and
\[
\mu^{*(n-n_0-m_0-\dotsb-m_i)}*\delta_{y_0}\bigl( \Nbdtil(X_i,\rho_i)\bigr) \geq \left(\frac{t^{C_0}}{2}\right)^{d^i}.
\]
Notice that by induction on $i$, one always has $e^{(d+1)m_i}\rho_i\leq r_i$, so that Lemma~\ref{bootstrap} may indeed be applied.
Moreover, choosing $n_0\asymp\log\frac{\norm{a_0}}{t}$ large enough (the involved constant will depend on $k$, $\tau$, $C_0$, etc.), we may ensure that all $m_i$ are large enough so that the error term $e^{-cm_i}$ from that lemma is always small compared to $\left(\frac{t^{C_0}}{2}\right)^{d^i}$.
Set $n_1=n_0+m_0+\dots+m_k$ and $\rho=\rho_k$.
One has $\#X_k\leq\#X_0$ and
\[
\mu^{*(n-n_1)}*\delta_{y_0}\bigl( \Nbdtil(X_k,\rho)\bigr)  \geq \left(\frac{t^{C_0}}{2}\right)^{d^k}
\]
so that for some $y\in X_k$,
\[
\mu^{*(n-n_1)}*\delta_{y_0}\left(\tilde\Ball(y,\rho)\right) \geq \frac{1}{\#X_0}\left(\frac{t^{C_0}}{2}\right)^{d^k}
\geq e^{-C_0n_0}\left(\frac{t^{C_0}}{2}\right)^{d^k}
\]
Now, since
\(
m_0+\dots+m_k \geq \frac{km_0}{3}
\)
we may choose $k$ large enough so that $m_0+\dots+m_k\geq \frac{3C_0n_0}{\eta}$, and then $n_0\asymp \log\frac{\norm{a_0}}{t}$ large enough to ensure that
\[
\rho= 
\rho_k \leq e^{-(1-\eps)(m_0+\dots+m_k)}
 \leq e^{-\frac{2C_0n_0}{\eta}}
\leq e^{-\frac{C_0n_0}{\eta}} \left(\frac{t^{C_0}}{2}\right)^{\frac{d^k}{\eta}}.
\]
The proposition follows.
\end{proof}


After using Corollary~\ref{cr:RWwiener}, we can now forget how $W_0$ is constructed from $a_0$. All what we need is that $W_0$ is a $G^\circ$-invariant rational subspace containing $V_0$. 

We now prove the lemma that was used in the above proof.
The notion of $r$-separated sets in $Y$ are with respect to the quasi-distance on $Y$. 
\begin{lem}
\label{bootstrap}
For any $\eps> 0$ there exist $c > 0$ and $m_0\in\N$ depending only on $\mu$ and $\eps$ such that the following holds for any Borel probability measure $\nu$ on $Y$ and any $m\geq m_0$.\\
Let $r > 0$ and $\rho > 0$ be such that $e^{(d+1)m}\rho < r$. 
Set
\[
r_1=e^{-m(1+\eps)}r
\quad\text{and}\quad
\rho_1=e^{-m(1-\eps)}\rho.
\]
If $X$ is an $r$-separated subset contained in a single torus in $Y$, then there is an $r_1$-separated subset $X_1 \subset Y$, contained in a single torus, with cardinality $\# X_1 \leq \# X$ and such that 
\[
\nu\bigl(\Nbdtil(X_1,\rho_1)\bigr) \geq (\mu^{*m}*\nu)\bigl(\Nbdtil(X,\rho)\bigr)^{d} - e^{-c m}.
\]
\end{lem}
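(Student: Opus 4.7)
The strategy is to pull back the concentration by the inverse of a typical element of $\mu^{*m}$, combining large deviation estimates for the random matrix product with a Jensen inequality that produces the $d$-th power in the conclusion. First, applying Theorem~\ref{thm:LargeD}(i) to each irreducible $G^\circ$-subrepresentation of $V$ yields a measurable subset $\Omega \subset G$ with $\mu^{*m}(\Omega) \geq 1 - e^{-cm}$ (for some $c = c(\mu,\eps) > 0$) such that every $g \in \Omega$ satisfies $\|g|_{V_i}\|_{\mathrm{op}} \leq e^{m(\lambda_1(\mu,V_i)+\eps')}$ for all $i$, where $\eps' = \eps \cdot \min_i \lambda_1(\mu,V_i)$. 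Translated to the quasi-norm on $V$ and its projection to $Y$, this reads $|gv| \leq e^{m(1+\eps)}|v|$ uniformly in $v$; inverting gives $|g^{-1}w| \geq e^{-m(1+\eps)}|w|$. Applied to differences of points of $X$, this shows that $g^{-1}X$ is $r_1$-separated for every $g \in \Omega$, and it remains in a single torus of $Y$ because the $\Gamma$-action on $Y$ permutes the finite set of tori indexed by $F$.

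The $d$-th power in the lower bound arises from a Jensen step. Set
\[
f(z) = \mu^{*m}\bigl\{g \in G : gz \in \Nbdtil(X,\rho)\bigr\},
\]
so that $\int f\,d\nu = (\mu^{*m}*\nu)(\Nbdtil(X,\rho)) =: \alpha$. By convexity of $t \mapsto t^d$ for $d \geq 1$ and Jensen's inequality, $\int f^d\,d\nu \geq \alpha^d$, and expanding by Fubini,
\[
\Ebb_{(g_1,\dotsc,g_d)\sim (\mu^{*m})^{\otimes d}}\,\nu\Bigl(\bigcap_{i=1}^d g_i^{-1}\Nbdtil(X,\rho)\Bigr) \geq \alpha^d.
\]
Discarding $d$-tuples with some $g_i \notin \Omega$ loses at most $d\cdot e^{-cm} \leq e^{-c'm}$, so some tuple $(g_1,\dotsc,g_d)\in \Omega^d$ realises
\[
\nu\Bigl(\bigcap_{i=1}^d g_i^{-1}\Nbdtil(X,\rho)\Bigr) \geq \alpha^d - e^{-c'm}.
\]

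The crux is then to show that for such a good tuple, the intersection on the left is contained in $\Nbdtil(X_1,\rho_1)$ for some $r_1$-separated set $X_1 \subset g_1^{-1}X$ with $\#X_1 \leq \#X$. Geometrically, each preimage $g_i^{-1}\tilde{\Ball}(0,\rho)$ is an anisotropic ellipsoid whose shortest semiaxes in the quasi-norm are comparable to $\rho_1 = e^{-m(1-\eps)}\rho$ (coming from the reciprocal of the largest singular value of $g_i$ on each $V_j$), while its longest semiaxes may be substantially larger. However, the orientations of the long directions are governed by the $K$-factors in the Cartan decompositions of the $g_i$, which under $\mu^{*m}$ are approximately equidistributed on the maximal compact subgroup of $G^\circ$. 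Taking $d$ independent samples therefore produces, with high probability, $d$ "random" long directions that span $V$, so the intersection of the corresponding $d$ ellipsoids collapses to a set of quasi-diameter $\ll \rho_1$. Combined with the $r_1$-separation of $g_1^{-1}X$ and the hypothesis $e^{(d+1)m}\rho < r$ (which ensures $\rho_1 \ll r_1$), this yields the desired covering by a single quasi-ball of radius $\rho_1$ around a point of $g_1^{-1}X$, and we set $X_1 \subset g_1^{-1}X$ accordingly.

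The main obstacle is making the last step quantitative. This requires a uniform non-concentration statement for the $K$-part of the Cartan decomposition of $\mu^{*m}$-typical elements, which plays the role of a large-deviation estimate at the level of compact parts and is of the same flavour as the estimates used in Section~\ref{sec:nonconcentration}. The exponent $d$ in the conclusion is the direct cost of needing $d$ independent samples to get the random spanning, and cannot in general be reduced below $\dim V$ without exploiting proximality.
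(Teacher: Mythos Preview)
Your plan matches the paper's proof in its skeleton: the Jensen step producing the $d$-th power, the selection of a tuple $(g_1,\dots,g_d)$ for which $\nu\bigl(\bigcap_i g_i^{-1}\Nbdtil(X,\rho)\bigr)$ is large, and the claim that this intersection is contained in at most $\#X$ quasi-balls of radius $\rho_1$ that are $r_1$-separated. The piece you flag as ``the main obstacle'' is indeed the crux, but your proposed route through non-concentration of the $K$-part of the Cartan decomposition is harder than necessary and is not what the paper does.

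The paper bypasses Cartan decompositions entirely. Rather than constraining each $g_i$ to lie in a fixed good set $\Omega$ defined by operator-norm upper bounds, it asks the \emph{tuple} $(g_1,\dots,g_d)$ to satisfy the joint expansion property
\[
\forall v \in \R^d/W_0 \setminus\{0\},\quad \max_{1\leq i\leq d} \frac{\qnorm{g_iv}}{\qnorm{v}} \geq e^{(1-\eps)m}.
\]
This immediately gives the diameter bound: if $x,y$ both lie in $\bigcap_i g_i^{-1}\tilde\Ball(x_i,\rho)$ then $\dtil(g_ix,g_iy)\leq\rho$ for every $i$, so $\dtil(x,y)\leq e^{-(1-\eps)m}\rho=\rho_1$. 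That this joint property holds on a set of tuples of $(\mu^{*m})^{\otimes d}$-measure at least $1-e^{-cm}$ is precisely \cite[Lemma~5.5]{HS2019}, which only needs the large deviation estimate Theorem~\ref{thm:LargeD}\ref{it:LargeDnc} for $\norm{gv}/\norm{v}$ (valid under irreducibility alone, no proximality), applied in each irreducible subrepresentation, together with the independence of the $d$ samples. Your intuition about ``$d$ random long directions spanning $V$'' is the heuristic behind this lemma, but it is proved via large deviations for vectors, not via equidistribution of the $K$-factor.

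Two smaller points. Your set $\Omega$ only encodes \emph{upper} bounds on $\norm{g|_{V_i}}$; this is enough for the $r_1$-separation of $g^{-1}X$ but contributes nothing to the $\rho_1$-diameter bound --- the contraction under $g^{-1}$ is the wrong direction. Second, to get $\#X_1\leq\#X$ the paper also uses $\qnorm{g_i^{-1}}\leq e^{(d-1+\eps)m}$ (from $-\lambda_{\dim W}(\mu,W)<(\dim W-1)\lambda_1(\mu,W)$) together with the hypothesis $e^{(d+1)m}\rho<r$ to show each $g_1^{-1}\tilde\Ball(x,\rho)$ meets at most one $g_i^{-1}\tilde\Ball(y,\rho)$, so the nonempty intersections are indexed by $x\in X$; the centers of $X_1$ are points of these intersections, not elements of $g_1^{-1}X$.
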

\begin{proof}
In this proof, we write $X^{(\rho)}$ for $\Nbdtil(X,\rho)$. 
By Jensen's inequality and the definition of $\mu^{*m}*\nu$, (see \cite[Lemma 7.6]{BFLM} for details),
\begin{equation*}
(\mu^{*m}*\nu)(X^{(\rho)})^d
 \leq \sum_{g_1,\dots,g_d\in\Gamma} \mu^{*m}(g_1)\dots\mu^{*m}(g_d) \nu(g_1^{-1}X^{(\rho)}\cap\dots\cap g_d^{-1}X^{(\rho)}).
\end{equation*}
This implies that the set of $d$-tuples $(g_i)_{1\leq i\leq d}$ such that
\begin{equation}\label{inter}
\nu(g_1^{-1}X^{(\rho)}\cap\dots\cap g_d^{-1}X^{(\rho)})\geq (\mu^{*m}*\nu)(X^{(\rho)})^d-e^{-c m}
\end{equation}
 has $(\mu^{*m})^{\otimes d}$-measure at least $e^{-c m}$.
Using the fact that the large deviation estimates Theorem~\ref{thm:LargeD}\ref{it:LargeDn} and \ref{it:LargeDnc} are valid under the only assumption that the action is irreducible, one readily checks that \cite[Lemma~5.5]{HS2019} and its proof are also valid under this assumption.
Applying this lemma in each irreducible subrepresentation of $\R^d$, one obtains that if $c$ is chosen small enough, there must exist $g_1,\dotsc,g_d \in \Gamma$ satisfying \eqref{inter} and moreover,
\begin{equation}\label{placegi}
\forall v\in \R^d/W_0 \setminus \{0\},\quad e^{(1 - \eps)m} \leq  \max_{1\leq i\leq d} \frac{\qnorm{g_i v}}{\qnorm{v}}
\end{equation}
and (using the large deviation estimates again and the fact that $-\lambda_{\dim W}(\mu,W) \leq (\dim W - 1) \lambda_1(\mu,W)$ for any $G$-invariant $W \subset V$)
\begin{equation}\label{placeginorm}
\forall i \in \{1,\dotsc,d\},\quad \qnorm{g_i} \leq e^{(1+\eps)m} \quad\text{and}\quad \qnorm{g_i^{-1}} \leq e^{(d-1+\eps) m}.
\end{equation}
We fix such elements $g_1,\dots,g_d$ for the rest of the proof.

Since $X$ is contained in a single torus, all the $g_i$'s are contained in the same class in $G/G^\circ$.

We claim that the set $g_1^{-1}X^{(\rho)}\cap\dots\cap g_d^{-1}X^{(\rho)}$ is included in a union of at most $\#X$ balls of radius $\rho_1=e^{-(1-\eps)m}\rho$.
Indeed, from \eqref{placeginorm} and the fact that $e^{(d+1)m}\rho<r$, we find, for a given $x\in X$ and $i\geq 1$, that the set $g_1^{-1}\tilde\Ball(x,\rho)$ meets at most one component $g_i^{-1}\tilde\Ball(y,\rho)$, $y\in X$.
Therefore, there are at most $\#X$ non-empty intersections $g_1^{-1}\tilde\Ball(x_1,\rho)\cap\dotsb \cap g_d^{-1}\tilde\Ball(x_d,\rho)$, for $x_1,\dotsc,x_d\in X$.

If $x,y$ lie inside such an intersection, then, for each $i$, $\dtil(g_ix, g_iy) \leq\rho$. Then \eqref{placegi} and \eqref{placeginorm} implies that $\dtil(x,y)\leq e^{-(1-\eps)m}\rho=\rho_1$.
Thus, each intersection $g_1^{-1}\tilde\Ball(x_1,\rho)\cap \dots \cap g_d^{-1}\tilde\Ball(x_d,\rho)$ is included in a ball of radius $\rho_1$.

Finally, using \eqref{placeginorm}, we see that these $\rho_1$-balls are separated by at least $r_1=e^{-(1+\eps)m} r$.
Moreover they are contained in the same torus in $Y$ because the $g_i$'s are in the same $G^\circ$ coset.
This finishes the proof of the proposition.
\end{proof}

\subsection{A diophantine property}
From the high concentration property obtained in the previous paragraph, we want to infer that $\mu^{*(n-n_1)}*\delta_{y_0}$ is concentrated near a proper $\Gamma$-invariant subset.
The argument relies on a diophantine property of the random walk, coming from the fact that $\mu$ is supported on $\GL_d(\Z)$.


\begin{prop}[Concentration near a closed invariant subset]
\label{pr:inv}
Given $\beta>0$, there exists $C>0$ such that the following holds.

Assume \eqref{eq:finalassp}.
Then there exist $n_1\in\N^*$ such that $\frac{1}{C}\log\frac{\norm{a_0}}{t}\leq n_1\leq C\log\frac{\norm{a_0}}{t}$ and $\rho\in[e^{-Cn_1},e^{-\frac{n_1}{C}}]$ and $Q\leq \rho^{-\beta}$ such that
\[
\mu^{*(n-n_1)}*\delta_{y_0}\bigl(\Nbdtil(Z_{Q},\rho)\bigr) \geq \rho^\beta.
\]
\end{prop}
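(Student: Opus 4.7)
The plan is to combine the granulation estimate of Proposition~\ref{high} with the diophantine constraints arising from $\Gamma\subset\GL_d(\Z)$, and to conclude via the exponential unstability of closed $\Gamma$-invariant subsets in the style of Eskin--Margulis and Benoist--Quint.

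First I would apply Proposition~\ref{high} with a parameter $\eta=\eta(\beta,\mu)$ small enough that the ratio $|\log\rho_1|/n_1$ is an arbitrarily large constant $N$. Writing $m=n-n_1$, this yields $y\in Y$ with $\mu^{*m}*\delta_{y_0}\bigl(\tilde B(y,\rho_1)\bigr)\geq \rho_1^\eta$. By Theorem~\ref{thm:LargeD} one may restrict to a subset $A\subset\Gamma$ of $\mu^{*m}$-mass $\geq \rho_1^\eta/2$ on which every $g$ maps $y_0$ into $\tilde B(y,\rho_1)$, has controlled quasi-distortion $\qnorm{g},\qnorm{g^{-1}}\leq e^{(1+\eps)m}$, and belongs to the fixed coset $\gamma\gamma_0^{-1}G^\circ$, where $\gamma$ is the second coordinate of $y$.

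The next step extracts a pair $(g_1,g_2)\in A\times A$ whose difference $h:=g_1-g_2\in\Mat_d(\Z)$ is non-singular. Since $h$ lies in the translate $\gamma\gamma_0^{-1}E$, the non-concentration of $\mu^{*m}\mm\mu^{*m}$ near the singular locus of $E$---a consequence of Proposition~\ref{pr:nc2} via Lemma~\ref{lm:detE'}---ensures that a positive fraction of pairs satisfy $|\det\nolimits_E(h)|\geq e^{-\omega n_1}$, whence $|\det h|\geq 1$ for the integer matrix $h$. Unpacking the inequality $\tilde d(g_1y_0,g_2y_0)\ll\rho_1$ in the torus $\R^d/(\gamma W_0+\Z^d)$ produces $p\in\Z^d$, $w\in\gamma W_0$, and $v_0\in V_0$ with $\|v_0\|\ll 1$ such that
\[
\|hx_0-p-w-v_0\|\ll \rho_1^{c_1},
\]
for some $c_1=c_1(\mu)>0$ coming from the quasi-norm exponents. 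Inverting $h$ and using $h^{-1}(\gamma W_0)=\gamma_0 W_0$ together with the $V_0$-invariance, one deduces $\tilde d(y_0,Z_{Q_0})\leq e^{Cm}\rho_1^{c_1}$ for some $Q_0\leq e^{Cm}$.

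The final step is to convert this pointwise closeness of $y_0$ to $Z_{Q_0}$ into the measure-theoretic concentration $\mu^{*m}*\delta_{y_0}\bigl(\Nbdtil(Z_Q,\rho)\bigr)\geq \rho^\beta$ at the desired scale $\rho\in[e^{-Cn_1},e^{-n_1/C}]$. This is where the exponential unstability of closed $\Gamma$-invariant subsets enters, via a drift function adapted to the quasi-norm structure of $Y$. Such a drift function would show that a walk starting from a point exponentially close to $Z_{Q_0}$ remains at quasi-distance at most $e^{-n_1/C}$ from $Z_Q$, with $Q$ a bounded enlargement of $Q_0$, with probability $\geq 1-e^{-cn_1}\gg\rho^\beta$ at time $m$. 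The main obstacle is precisely the quantitative construction and exploitation of this drift function: it must simultaneously accommodate the non-uniform Lyapunov exponents defining the quasi-norm, the compact factors $V_0$, and the coset structure $G/G^\circ$, and must yield exponential unstability sharp enough to survive the possibly very large ratio $m/n_1$.
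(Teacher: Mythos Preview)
Your proposal has the overall architecture inverted and contains a quantitative gap that makes the diophantine step vacuous.

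\textbf{The inversion.} Proposition~\ref{pr:inv} asks for concentration of $\mu^{*(n-n_1)}*\delta_{y_0}$ near $Z_Q$; it does \emph{not} ask that $y_0$ itself be close to $Z_Q$ (that is the conclusion of Theorem~\ref{thm:final}, proved afterwards via the Margulis inequality). In the paper, after Proposition~\ref{high} produces a point $y$ with $\mu^{*(n-n_1)}*\delta_{y_0}(\tilde\Ball(y,\rho))\geq\rho^\eta$, the diophantine argument shows that \emph{$y$} lies within $\rho^{1-\beta}$ of $Z_Q$ for some $Q\leq\rho^{-\beta}$. Then $\tilde\Ball(y,\rho)\subset\Nbdtil(Z_Q,O(\rho^{1-\beta}))$, and the proposition follows immediately---no drift function is needed here at all. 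Your step~3 is therefore both unnecessary and misdirected: the Margulis inequality of Lemma~\ref{lm:foster} goes the other way (it says the walk \emph{escapes} invariant sets, so that concentration at time $n-n_1$ forces closeness at time $0$), and you are trying to use it to propagate closeness forward in time.

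\textbf{The quantitative gap.} Your diophantine step works with $g_1,g_2\in A\subset\Supp\mu^{*m}$ where $m=n-n_1$ is the full remaining time, which may be arbitrarily large compared to $n_1\asymp\abs{\log\rho_1}$. The difference $h=g_1-g_2$ then has $\norm{h}\leq e^{Cm}$, so inverting it produces $\norm{h^{-1}}\leq e^{C'm}$ and your conclusion becomes $\dtil(y_0,Z_{Q_0})\leq e^{C'm}\rho_1^{c_1}$ with $Q_0\leq e^{C'm}$. Since $\rho_1^{c_1}\asymp e^{-cn_1}$ and there is no upper bound on $m/n_1$, this estimate is vacuous. (Your claim that $\abs{\det_E h}\geq e^{-\omega n_1}$ is also off: Lemma~\ref{lm:detE'} gives a bound at the scale of the walk that produced the elements, here $m$, not $n_1$.) The paper avoids this by observing that once $\mu^{*(n-n_1)}*\delta_{y_0}$ gives mass $\geq\rho^\eta$ to $\tilde\Ball(y,\rho)$, one can write $\mu^{*(n-n_1)}=\mu^{*m'}*\mu^{*(n-n_1-m')}$ with $m'\asymp\abs{\log\rho}$ and find some intermediate point $y_1$ with $\mu^{*m'}*\delta_{y_1}(\tilde\Ball(y,\rho))\geq\rho^\eta$. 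The diophantine/integrality argument is then run with elements of $\Supp\mu^{*m'}$, whose norms are $\leq e^{Cm'}\leq\rho^{-O(1)}$, yielding $\dtil(y,Z_Q)\leq\rho^{1-\beta}$ with $Q\leq\rho^{-\beta}$ as required. Using only a short segment of the walk, and concluding about $y$ rather than $y_0$, is the missing idea.
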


This proposition is an immediate consequence of Proposition~\ref{high} and of a diophantine property of the random walk, given by the following lemma.

\begin{lem}[Diophantine property]
For every $\beta>0$, there exist constants $C$ and $\eta>0$ depending on $\mu$ and $\beta$ such that
for any $y_0,y\in Y$, if for $n\geq C\abs{\log\rho}$, one has
\[
(\mu^{*n} * \delta_{y_0})(\tilde\Ball(y,\rho)) \geq \rho^\eta
\]
then $\dtil(y, Z_{Q}) \leq \rho^{1-\beta}$ for some $Q \leq \rho^{-\beta}$.
\end{lem}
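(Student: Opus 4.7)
The plan is to exploit the concentration hypothesis to extract an exponentially large set of elements of $\Gamma$ sending $y_0$ into $\tilde\Ball(y,\rho)$, observe that their pairwise ratios lie in $G^\circ$ and approximately stabilise a lift of $y$, and then use the integrality of $\Gamma \subset \GL_d(\Z)$ together with a Diophantine escape argument of the same flavour as in Section~\ref{sec:nonconcentration} to force $y$ to be quasi-close to a rational point with polynomially bounded denominator modulo $\gamma W_0$.

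First I would let $A = \set{g \in \Gamma}{g y_0 \in \tilde\Ball(y,\rho)}$; the hypothesis gives $\mu^{*n}(A) \geq \rho^\eta$. Pigeonholing over the finite quotient $F = G/G^\circ$ and using the large deviation Theorem~\ref{thm:LargeD} to discard the $e^{-cn}$-mass set where $\norm{g} + \norm{g^{-1}} > e^{C_0 n}$, I obtain $A' \subset A$ of mass $\gg \rho^\eta$, all of whose elements land in the same $\gamma$-component of $Y$ and satisfy a uniform bound $\norm{g} \leq e^{C_0 n}$. Fix any $g_\ast \in A'$ and set $z_\ast = g_\ast x_0$; for every $g \in A'$, the element $h = g g_\ast^{-1} \in G^\circ$ preserves $\gamma W_0$ (by normality of $G^\circ$ in $G$ and $G^\circ$-invariance of $W_0$), preserves $V_0$, and preserves each Lyapunov block $V_i$. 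The weak triangle inequality together with the definition of $\dtil$ on the $\gamma$-torus then yield
\[
(h-I) z_\ast = m_h + w_h + v_h + \xi_h,
\]
with $m_h \in \Z^d$, $w_h \in \gamma W_0$, $v_h \in V_0$ satisfying $\norm{v_h} \leq O(1)$, and $\norm{\pi_i(\xi_h)} \leq O(\rho^{\lambda_i})$ for every positive Lyapunov block $V_i$.

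The crucial step is to locate $h$ among the ratios $A' g_\ast^{-1}$ for which the integer endomorphism $h-I$, descended to $\bar V = V/\gamma W_0$, is invertible with block-inverse norm at most $\rho^{-\beta \lambda_i}$ on each $V_i$; since the $V_i$ are $G$-invariant, both $(h-I)$ and its inverse act block-diagonally along the Lyapunov decomposition, which is what allows these per-block bounds. The induced action of $G^\circ$ on $\bar V$ has no nonzero fixed vector outside the image of $V_0$, so the locus where $(h-I)$ degenerates on $\bar V/V_0$ is a proper Zariski closed subvariety of $G^\circ$; invoking the spectral gap modulo primes of Salehi Golsefidy--Varjú together with the Lang--Weil estimate, exactly as in the first two steps of the proof of Proposition~\ref{ncirr}, and pigeonholing once more, produces the desired $h$ provided $\eta$ is small enough in terms of $\beta$ and $C$ is large enough. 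Inverting the displayed equation block by block in $\bar V$ modulo $\Z^d$ expresses $\pi(z_\ast)$ as a rational point of denominator at most $|\det_{\bar V}(h-I)| \leq \rho^{-\beta}$, up to a quasi-error of size $\rho^{1-\beta}$, while the bounded $V_0$-contribution $(h-I)^{-1} v_h$ (finite because $(h-I)|_{V_0}$ acts through a compact factor with eigenvalues bounded away from $1$ on the non-fixed part) is absorbed into the ball $\Ball_{V_0}(0,Q)$ inside the definition of $Z_Q$; pulling back through $g_\ast \in \GL_d(\Z)$, which preserves rational denominators, transfers the conclusion to $y$ and gives $\dtil(y, Z_Q) \leq \rho^{1-\beta}$ with $Q \leq \rho^{-\beta}$.

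The main obstacle is the quantitative cascade in the nondegeneracy step: the mass $\rho^\eta$ of $A'$ must survive the escape-from-subvariety argument, the admissible $h$ must satisfy per-block inverse-norm bounds $\norm{(h-I)^{-1}|_{V_i}} \leq \rho^{-\beta\lambda_i}$, and the a-priori norm bound $e^{C_0 n}$ from the exponential moment must be reconciled with these. The quasi-norm is exactly what decouples the error estimates across the Lyapunov blocks and makes this compatibility achievable; balancing the resulting inequalities forces a careful choice of $\eta$ as a small multiple of $\beta$ depending on the Lyapunov spectrum and on $\dim E$, but the structure of the argument mirrors the non-concentration estimates of Section~\ref{sec:nonconcentration} and should go through with only cosmetic modifications.
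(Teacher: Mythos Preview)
There is a genuine gap in your argument: you never reduce from time $n$ to a time comparable to $\abs{\log\rho}$, and without this step the denominator bound $Q \leq \rho^{-\beta}$ cannot be achieved. Your element $h = g g_\ast^{-1}$ is a product of two elements drawn from $\Supp\mu^{*n}$, so after the large-deviation truncation its norm is controlled only by $e^{2C_0 n}$. But the hypothesis of the lemma is $n \geq C\abs{\log\rho}$, an inequality in the wrong direction: $n$ may be arbitrarily large compared to $\abs{\log\rho}$. Consequently $\abs{\det_{\bar V}(h-I)}$, which governs your rational denominator, is only bounded by $e^{O(n)}$ and there is no mechanism to force it below $\rho^{-\beta}$. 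The paper's proof hinges precisely on this point: writing $\mu^{*n} = \mu^{*m}*\mu^{*(n-m)}$ with $m$ chosen so that $e^{-c_0 m/2}\geq \rho^\eta > e^{-c_0 m}$, one first finds (by averaging) an intermediate point $y_1$ with $(\mu^{*m}*\delta_{y_1})(\tilde\Ball(y,\rho))\geq\rho^\eta$, and only then extracts group elements, now of norm at most $e^{C_0 m}\leq \rho^{-O(\eta)}$, which is what makes $Q\leq\rho^{-\beta}$ attainable for $\eta$ small.

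Two further remarks. First, once the time reduction is in place, the paper does not look for a single $h$ with $(h-I)$ invertible; instead it considers the linear map $\theta\colon (v_1,v_2)\mapsto (g_m\dotsm g_1 v_1 - v_2)_{(g_i)\in A_m}$ on $V/\gamma_1 W_0 \times V/\gamma W_0$ and shows it is injective because the fibers $\set{g}{gv_1=v_2}$ are proper subvarieties of small $\mu^{*m}$-mass. Inverting $\theta$ (a rectangular integer system) gives both the denominator and the error bounds in one stroke, bypassing the need for per-block inverse-norm control. Second, your treatment of the compact block $V_0$ is fragile: for a specific $h$ the eigenvalues of $(h-I)\big|_{V_0}$ need not be bounded away from zero. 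The paper avoids this entirely by arranging $V_0\subset W_0$, so that the quotient $V/\gamma W_0$ carries no $G^\circ$-invariant vector and the whole analysis takes place in the non-compact part.
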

\begin{proof}
Consider the action of $G$ on $V_F =\bigsqcup_{\gamma\in F} V/\gamma W_0\times \{\gamma\}$.
Since $V/W_0$ contains no $G^\circ$-invariant vector, for every non-zero $(v_1,v_2)$ in $V_F\times V_F$, the set $\set{g}{g v_1 = v_2}$ is a linear subvariety in $G$ of dimension less than $\dim G$. Using the spectral gap property modulo prime numbers~\cite{SGV} (or applying the first step of the proof of Proposition~\ref{ncirr}), we obtain that for $m$ large enough, for some $c_0$ independent of $(v_1,v_2)$,
\begin{equation}\label{inj}
\mu^{\otimes m}(\set{(g_1,\dots,g_m)}{g_m\dotsm g_1 v_1 = v_2}) \leq e^{-c_0m}.
\end{equation}
Fix $m$ such that
\[
e^{-\frac{c_0m}{2}}\geq \rho^\eta > e^{-c_0m}.
\]
If $C$ is large enough, the condition $n\geq C\abs{\log\rho}$ ensures that $n\geq m$.
From the assumed inequality, it follows that there exists some $y_1\in Y$ such that 
\[
(\mu^{*m} * \delta_{y_1})(\tilde\Ball(y,\rho)) \geq \rho^\eta,
\]
which implies that the set
\[
A_m = \set{(g_1,\dots,g_m)\in(\Supp\mu)^{m}}
{\dtil(g_m\dots g_1y_1,y)\leq \rho}
\]
satisfies
\[
\mu^{\otimes m}(A_m)\geq \rho^\eta > e^{-c_0m}.
\]
Using the finite exponential moment of $\mu$ and reducing the set $A_m$, we can assume further that for all $(g_i) \in A_m$, $\norm{g_m \dotsm g_1} \leq e^{C_0 m}$ for some $C_0 = C_0(\mu)$.
Since all matrices have integer coefficients, the set
\[
A'_m = \{g_m\dots g_1\ ;\ (g_1,\dots,g_m)\in A_m\}
\]
is then finite.

Write $y_1=(x_1,\gamma_1)$ and $y=(x,\gamma)$.
Recalling \eqref{inj} above, one infers that the linear map
\[
\begin{array}{lccc}
\theta\colon & V/\gamma_1W_0\times V/\gamma W_0 & \to & (V/\gamma W_0)^{A_m'}\\
& (v_1,v_2) & \mapsto & (g v_1 - v_2)_{g\in A_m'}
\end{array}
\]
is injective.
Moreover, in the canonical bases, its matrix has coefficients in $\Z$ bounded by $e^{C_0m}$, so its inverse has coefficients in $\frac{1}{Q}\Z$ for some $Q\leq e^{C_1m}$, and bounded by $e^{C_1m}$.
Therefore, any solution $(v_1,v_2)$ in $V/\gamma_1W_0\times V/\gamma W_0$ to
\[
\forall (g_1,\dots,g_m)\in A_m,\quad
g_m\dots g_1 v_1 - v_2 \in \tilde\Ball(0,\rho)+\Z^d \mymod \gamma W_0\\
\]
can be written, for some $w_1,w_2\in\Z^d$ and $u_1,u_2$ in $\tilde\Ball(0,e^{C_1 m}\rho)$,
\[
v_1 = \frac{1}{Q} w_1 + u_1 \mymod \gamma_1 W_0 \quad \text{and} \quad v_2 = \frac{1}{Q} w_2 + u_2 \mymod \gamma W_0.
\]
This applies in particular to representatives of $(x_1,x)$ in $V/\gamma_1W_0\times V/\gamma W_0$.
It follows that
\[
\dtil(y,Z_{e^{C_1m}}) \leq e^{C_1 m}\rho.
\]
If $\eta>0$ is chosen so small that $\frac{2C_1}{c_0}\eta<\beta$, one has
\[
e^{C_1m} = e^{\frac{2C_1}{c_0}\frac{c_0m}{2}} \leq \rho^{-\frac{2C_1}{c_0}\eta} \leq \rho^{-\beta}
\]
so the lemma is proved.
\end{proof}

\subsection{Unstability of closed invariant subsets}
To conclude the proof of Theorem~\ref{thm:final}, we use a variant of the argument given in \cite[\S3]{HLL2020}.
It is based on Foster's exponential recurrence criterion, applied to a well-chosen function associated to a closed invariant subset.
This technique has been used extensively in homogeneous dynamics since the work of Eskin and Margulis \cite{em}, in particular by Benoist and Quint for their study of stationary measures \cite{bq1,bq2,bq3,bqfv}.

\begin{lem}[Margulis inequality]
\label{lm:foster}
For every $\lambda\in(0,1)$, there exist constants $C,\alpha>0$ depending only on $\mu$ such that the following holds.
For $Q\geq 2$, define a function $\phi_Q\colon Y\to \R\cup\{+\infty\}$ by
\[
\phi_Q(y) = \left\{\begin{array}{ll}
\dtil(y,Z_{Q})^{-\alpha} & \mbox{if}\ \dtil(y,Z_{Q})>0\\
+\infty & \mbox{otherwise}.
\end{array}\right.
\] 
For all $y \in Y$ and all integers $n \geq 1$, 
\[
\int \phi_Q(gy) \dd \mu^{*n} (g) \leq e^{-\lambda\alpha n} \phi_Q(y) + Q^C.
\]
\end{lem}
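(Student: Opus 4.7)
The strategy is the standard drift-function argument. Since $Z_Q$ is $\Gamma$-invariant by construction, for any $y \in Y$ with $\dtil(y,Z_Q) > 0$ pick $z \in Z_Q$ achieving $\dtil(y,z) = \dtil(y,Z_Q) = r$. Because $\Ball_{V_0}(0,Q) \subset Z_Q$ in the compact direction, we may arrange that the $V_0$-component of $u := y - z$ (lifted to $V/\gamma W_0$) has euclidean norm at most $1$, so $\abs{u} = r$ depends only on the non-compact Lyapunov components $\pi_i(u)$, $1 \le i \le r$. The $\Gamma$-invariance of $Z_Q$ then gives $gz \in Z_Q$ for every $g$, hence
\[
\dtil(gy,Z_Q) \le \dtil(gy,gz) = \abs{gu}, \qquad \phi_Q(gy) \le \abs{gu}^{-\alpha}.
\]
It therefore suffices to bound $\int \abs{gu}^{-\alpha}\,\dd\mu^{*n}(g)$ by $e^{-\lambda\alpha n}\abs{u}^{-\alpha}$, up to the $Q^C$ additive error.

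For this, apply Theorem~\ref{thm:LargeD}\ref{it:LargeDnc} in each irreducible subrepresentation of $V_i$: given $\omega > 0$ there exists $c = c(\mu,\omega) > 0$ such that uniformly in nonzero $v \in V_i$,
\[
\mu^{*n}\bigl(\bigl\{g \colon \norm{gv} < e^{(\lambda_1(\mu,V_i) - \omega)n}\norm{v}\bigr\}\bigr) \le e^{-cn}.
\]
Intersecting these events across the finitely many $i$ and raising to the power $1/\lambda_1(\mu,V_i)$ in each component, we obtain a good event of $\mu^{*n}$-probability $\geq 1 - C e^{-cn}$ on which
\[
\abs{gu} \;\ge\; e^{(1 - \omega/\min_i \lambda_1(\mu,V_i))\,n}\abs{u}.
\]
Choose $\omega$ small enough that the exponent exceeds $\lambda$; then the good event contributes at most $e^{-\lambda\alpha n}\abs{u}^{-\alpha}$ to the integral. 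On the complementary bad event, the finite exponential moment of $\mu$ yields a crude bound $\abs{gu} \ge e^{-C_0 n}\abs{u}$ outside a further exponentially rare set (absorbed in $c$), so the bad event contributes at most $C e^{(\alpha C_0 - c)n}\abs{u}^{-\alpha}$. Taking $\alpha > 0$ small enough that $\alpha C_0 < c - \lambda\alpha$ makes this negligible relative to the main term.

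This delivers $\int \phi_Q(gy)\,\dd\mu^{*n}(g) \le e^{-\lambda\alpha n}\phi_Q(y)$ for all $n$ larger than some threshold $n_0 = n_0(\mu,\lambda)$. The additive $Q^C$ term handles two remaining regimes: (i) values $n < n_0$, where no exponential decay has kicked in yet and a uniform moment bound on $\abs{gu}^{-\alpha}$ suffices; and (ii) points $y$ at which $\phi_Q(y)$ is already bounded by $Q^C$, which by Dirichlet's approximation theorem is automatic, since every $x \in \Tbb^d$ lies within quasi-distance $O(Q^{-c_0})$ of $\bigcup_{q \le Q}\tfrac{1}{q}\Z^d \subset Z_Q$ for some $c_0 = c_0(\mu) > 0$. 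The main obstacle is the simultaneous calibration of the parameters $\alpha$, $\omega$, $n_0$ so that the good-event exponential decay genuinely dominates the bad-event contribution \emph{uniformly in $u$ and in $Q$}, while the quasi-norm interacts correctly with the decomposition $V = V_0 \oplus V_1 \oplus \cdots \oplus V_r$ after quotienting by $\gamma W_0$.
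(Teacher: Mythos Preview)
Your central reduction contains a sign error that breaks the argument. You write
\[
\dtil(gy,Z_Q) \le \dtil(gy,gz) = \abs{gu}, \qquad \phi_Q(gy) \le \abs{gu}^{-\alpha},
\]
but since $\phi_Q = \dtil(\,\cdot\,,Z_Q)^{-\alpha}$ with $\alpha>0$ and $x\mapsto x^{-\alpha}$ is decreasing, the upper bound $\dtil(gy,Z_Q)\le \abs{gu}$ only yields $\phi_Q(gy)\ge \abs{gu}^{-\alpha}$, which is useless here. To show that $\int\phi_Q(gy)\,\dd\mu^{*n}$ is small you need a \emph{lower} bound on $\dtil(gy,Z_Q)$, i.e.\ you must show that $gy$ stays far from \emph{every} point of $Z_Q$, not merely that $gz$ remains a witness. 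Knowing only that $gz\in Z_Q$ gives no such lower bound: $gy$ could be pushed away from $gz$ and simultaneously land near some other $z'\in Z_Q$. The same confusion recurs in your point (ii): Dirichlet gives $\dtil(x,Z_Q)\ll Q^{-c_0}$, hence $\phi_Q(x)\gg Q^{\alpha c_0}$, a lower bound on $\phi_Q$ rather than the upper bound you invoke.

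The single-vector contraction $\int\abs{gu}^{-\alpha}\,\dd\mu^{*n}(g)\le e^{-\lambda\alpha n}\abs{u}^{-\alpha}$ that you establish via large deviations and the exponential moment is indeed the engine of the proof (and is what the paper alludes to as ``Furstenberg's law of large numbers''). What is missing is the passage from this local estimate to the drift inequality for $\phi_Q$. One correct route: since $V_0\subset\gamma W_0$, the set $Z_Q$ is finite in each torus of $Y$, with at most $Q^{O(1)}$ points, any two separated by quasi-distance at least $Q^{-O(1)}$. If $u_0$ is the shortest lift of $y-z_0$ with $z_0$ nearest, then every \emph{other} lift of $y-z$ (over all $z\in Z_Q$, all lattice translates) has quasi-norm at least $c\,Q^{-O(1)}$, and for such $v$ one has the crude bound $\abs{gv}^{-\alpha}\le \norm{g^{-1}}^{\alpha/\lambda_{\min}}\,Q^{O(\alpha)}$. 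Thus
\[
\phi_Q(gy)\ \le\ \abs{gu_0}^{-\alpha}\ +\ \norm{g^{-1}}^{\alpha/\lambda_{\min}}\,Q^{O(\alpha)},
\]
and integrating against $\dd\mu^{*n_0}$ for a fixed $n_0$ gives $a\,\phi_Q(y)+Q^C$ with $a<1$; iteration then handles general $n$. The additive $Q^C$ therefore comes from the \emph{separation} of $Z_Q$, not from Dirichlet or from small $n$.
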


The proof of such inequalities is an application of Furstenberg's law of large numbers \cite[Theorem~4.28]{BenoistQuint}, using also the exponential moment assumption on $\mu$.
Since it is rather standard, we leave it to the reader, and turn to the proof of Theorem~\ref{thm:final}.

\begin{proof}[Proof of Theorem~\ref{thm:final}]
Let $C,\alpha>0$ be the parameters given by Lemma~\ref{lm:foster} applied with $\lambda'=\frac{1+\lambda}{2}$ instead of $\lambda$.
Then set $\beta=\frac{\alpha}{C+2}$.

Proposition~\ref{pr:inv} shows that for some $n_1\asymp_\beta\log\frac{\norm{a_0}}{t}$ and some $\rho\in[e^{-C_1n_1},e^{-c_1n_1}]$, there exist $Q\leq\rho^{-\beta}$ such that
\[
\mu^{*(n-n_1)}*\delta_{y_0}\bigl(\Nbdtil(Z_{Q},\rho)\bigr) \geq \rho^\beta.
\]
Applying Lemma~\ref{lm:foster} yields
\begin{align*}
\rho^{-\alpha+\beta} & \leq \int \phi_Q(gy_0) \dd \mu^{*(n-n_1)} (g)\\
 & \leq e^{-\lambda'\alpha (n-n_1)} \phi_Q(y_0) + Q^C.
\end{align*}
Note that $Q^C\leq \rho^{-C\beta}\leq \frac{1}{2}\rho^{-\alpha+\beta}$ and therefore
\[
\phi_Q(y_0) = \dtil(y_0,Z_{Q})^{-\alpha}
\gg e^{\lambda'\alpha (n-n_1)}\rho^{-\alpha+\beta} 
\gg e^{\lambda'\alpha n} e^{-C_1\alpha(1-\frac{1}{C+2})n_1}.
\]
Since $n_1\asymp\log\frac{\norm{a_0}}{t}$ and $\lambda'=\lambda+\frac{1-\lambda}{2}$, taking $n\gg \frac{1}{1-\lambda}\log\frac{\norm{a_0}}{t}$ yields
\[
\dtil(y_0,Z_{Q}) \leq e^{-\lambda n}
\]
and the theorem is proved.
\end{proof}

\subsection*{Acknowledgements}
It is a pleasure to thank Yves Benoist and Elon Lindenstrauss for several useful discussions, in particular on the existence of satellite measures in the presence of compact factors.
The authors are also grateful to the anonymous referee for numerous corrections and helpful comments.
While this research was conducted, W.H. was supported by ERC 2020 grant HomDyn (grant no.~833423), KIAS Individual Grant (no.~MG080401) and the National Natural Science Foundation of China (No. 12288201).

\bibliographystyle{abbrv} 
\bibliography{bib_redu}

\begin{thebibliography}{10}

\bibitem{Aoun2011}
R.~Aoun.
\newblock Random subgroups of linear groups are free.
\newblock {\em Duke Math. J.}, 160(1):117--173, 2011.

\bibitem{benard}
T.~B{\'e}nard.
\newblock Equidistribution of mass for random processes on finite-volume
  spaces.
\newblock {\em Isr. J. Math.}, 255(1):417--422, 2023.

\bibitem{bq1}
Y.~Benoist and J.-F. Quint.
\newblock Mesures stationnaires et ferm\'{e}s invariants des espaces
  homog\`enes.
\newblock {\em Ann. of Math. (2)}, 174(2):1111--1162, 2011.

\bibitem{bqfv}
Y.~{Benoist} and J.-F. {Quint}.
\newblock {Random walks on finite volume homogeneous spaces}.
\newblock {\em {Invent. Math.}}, 187(1):37--59, 2012.

\bibitem{bq2}
Y.~Benoist and J.-F. Quint.
\newblock Stationary measures and invariant subsets of homogeneous spaces
  ({II}).
\newblock {\em J. Amer. Math. Soc.}, 26(3):659--734, 2013.

\bibitem{bq3}
Y.~Benoist and J.-F. Quint.
\newblock Stationary measures and invariant subsets of homogeneous spaces
  ({III}).
\newblock {\em Ann. of Math. (2)}, 178(3):1017--1059, 2013.

\bibitem{BenoistQuint}
Y.~Benoist and J.-F. Quint.
\newblock {\em Random walks on reductive groups}, volume~62 of {\em Ergebnisse
  der Mathematik und ihrer Grenzgebiete. 3. Folge. A Series of Modern Surveys
  in Mathematics}.
\newblock Springer, Cham, 2016.

\bibitem{BougerolLacroix}
P.~Bougerol and J.~Lacroix.
\newblock {\em Products of random matrices with applications to
  {S}chr\"{o}dinger operators}, volume~8 of {\em Progress in Probability and
  Statistics}.
\newblock Birkh\"{a}user Boston, Inc., Boston, MA, 1985.

\bibitem{bourgain_evkt}
J.~{Bourgain}.
\newblock {On the Erd\H{o}s-Volkmann and Katz-Tao ring conjectures}.
\newblock {\em {Geom. Funct. Anal.}}, 13(2):334--365, 2003.

\bibitem{Bourgain2010}
J.~Bourgain.
\newblock The discretized sum-product and projection theorems.
\newblock {\em J. Anal. Math.}, 112:193--236, 2010.

\bibitem{BFLM}
J.~Bourgain, A.~Furman, E.~Lindenstrauss, and S.~Mozes.
\newblock Stationary measures and equidistribution for orbits of nonabelian
  semigroups on the torus.
\newblock {\em J. Amer. Math. Soc.}, 24(1):231--280, 2011.

\bibitem{bourgaingamburd_su2}
J.~{Bourgain} and A.~{Gamburd}.
\newblock {On the spectral gap for finitely-generated subgroups of
  $\text{SU}(2)$}.
\newblock {\em {Invent. Math.}}, 171(1):83--121, 2008.

\bibitem{bourgaingamburd_sl2p}
J.~Bourgain and A.~Gamburd.
\newblock Uniform expansion bounds for {C}ayley graphs of {${\rm
  SL}_2(\mathbb{F}_p)$}.
\newblock {\em Ann. of Math. (2)}, 167(2):625--642, 2008.

\bibitem{bgk}
J.~{Bourgain}, A.~A. {Glibichuk}, and S.~V. {Konyagin}.
\newblock {Estimates for the number of sums and products and for exponential
  sums in fields of prime order}.
\newblock {\em {J. Lond. Math. Soc., II. Ser.}}, 73(2):380--398, 2006.

\bibitem{bv}
J.~Bourgain and P.~P. Varj\'{u}.
\newblock Expansion in {$SL_d({\bf Z}/q{\bf Z}),\,q$} arbitrary.
\newblock {\em Invent. Math.}, 188(1):151--173, 2012.

\bibitem{Boyer}
J.-B. Boyer.
\newblock {On the affine random walk on the torus}.
\newblock {\em arXiv e-prints}, page arXiv:1702.08387, Feb 2017.

\bibitem{benardhe}
T.~Bénard and W.~He.
\newblock Multislicing and effective equidistribution for random walks on some
  homogeneous spaces, 2024.
\newblock Preprint arXiv:2409.03300.

\bibitem{erdosszemeredi}
P.~Erd{\H{o}}s and E.~Szemer{\'e}di.
\newblock On sums and products of integers.
\newblock In P.~Erd{\H{o}}s, L.~Alp{\'a}r, G.~Hal{\'a}sz, and
  A.~S{\'a}rk{\"o}zy, editors, {\em Studies in Pure Mathematics: To the Memory
  of Paul Tur{\'a}n}, pages 213--218. Birkh{\"a}user Basel, 1983.

\bibitem{em}
A.~{Eskin} and G.~{Margulis}.
\newblock {Recurrence properties of random walks on finite volume homogeneous
  manifolds}.
\newblock In {\em Random walks and geometry. Proceedings of a workshop at the
  Erwin Schr\"odinger Institute, Vienna, June 18 -- July 13, 2001. In
  collaboration with Klaus Schmidt and Wolfgang Woess. Collected papers.},
  pages 431--444. Berlin: de Gruyter, 2004.

\bibitem{falconer}
K.~J. {Falconer}.
\newblock {Hausdorff dimension and the exceptional set of projections}.
\newblock {\em {Mathematika}}, 29:109--115, 1982.

\bibitem{Furstenberg1963}
H.~Furstenberg.
\newblock Noncommuting random products.
\newblock {\em Trans. Amer. Math. Soc.}, 108:377--428, 1963.

\bibitem{GuivarchStarkov}
Y.~Guivarc'h and A.~N. Starkov.
\newblock Orbits of linear group actions, random walks on homogeneous spaces
  and toral automorphisms.
\newblock {\em Ergodic Theory Dynam. Systems}, 24(3):767--802, 2004.

\bibitem{He2016}
W.~He.
\newblock Discretized sum-product estimates in matrix algebras.
\newblock {\em J. Anal. Math.}, 139(2):637--676, 2019.

\bibitem{HS2018}
W.~{He} and N.~{de Saxc\'e}.
\newblock {Sum-product for real Lie groups}.
\newblock {\em {J. Eur. Math. Soc. (JEMS)}}, 23(6):2127--2151, 2021.

\bibitem{HS2019}
W.~He and N.~De~Saxc{\'e}.
\newblock Linear random walks on the torus.
\newblock {\em Duke Math. J.}, 171(5):1061--1133, 2022.

\bibitem{HeSaxce_expansion}
W.~He and N.~de~Saxc\'{e}.
\newblock Trou spectral dans les groupes simples.
\newblock {\em Enseign. Math.}, 70(3/4):425--477, 2024.

\bibitem{HLL2020}
W.~He, T.~Lakrec, and E.~Lindenstrauss.
\newblock {Affine Random Walks on the Torus}.
\newblock {\em International Mathematics Research Notices}, 01 2021.
\newblock rnaa322.

\bibitem{HLL2021}
W.~He, T.~Lakrec, and E.~Lindenstrauss.
\newblock Equidistribution of affine random walks on some nilmanifolds.
\newblock In {\em Analysis at large. Dedicated to the life and work of Jean
  Bourgain}, pages 131--171. Cham: Springer, 2022.

\bibitem{kim}
W.~Kim.
\newblock Effective equidistribution of expanding translates in the space of
  affine lattices, 2021.
\newblock To appear in Duke Math. J.

\bibitem{LePage}
E.~Le~Page.
\newblock Th\'{e}or\`emes limites pour les produits de matrices al\'{e}atoires.
\newblock In {\em Probability measures on groups ({O}berwolfach, 1981)}, volume
  928 of {\em Lecture Notes in Math.}, pages 258--303. Springer, Berlin-New
  York, 1982.

\bibitem{Li2018}
J.~{Li}.
\newblock {Discretized sum-product and Fourier decay in $\mathbb{R}^n$}.
\newblock {\em {J. Anal. Math.}}, 143(2):763--800, 2021.

\bibitem{LM}
E.~Lindenstrauss and A.~Mohammadi.
\newblock Polynomial effective density in quotients of {{\(\mathbb{H}^3\)}} and
  {{\(\mathbb{H}^2 \times \mathbb{H}^2\)}}.
\newblock {\em Invent. Math.}, 231(3):1141--1237, 2023.

\bibitem{lmw}
E.~Lindenstrauss, A.~Mohammadi, and Z.~Wang.
\newblock Effective equidistribution for some one parameter unipotent flows,
  2022.
\newblock Preprint arXiv:2211.11099.

\bibitem{Muchnik}
R.~Muchnik.
\newblock Semigroup actions on {$\mathbb{T}^n$}.
\newblock {\em Geom. Dedicata}, 110:1--47, 2005.

\bibitem{SGV}
A.~Salehi~Golsefidy and P.~P. Varj\'{u}.
\newblock Expansion in perfect groups.
\newblock {\em Geom. Funct. Anal.}, 22(6):1832--1891, 2012.

\bibitem{schmidt_da}
W.~M. {Schmidt}.
\newblock {\em {Diophantine approximation.}}, volume 785.
\newblock Springer, Cham, 1980.

\bibitem{Tao2008}
T.~Tao.
\newblock Product set estimates for non-commutative groups.
\newblock {\em Combinatorica}, 28(5):547--594, 2008.

\bibitem{TaoVu}
T.~Tao and V.~H. Vu.
\newblock {\em Additive combinatorics}, volume 105 of {\em Cambridge Studies in
  Advanced Mathematics}.
\newblock Cambridge University Press, Cambridge, 2010.

\bibitem{VanderWaerden}
B.~L. {Van der Waerden}.
\newblock {\em Modern Algebra. Volume II. Based in part on lectures by E. Artin
  and E. Noether.}
\newblock Frederick Ungar Publishing Co., New York, transl. from the 3rd german
  edition, 1950.

\bibitem{yang}
L.~Yang.
\newblock Effective version of {Ratner}'s equidistribution theorem for
  {$\mathrm{SL}_3(\mathbb{R})$}, 2022.
\newblock Preprint arXiv:2208.02525.

\end{thebibliography}

\end{document}